\newtheorem{theorem}{Theorem}[section]
\newtheorem*{theorem*}{Theorem}
\newtheorem{lemma}[theorem]{Lemma}
\newtheorem{definition}[theorem]{Definition}
\newtheorem{proposition}[theorem]{Proposition}
\newtheorem{corollary}[theorem]{Corollary}
\newtheorem{assumption}[theorem]{Assumption}
\theoremstyle{remark}
\newtheorem{rmk}[theorem]{Remark}
\newcommand{\eps}{\varepsilon}
\newcommand{\U}{\mathcal U}
\newcommand{\E}{\mathbb E}
\newcommand{\Co}{\mathcal S}
\renewcommand{\aa}{\mathfrak a}
\newcommand{\bb}{\mathfrak b}
\newcommand{\A}{\mathcal A}
\newcommand{\e}{\mathfrak e}
\newcommand{\pp}{\mathbb{P}}
\newcommand{\qq}{\mathbb{Q}}
\newcommand{\rr}{\mathbb{R}}
\newcommand{\zz}{\mathbb{Z}}
\newcommand{\nn}{\mathbb{N}}
\newcommand{\ep}{\hfill \ensuremath{\Box}}
\newcommand{\eq}{\begin{equation}}
\newcommand{\en}{\end{equation}}
\numberwithin{equation}{section}
\newcommand{\ii}{\mathbf i}
\newcommand{\vv}{\mathbf v}
\newcommand{\G}{\mathcal G}
\title{Stochastic Airy Semigroup through tridiagonal matrices}
\author{Vadim Gorin}
\address{Department of Mathematics, Massachusetts Institute of Technology, MA, USA and Institute for Information Transmission Problems of Russian Academy of Sciences,  Russia}
\email{vadicgor@gmail.com}
\author{Mykhaylo Shkolnikov}
\address{ORFE Department, Princeton University, Princeton, NJ, USA}
\email{mshkolni@gmail.com}
\begin{document}

\begin{abstract}
We determine the operator limit for large powers of random tridiagonal matrices as the size of the matrix grows. The result provides a novel expression in terms of functionals of Brownian motions for the Laplace transform of the Airy$_\beta$ process, which describes the largest eigenvalues in the $\beta$ ensembles of random matrix theory. Another consequence is a Feynman-Kac formula for the stochastic Airy operator of Ram\'{i}rez, Rider, and Vir\'{a}g.

As a side result, we find that the difference between the area underneath a standard Brownian excursion and one half of the integral of its squared local times is a Gaussian random variable.
\end{abstract}

\maketitle

\section{Introduction}

This article is about spectral properties of random matrices, and we refer to \cite{AGZ}, \cite{Pastur_Shcherbina}, \cite{Forrester}, \cite{ABF} for modern general reviews. In random matrix ensembles one distinguishes a parameter $\beta$, which is typically equal to $1$, $2$, or $4$ in full Hermitian matrix models (such as Wigner
or Wishart ensembles) and corresponds to real, complex, or quaternion matrix elements. More generally, $\beta$ can be taken to be an arbitrary positive number, in relation with Coulomb log-gases, the Calogero-Sutherland quantum many-body system, random
tridiagonal matrices, Heckman-Opdam and Macdonald processes, see e.g.
\cite[Chapter 20 ``Beta Ensembles'']{ABF}, \cite{Dum}, \cite{BG_GFF} for the details.

\medskip

Here, we concentrate on \emph{edge limits} of random matrix ensembles describing the asymptotic behavior of the largest eigenvalues (and the corresponding eigenvectors). At $\beta=2$, i.e. for complex Hermitian matrices, there are many deep results in this direction. In particular, the properly centered and rescaled largest eigenvalue
converges to the Tracy-Widom law $F_2$ \cite{TW-U}, the point process describing all largest eigenvalues converges to the Airy point process, which is a part of $2$D Airy line ensemble \cite{Prah_Spohn}, \cite{CH} (the latter can be obtained by
considering the largest eigenvalues of corners of random matrices). All these results are very robust and have been proved rigorously in great generality, see e.g. \cite{Soshnikov}, \cite{Peche}, \cite{Sodin_edge}, \cite{Pastur_Shcherbina_edge}, \cite{DG}, \cite{BEY}, \cite{KRV}, \cite{BFG}. Furthermore, the universality of these objects extends far beyond random matrix theory, see e.g. \cite{BG_Spb}, \cite{BP_Lectures}, \cite{Johansson_lectures},
\cite{Corwin_ICM} and references therein. Several descriptions of the limiting objects are known: tracktable expressions of their correlation functions (see e.g. \cite{TW-U}, \cite{Prah_Spohn}) and Laplace transforms (see e.g. \cite{Soshnikov}, \cite{Okounkov}, \cite{Sodin_ICM}), and a conjectural description through the so-called Brownian Gibbs property (see \cite[Section 3.2]{CH}).

\medskip

While for $\beta=1$ there are still many results parallel to the $\beta=2$ case, there is much less understanding for general values of $\beta>0$. For the general $\beta$ analogues of $F_2$ and the Airy point process, the only known identification is via the spectrum of the stochastic Airy operator \cite{Edelman_Sutton}, \cite{RRV}, and no analytic formulas for correlation functions or Laplace transforms are known. Moreover, even the existence of the Airy line ensemble for general $\beta$ has not been established.

\medskip

From the analytic point of view, the main difficulty for general $\beta$ is that the determinantal/Pfaffian formulas for the correlation functions available for $\beta=1,\,2,\,4$ are not known to extend to other values of $\beta$. A recent alternative approach producing explicit formulas through \emph{Macdonald processes} \cite{BigMac}, \cite{BCGS} does work for $\beta$ ensembles (see \cite{BG_GFF}, \cite{BG_Appendix}), but the edge limits are
not yet accessible through these techniques.

\medskip

Another approach, which has proved to be very successful for $\beta=1,\,2,\,4$, is the \emph{moments method}. In the present article we prove that the latter approach \emph{can} be adapted to the study of the edge limits of general $\beta$ ensembles. This leads to several outcomes. First, we prove that the Laplace transform of the point process of the rescaled largest eigenvalues in the
Gaussian $\beta$ ensemble (and more general random matrices) converges to the Laplace transform of the Airy$_\beta$ point process and establish a novel formula for the latter in terms of
a functional of Brownian motion. This is closely related to our second result: the identification and proof of a Feynman-Kac formula for the \emph{stochastic Airy semigroup}, the semigroup associated with the stochastic Airy operator.

\medskip

It is known that Laplace transforms can be used to study various properties of the underlying point processes. For instance, by integrating one should be able to access \emph{linear statistics} of the Airy$_\beta$ point process (in addition to their intrinsic interest, the latter are also important for the study of rigidity properties, cf. \cite{Ghosh_Peres}). On the other hand, by sending the parameter of the Laplace transform to infinity, one should be able to find the Tracy-Widom laws $F_\beta$. We postpone the discussion of these possible applications to future papers. Instead, we present a rather unexpected consequence: comparing our results with the literature for $\beta=2$ we find a novel identity involving the Brownian excursion area and the local times of the same excursion. 

\medskip

From the technical point of view, our main result is the computation of the asymptotics of matrix elements of \emph{large} powers of random tridiagonal matrices. More precisely, for a matrix of size $N\times N$ we deal with powers of the order $N^{2/3}$. In the
case when the powers do not grow or grow slower than $N^{2/3}$, such asymptotics has been previously analyzed in \cite{DE2}, \cite{Wong}, \cite{Duy}, but for the analysis of the fast growing powers (directly related to the edge asymptotics of $\beta$ ensembles) many new ideas are necessary. In particular, in our proofs we heavily rely
on \emph{strong} invariance principles, i.e. statements about the convergence of the trajectories of (conditioned) random walks to those of Brownian motions (or bridges) with a very precise control of errors, as in \cite{Kh}, \cite{LTF}, \cite{BK2}. In addition, we
use \emph{path transformations} linking discrete local times of random walks to time-changed versions of the same random walk, see \cite{AFP}, and also \cite{Jeulin}, \cite{Biane-Yor}, \cite{CSY} for the continuous analogues.

\medskip

We proceed to a detailed exposition of our results.

\medskip

\noindent {\bf Notation.} In what follows $C$ stands for a positive constant whose exact value is not important for us and might change from line to line.

\medskip

\noindent {\bf Acknowledgement.}  We would like to thank Simone Warzel for suggesting the strategy for the proof of the trace formula \eqref{trace_formula}, and Sasha Sodin for helpful discussions. V.~G. was partially supported by the NSF grant DMS-1407562. M.~S. was partially supported by the NSF grant DMS-1506290.
\section{Setup and results}
\label{Section_setup}


Given two sequences of independent random variables $\aa(m)$, $m\in\nn$ and
$\bb(m)$, $m\in\nn$, we define for each $N\in\nn$ the $N\times N$ symmetric
tridiagonal matrix $M_N=(M_N[m,n])_{m,n=1}^N$ by setting $M_N[m,m]=\aa(m)$,
$m=1,2,\ldots,N$ and $M[m,m+1]=\bb(m)$, $m=1,2,\ldots,N-1$: \eq
M_N=\left(\begin{array}{ccccc}
\aa(1) & \bb(1) & 0 & \cdots & 0 \\
\bb(1) & \aa(2) & \bb(2) & \ddots & \vdots \\
0 & \bb(2) & \aa(3) & \ddots & 0 \\
\vdots & \ddots & \ddots & \ddots & \bb(N-1) \\
0 & \cdots & 0 & \bb(N-1) & \aa(N)
\end{array}\right)
\en In this paper, we study $M_N$  in the asymptotic regime
$N\to\infty$. \footnote{All our arguments extend to the case when the entries $\aa(m)$, $m=1,2,\ldots,N$ and $\bb(m)$, $m=1,2,\ldots,N-1$ vary with $N$. However, to keep the notation reasonable we have
decided to work in the less general setup of no dependence on $N$.}

\medskip

The case when, for a fixed $\beta>0$, all $\aa(m)$, $m\in\nn$ have the normal
distribution $N(0,2/\beta)$, and the $\bb(m)$, $m\in\nn$ are $\beta^{-1/2}$
multiples of $\chi$-distributed random variables with parameters $\beta m$,
$m\in\nn$ is of particular interest. Here, the density of the $\chi$ distribution
with parameter $a$ on $\rr_{\ge 0}$ is
\[
\frac{2^{1-k/2}}{\Gamma(a/2)}\,x^{a-1}\,e^{-x^2/2}, \quad x>0.
\]
In this situation, the joint density of the $N$ eigenvalues
$\lambda_1\le \lambda_2\le\dots\le \lambda_N$ of $M_N$ is proportional to
\[
\prod_{i<j} (\lambda_j-\lambda_i)^{\beta}\,
\prod_{i=1}^N e^{-\frac{\beta}{4} \lambda_i^2},
\]
and the corresponding joint distribution is usually referred to as the \textit{Gaussian $\beta$ ensemble}, see e.g. \cite{DE}. More generally, we work with arbitrary sequences of independent random variables $\aa(m)$, $m\in\nn$ and $\bb(m)$, $m\in\nn$ satisfying the following assumption.

\begin{assumption} \label{Assumptions}
The sequences $\aa(m)$, $m\in\nn$ and $\bb(m)$, $m\in\nn$ of independent random variables satisfy, with the notation
\eq
\bb(m)=\sqrt{m}+\xi(m),\quad m\in\nn,
\en
\begin{enumerate}[(a)]
\item as $m\to\infty$, $\big|\E[\aa(m)]\big|=o(m^{-1/3})$, $\big|\E[\xi(m)]\big|=o(m^{-1/3})$;
\item there exist nonnegative constants $s_\aa$, $s_\xi$ such that $\frac{s_a^2}{4}+s_\xi^2=\frac{1}{\beta}$ and, as $m\to\infty$, $\E[\aa(m)^2]=s_\aa^2+o(1)$, $\E[\xi(m)^2]=s_\xi^2+o(1)$;
\item there exist constants $C>0$ and $0<\gamma<2/3$ such that
\[
\E\big[|\aa(m)|^\ell\big]\le C^\ell \ell^{\gamma\ell}\quad\text{and}\quad \E\big[|\xi(m)|^\ell\big]\le C^\ell \ell^{\gamma \ell}\quad\text{for all }\,m,\ell\in\nn.
\]
\end{enumerate}
\end{assumption}

\smallskip

In particular, we have the following simple lemma.

\begin{lemma}
If all $\aa(m)$, $m\in\nn$ are $N(0,2/\beta)$-distributed and the
$\sqrt{\beta}\,\bb(m)$, $m\in\nn$ are $\chi$-distributed with parameters $\beta m$,
$m\in\nn$, respectively, then Assumption \ref{Assumptions} holds with
$s_{\aa}/2=s_\xi=\frac{1}{\sqrt{2\beta}}$.
\end{lemma}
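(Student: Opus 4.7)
The plan is to verify the three parts (a), (b), (c) of Assumption \ref{Assumptions} directly, relying on standard facts about the Gaussian and $\chi$ distributions together with the classical Gauss-ratio expansion of $\Gamma((k+1)/2)/\Gamma(k/2)$.

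For (a), the mean of $\aa(m)\sim N(0,2/\beta)$ is zero by symmetry. For $\xi(m)=\bb(m)-\sqrt{m}$, I would plug in the expansion
\[
\E[\chi_k] \;=\; \sqrt{2}\,\frac{\Gamma((k+1)/2)}{\Gamma(k/2)} \;=\; \sqrt{k}\Bigl(1-\tfrac{1}{4k}+O(k^{-2})\Bigr)
\]
with $k=\beta m$ to obtain $\E[\bb(m)]=\sqrt{m}+O(m^{-1/2})$, hence $|\E[\xi(m)]|=O(m^{-1/2})=o(m^{-1/3})$. For (b), a direct check gives $s_\aa^2/4+s_\xi^2=1/(2\beta)+1/(2\beta)=1/\beta$; then $\E[\aa(m)^2]=2/\beta=s_\aa^2$ exactly, while
\[
\E[\xi(m)^2]\;=\;\E[\bb(m)^2]-2\sqrt{m}\,\E[\bb(m)]+m,
\]
combined with $\E[\bb(m)^2]=\E[\chi_{\beta m}^2]/\beta=m$ and the mean expansion from (a), yields $\E[\xi(m)^2]=1/(2\beta)+O(m^{-1})=s_\xi^2+o(1)$.

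For (c), the Gaussian absolute moments $\E[|\aa(m)|^\ell]=(2/\beta)^{\ell/2}\cdot 2^{\ell/2}\Gamma((\ell+1)/2)/\sqrt{\pi}$ satisfy the stated bound with $\gamma=1/2$ by Stirling. For $\xi(m)$ the argument I have in mind uses Gaussian concentration: when $\beta m$ is an integer, $\chi_{\beta m}$ is a $1$-Lipschitz functional of $\beta m$ i.i.d. standard Gaussians, so Borell's inequality gives that $\chi_{\beta m}-\E[\chi_{\beta m}]$ is sub-Gaussian with variance proxy $1$ \emph{uniformly in} $m$; for non-integer $\beta m$ the same sub-Gaussian tail can be read off directly from the moment generating function of the $\chi$ density. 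Combining this with $|\sqrt{m}-\E[\bb(m)]|=O(m^{-1/2})$ from (a) yields $\E[|\xi(m)|^\ell]\le C^\ell\ell^{\ell/2}$, so the bound holds with $\gamma=1/2$.

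The only step that needs real care is this last moment estimate, because we need the sub-Gaussian constant for $\chi_{\beta m}$ to be genuinely dimension-free; once that is in place, the remaining verifications are direct substitutions into closed-form expressions for moments of the Gaussian and $\chi$ distributions, and the factor $\ell^{\gamma\ell}$ with $\gamma=1/2<2/3$ appears automatically from the Stirling estimate of $\Gamma((\ell+1)/2)$.
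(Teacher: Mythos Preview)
Your proposal is correct and follows essentially the same approach as the paper: verify (a), (b), (c) directly, with the only nontrivial point being the uniform sub-Gaussian tail of $\xi(m)$. The paper simply cites the tail estimate of Laurent--Massart \cite[Section 4.1, Lemma 1]{LM} for that step, whereas you derive it via Gaussian concentration (Borell's inequality for the $1$-Lipschitz norm functional); these are interchangeable sources for the same dimension-free sub-Gaussian bound.
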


\begin{proof} 
The result is immediate for $\aa(m)$, $m\in\nn$. For $\xi(m)$, $m\in\nn$ it follows from known tail estimates for $\chi$ random variables, see e.g. \cite[Section 4.1, Lemma 1]{LM}.
\end{proof}

We start the study of the $N\to\infty$ limit of $M_N$ by recalling the \emph{semicircle law}. For each $N\in\nn$, let $\lambda_N^1\ge\lambda_N^2\ge\cdots\ge\lambda_N^N$ denote the ordered eigenvalues of $M_N$. Consider the random probability measure
\begin{equation}
\label{eq_empirical}
\rho_N=\frac{1}{N}\sum_{i=1}^N \delta_{\lambda_N^i/\sqrt{N}}\,.
\end{equation}
\begin{proposition} \label{Prop_semicircle}
Under Assumption \ref{Assumptions}, as $N\to\infty$, the random measure $\rho_N$ converge weakly, in probability, to the deterministic measure $\mu$ with density
 $$
  \frac{1}{2\pi} \sqrt{4-x^2},\quad -2<x<2.
 $$
\end{proposition}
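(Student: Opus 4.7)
The plan is to use the moment method. Since the semicircle law $\mu$ has compact support and is uniquely determined by its moments, it suffices to show that for every integer $k \ge 0$,
\[
\int x^k\,d\rho_N(x) \;=\; \frac{1}{N^{1+k/2}}\operatorname{tr}(M_N^k) \;\longrightarrow\; \int x^k\,d\mu(x) \quad \text{in probability.}
\]
I would expand $\operatorname{tr}(M_N^k)$ as a sum over closed walks $m = m_0, m_1, \ldots, m_k = m$ on $\{1,\ldots,N\}$ with $|m_{i+1} - m_i| \le 1$, weighted by $\prod_{i=0}^{k-1} W_i$, where $W_i = \aa(m_i)$ for a lazy step and $W_i = \bb(\min(m_i, m_{i+1}))$ for a $\pm 1$ step. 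Assumption \ref{Assumptions}(c) supplies uniform bounds on all moments of these weights, which will let me interchange sums and expectations freely.

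For the expected moments, the dominant walks are those of even length $k = 2\ell$ with no lazy steps in which every traversed bond is used exactly twice. By Assumption \ref{Assumptions}(a,b), such a walk starting at a position $m \in \{1,\ldots,N\}$ contributes $\prod_{\text{bonds used}} \E[\bb(m_i)^2] = m^\ell(1+o(1))$ when $m \to \infty$ (the walk lives in a window of size $\ell$ around $m$, so every bond's index is $m(1+o(1))$), and the number of such walks from $m$ to $m$ of length $2\ell$ in $\zz$ is $\binom{2\ell}{\ell}$. Summing over starting positions and passing to a Riemann sum gives
\[
\frac{1}{N^{1+\ell}}\,\E\bigl[\operatorname{tr}(M_N^{2\ell})\bigr] \;\longrightarrow\; \binom{2\ell}{\ell}\int_0^1 x^\ell\,dx \;=\; \frac{1}{\ell+1}\binom{2\ell}{\ell} \;=\; C_\ell,
\]
which is exactly the $2\ell$-th moment of $\mu$. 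All other walks contribute to lower order: each coincidence that merges two bonds or two lazy sites costs a factor of $N$ in the summation over starting positions, while each lazy step replaces a factor of size $\sqrt{m}$ with a bounded factor of order one; the Assumption \ref{Assumptions}(c) bound of $C^\ell \ell^{\gamma\ell}$ controls the combinatorial blow-up uniformly in $m$ for any fixed $k$. For odd $k$ the same accounting yields $\E[\operatorname{tr}(M_N^k)] = o(N^{1+k/2})$, matching the vanishing odd moments of $\mu$.

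Concentration then follows from a variance bound $\operatorname{Var}\bigl(N^{-1-k/2}\operatorname{tr}(M_N^k)\bigr) = O(N^{-1})$, obtained by expanding $\E[\operatorname{tr}(M_N^k)^2]$ as a sum over \emph{pairs} of walks. Pairs whose walks share no random variable reproduce $\bigl(\E[\operatorname{tr}(M_N^k)]\bigr)^2$, while any pair with at least one shared $\aa$ or $\bb$ loses a free summation index and hence a factor of $N$; Chebyshev's inequality then gives convergence in probability. The main obstacle is the combinatorial bookkeeping for the subleading walks --- verifying that all walks with lazy steps, bonds used four or more times, or non-trivial overlaps in the variance computation are truly $o(N^{1+\ell})$ --- but the moment control in Assumption \ref{Assumptions}(c), with $\gamma < 2/3$, is much stronger than what one needs for fixed $k$. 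A cleaner alternative is to quote an existing semicircle law for random Jacobi matrices with independent entries satisfying comparable mean/variance assumptions (e.g.\ Dumitriu--Edelman) and verify the hypotheses in the present setup.
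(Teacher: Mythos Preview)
Your approach is essentially the same as the paper's: moment method via the walk expansion of $\operatorname{tr}(M_N^k)$, identification of the leading term as $\binom{2\ell}{\ell}\int_0^1 x^\ell\,dx = C_\ell$, and a Chebyshev/variance argument for concentration. The paper carries this out in the appendix (Theorem~A.1) under the weaker hypothesis $\E[|\aa(m)|^k],\,\E[|\xi(m)|^k]=o(m^{k/2})$, which Assumption~\ref{Assumptions} certainly implies.

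One point to correct: your identification of the dominant walks as those ``in which every traversed bond is used exactly twice'' is Wigner-matrix intuition that does not apply here, and it is inconsistent with your own count $\binom{2\ell}{\ell}$. For $\ell=2$, only four of the six closed $\pm1$ walks use each edge exactly twice (e.g.\ $0,1,0,1,0$ does not), so your description would give $4/3$ rather than $C_2=2$. The reason your formula is nevertheless correct is that in this tridiagonal model the off-diagonal entries $\bb(m)=\sqrt{m}+\xi(m)$ are \emph{not} centered: the deterministic part $\sqrt{m}$ dominates, so \emph{every} closed $\pm1$ walk of length $2\ell$ near level $m$ contributes $\prod_l \sqrt{i_l\wedge i_{l-1}}=m^\ell(1+o(1))$ to $\E[\operatorname{tr}(M_N^{2\ell})]$, regardless of edge multiplicities. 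There are $\binom{2\ell}{\ell}$ such walks, which is the count you wrote down. This is exactly how the paper argues, and it is simpler than the Wigner-style edge-pairing combinatorics.
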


\smallskip

For the Gaussian $\beta$ ensemble, Proposition \ref{Prop_semicircle} is well-known and can be proven in several ways, cf. \cite{AGZ}, \cite{Dum}. For the sake of completeness, we provide a proof
of our more general statement in the appendix.

\medskip

Proposition \ref{Prop_semicircle} gives the leading order asymptotics of the
normalized spectral measure $\rho_N$. Refinements of this statement are available in
at least three different directions. The first one studies the higher order
asymptotics of $\rho_N$ in the same coordinates, that is, the fluctuations of
$\rho_N$ around the semicircle distribution $\mu$. This is referred to as
\emph{global} asymptotics. In this direction, we prove in the appendix a Central
Limit Theorem (CLT) for the joint fluctuations of multiple corners of $M_N$. Note
that the Gaussian nature of the fluctuations (and the corresponding covariance structure) for a single matrix is well-known, cf. \cite{Johansson}, \cite{AGZ}, \cite{Dum}.

\medskip

The second refinement is the study of the \emph{local} asymptotics of the eigenvalues in the bulk of the spectrum. A typical question in this direction is the asymptotic distribution of the rescaled spacing $\sqrt{N}(\lambda^{\lfloor N/2\rfloor}_N-\lambda^{\lfloor N/2\rfloor+1}_N)$. We do not address this limiting regime in the present paper and instead refer to \cite{VV} for results of this type
for random tridiagonal matrices.

\medskip

The third refinement is the investigation of the asymptotics of the \emph{extreme}
eigenvalues of $M_N$ and the corresponding eigenvectors, which is known as
\emph{edge} asymptotics. In this direction, it is shown in \cite{RRV} (see also
\cite{KRV}) that the random variable $N^{1/6}(\lambda^i_N-2\sqrt{N})$ converges in
distribution for every fixed $i\in\nn$. The limit of the corresponding eigenvector
is also studied therein. Our main results are closely related to this work.

\medskip

Let us now present the main results of this paper. Fix a (possibly unbounded) interval $\A\subset\rr_{\ge0}$, consider a probability space which supports a standard Brownian motion $W$, and consider for each $T>0$ the following (random) kernel on $\mathbb R_{\ge 0} \times\mathbb R_{\ge 0}$:
\begin{equation}\label{def:kernel}
\begin{split}
& K_\A(x,y;T)=\frac{1}{\sqrt{2\pi T}} \exp\left(-\frac{(x-y)^2}{2T}\right) \\ 
& \qquad\qquad\quad \E_{B^{x,y}} \Biggl[\mathbf{1}_{\{\forall t:\,B^{x,y}(t)\in\A\}} \exp\left(
-\frac{1}{2}\int_0^T B^{x,y}(t)\,\mathrm{d}t+\frac{1}{\sqrt{\beta}}\,\int_0^\infty
L_a(B^{x,y})\,\mathrm{d}W(a)\right)\Biggr].
\end{split}
\end{equation}
Here, $B^{x,y}$ is a standard Brownian bridge starting at $x$ at time $0$ and ending at $y$ at time $T$ which is independent of $W$; the $L_a(B^{x, y})$ are the local times accumulated by $B^{x,y}$ at level $a$ on $[0,T]$; and the expectation $\E_{B^{x,y}}$ is taken only with respect to $B^{x,y}$. We define $\U_\A(T)$, $T>0$
as the integral operators on $\rr_{\ge0}$ with kernels $K_\A(x,y;T)$, $T>0$, respectively. In order to be able to make statements about \textit{multiple} operators $\U_\A(T)$, we use the \textit{same} path of $W$ in \eqref{def:kernel} and define the stochastic integral with respect to $W$ therein according to the almost sure procedure described in \cite{Ka}. For notational convenience, we let $\U_\A(0)$ be the the orthogonal projector from $L^2(\rr_{\ge 0})$ onto $L^2(\A)$; in particular, when $\A=\rr_{\ge 0}$, then $\U_\A(0)$ is the identity operator.

\begin{proposition} \label{Proposition_trace_class}
For each $T>0$, $\U_\A(T)$ is almost surely a symmetric non-negative trace class
operator on $L^2(\rr_{\ge0})$ satisfying the trace formula
\begin{equation}\label{trace_formula}
\mathrm{Trace}(\U_\A(T))=\int_{\rr_{\ge0}} K_\A(x,x;T)\,\mathrm{d}x.
\end{equation}
\end{proposition}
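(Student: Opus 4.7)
The plan is to separate the proof into four stages: well-definedness of the kernel, symmetry in $(x,y)$, a Chapman--Kolmogorov semigroup identity that yields non-negativity, and a single Hilbert--Schmidt bound that simultaneously gives the trace-class property and the trace formula. Well-definedness is almost automatic: because $B^{x,y}$ lives on the compact interval $[0,T]$, the map $a\mapsto L_a(B^{x,y})$ is almost surely continuous with compact support, so $\int_0^\infty L_a(B^{x,y})^2\,\mathrm{d}a<\infty$ and the Kallianpur construction of \cite{Ka} provides the stochastic integrals against $W$ simultaneously for all $x,y$ on one probability space.

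Symmetry $K_\A(x,y;T)=K_\A(y,x;T)$ will follow from the time reversal $t\mapsto B^{x,y}(T-t)$, which swaps the endpoints while leaving each of the three exponent ingredients invariant: the path indicator $\{B(t)\in\A\ \forall t\}$, the integral $\int_0^T B(t)\,\mathrm{d}t$, and the entire family $\{L_a(B)\}_{a\ge 0}$. For the semigroup identity I would condition $B^{x,y}$ on its midpoint $B^{x,y}(T/2)=z$ and invoke the Markov property, so that the two halves of the bridge are conditionally independent Brownian bridges. All three functionals split additively over this decomposition (the indicator as a product, the path integral as a sum, and the local times as $L_a(B)=L_a(B|_{[0,T/2]})+L_a(B|_{[T/2,T]})$, so the white-noise integral splits linearly in $L$); together with the standard expression for the midpoint density this yields
\begin{equation*}
K_\A(x,y;T)=\int_{\rr_{\ge 0}} K_\A(x,z;T/2)\,K_\A(z,y;T/2)\,\mathrm{d}z,
\end{equation*}
i.e.\ $\U_\A(T)=\U_\A(T/2)^2$. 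Combined with symmetry, this makes $\U_\A(T)$ self-adjoint and non-negative.

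Setting $y=x$ in the semigroup identity and using symmetry gives $K_\A(x,x;T)=\int K_\A(x,z;T/2)^2\,\mathrm{d}z$, so $\|\U_\A(T/2)\|_{HS}^2=\int K_\A(x,x;T)\,\mathrm{d}x$. Hence, proving almost-sure finiteness of $\int K_\A(x,x;T)\,\mathrm{d}x$ will simultaneously establish that $\U_\A(T/2)$ is Hilbert--Schmidt, that $\U_\A(T)=\U_\A(T/2)^{*}\U_\A(T/2)$ is trace class, and the trace identity $\mathrm{Trace}(\U_\A(T))=\int K_\A(x,x;T)\,\mathrm{d}x$. To establish the finiteness I would integrate over $W$ first; conditional on $B$, the stochastic integral $\int L_a\,\mathrm{d}W(a)$ is a centered Gaussian with variance $\int L_a^2\,\mathrm{d}a$, so Fubini gives
\begin{equation*}
\E_W[K_\A(x,x;T)]=\frac{1}{\sqrt{2\pi T}}\,\E_{B^{x,x}}\left[\mathbf{1}_{\{B\in\A\}}\exp\left(-\tfrac{1}{2}\int_0^T B(t)\,\mathrm{d}t+\tfrac{1}{2\beta}\int_0^\infty L_a(B)^2\,\mathrm{d}a\right)\right].
\end{equation*}
Writing $B^{x,x}(t)=x+b(t)$ for a standard bridge $b$ from $0$ to $0$ factors out $e^{-Tx/2}$; what remains is $x$-independent (the shape functional $\int L_a^2\,\mathrm{d}a$ is translation-invariant) and finite by classical exponential-moment bounds on $\int_0^T b(t)\,\mathrm{d}t$ and on $\int L_a(b)^2\,\mathrm{d}a\le T\sup_a L_a(b)$. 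Integrating $e^{-Tx/2}$ over $\rr_{\ge 0}$ yields $\E_W\int K_\A(x,x;T)\,\mathrm{d}x<\infty$, from which almost-sure finiteness follows.

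The main obstacle lies in this last step: for a typical realization of $W$ the exponent $\frac{1}{\sqrt\beta}\int L_a(B)\,\mathrm{d}W(a)$ is not uniformly bounded in $x$, since the local times $L_a(B^{x,x})$ probe regions of the axis near $x$ and the trajectory $a\mapsto W(a)$ grows like $\sqrt{a\log\log a}$. The remedy is precisely to integrate against $W$ first, replacing the unruly stochastic-integral exponent by the deterministic quadratic $\frac{1}{2\beta}\int L_a^2\,\mathrm{d}a$ and reducing the problem to classical Brownian bridge estimates that are insensitive to the spatial level; the other three steps are essentially formal manipulations of the defining expression.
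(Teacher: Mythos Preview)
Your proposal is correct and follows essentially the same route as the paper: symmetry via time reversal of the bridge, the kernel-level Chapman--Kolmogorov identity $\U_\A(T)=\U_\A(T/2)^2$ from the Markov decomposition of $B^{x,y}$, and the Hilbert--Schmidt estimate obtained by integrating out $W$ first so that the white-noise term becomes the deterministic $\frac{1}{2\beta}\int L_a^2\,\mathrm{d}a$. The only organizational difference is that the paper first proves $\U_\A(T)$ is Hilbert--Schmidt for every $T$ (its Lemma~\ref{Lemma_HS}, bounding $\int\!\!\int K_\A(x,y;T)^2\,\mathrm{d}x\,\mathrm{d}y$) and then deduces the trace formula from $\|\U_\A(T/2)\|_{HS}^2$, whereas you go straight for the diagonal integral; and for the exponential moment of $\int L_a^2\,\mathrm{d}a$ the paper routes through its discrete random-walk estimates rather than citing the classical continuous bound you invoke.
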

\begin{proposition} \label{Proposition_semigroup}
Operators $\U_\A(T)$, $T\ge 0$ have the almost sure semigroup property: for any
$T_1,T_2\ge0$, it holds $\U_\A(T_1)\,\U_\A(T_2)=\U_\A(T_1+T_2)$ with probability
one.
\end{proposition}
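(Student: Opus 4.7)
The plan is to establish the identity at the level of integral kernels. Fix $T_1, T_2>0$ (the boundary case $T_i=0$ reduces to the projection property via the fact that $K_\A(x,y;T)$ vanishes whenever $x\notin\A$ or $y\notin\A$). The kernel of the composition $\U_\A(T_1)\,\U_\A(T_2)$ is $(x,y)\mapsto\int_{\rr_{\ge 0}} K_\A(x,z;T_1)\,K_\A(z,y;T_2)\,\mathrm{d}z$, and I will show that, almost surely in $W$, this equals $K_\A(x,y;T_1+T_2)$; the operator identity then follows from symmetry of the kernels and Fubini.

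The standard decomposition of a Brownian bridge on $[0,T_1+T_2]$ is to condition on its value $z$ at the intermediate time $T_1$: conditionally, the two pieces $B_1:=(B^{x,y}(t))_{t\in[0,T_1]}$ and $B_2:=(B^{x,y}(T_1+s))_{s\in[0,T_2]}$ are independent Brownian bridges from $x$ to $z$ and from $z$ to $y$ respectively, while $z$ has density
\[
p(z)=\frac{(2\pi T_1)^{-1/2}e^{-(z-x)^2/(2T_1)}\,(2\pi T_2)^{-1/2}e^{-(y-z)^2/(2T_2)}}{(2\pi(T_1+T_2))^{-1/2}e^{-(y-x)^2/(2(T_1+T_2))}}.
\]
Multiplying the Gaussian prefactor $(2\pi(T_1+T_2))^{-1/2}\exp(-(x-y)^2/(2(T_1+T_2)))$ in $K_\A(x,y;T_1+T_2)$ by $p(z)\,\mathrm{d}z$ yields exactly the product of the Gaussian prefactors of $K_\A(x,z;T_1)$ and $K_\A(z,y;T_2)$, so if the functional inside the bridge expectation also splits multiplicatively between $B_1$ and $B_2$, integration in $z$ reproduces the convolution of kernels.

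The functional does split. The event $\{\forall t:\,B^{x,y}(t)\in\A\}$ factors as $\{\forall t:\,B_1(t)\in\A\}\cap\{\forall s:\,B_2(s)\in\A\}$; the time integral splits as $\int_0^{T_1+T_2} B^{x,y}(t)\,\mathrm{d}t=\int_0^{T_1} B_1(t)\,\mathrm{d}t+\int_0^{T_2} B_2(s)\,\mathrm{d}s$; and the occupation-time identity gives $L_a(B^{x,y})=L_a(B_1)+L_a(B_2)$ for every $a\ge 0$. Consequently, the stochastic integral against $W$ splits as
\[
\int_0^\infty L_a(B^{x,y})\,\mathrm{d}W(a)=\int_0^\infty L_a(B_1)\,\mathrm{d}W(a)+\int_0^\infty L_a(B_2)\,\mathrm{d}W(a),
\]
with the \emph{same} $W$ in both summands, so the exponential turns into a product of Feynman--Kac weights, one per sub-bridge. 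The conditional independence of $B_1$ and $B_2$ given $z$ then factors the bridge expectation into a product, matching $K_\A(x,z;T_1)\,K_\A(z,y;T_2)$ term by term.

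The main obstacle is the rigorous treatment of the stochastic integral $\int_0^\infty L_a(\cdot)\,\mathrm{d}W(a)$, whose integrand is random but independent of $W$. Since the integral is defined via the almost sure, pathwise construction of \cite{Ka}, which fixes $W$ once and for all and builds the integral for each admissible integrand, I need to verify (i) that $a\mapsto L_a(B_1)$ and $a\mapsto L_a(B_2)$ individually fall inside the class of admissible integrands, and (ii) that the additive decomposition of local times lifts to an almost sure additivity of the corresponding stochastic integrals, jointly in the conditioning variable $z$ so that Fubini can be invoked when integrating against $z$ and against the conditional law of $z$. Once these measure-theoretic points are settled, the trace class (and hence Hilbert--Schmidt) bounds implicit in Proposition~\ref{Proposition_trace_class} provide the integrability required to exchange the $z$-integral with the bridge expectations, closing the argument.
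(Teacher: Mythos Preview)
Your proposal is correct and follows essentially the same route as the paper: both reduce to the kernel identity $\int_{\rr_{\ge 0}} K_\A(x,z;T_1)\,K_\A(z,y;T_2)\,\mathrm{d}z=K_\A(x,y;T_1+T_2)$ and establish it via the Markov decomposition of the Brownian bridge at the intermediate time $T_1$, checking that the Gaussian prefactors and the Feynman--Kac functional split multiplicatively. The only mild difference is that the paper is a bit more explicit about the reduction from the operator identity to the kernel identity (using the Hilbert--Schmidt property of Lemma~\ref{Lemma_HS} to pass to a countable dense set of test functions and thereby upgrade ``for fixed $f$'' to ``for all $f$'' almost surely), whereas you are a bit more explicit about the pathwise handling of the $\mathrm{d}W$ integral; both emphases are appropriate.
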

\begin{proposition} \label{Proposition_continuous}
The semigroup $\U_\A(T)$, $T\ge0$ is $L^2$-strongly continuous, that is, for any $T\ge0$ and $f\in L^2(\mathbb R_{\ge 0})$, it holds $\lim_{t\to T} \E\big[\|\U_\A(T)f-\U_\A(t)f\|^2\big]=0$.
\end{proposition}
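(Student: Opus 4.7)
My plan is to establish strong continuity at $T = 0^+$ first (the key technical step), and then bootstrap to continuity at any $T > 0$ via the semigroup property of Proposition \ref{Proposition_semigroup}.

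\medskip

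For continuity at $T = 0^+$, I would first note the bound $\E\|\U_\A(t)\|_{\mathrm{op}}^2 \le \E\|\U_\A(t)\|_{HS}^2 = \E\,\mathrm{Trace}(\U_\A(2t))$ (using $\U_\A(t)^2 = \U_\A(2t)$ from Proposition \ref{Proposition_semigroup} and symmetry), which by density would let me reduce to $f \in C_c((0,\infty))$. For such $f$, I would expand the squared norm,
\[
\E\|\U_\A(t)f - \U_\A(0)f\|^2 = \int \E[(\U_\A(t)f)(x)^2]\,\mathrm dx - 2\!\int \E[(\U_\A(t)f)(x)]\,\mathbf 1_\A(x)f(x)\,\mathrm dx + \int_\A f(x)^2\,\mathrm dx,
\]
and treat each integrand via Fubini by integrating $W$ first. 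Since $\int_0^\infty L_a\,\mathrm dW(a)$ is, conditionally on the bridge, centered Gaussian with variance $\int_0^\infty L_a^2\,\mathrm da$, the $W$-expectation of $\exp(\beta^{-1/2}\int L_a\,\mathrm dW)$ equals $\exp((2\beta)^{-1}\!\int L_a^2\,\mathrm da)$. Because a Brownian bridge $B^{x,y}$ of duration $t$ has range $O(\sqrt t)$ with high probability, both $\int_0^t B^{x,y}$ and $\int_0^\infty L_a(B^{x,y})^2\,\mathrm da$ are $O(t^{3/2})$, so these exponential factors tend to $1$. Combined with the approximate-delta property of the Gaussian prefactor, dominated convergence would yield $\E[(\U_\A(t)f)(x)^n] \to \mathbf 1_\A(x) f(x)^n$ in $L^1(\mathrm dx)$ for $n = 1, 2$, and the three integrals cancel in the limit.

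\medskip

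For $T > 0$ and $t \to T$, I would exploit Proposition \ref{Proposition_semigroup} together with $\U_\A(T) = \U_\A(T)\U_\A(0)$ to write, for $t \ge T$,
\[
\U_\A(t)f - \U_\A(T)f = \U_\A(T)\bigl[\U_\A(t - T)f - \U_\A(0)f\bigr],
\]
and a symmetric identity with $\U_\A(t)$ in front when $t < T$. Taking norms and applying Cauchy--Schwarz gives
\[
\E\|\U_\A(t)f - \U_\A(T)f\|^2 \le \bigl(\E\|\U_\A(T \wedge t)\|_{\mathrm{op}}^4\bigr)^{1/2}\bigl(\E\|\U_\A(|t-T|)f - \U_\A(0)f\|^4\bigr)^{1/2}.
\]
The first factor is locally bounded via $\|\cdot\|_{\mathrm{op}} \le \|\cdot\|_{HS}$ and finiteness of $\E[\mathrm{Trace}(\U_\A(2s))^2]$, while the second vanishes as $t \to T$ by the $L^4$ analogue of the first step---the Fubini argument is identical, now with four bridges and a quartic in the summed local times inside the $W$-Gaussian integral.

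\medskip

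The hardest part will be the uniform integrability needed to pass from pointwise bridge limits to $L^1(\mathrm dx)$ convergence of the second and fourth moments. This reduces to exponential bridge estimates of the form $\E[\mathbf 1_{\{B^{x,y}\subset\A\}}\exp(c\int L_a^2\,\mathrm da)]$, locally uniform in $t$ and in $x, y$---precisely the bounds underpinning Proposition \ref{Proposition_trace_class}, which I would recycle here.
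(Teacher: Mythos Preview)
Your overall architecture matches the paper's: reduce to $T=0$ via the semigroup property and Cauchy--Schwarz, controlling the operator-norm factor through the trace, and then handle the $T=0^+$ case by exploiting that the bridge functionals $\int_0^t B^{x,y}$ and $\int L_a^2\,\mathrm da$ scale like $t$ and $t^{3/2}$ respectively. The bootstrap step and the bridge-scaling heuristics are essentially what the paper does.

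There is, however, a real gap in your reduction to $f\in C_c$. You invoke $\E\|\U_\A(t)\|_{\mathrm{op}}^2\le \E\|\U_\A(t)\|_{HS}^2=\E\,\mathrm{Trace}(\U_\A(2t))$, but this quantity blows up as $t\downarrow 0$: by Proposition~\ref{Proposition_expectation} one has $\E\,\mathrm{Trace}(\U_\A(T))\sim c\,T^{-3/2}$. So the bound gives you nothing uniform in small $t$, and the standard $3\varepsilon$ density argument fails (the term $\E\|\U_\A(t)(g-f)\|^2$ is not controllable). Trying to use instead the pathwise spectral bound $\|\U_\A(t)\|_{\mathrm{op}}=e^{t\eta_\A^1/2}$ would be circular, since Proposition~\ref{Proposition_spectrum_of_semigroup} is proved \emph{after} (and using) Proposition~\ref{Proposition_continuous}.

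The paper sidesteps this entirely: rather than restricting $f$, it introduces the deterministic comparison operator $\G_\A(T)$ with kernel
\[
K^{\G}_\A(x,y;T)=\frac{1}{\sqrt{2\pi T}}\,e^{-(x-y)^2/(2T)}\,\E_{B^{x,y}}\!\Big[\mathbf 1_{\{\forall t:\,B^{x,y}(t)\in\A\}}\,e^{-\frac12\int_0^T B^{x,y}}\Big],
\]
quotes the classical fact that $\G_\A(T)\to \U_\A(0)$ strongly, and then bounds $\E\|\U_\A(T)f-\G_\A(T)f\|^p$ directly for arbitrary $f\in L^2$ via $|e^S-1|\le |S|e^{|S|}$, Fubini/H\"older, Brownian scaling, and finally Young's convolution inequality (which is what absorbs a general $f$). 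Your expansion of $\E\|\U_\A(t)f-\U_\A(0)f\|^2$ would likely go through for $f\in C_c$, but to reach all of $L^2$ you should either mimic the paper's $\G_\A$ comparison or replace the density step by a convolution estimate that works for arbitrary $f$.
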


\begin{proposition} \label{Proposition_spectrum_of_semigroup}
 There exists an orthonormal basis of random vectors $\vv^1_\A,\vv^2_\A,\ldots\in L^2(\mathbb \A)\subset L^2(\mathbb R_{\ge 0})$ and random variables $\eta^1_\A \ge \eta^2_\A \ge \dots$ defined on the same probability space as $\,\U_\A(T)$, $T>0$ such that, for each $T> 0$, the spectrum of $\U_\A(T)$ (as an operator on $L^2(\A)$) consists of eigenvalues $\exp(T\eta^i_\A/2)$, $i\in\nn$ corresponding to the eigenvectors $\vv^i_\A$, $i\in\nn$, respectively.
\end{proposition}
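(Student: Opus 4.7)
The plan is to diagonalize $\U_\A(1)$ via the spectral theorem and then use commutativity from the semigroup property, together with strong continuity, to show that the resulting orthonormal basis simultaneously diagonalizes every $\U_\A(T)$ with eigenvalues of the required exponential form. A preliminary reduction: the indicator $\mathbf{1}\{\forall t:\,B^{x,y}(t)\in\A\}$ in~\eqref{def:kernel} forces $K_\A(x,y;T)=0$ unless both $x,y\in\A$, so $\U_\A(T)$ annihilates $L^2(\A^c)$ and has range in $L^2(\A)$. It therefore suffices to regard $\U_\A(T)$ as a compact symmetric non-negative operator on $L^2(\A)$.

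I would work on the full-measure event on which Propositions~\ref{Proposition_trace_class} and~\ref{Proposition_semigroup} hold for every pair $(T_1,T_2)\in\qq_{>0}^{\,2}$ (a countable intersection of a.s.\ events). Applying the spectral theorem to $\U_\A(1)$ produces an orthonormal basis $\{\vv^i\}_{i\ge 1}$ of $L^2(\A)$ of eigenvectors with eigenvalues $\mu_1\ge\mu_2\ge\dots\to 0$. For every rational $T>0$, the semigroup identity gives $[\U_\A(T),\U_\A(1)]=0$, so each finite-dimensional eigenspace of $\U_\A(1)$ corresponding to $\mu_i>0$ is invariant under every $\U_\A(T)$, $T\in\qq_{>0}$, and on it the commuting family of self-adjoint operators admits a simultaneous diagonalization. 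After refining $\{\vv^i\}$ accordingly, each $\vv^i$ becomes an eigenvector of every $\U_\A(T)$, $T\in\qq_{>0}$, with eigenvalue $c_i(T)\ge 0$; multiplicativity $c_i(T+s)=c_i(T)c_i(s)$ together with strict positivity of $c_i(1)=\mu_i$ forces $c_i(T)=\exp(T\eta^i/2)$, with $\eta^i:=2\log\mu_i$, and reordering by decreasing $\mu_i$ yields $\eta^1\ge\eta^2\ge\cdots$.

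Two points require care. First, I would rule out $\mu_i=0$ by combining the identity $\U_\A(1)=\U_\A(1/q)^q$ (which, with non-negativity of $\U_\A(1/q)$, implies $\ker\U_\A(1)=\ker\U_\A(1/q)$ for every $q\in\nn$) with Proposition~\ref{Proposition_continuous}, which gives $\U_\A(1/q)f\to f$ in mean square for every $f\in L^2(\A)$; hence any element of the common kernel must vanish. Second, to pass from rational $T$ to arbitrary $T>0$, I would choose $T_n\in\qq_{>0}$ with $T_n\to T$: Proposition~\ref{Proposition_continuous} gives $\U_\A(T_n)\vv^i\to\U_\A(T)\vv^i$ in mean square, while $\U_\A(T_n)\vv^i=\exp(T_n\eta^i/2)\vv^i\to\exp(T\eta^i/2)\vv^i$ almost surely along a subsequence, so uniqueness of limits yields the desired identity. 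The main obstacle is promoting both steps to a single full-measure event valid for all $T>0$ and all $i$ simultaneously: the kernel-triviality is the more delicate issue, since $\ker\U_\A(1)$ is itself a random subspace, and I would address it either by applying the subsequence argument to a countable dense subset of $L^2(\A)$ combined with closedness of the kernel, or by exploiting strict positivity of the random kernel function $K_\A(x,y;T)$ on $\A\times\A$ to argue that $\U_\A(T)$ is pathwise injective.
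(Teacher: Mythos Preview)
Your proof is correct and takes essentially the same route as the paper: diagonalize over rational $T$ via the spectral theorem and commutativity from the semigroup property, rule out zero eigenvalues using strong continuity at $T=0$, then extend to all $T>0$ by continuity. You are in fact more explicit than the paper about one technical point---Proposition~\ref{Proposition_continuous} is stated only for deterministic $f$, while the eigenvectors $\vv^i$ are random---and your first suggested fix (pass to a countable dense family $\{f_n\}$ and use self-adjointness, so that $\langle v,f_n\rangle=\lim_k\langle \U_\A(1/q_k)v,f_n\rangle=0$ for any $v$ in the common kernel) handles this cleanly; by contrast, strict positivity of $K_\A$ on $\A\times\A$ would not by itself yield injectivity of the integral operator.
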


\noindent The proofs of Propositions \ref{Proposition_trace_class},
\ref{Proposition_semigroup}, and \ref{Proposition_continuous} are given in Section \ref{Section_properties}, and Proposition
\ref{Proposition_spectrum_of_semigroup} is established in Section \ref{Section_properties_2}.

\medskip

Our interest in the operators $\U_\A(T)$, $T>0$ is based on their appearance in the $N\to\infty$ edge limit of the matrix $M_N$ and its submatrices. More specifically, let $\Co$ denote the set of all locally integrable functions $f$ on $\rr_{\ge0}$ which grow subexponentially fast at infinity (that is, for which there exists a
$\delta>0$ such that $f(x)=O(\exp(x^{1-\delta}))$ as $x\to\infty$). Further, for any $N\in\nn$ and $f\in\Co$, write $\pi_N f$ for the vector in $\rr^N$ with components $N^{1/6}\,\int_{N^{-1/3}(N-i)}^{N^{-1/3}(N-i+1)} f(x)\,\mathrm{d}x$, $i=1,2,\ldots,N$ and $(\pi_N f)'$ for its transpose. In addition, define the $N\times N$ matrix
\begin{equation*}
\mathcal M(T,\A,N)=\frac{1}{2}\left( \left(\frac{M_{N;\A}}{2\sqrt N}\right)^{\lfloor T N^{2/3} \rfloor}+
\left(\frac{M_{N;\A}}{2\sqrt{N}}\right)^{\lfloor T N^{2/3}\rfloor-1}\right),
\end{equation*}
where $M_{N;\A}$ is the restriction of $M_N$ onto $\A$, so that the $(i,j)$-th entry of $M_{N,\A}$ is equal to that of $M_N$ if $\frac{N-i+1/2}{N^{1/3}},\frac{N-j+1/2}{N^{1/3}}\in\A$ and zero otherwise. In particular, $M_{N;[0,\infty)}=M_N$.

\begin{theorem} \label{Theorem_Main}
Under the Assumption \ref{Assumptions} we have
\begin{equation*}
\lim_{N\to\infty} \, \mathcal M(T,\A,N) =\U_\A(T),\quad T\ge 0
\end{equation*}
in the following senses.
\begin{enumerate}[(a)]
\item Weak convergence: For any $f,g\in \Co$ and $T\ge 0$, we have
\begin{equation*}
\lim_{N\to\infty} \, (\pi_N f)'\,\mathcal M(T,\A,N)\,(\pi_N g)=\int_{\rr_{\ge0}} \big(\U_A(T)f\big)(x)\,g(x)\,\mathrm{d}x
\end{equation*}
in distribution and in the sense of moments.
\item Convergence of traces: For any $T\ge 0$ we have
\begin{equation*}
\lim_{N\to\infty} {\rm Trace}\big(\mathcal M(T,\A,N)\big)={\rm Trace}\big(\U_\A(T)\big)
\end{equation*}
in distribution and in the sense of moments.
\item The convergences in parts (a) and (b) also hold jointly for any finite collection of $T$'s, $\A$'s, $f$'s, and $g$'s.
\end{enumerate}
The Brownian motion $W$ in the definition of $\,\U_\A(T)$ arises hereby as the following limit in distribution with respect to the Skorokhod topology:
\begin{equation}\label{eq_limit_BM_appearance}
W(a)=\sqrt{\beta}  \lim_{N\to\infty}  N^{-1/6} \sum_{n=N-\lfloor N^{1/3} a \rfloor}^N \left(\xi(n)+\frac{\aa(n)}{2}\right), \quad a\ge 0.
\end{equation}
\end{theorem}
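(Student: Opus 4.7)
The plan is to carry out a moments-method calculation: expand the powers in $\mathcal M(T,\A,N)$ as sums over nearest-neighbor lazy walks on $\{1,\ldots,N\}$, rescale these walks near the top edge to Brownian bridges, and identify the limiting path weights as the random functional in \eqref{def:kernel}. Since $M_N$ is tridiagonal, $(M_N^k)[i,j]=\sum_{\pi}\prod_{l=1}^{k}M_N[\pi_{l-1},\pi_l]$ with the sum over walks $\pi_0=i,\pi_1,\ldots,\pi_k=j$ satisfying $|\pi_l-\pi_{l-1}|\le 1$; the restriction to $\A$ just restricts admissible vertices. Setting $k=\lfloor TN^{2/3}\rfloor$ and identifying $x=(N-i)/N^{1/3}$, $y=(N-j)/N^{1/3}$, the bilinear form $(\pi_N f)'\mathcal M(T,\A,N)(\pi_N g)$ becomes a Riemann-sum approximation of $\int f(x)\,K_\A(x,y;T)\,g(y)\,dx\,dy$.

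I would first establish a priori tail bounds: using Assumption \ref{Assumptions}(c) together with the deterministic bound $\bb(m)/(2\sqrt N)\le\frac12+o(1)$, walks that descend more than $R\,N^{1/3}$ below the top edge, or contain more than $R\,N^{1/3}$ diagonal (``lazy'') steps for a sufficiently large polylogarithmic cutoff $R$, contribute negligibly to moments of the bilinear form. On the remaining set of paths, the rescaled trajectory $B^{(N)}(t)=(N-\pi_{\lfloor tN^{2/3}\rfloor})/N^{1/3}$ is coupled to a Brownian bridge $B^{x,y}$ on $[0,T]$ via a strong (KMT-type) invariance principle in the spirit of \cite{Kh}, \cite{LTF}, \cite{BK2}; simultaneously, the discrete edge-crossing counts $o_m$ are approximated, via path transformations as in \cite{AFP}, by $2N^{1/3}L_a(B^{x,y})$ with $a=(N-m)/N^{1/3}$, uniformly in $a$.

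The core step is to expand $\log\prod_{l=1}^k\big(M_N[\pi_{l-1},\pi_l]/(2\sqrt N)\big)$ into three asymptotic pieces. First, the deterministic off-diagonal mean $\sum_m o_m\log(\sqrt m/(2\sqrt N))$ gives, after Taylor expanding $\sqrt{m/N}=1-(N-m)/(2N)+O(((N-m)/N)^2)$ and passing to the Riemann-sum limit, the term $-\tfrac12\int_0^T B^{x,y}(t)\,dt$ modulo a constant absorbed into $(2\sqrt N)^{-k}$ and a vanishing remainder. Second, the random fluctuations combine: the off-diagonal part yields $\sum_m o_m\log(1+\xi(m)/\sqrt m)\approx\frac{2}{\sqrt N}\sum_m\ell_m\xi(m)$ with $\ell_m=o_m/2$, while a combinatorial resummation of lazy insertions (at each visited vertex $v$ the geometric series $\sum_{d\ge 0}\binom{V_v+d-1}{d}(\aa(v)/(2\sqrt N))^d=(1-\aa(v)/(2\sqrt N))^{-V_v}$, where $V_v$ is the number of non-lazy visits to $v$) gives a diagonal contribution $\frac{1}{2\sqrt N}\sum_v V_v\aa(v)\approx\frac{1}{\sqrt N}\sum_m\ell_m\aa(m)$. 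Adding, and using $\ell_m\approx\tfrac12 N^{1/3}L_{a_m}(B^{x,y})$, one obtains
\begin{equation*}
\frac{2}{\sqrt N}\sum_m\ell_m\Big(\xi(m)+\tfrac{\aa(m)}{2}\Big)\;\approx\;N^{-1/6}\sum_m 2L_{a_m}(B^{x,y})\Big(\xi(m)+\tfrac{\aa(m)}{2}\Big),
\end{equation*}
which by \eqref{eq_limit_BM_appearance} converges in distribution to $\frac{1}{\sqrt\beta}\int_0^\infty L_a(B^{x,y})\,dW(a)$. Third, higher-order Taylor remainders quadratic in $\xi,\aa$ contribute deterministic corrections of size $O(N^{-1/3})$ that vanish by Assumption \ref{Assumptions}(b), (c). The averaging $\tfrac12(M^k+M^{k-1})$ removes the parity obstruction arising from the bipartite structure of the reduced (non-lazy) walk. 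Joint convergence in (c) uses a single scaling limit $W$ of the common noise across all $T,\A,f,g$, and independent families of Brownian bridges describe independent paths; convergence of traces (b) follows by summing over closed lazy loops from $i$ to $i$, matching $\int K_\A(x,x;T)\,dx$; convergence in the sense of moments is obtained by applying the same scheme to tensor-power bilinear forms, producing finite collections of Brownian bridges driven by the same $W$, with Assumption \ref{Assumptions}(c) providing the uniform moment control needed for dominated convergence.

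The principal technical obstacle is the simultaneous control in the second step above: the walk (hence its local times $\ell_m$) and the noise $(\xi(m),\aa(m))$ are coupled through the path weight itself, so identifying the limit of $\sum_m\ell_m(\xi(m)+\aa(m)/2)$ as $\int L_a(B^{x,y})\,dW(a)$ requires either conditioning on the walk and invoking a quantitative CLT for the conditionally independent noise sum (exploiting the sub-Gaussian-type moments from Assumption \ref{Assumptions}(c)), or a joint KMT-type coupling of the path, its discrete local times, and the increments of the limiting $W$ with polynomial-in-$N$ error bounds. The combinatorial identification of the $(\aa/2+\xi)$ combination from the lazy-insertion expansion is the key conceptual ingredient, while ensuring that the many small errors arising from the various Taylor expansions and invariance principles do not compound uncontrollably is the central bookkeeping challenge.
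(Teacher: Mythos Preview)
Your proposal is correct and follows essentially the same route as the paper: path expansion of $(M_N)^k$ over lazy nearest-neighbor walks, strong invariance coupling of the rescaled bridge and its occupation times to a Brownian bridge and its local times (the paper's Proposition~\ref{lemma_coupling}, based on \cite{LTF}, \cite{BK2}), a conditional CLT identifying the weighted noise sum with $\frac{1}{\sqrt\beta}\int L_a(B^{x,y})\,\mathrm{d}W(a)$ (Propositions~\ref{Proposition_basic_conv}--\ref{Proposition_basic_convergence_upgrade}), and uniform moment bounds from Assumption~\ref{Assumptions}(c) to control tails and pass from distributional to moment convergence.

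The one presentational difference is in the lazy-step bookkeeping. You resum diagonal insertions via the negative-binomial generating function $\prod_v(1-\aa(v)/(2\sqrt N))^{-V_v}$, whereas the paper decomposes $\mathrm{Sc}(N)$ by the number $j$ of lazy steps and uses Newton's identity \eqref{eq_Newton} to show that the complete homogeneous symmetric polynomial $h(j;N)$ is dominated by $p(1;N)^j/j!$, with the remainder controlled in Lemma~\ref{Lemma_tail_sum}. These are the same generating function, so the heuristics agree; but be aware that your product formula implicitly fixes the non-lazy backbone while summing freely over $j$, whereas in $\mathcal M(T,\A,N)$ the backbone length $k-j$ (hence $V_v$ itself) depends on $j$. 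The paper's per-$j$ decomposition, together with the geometric-in-$j$ moment bounds of Lemmas~\ref{Lemma_moment_bound} and~\ref{Lemma_moment_bound_upgrade}, is exactly what licenses treating the backbone as fixed across $j$ in the limit, and you will need an analogous argument to make your resummation rigorous.
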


\noindent The proof of Theorem \ref{Theorem_Main} is given in Section \ref{Section_complete_proof}.

\begin{rmk}
We recall that, for deterministic operators, weak convergence together with the convergence of their traces imply other stronger forms of convergence, in particular, the convergence in the
trace-class norm, see e.g. \cite[Section 2]{Simon}. Yet, when we speak about the convergence of finite-dimensional distributions of random operators, sticking to the statements $(a)$ and $(b)$ seems
quite natural.
\end{rmk}

The convergence of the traces ${\rm Trace}\bigl(\mathcal M(T,\A,N)\bigr)$ as
$N\to\infty$ implies the convergence of the eigenvalues of $M_{N;\A}$ in the same
limit. Let $\lambda^1_{N;\A}\ge\lambda^2_{N;\A}\ge\cdots\ge\lambda^N_{N;\A}$ denote
the eigenvalues of the matrix $N^{1/6}(M_{N;\A}-2\sqrt{N})$.

\begin{corollary} \label{Corollary_spectrum}
In the notations of Proposition \ref{Proposition_spectrum_of_semigroup}, one has the
convergence in distribution
\begin{equation}\label{eq_limit_trace}
\sum_{i=1}^N e^{T\lambda^i_{N;\A}/2}
\longrightarrow_{N\to\infty} \sum_{i=1}^\infty e^{T\eta^i_\A/2}
={\rm Trace}(\U_\A(T))
\end{equation}
jointly for any finitely many $T$'s and $\A$'s. Therefore, one also has
\begin{equation}\label{eq_edge_limit}
\lambda^i_{N;\A}\longrightarrow_{N\to\infty} \eta^i_\A
\end{equation}
jointly for any finitely many $i$'s and $\A$'s.
\end{corollary}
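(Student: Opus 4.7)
The corollary has two parts: the Laplace-transform convergence~\eqref{eq_limit_trace}, which is essentially a repackaging of Theorem~\ref{Theorem_Main}(b), and the individual eigenvalue convergence~\eqref{eq_edge_limit}, which follows from~\eqref{eq_limit_trace} by a Laplace-uniqueness argument. I would treat them in turn.

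For~\eqref{eq_limit_trace}, set $p=\lfloor TN^{2/3}\rfloor$. The eigenvalues of $\mathcal{M}(T,\A,N)$ are exactly $g_N(\lambda^i_{N;\A})$ with
\[
g_N(x)=\tfrac12\bigl(1+\tfrac{x}{2N^{2/3}}\bigr)^{p-1}\bigl(2+\tfrac{x}{2N^{2/3}}\bigr),
\]
so ${\rm Trace}(\mathcal{M}(T,\A,N))=\sum_i g_N(\lambda^i_{N;\A})$. The key estimate is
\[
\sum_{i=1}^N g_N(\lambda^i_{N;\A})-\sum_{i=1}^N e^{T\lambda^i_{N;\A}/2}\longrightarrow 0\quad\text{in probability},
\]
which I would establish by splitting into three ranges of $\lambda^i_{N;\A}$. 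On the edge $\{\lambda^i_{N;\A}\ge -R\}$, Taylor expansion yields $g_N(x)=e^{Tx/2}(1+O(R^2/N^{2/3}))$ uniformly, and the number of edge eigenvalues is tight because the matching lower bound $g_N(x)\ge \tfrac12 e^{Tx/2-O(R^2/N^{2/3})}$ lets the trace dominate $e^{-TR/2}\cdot\#\{\lambda^i_{N;\A}\in[-R,0]\}$. On the intermediate range $\{\lambda^i_{N;\A}\in[-2N^{2/3},-R]\}$, the factorisation of $g_N$ combined with $\log(1+u)\le u$ and $p-1\ge TN^{2/3}/2$ delivers the pointwise bound $|g_N(x)|\le e^{Tx/4}$, after which the tail-trade
\[
\sum_{\lambda^i_{N;\A}<-R}e^{T\lambda^i_{N;\A}/4}\le e^{-TR/8}\sum_i e^{T\lambda^i_{N;\A}/8}
\]
reduces the tail to Theorem~\ref{Theorem_Main}(b) at the smaller Laplace parameter $T/4$; the same tail-trade controls $\sum_{\lambda^i_{N;\A}<-R}e^{T\lambda^i_{N;\A}/2}$. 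On the deep bulk $\{\lambda^i_{N;\A}<-2N^{2/3}\}$, a direct computation shows $|g_N(x)|$ is super-exponentially small. Combining these estimates with Theorem~\ref{Theorem_Main}(b) gives~\eqref{eq_limit_trace}, and the joint version over finite families of $T$'s and $\A$'s is inherited from Theorem~\ref{Theorem_Main}(c).

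For~\eqref{eq_edge_limit}, I would apply Skorokhod's representation theorem to~\eqref{eq_limit_trace} along a countable dense $\{T_k\}\subset(0,\infty)$ (and finitely many $\A$'s) to pass to a probability space on which $\sum_i e^{T_k\lambda^i_{N;\A}/2}\to\sum_i e^{T_k\eta^i_\A/2}$ almost surely for every $k$. The conclusion then reduces to a pathwise deterministic lemma: if decreasing real sequences $(\lambda^i_N)_i$ and $(\eta^i)_i$ with $\eta^i\to-\infty$ and $\sum_i e^{T\eta^i/2}<\infty$ for all $T>0$ satisfy $\sum_i e^{T_k\lambda^i_N/2}\to\sum_i e^{T_k\eta^i/2}$ on a dense set, then $\lambda^i_N\to\eta^i$ for every $i$. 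Its proof proceeds in four steps: (a) tightness of $\lambda^1_N$ via $e^{T\lambda^1_N/2}\le\sum_i e^{T\lambda^i_N/2}$; (b) extraction of a subsequential decreasing limit $\xi^i=\lim\lambda^i_N$, with Fatou giving $\sum_i e^{T\xi^i/2}\le\sum_i e^{T\eta^i/2}<\infty$ and therefore $\xi^i\to-\infty$; (c) upgrading Fatou to equality $\sum_i e^{T\xi^i/2}=\sum_i e^{T\eta^i/2}$ via a standard tail-truncation argument at finitely many top indices; and (d) concluding $\xi^i=\eta^i$ by Laplace uniqueness, i.e.\ by iterating $\eta^1=\lim_{T\to\infty}(2/T)\log\sum_i e^{T\eta^i/2}$ and subtracting off the leading term.

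The main technical obstacle is the tail estimate in Step~1: a priori up to $O(N)$ eigenvalues lie in the intermediate range $\lambda^i_{N;\A}\in[-2N^{2/3},-R]$, so the naive estimate ``number of terms times maximal term'' diverges in $N$; only the sharp pointwise bound $|g_N(x)|\le e^{Tx/4}$ together with the tail-trade against Theorem~\ref{Theorem_Main}(b) at a smaller Laplace parameter produces an error bound that is uniform in $N$. In Step~2, the delicate point is the Fatou-to-equality upgrade in the deterministic lemma, which hinges crucially on the summability $\sum_i e^{T\eta^i/2}<\infty$ for all $T>0$ to rule out a strictly positive $\lim_{i\to\infty}\xi^i$.
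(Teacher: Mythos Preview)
Your overall architecture matches the paper's: first show ${\rm Trace}(\mathcal{M}(T,\A,N))-\sum_i e^{T\lambda^i_{N;\A}/2}\to0$ in probability, then pass to a.s.\ convergence via Skorokhod and extract the individual eigenvalues by Laplace uniqueness. Your Step~2 is essentially the paper's argument verbatim. The problem is in Step~1, specifically in the ``deep bulk'' and the handling of outliers.

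The claim that $|g_N(x)|$ is ``super-exponentially small'' for $\lambda<-2N^{2/3}$ is false. Writing $u=1+\lambda/(2N^{2/3})=\mu/(2\sqrt N)$, this range is $\{u<0\}$ and $|g_N|=\tfrac12|u|^{p-1}|u+1|$. Near the \emph{left} spectral edge $u\approx-1$ one has $|u|^{p-1}\approx1$ while $|u+1|$ is only $O(N^{-2/3})$; the crude bound $N\cdot O(N^{-2/3})=O(N^{1/3})$ does not vanish, and if there is a left outlier $u<-1$ then $|g_N|$ can blow up. You never rule out such outliers. A parallel issue affects your ``edge'' range: the Taylor bound $g_N(x)=e^{Tx/2}(1+O(R^2/N^{2/3}))$ is only uniform on $|\lambda|\le R$, not on $\lambda\ge-R$, so right outliers are not covered either. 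Finally, the tail-trade for the intermediate range is circular as written: Theorem~\ref{Theorem_Main}(b) at parameter $T/4$ controls $\sum_i g_N^{T/4}(\lambda^i)$, not $\sum_i e^{(T/4)\lambda^i/2}$, and passing between the two is exactly the statement you are proving. The paper sidesteps all three issues at once by (i) separating positive and negative eigenvalues of $M_{N;\A}$ so that both edges are treated symmetrically via~\eqref{eq_Trace_sum}, (ii) using an $N^{\varepsilon}$ cutoff rather than a fixed $R$, which makes the bulk contribution at most $N e^{-TN^{\varepsilon}/2}\to0$ trivially, and (iii) ruling out outliers on \emph{both} sides by observing that an outlier would force the even-power trace ${\rm Trace}\bigl((M_{N;\A}/(2\sqrt N))^{2\lfloor p/2\rfloor}\bigr)$ to exceed $(1+N^{\varepsilon-2/3})^{p}$, contradicting its convergence (Remark~\ref{Remark_even_and_odd}). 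Once outliers are excluded, the edge window $(-N^{\varepsilon},N^{\varepsilon})$ admits the Taylor approximation with a multiplicative error $e^{O(N^{2\varepsilon-2/3})}-1$, and the total error is controlled directly by the trace without any tail-trade.
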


\begin{rmk}
If we replace $\mathcal M(T,\A,N)$ by $-\mathcal M(T,\A,N)$, then limit theorems similar to Theorem \ref{Theorem_Main} and Corollary \ref{Corollary_spectrum} will hold for this new object (see Remark \ref{Remark_even_and_odd} for more details). The latter give the asymptotics of the smallest eigenvalues of $M_N$. Interestingly, while for the variance constants $s_\aa$, $s_\xi$ corresponding to the Gaussian $\beta$ ensemble the limits of the largest and the smallest eigenvalues are independent, this is not true in general.
\end{rmk}


\noindent The proof of Corollary \ref{Corollary_spectrum} is given in Section
\ref{Section_extreme_convergence}.

\medskip

In order to compare with the previous work on the subject, we take $\A=\rr_{\ge0}$
and omit it from the notations. In other words, we consider only the spectrum of the
original matrix $M_N$. In this case, an alternative derivation of the edge limit
theorem and another interpretation of the limits $\eta^i$ were given in \cite{RRV}.
There, the authors make sense of the stochastic Airy operator $SAO_\beta$
\begin{equation*}
SAO_\beta=-\frac{d^2}{da^2}+a+\frac{2}{\sqrt{\beta}}\,W'(a)
\end{equation*}
on $L^2(\rr_{\ge0})$ with a Dirichlet boundary condition at zero by appropriately
defining an orthnormal basis of its eigenfunctions and the corresponding eigenvalues
$-\eta^i$, $i\in\nn$ (see \cite[Section 2]{RRV}, and also \cite{Bl_thesis},
\cite{Minami}). In addition, they show that the leading eigenvalues of $M_N$ (and
the corresponding eigenvectors) converge to the leading eigenvalues (eigenvectors)
of $SAO_\beta$. Note that, since the white noise $W'(a)$, $a\ge0$ is a generalized
function, special care is required in defining the operator $SAO_\beta$.

\begin{corollary} \label{Corollary_generator}
Let $\A=\rr_{\ge0}$ and, for any $T\ge 0$, define $e^{-\frac{T}{2}SAO_\beta}$ as the
unique operator on $L^2(\rr_{\ge0})$ with the same orthonormal basis of
eigenfunctions as $SAO_\beta$ and the corresponding eigenvalues $e^{T\eta^1/2}\ge
e^{T\eta^2/2}\ge\cdots$. If one couples $e^{-\frac{T}{2}SAO_\beta}$ with $\U(T)$ by identifying the Brownian motions $W$ in their respective definitions, then for
each $T\ge 0$, the operators $e^{-\frac{T}{2}SAO_\beta}$ and $\U(T)$ coincide with probability one.
\end{corollary}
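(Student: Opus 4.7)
The plan is to realize both operators $\U(T)$ and $e^{-\frac{T}{2}SAO_\beta}$ as limits of the same sequence $\mathcal{M}(T,\rr_{\ge 0},N)$ under the natural coupling in which the Brownian motion $W$ from the definition \eqref{def:kernel} of $\U(T)$ coincides with the one used to construct $SAO_\beta$ in \cite{RRV} (both arising as the scaling limit \eqref{eq_limit_BM_appearance}). On the common probability space so obtained, $\U(T)$ and $e^{-\frac{T}{2}SAO_\beta}$ are both measurable functions of $W$, and the task is to show that they agree almost surely.

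My first step is to identify the eigenvalues. Corollary \ref{Corollary_spectrum} yields $\lambda^i_{N;\rr_{\ge 0}}\to\eta^i$ jointly in $i$, while \cite[Theorem~1.1]{RRV} yields (under the canonical coupling, in probability) the convergence of the same rescaled eigenvalues of $M_N$ to $-\Lambda^i$, where $0<\Lambda^1<\Lambda^2<\cdots$ are the eigenvalues of $SAO_\beta$. Once the distributional convergence of Corollary \ref{Corollary_spectrum} is upgraded to convergence in probability---which is available via the strong invariance principles already deployed in the proof of Theorem \ref{Theorem_Main}---the uniqueness of in-probability limits yields $\eta^i=-\Lambda^i$ with probability one, matching the eigenvalues of $\U(T)$ with those of $e^{-\frac{T}{2}SAO_\beta}$ as defined via functional calculus.

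Next, I identify the operators themselves via their matrix elements. Since $\mathcal{M}(T,\rr_{\ge 0},N)$ is a polynomial in $M_N$, it shares the eigenvectors $v^i_N$ of $M_N$, so its spectral decomposition reads
\[
(\pi_N f)'\,\mathcal{M}(T,\rr_{\ge 0},N)\,(\pi_N g)=\sum_{i=1}^N \mu^i_N\,\langle \pi_N f,v^i_N\rangle\,\langle v^i_N,\pi_N g\rangle,
\]
where $\mu^i_N$ are the eigenvalues of $\mathcal{M}(T,\rr_{\ge 0},N)$. Combining the eigenvalue convergence $\mu^i_N\to e^{T\eta^i/2}$ with the eigenvector convergence $v^i_N\to\phi^i$ supplied by \cite{RRV} (where $\phi^i$ are the eigenfunctions of $SAO_\beta$), and controlling the contribution of bulk eigenvalues via the trace-class convergence in Theorem \ref{Theorem_Main}(b), the right-hand side tends to $\sum_i e^{T\eta^i/2}\langle f,\phi^i\rangle\langle\phi^i,g\rangle=\langle e^{-\frac{T}{2}SAO_\beta}f,g\rangle$. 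On the other hand, by Theorem \ref{Theorem_Main}(a) the same bilinear form tends to $\langle\U(T)f,g\rangle$. Equating the two limits, for $f$ and $g$ ranging over a dense subset of $L^2(\rr_{\ge 0})$, yields $\U(T)=e^{-\frac{T}{2}SAO_\beta}$ almost surely.

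The main obstacle is the tail control in the spectral decomposition: most of the $N$ eigenvalues of $M_N$ lie in the bulk, yet the polynomial $\tfrac{1}{2}(x^k+x^{k-1})$ with $k=\lfloor TN^{2/3}\rfloor$ does not annihilate them exactly, so one has to rule out that their aggregate contribution to the bilinear form survives in the limit. The requisite uniform-in-$N$ control comes from the trace convergence of Theorem \ref{Theorem_Main}(b) combined with the asymptotic non-negativity of $\mathcal{M}(T,\rr_{\ge 0},N)$ at the edge of the spectrum, which together with the Cauchy--Schwarz inequality bounds the tail sum $\sum_{i>K}\mu^i_N\,|\langle \pi_N f,v^i_N\rangle\langle v^i_N,\pi_N g\rangle|$ by a quantity that can be made arbitrarily small by choosing $K$ large, uniformly in $N$. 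A secondary technical point, namely the in-probability upgrade of Corollary \ref{Corollary_spectrum} invoked above, follows from the path-wise nature of the constructions and does not require substantially new arguments beyond those already used to prove Theorem \ref{Theorem_Main}.
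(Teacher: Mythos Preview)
Your overall strategy---realizing both operators as limits of the same sequence $\mathcal{M}(T,\rr_{\ge 0},N)$---is the same as the paper's. The spectral-decomposition computation and the tail control via the trace convergence also match what the paper does when it argues that $\mathcal{M}(T,\rr_{\ge 0},N)\to e^{-\frac{T}{2}SAO_\beta}$ strongly under the \cite{RRV} coupling.

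The gap is in the step you flag as ``secondary'': the upgrade of Theorem~\ref{Theorem_Main} (and hence Corollary~\ref{Corollary_spectrum}) from convergence in distribution to convergence in probability on the \cite{RRV} coupling space. You assert this ``follows from the path-wise nature of the constructions and does not require substantially new arguments'', but the proof of Theorem~\ref{Theorem_Main} is a moments argument; the strong invariance principles are applied to the random-walk bridges, not to the noise variables $\aa(m),\xi(m)$, whose convergence to $W$ in \eqref{eq_limit_BM_appearance} is a CLT statement. Nothing in the paper constructs a coupling under which $\langle \mathcal{M}(T,\rr_{\ge 0},N)\pi_N f,\pi_N g\rangle\to\langle \U(T)f,g\rangle$ in probability, and doing so would require genuinely new work (e.g.\ a strong approximation for the partial sums of the $\xi$'s and $\aa$'s together with a quantitative version of the entire limiting analysis).

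The paper sidesteps this cleanly. It never upgrades Theorem~\ref{Theorem_Main} to in-probability convergence. Instead it observes that under the \cite{RRV} coupling one has $\mathcal{M}(T,\rr_{\ge 0},N)\to e^{-\frac{T}{2}SAO_\beta}$ strongly almost surely, while Theorem~\ref{Theorem_Main} gives $\mathcal{M}(T,\rr_{\ge 0},N)\to \U(T)$ weakly in distribution \emph{jointly with} \eqref{eq_limit_BM_appearance}. This forces the laws of the pairs $(W,e^{-\frac{T}{2}SAO_\beta})$ and $(W,\U(T))$ to coincide. Since each operator is measurable with respect to $W$, a functional-representation lemma (\cite[Theorem~5.3]{Ka}) yields a unique (up to null sets) deterministic $F$ with $e^{-\frac{T}{2}SAO_\beta}=F(W)=\U(T)$ almost surely. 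This measurability argument is the missing idea in your proposal; with it, no in-probability upgrade is needed.
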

\noindent The proof of Corollary \ref{Corollary_generator} is given in Section \ref{Section_properties_2}. Proposition \ref{Proposition_semigroup} and Corollary \ref{Corollary_generator} lead to the name \emph{stochastic Airy semigroup} for the
operators $\U(T)$, $T\ge0$.

\medskip

The relationship between $SAO_\beta$ and the operators $\U(T)$, $T>0$ can be viewed
as a variant of the Feynman-Kac formula for Schroedinger operators (see e.g.
\cite[Section 6, equation (6.6)]{Si1} for the case when the potential is a
deterministic function). However, since in the case of $SAO_\beta$ the potential is
given by the generalized function $a+\frac{2}{\sqrt{\beta}}W'(a)$, such a result
seems to be beyond the scope of the previous literature. A notable exception is the
``zero temperature case'' $\beta=\infty$ which falls into the framework of the usual
Feynman-Kac formula. In that case, a path transformation argument allows to recast
the Feynman-Kac identity
\begin{equation*}
{\rm Trace}(\U(T))=\frac{1}{\sqrt{2\pi T}}\,\int_0^\infty \E_{B^{x,x}} \bigg[\mathbf{1}_{\{\forall t:\,B^{x,x}(t)\ge 0\}}\,\exp\left(-\frac{1}{2}\int_0^T B^{x,x}(t)\,\mathrm{d}t\right)\bigg]\,\mathrm{d}x
\end{equation*}
for the trace of $\U(T)$ as
\begin{equation*}
{\rm Trace}(\U(T))=\sqrt{\frac{2}{\pi}}\,T^{-3/2}\,\E\left[\exp\Big(-{\frac{T^{3/2}}{2}\,\int_0^1 \e(t)\,\mathrm{d}t}\Big)\right]
\end{equation*}
where $\e$ is a standard Brownian excursion on the time interval $[0,1]$ (see the proof of Proposition \ref{Proposition_expectation} for the details). Since $SAO_\infty$ is the deterministic Airy operator, the latter formula is the well-known series representation for the Laplace transform of the Brownian excursion area $\int_0^1 \e(t)\,\mathrm{d}t$ (see e.g. \cite[Section 13]{Ja}).

\medskip

We now turn to the special value $\beta=2$, in which case the edge asymptotics is
much better understood. In particular, there exist formulas for the moments of the
limiting traces of the form \eqref{eq_limit_trace}. The first moment admits a
particularly simple formula and is given (in our notation) by the following
proposition from \cite{Okounkov}.

\begin{proposition}[{\cite[Section 2.6.1]{Okounkov}}] \label{Proposition_expectation_Ok}
Take $\beta=2$ and $\A=\rr_{\ge0}$. Then, for all $T>0$,
\begin{equation*}
\E \big[{\rm Trace}(\U(T))\big]
=\E\bigg[\sum_{i\ge 1} e^{T\,\eta^i/2}\bigg]
=\sqrt{\frac{2}{\pi}} \, T^{-3/2}\,e^{T^3/96}.
\end{equation*}
\end{proposition}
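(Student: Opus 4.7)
The plan is to exploit the determinantal structure at $\beta=2$. At this value of $\beta$, the classical edge scaling limit for the Gaussian Unitary Ensemble, combined with the identification of the $\eta^i$ in Corollary \ref{Corollary_spectrum}, identifies $\{\eta^i\}_{i\ge 1}$ with the Airy$_2$ point process, whose one-point correlation function is the diagonal of the Airy kernel
$$
K_{\mathrm{Airy}}(x,y)=\int_0^\infty \mathrm{Ai}(x+t)\,\mathrm{Ai}(y+t)\,\mathrm{d}t.
$$

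The determinantal first-moment formula
$$
\E\bigg[\sum_{i\ge 1} f(\eta^i)\bigg]=\int_{\rr} f(x)\,K_{\mathrm{Airy}}(x,x)\,\mathrm{d}x
$$
would be applied with $f(x)=e^{Tx/2}$. The upper bound $\mathrm{Ai}(u)=O(\exp(-\tfrac{2}{3} u^{3/2}))$ as $u\to+\infty$ makes $K_{\mathrm{Airy}}(x,x)$ decay super-exponentially at $+\infty$, so absolute convergence of the integral is automatic; the Weyl-type asymptotic $-\eta^i\sim (3\pi i/2)^{2/3}$ (inherited from the zeros of $\mathrm{Ai}$) would justify absolute convergence of the sum.

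Next I would substitute $K_{\mathrm{Airy}}(x,x)=\int_0^\infty \mathrm{Ai}(x+t)^2\,\mathrm{d}t$, interchange the order of integration by Fubini, and change variables $s=x+t$ to factor the answer:
$$
\int_{-\infty}^\infty e^{Tx/2}\int_0^\infty \mathrm{Ai}(x+t)^2\,\mathrm{d}t\,\mathrm{d}x
=\int_0^\infty e^{-Tt/2}\,\mathrm{d}t\cdot\int_{-\infty}^\infty e^{Ts/2}\,\mathrm{Ai}(s)^2\,\mathrm{d}s
=\frac{2}{T}\int_{-\infty}^\infty e^{Ts/2}\,\mathrm{Ai}(s)^2\,\mathrm{d}s.
$$
The remaining integral is evaluated via the classical identity
$$
\int_{-\infty}^\infty e^{cs}\,\mathrm{Ai}(s)^2\,\mathrm{d}s=\frac{1}{2\sqrt{\pi c}}\,\exp\!\left(\frac{c^3}{12}\right),\qquad c>0,
$$
which follows from the contour representation $\mathrm{Ai}(x)=\frac{1}{2\pi\ii}\int_\Gamma e^{xz-z^3/3}\,\mathrm{d}z$ and two residual Gaussian integrations, or equivalently by recognizing the diagonal of the heat kernel of the deterministic Airy operator $-\partial^2+x$ at time $c$. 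Specializing to $c=T/2$ and combining yields
$$
\E\big[\mathrm{Trace}(\U(T))\big]=\frac{2}{T}\cdot\frac{1}{\sqrt{2\pi T}}\,e^{T^3/96}=\sqrt{\frac{2}{\pi}}\,T^{-3/2}\,e^{T^3/96},
$$
as desired.

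The main obstacle is not any individual calculation but rather the justification of the interchanges: applying the one-point correlation formula to the unbounded test function $e^{Tx/2}$, and swapping the expectation with the infinite sum. Both rest on the quantitative tail bounds above (super-exponential decay of $\mathrm{Ai}$ at $+\infty$, Weyl-type growth of $-\eta^i$ at infinity), which are standard but must be invoked cleanly. The core Airy-squared integral, once isolated, reduces to a Gaussian computation, and is the only place where a nontrivial special-function identity enters the argument.
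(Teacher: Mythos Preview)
Your argument is correct. The paper does not supply its own proof of this proposition: it is quoted as a result from \cite[Section 2.6.1]{Okounkov}, so there is no in-paper derivation to compare against. Your route via the Airy determinantal structure at $\beta=2$ --- using the one-point function $K_{\mathrm{Airy}}(x,x)=\int_0^\infty \mathrm{Ai}(x+t)^2\,\mathrm{d}t$, Fubini, and the closed-form evaluation of $\int_\rr e^{cs}\,\mathrm{Ai}(s)^2\,\mathrm{d}s$ --- is the standard way to obtain this formula and is exactly in the spirit of Okounkov's computation. The justifications you flag (applying the first-moment identity to the unbounded $e^{Tx/2}$ and swapping sum with expectation) are indeed the only points requiring care, and the tail bounds you name are precisely what is needed.
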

\noindent On the other hand, our expression for the kernel $K(x,y;T)$ and a suitable path tranformation allow to write the same trace in terms of a functional of a standard Brownian excursion $\e$ on the time interval $[0,1]$.

\begin{proposition} \label{Proposition_expectation}
Take $\A=\rr_{\ge0}$. Then, for all $T>0$,
\begin{equation} \label{eq_expectation_of_moment}
\begin{split}
\E \big[{\rm Trace}(\U(T))\big]
&=\E\bigg[\sum_{i\ge 1} e^{T\,\eta^i/2}\bigg] \\
&=\sqrt{\frac{2}{\pi}}\, T^{-3/2}\, \E\bigg[\exp\bigg(-\frac{T^{3/2}}{2}\int_0^1 \e(t)\,\mathrm{d}t + \frac{T^{3/2}}{2\beta}\int_0^\infty (l_y)^2 \,\mathrm{d}y\bigg)\bigg],
\end{split}
\end{equation}
where $\e$ is a standard Brownian excursion on the time interval $[0,1]$, and each $l_y$ is the total local time of $\e$ at level $y$.
\end{proposition}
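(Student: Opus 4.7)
The plan is to integrate out $W$ first (using Gaussianity of the Wiener integral), then pass from Brownian bridges to Brownian excursions via a Vervaat-type path transformation, and finally rescale to an excursion of length $1$. By \eqref{trace_formula} and Fubini, $\E[\mathrm{Trace}(\U(T))]=\int_0^\infty \E[K(x,x;T)]\,\mathrm{d}x$. Conditional on $B^{x,x}$, the stochastic integral $\int_0^\infty L_a(B^{x,x})\,\mathrm{d}W(a)$ is a centered Gaussian with variance $\int_0^\infty L_a(B^{x,x})^2\,\mathrm{d}a$, so the identity $\E[e^Z]=e^{\mathrm{Var}(Z)/2}$ gives
\[
\E[K(x,x;T)]=\frac{1}{\sqrt{2\pi T}}\,\E_{B^{x,x}}\bigg[\mathbf{1}_{\{\min B^{x,x}\ge 0\}}\exp\Big({-}\tfrac{1}{2}\int_0^T B^{x,x}(t)\,\mathrm{d}t+\tfrac{1}{2\beta}\int_0^\infty L_a(B^{x,x})^2\,\mathrm{d}a\Big)\bigg].
\]

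For the path transformation, I would write $B^{x,x}=x+B^{0,0}$ with $B^{0,0}$ a Brownian bridge from $0$ to $0$ of length $T$ and substitute $y=x+\min B^{0,0}\ge 0$ in the $x$-integral. The recentered bridge $B^{0,0}-\min B^{0,0}$ has the same time integral and the same local-time $L^2$-norm as its cyclic shift that starts at its minimum, which by Vervaat's theorem (see e.g.\ \cite{Biane-Yor}) is a Brownian excursion $\e_T$ of length $T$. A vertical shift $\e_T\mapsto y+\e_T$ preserves $\int_0^\infty L_a(\cdot)^2\,\mathrm{d}a$ (since $\e_T\ge 0$, the local-time profile is merely translated in the spatial variable) and simply adds $Ty$ to the time integral; integrating $e^{-Ty/2}$ over $y\in\rr_{\ge 0}$ therefore yields a factor $2/T$.

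It remains to rescale from length $T$ to length $1$: Brownian scaling gives $\e_T(t)=\sqrt{T}\,\e(t/T)$, whence $\int_0^T \e_T\,\mathrm{d}t=T^{3/2}\int_0^1\e\,\mathrm{d}t$ and, via the occupation density formula with $L_z(\e_T)=\sqrt{T}\,l_{z/\sqrt T}$, $\int_0^\infty L_z(\e_T)^2\,\mathrm{d}z=T^{3/2}\int_0^\infty l_y^2\,\mathrm{d}y$. Combining the prefactors through $\frac{1}{\sqrt{2\pi T}}\cdot\frac{2}{T}=\sqrt{2/\pi}\,T^{-3/2}$ reproduces \eqref{eq_expectation_of_moment}. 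The main obstacle is the Vervaat reduction: one must carefully verify cyclic-shift invariance of both functionals and justify Fubini despite the positive local-time term in the exponent (for which the resulting closed form together with Okounkov's $\beta=2$ identity in Proposition \ref{Proposition_expectation_Ok} serves as a useful sanity check on integrability).
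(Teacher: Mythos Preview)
Your proposal is correct and follows essentially the same route as the paper: integrate out $W$ using Gaussianity (the paper invokes Novikov's condition here), translate $B^{x,x}=x+B^{0,0}$ and integrate over the starting level to produce the factor $2/T$, apply Vervaat's theorem to pass from the recentered bridge to an excursion on $[0,T]$, and finally Brownian-scale to length $1$. The only cosmetic difference is that the paper integrates in $x$ before invoking Vervaat, whereas you substitute $y=x+\min B^{0,0}$ and then observe cyclic-shift invariance of both functionals; these are the same computation.
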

\noindent The proof of Proposition \ref{Proposition_expectation} is given in Section
\ref{Section_properties_2}.

\medskip

Comparing Propositions \ref{Proposition_expectation_Ok} and
\ref{Proposition_expectation} one obtains the following corollary of independent interest.

\begin{corollary} \label{Cor_excursion_identity}
Let $\e$ be a standard Brownian excursion on the time interval
$[0,1]$ and, for each $y\ge0$, let $l_y$ be the total local time of $\e$ at level $y$. Then,
\begin{equation}
\label{eq_excursion_functional}
\int_0^1 \e(t)\,\mathrm{d}t -
\frac{1}{2}\,\int_0^\infty (l_y)^2\,\mathrm{d}y
\end{equation}
is a Gaussian random variable of mean $0$ and variance $\frac{1}{12}$.
\end{corollary}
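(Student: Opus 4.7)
The strategy is to compare the two expressions for $\E[\mathrm{Trace}(\U(T))]$ given by Propositions \ref{Proposition_expectation_Ok} and \ref{Proposition_expectation}, specialized to $\beta = 2$ and $\A = \rr_{\ge 0}$. Setting $\beta = 2$ inside the exponent of Proposition \ref{Proposition_expectation} yields
$$-\frac{T^{3/2}}{2}\int_0^1 \e(t)\,\mathrm{d}t + \frac{T^{3/2}}{4}\int_0^\infty (l_y)^2\,\mathrm{d}y = -\frac{T^{3/2}}{2}\,Z,$$
where $Z := \int_0^1 \e(t)\,\mathrm{d}t - \tfrac{1}{2}\int_0^\infty (l_y)^2\,\mathrm{d}y$ is the random variable of the corollary. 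Dividing the two expressions for $\E[\mathrm{Trace}(\U(T))]$ through by the common prefactor $\sqrt{2/\pi}\,T^{-3/2}$ and substituting $s := T^{3/2}/2$, which sweeps out all of $(0, \infty)$ as $T$ does, I obtain
$$\E[e^{-sZ}] = e^{s^2/24}, \qquad s > 0,$$
which is precisely the Laplace transform of $N(0, 1/12)$ on the positive real axis.

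What remains is upgrading this one-sided Laplace identity to a distributional identification. I would verify that $Z$ has sub-Gaussian tails on both sides, so that $s \mapsto \E[e^{-sZ}]$ extends to an entire function of $s \in \mathbb{C}$ and must then coincide with $e^{s^2/24}$ everywhere by the identity theorem. The upper tail of $Z$ is immediate from the bound $Z \leq \int_0^1 \e(t)\,\mathrm{d}t \leq \sup_{t \in [0,1]} \e(t)$, whose distribution is sub-Gaussian (classical from the reflection principle or Vervaat's transform). The lower tail is controlled by the Laplace identity itself: for every $s > 0$ a Chernoff bound gives
$$\pp(Z < -t) \leq e^{-st}\,\E[e^{-sZ}] = e^{-st + s^2/24},$$
and optimizing over $s > 0$ yields $\pp(Z < -t) \leq e^{-6t^2}$. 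Hence $\E[e^{s|Z|}] < \infty$ for every $s \in \rr$, so the bilateral Laplace transform of $Z$ is entire.

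Evaluating the extended equality $\E[e^{-sZ}] = e^{s^2/24}$ at $s = it$, $t \in \rr$, gives the characteristic function $\E[e^{-itZ}] = e^{-t^2/24}$, which identifies $Z$ as the centered Gaussian of variance $\frac{1}{12}$. The only real subtlety is the analytic continuation step; fortunately one side (the lower tail of $Z$) is furnished for free by the Laplace identity just established, and only the other side (the upper tail, via sub-Gaussianity of $\sup_t \e(t)$) needs to be imported from the classical literature on Brownian excursions.
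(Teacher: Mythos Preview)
Your proof is correct and follows essentially the same route as the paper: equate Propositions \ref{Proposition_expectation_Ok} and \ref{Proposition_expectation} at $\beta=2$ to obtain the one-sided Laplace transform of $Z$, then upgrade to a distributional identification via analytic continuation. The only minor difference is in the last step: the paper extends the identity merely to the open half-plane $\{\text{Re}\,s>0\}$ via Morera's Theorem and then invokes the uniqueness theorem for Laplace transforms (Widder), which requires no tail information beyond what the identity itself already provides; your route instead establishes sub-Gaussian tails on both sides to get an entire extension and then reads off the characteristic function. Both are valid, but note that your upper-tail argument (via $\sup_t \e(t)$) is not strictly needed---the half-plane version suffices.
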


\noindent The proof of Corollary \ref{Cor_excursion_identity} is given in Section
\ref{Section_properties_2}. To the best of our knowledge, Corollary
\ref{Cor_excursion_identity} is new and did not appear previously in the path
transformation literature. However, it has been established in that literature (see
\cite{Jeulin}, \cite{Biane-Yor}, \cite[Theorem 2.1]{CSY}) that the two terms in
\eqref{eq_excursion_functional} have the same distribution. In particular, this
implies that the expectation of the random variable in
\eqref{eq_excursion_functional} indeed equals to $0$. It would be interesting to
find an independent proof of Corollary \ref{Cor_excursion_identity}, which does not
rely on the random matrix theory.

\medskip

Proposition \ref{Proposition_expectation} also gives a partial explanation for the special role that the value $\beta=2$ plays. Indeed, expanding the last exponential function in \eqref{eq_expectation_of_moment} into a power series we get
\begin{equation} \label{eq_expansion}
\begin{split}
\E \big[{\rm Trace}(\U(T))\big]
= &\sqrt{\frac2\pi}\,T^{-3/2}  - \frac{1}{2\sqrt{2\pi}}\,\E\bigg[
\int_0^1 \e(t)\,\mathrm{d}t - \frac{1}{\beta}\,\int_0^\infty (l_y)^2\,\mathrm{d}y \bigg] \\
& +\frac{1}{4\sqrt{2\pi}}\,T^{3/2}\,\E\bigg[\bigg(\int_0^1 \e(t)\,\mathrm{d}t - \frac{1}{\beta}\,\int_0^\infty (l_y)^2\,\mathrm{d}y\bigg)^2\bigg]+\cdots.
\end{split}
\end{equation}
In particular, $\beta=2$ is the only case in which the second term in the expansion \eqref{eq_expansion} vanishes.

\section{Combinatorics of high powers of tridiagonal matrices}\label{Section_combi}

In this section, we give a sketch of the proof of Theorem \ref{Theorem_Main} and, in particular, explain how the Brownian bridges $B^{x,y}$ and the Brownian motion $W$ in the definition of the kernel $K_\A(x,y;T)$ arise in the study of high powers of the matrix $M_N$. The technical estimates required to justify the steps of this sketch are then presented in Section \ref{Section_estimates} below, culminating in the complete proof of the theorem in Section \ref{Section_complete_proof}.

\medskip

Our aim is to study the matrix elements and the trace of a high power of the matrix $M_N$ and its principal submatrices. For the sake of a cleaner notation, we consider only the full matrix $M_N$. To study its submatrices one only needs to restrict the (scaled) indices to the corresponding set $\A$.

\medskip

By definition,
\begin{equation} \label{eq_element_as_sum}
(M_N)^k[i,i']=\sum M_N[i_0,i_1]\,M_N[i_1,i_2]\,\cdots \,M_N[i_{k-2},i_{k-1}]\,M_N[i_{k-1},i_{k}],
\end{equation}
where the sum is taken over all sequences of integers $i_0,i_1,\ldots,i_k$ in $\{1,2,\ldots,N\}$ such that $i_0=i$, $i_k=i'$, and $|i_j-i_{j-1}|\le 1$ for all $j=1,2,\ldots,k$. Hereby, the factors of the form $M_N[m,m+1]$ or $M_N[m,m-1]$ (given by $\bb(m)=\sqrt{m}+\xi(m)$) are given by $\sqrt{m}$ at the leading order in $m$, whereas factors of the form $M_N[m,m]$ (given by $\aa(m)$) are of order $1$ in $m$.

\medskip

We are interested in $\mathcal M(T,\A,N)$ and take first $k=\lfloor TN^{2/3}\rfloor$. Throughout the argument we assume that $k$ is even, with the odd case being very similar. Let us consider the sequences in \eqref{eq_element_as_sum} without ``horizontal'' segments
$i_{j-1}=i_j$. Note that we need to assume that $i-i'$ is even, as otherwise the sum is empty. With the notation $a\wedge b$ for $\min(a,b)$, the corresponding part of the sum in \eqref{eq_element_as_sum} is
\begin{equation}\label{eq_high_moment_base}
\big(2\sqrt{N}\big)^k \cdot\frac{1}{2^k}\sum_{\substack{1\le i_0,i_1,\ldots,i_{k} \le N \\ |i_j-i_{j-1}|=1\text{ for all } j\\
 i_0=i,\; i_{k}=i'}}
\prod_{l=1}^k \frac{\sqrt{i_l\wedge i_{l-1}}}{\sqrt{N}} \left(1+\frac{\xi( i_l\wedge i_{l-1})}{\sqrt{i_l\wedge i_{l-1}}}\right).
\end{equation}
The prefactor $\big(2\sqrt{N}\big)^k$ corresponds to the scaling under which the limiting spectral interval is $[-1,1]$. It is also precisely the normalization of $M_N$ used in the definition of $\mathcal M(T,\A,N)$, and we need to identify the $N\to\infty$ limit of the rest of the expression in \eqref{eq_high_moment_base}.

\medskip

Write $i^*$ for $\min(i_0,i_1,\ldots,i_k)$. It is not hard to see that the contribution of the sequences with $\frac{N-i^*}{N^{1/3}}\to\infty$ to the sum in \eqref{eq_high_moment_base} becomes negligible in the limit, so that one can restrict the attention to sequences with $\limsup_{N\to\infty} \frac{N-i^*}{N^{1/3}}<\infty$. In particular, we choose $i$, $i'$ such that
\begin{equation*}
x:=\lim_{N\to\infty} \frac{N-i}{N^{1/3}}<\infty,\quad
y:=\lim_{N\to\infty} \frac{N-i'}{N^{1/3}}<\infty.
\end{equation*}
Note further that we are summing over the trajectories of a simple random walk bridge with $k$ steps connecting $i$ to $i'$ (that is, a simple random walk with $k$ steps conditioned on having the prescribed endpoints). Our aim is to prove that the normalized sum converges to an integral with respect to the law of the Brownian bridge connecting $x$ to $y$.

\medskip

Each summand in \eqref{eq_high_moment_base} can be trivially rewritten as
\begin{equation}
\label{eq_summand_high_moment}
 \exp\bigg(\frac{1}{2}\,\sum_{l=1}^k \log\bigg(1-\frac{N-i_l\wedge i_{l-1}}{N}\bigg) +\sum_{l=1}^k \log\bigg(1+\frac{\xi( i_l\wedge
i_{l-1})}{\sqrt{i_l\wedge i_{l-1}}} \bigg)\bigg).
\end{equation}
For terms with $\limsup_{N\to\infty} \frac{N-i^*}{N^{1/3}}<\infty$, the arguments of the logarithms in the first sum are close to $1$, and one can use the formula $\log\,(1+z)\approx z$ (here, $z$ is of the order $N^{-2/3}$, and there are order $N^{2/3}$ summands). Similarly, for the logarithms in the second sum, consider the Taylor expansion
\begin{equation*}
\log\bigg(1+\frac{\xi( i_l\wedge i_{l-1})}{\sqrt{i_l\wedge i_{l-1}}}\bigg) = \frac{\xi( i_l\wedge i_{l-1})}{\sqrt{i_l\wedge i_{l-1}}}
- \frac{1}{2}\,\frac{\xi( i_l\wedge i_{l-1})^2}{i_l\wedge i_{l-1}} + \cdots
\end{equation*}
and note that already the second term is of order $N^{-1}$ (in expectation). Since there are order $N^{2/3}$ summands, only the first term can contribute to the $N\to\infty$ limit. Consequently, in that limit the expression from \eqref{eq_summand_high_moment} can be replaced by
\begin{equation}
\label{eq_summand_high_moment_2}
\exp\bigg(-\frac{1}{2N}\,\sum_{l=1}^k (N-i_l\wedge i_{l+1})
+ \sum_{l=1}^k \frac{\xi(i_l\wedge i_{l-1})}{\sqrt{i_l\wedge i_{l-1}}} \bigg).
\end{equation}

\smallskip

At this stage, we observe that
\begin{equation} \label{eq_summand_high_moment_3}
\sum_{l=1}^k \frac{\xi(i_l\wedge i_{l-1})}{\sqrt{i_l\wedge i_{l-1}}}=\sum_{h=i^*}^N \frac{\xi(h)}{\sqrt{h}}\,\big|\{l:\,i_l\wedge i_{l+1}=h\}\big|.
\end{equation}
A typical trajectory of a simple random walk bridge with $k$ steps connecting $i$ to $i'$ visits an order of $k^{1/2}$ sites, and the corresponding ``occupation times'' $\big|\{l:\,i_l\wedge i_{l-1}=h\}\big|$ are of the order $k^{1/2}$. Therefore, for every such trajectory, the sum on the right-hand side of \eqref{eq_summand_high_moment_3} is a sum of independent random variables with means of orders $o(h^{-2/3}k^{1/2})=o(N^{-1/3})$ and variances of orders $O(h^{-1}k)=O(N^{-1/3})$. Since there are an order of $N^{1/3}$ summands, the limit of the sum is given by the Central Limit Theorem. More specifically, the described random walk bridge converges in the limit $N\to\infty$ to a standard Brownian bridge on $[0,T]$ connecting $x$ to $y$, its occupation times normalized by $N^{1/3}$ converge to the local times of the Brownian bridge, and so the variance of the limiting centered Gaussian random variable comes out to $s_\xi^2\,\int_0^\infty L_a(B^{x,y})^2\,\mathrm{d}a$ (see Assumption \ref{Assumptions} (b) and \eqref{def:kernel} for the notations). That random variable can be written more explicitly as
\begin{equation*}
s_\xi \int_0^\infty L_a(B^{x,y})\,\mathrm{d}W_\xi(a),
\end{equation*}
where the Brownian motion $s_\xi\,W_\xi(a)$ is the limit of $N^{-1/6}\,\sum_{h=N-\lfloor N^{1/3} a\rfloor}^N \xi(h)$.

\medskip

Next, by a standard application of Stirling's formula we find that the number of random walk bridges of length $k=\lfloor TN^{2/3}\rfloor$ connecting $i$ to $i'$ behaves asymptotically as $2^k N^{-1/3}\sqrt{\frac{2}{\pi T}}\,e^{-(x-y)^2/(2T)}$. Since the expression in \eqref{eq_high_moment_base} can be viewed as a multiple of the expectation of a functional with respect to the law of such a random walk bridge, its asymptotic behavior is given by the same multiple of the corresponding functional of the Brownian bridge $B^{x,y}$:
\begin{equation*} \label{eq_high_power_base}
\begin{split}
& \big(2\sqrt{N}\big)^k N^{-1/3}\sqrt{\frac{2}{\pi T}}\,e^{-(x-y)^2/(2T)}\\
& \cdot\E_{B^{x,y}}\bigg[{\mathbf
1}_{\{\forall t:\,B^{x,y}(t) \ge 0\}}
\exp\bigg( -\frac12 \int_0^T B^{x,y}(t)\,\mathrm{d}t
+ s_\xi\,\int_0^\infty L_a(B^{x,y})\,\mathrm{d}W(a)\bigg)\bigg].
\end{split}
\end{equation*}

\smallskip

Next, we turn to the sequences in \eqref{eq_element_as_sum} which have
horizontal segments. We still work with an even $k$ and write $2n$ for the number of horizontal segments. The corresponding sequences can be thought of as follows: take a sequence of length $k-2n$ with no horizontal segments and insert $2n$ horizontal segments at arbitrary spots.
To analyze the effect of such insertion we start with the case $n=1$. The corresponding part of the sum in \eqref{eq_element_as_sum}, normalized by
$\big(2\sqrt{N}\big)^k$, is given by
\begin{equation*}
\frac{1}{2^{k-2}}\sum_{\substack{1\le i_0,i_1,\ldots,i_{k-2} \le N \\ |i_j-i_{j-1}|=1\text{ for all } j \\
 i_0=i,\; i_{k-2}=i'}}
\prod_{l=1}^{k-2} \frac{\sqrt{i_l\wedge i_{l-1}}}{\sqrt{N}} \left(1+\frac{\xi( i_l\wedge i_{l-1})}{\sqrt{i_l\wedge i_{l-1}}}\right)
\cdot\bigg(\frac{1}{(2\sqrt{N})^2} \sum_{0\le j\le l\le k-2} \aa(i_j)\,\aa(i_l)\bigg).
\end{equation*}

\smallskip

Note that the last factor can be written as the sum of the terms
$\frac{1}{2}\,\frac{1}{(2\sqrt{N})^2}\,\big(\sum_{j=0}^{k-2} \aa(i_j)\big)^2$ and $\frac{1}{2}\,\frac{1}{(2\sqrt{N})^2}\,\sum_{j=0}^{k-2} \aa(i_j)^2$. An analysis as for the left-hand side of \eqref{eq_summand_high_moment_3} shows that the first term tends to $\frac{1}{2}$ times the square of a Gaussian random variable with mean $0$ and variance $\frac{s_\aa^2}{4}\,\int_0^\infty L_a(B^{x,y})^2\,\mathrm{d}a$, which we write as
\begin{equation*}
\frac{s_\aa}{2}\,\int_0^\infty L_a(B^{x,y})\,\mathrm{d}W_\aa(a).
\end{equation*}
Here, the Brownian motion $s_\aa\,W_\aa(a)$ is the limit of $N^{-1/6}\,\sum_{h=N-\lfloor N^{1/3} a\rfloor}^N \aa(h)$. The second term is of order $O(N^{-1}k)=O(N^{-1/3})$ (in expectation), and so negligible in the limit $N\to\infty$.


\medskip

Similarly, for any number $2n$ of horizontal segments, their leading order contribution is a factor of
\begin{equation*}
\frac{1}{(2n)!\,(2\sqrt{N})^{2n}}\bigg(\sum_{j=0}^{k-2n} \aa(i_j) \bigg)^{2n}\sim \frac{1}{(2n)!}\,\bigg(\frac{s_\aa}{2}\,\int_0^\infty L_a(B^{x,y})\,\mathrm{d}W_\aa(a)\bigg)^{2n}.
\end{equation*}
At this point, we add $(M_N)^{k-1}[i,i']$ (as in the definition of $\mathcal M(T,\A,N)$) and recall that $k-1$ is odd. Treating $(M_N)^{k-1}[i,i']$ exactly in the same way as $(M_N)^k[i,i']$, only changing the even number of horizontal segments to an odd number $2n-1$ of horizontal segments, we conclude that the leading order contributions of the latter combine to
\begin{equation*}
\begin{split}
& \sum_{n=0}^\infty \frac{1}{(2n)!}\bigg(\frac{s_\aa}{2}\,\int_0^\infty L_a(B^{x,y})\,\mathrm{d}W_\aa(a)\bigg)^{2n}
+\sum_{n=1}^\infty \frac{1}{(2n-1)!}\bigg(\frac{s_\aa}{2}\,\int_0^\infty L_a(B^{x,y})\,\mathrm{d}W_\aa(a)\bigg)^{2n-1} \\
&=\exp\bigg(\frac{s_\aa}{2}\,\int_0^\infty L_a(B^{x,y})\,\mathrm{d}W_\aa(a)\bigg).
\end{split}
\end{equation*}

\smallskip

Putting everything together we obtain the asymptotics
\begin{equation} \label{eq_asymptotic_expansion_main}
\begin{split}
&\frac{1}{2}\left(\left(\frac{M_N}{2\sqrt N}\right)^k+ \left(\frac{M_N}{2\sqrt
N}\right)^{k-1} \right)[i,i'] \sim N^{-1/3}\sqrt{\frac{1}{2\pi T}}\,
e^{-\frac{(x-y)^2}{2T}} \\
&\E_{B^{x,y}}\bigg[{\mathbf 1}_{\{
\forall t:B^{x,y}(t)\ge 0\}}\exp\bigg(-\frac12 \int_0^T B^{x,y}(t)\mathrm{d}t + \int_0^\infty L_a(B^{x,y})\Big(s_\xi\mathrm{d}W_\xi(a)
+\frac{s_\aa}{2}\mathrm{d}W_\aa(a)\Big)\bigg)\bigg].
\end{split}
\end{equation}
To complete the derivation of the kernel $K(x,y;T)$ it remains to recall from Assumption \ref{Assumptions} that $s_\xi^2+\frac{s_\aa^2}{4}=\frac{1}{\beta}$ and to set $W=\sqrt{\beta}\,\big(s_\xi\,W_\xi+\frac{s_\aa}{2}\,W_\aa\big)$. Theorem \ref{Theorem_Main} now follows by summing  \eqref{eq_asymptotic_expansion_main} over the relevant indices $i$,
$i'$ and replacing the sums by the integrals they approximate. Finally,
for odd $i-i'$ and for general $\A\subset\rr_{\ge0}$, one only needs to interchange the roles of even and odd powers of $M_N$ and to consider sequences of $i_j$'s with $\frac{N-i_j+1/2}{N^{1/3}}\in\A$, respectively.

\section{Towards a rigorous proof} \label{Section_estimates}

\subsection{Convergence of random walk bridges and their local times}

In this subsection, we present the main technical ingredients needed to justify the arguments of Section \ref{Section_combi}.

\medskip

We use the following set of notations. Let $x,y\in\rr$, $N\in\nn$, and $\tilde{T}>0$ be such that $\tilde{T}N^{2/3}$ is an integer of the same parity as $\lfloor N-N^{1/3} x\rfloor-\lfloor N-N^{1/3} y\rfloor$. We write
\begin{equation*}
X^{x,y;N,\tilde{T}}:=\big(X^{x,y;N,\tilde{T}}(0),\,X^{x,y;N,\tilde{T}}(N^{-2/3}),\,X^{x,y;N,\tilde{T}}(2N^{-2/3}),\,\ldots,\,X^{x,y;N,\tilde{T}}(\tilde{T})\big)
\end{equation*}
for the simple random walk bridge connecting $\lfloor N-N^{1/3} x\rfloor$ to $\lfloor N-N^{1/3} y\rfloor$ in $\tilde{T}N^{2/3}$ steps of size $\pm 1$. More specifically, the trajectory of $X^{x,y;N,\tilde{T}}$ is chosen uniformly at random among all trajectories of the type described. We further define $X^{x,y;N,\tilde{T}}(t)$ for all $t\in[0,\tilde{T}]$ by linear interpolation. Finally, for $h\in\rr$, we  introduce the (normalized) occupation times
\begin{equation}
L_h(X^{x,y;N,\tilde{T}})=N^{-1/3}\big|\{t\in[0,\tilde{T}]:\,
X^{x,y;N,\tilde{T}}(t)=N-N^{1/3}h\}\big|,\quad h\in\rr.
\end{equation}

\medskip

The following proposition provides a coupling of the quantities $L_h(X^{x,y;N,\tilde{T}})$, $h\in\rr$ with their continuous analogues, the local times of a Brownian bridge.

\begin{proposition}\label{lemma_coupling}
Fix $x,y\in\rr$. Let $T_N$, $N\in\nn$ be a sequence of reals such that $\sup_N |T_N-T|\,N^{2/3}<\infty$ for some $T>0$, $T_N\,N^{2/3}\in\nn$ for all $N$, and $T_N\,N^{2/3}$ has the same parity as $\lfloor N-N^{1/3} x\rfloor-\lfloor N-N^{1/3} y\rfloor$. Then, there exists a probability space supporting a sequence of random walk bridges $X^{x,y;N,T_N}$, $N\in\nn$, a standard Brownian bridge $B^{x,y}$ on $[0,T]$ connecting $x$ to $y$, and a real random variable $\mathcal C$ such that
\begin{eqnarray}
&& \sup_{h\in\rr} \Big|L_h\big(X^{x,y;N,T_N}\big)-L_h\big(B^{x,y}\big)\Big| \le \mathcal C\,N^{-1/16},\quad N\in\nn \label{XtoB0}\\
&& \sup_{0\le t\le T_N\wedge T} \, \Big|N^{-1/3}(N-X^{x,y;N,T_N}(t))-B^{x,y}(t)\Big|\le \mathcal C\,N^{-1/3}\,\log N ,\quad N\in\nn \label{XtoB}.
\end{eqnarray}
\end{proposition}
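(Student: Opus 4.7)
The plan is to construct the coupling in two stages. The first stage delivers the path-level estimate \eqref{XtoB} via a strong invariance principle for random walk bridges, and the second stage derives the local time bound \eqref{XtoB0} from this by combining the path coupling with regularity properties of Brownian local times.

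For the first stage, I would invoke a KMT-type strong approximation for random walk bridges, such as the ones developed in \cite{Kh}, \cite{LTF}, \cite{BK2}. These results provide a common probability space carrying the simple random walk bridge $X^{x,y;N,T_N}$ (of length $T_N N^{2/3}$ in $\pm 1$ steps) and a standard Brownian bridge on $[0,T_N]$ from $x$ to $y$, with uniform trajectory error of order $\log(\text{length})/\sqrt{\text{length}}$. Under the diffusive rescaling (space by $N^{-1/3}$, time by $N^{-2/3}$), this becomes $N^{-1/3}\log N$, as claimed. One small additional step is required to identify this bridge with a bridge on the fixed interval $[0,T]$: since $|T_N - T| = O(N^{-2/3})$, the two can be patched together using the modulus of continuity of the Brownian bridge, whose contribution on an interval of that length is $O(N^{-1/3}\sqrt{\log N})$ and can be absorbed into the constant $\mathcal C$.

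For the second stage, I would pass from the trajectory coupling to the local times by a smoothing argument. For each $\delta>0$ and each process $Z$, consider the regularization $\frac{1}{2\delta}\int_0^{T_N \wedge T}\mathbf 1_{|Z(t)-h|\le \delta}\,\mathrm dt$. On the coupling event from Stage 1, the difference of the two regularized quantities is bounded by $C N^{-1/3}(\log N)/\delta$ uniformly in $h$, since swapping one path for the other moves the indicator only on a set of Lebesgue measure $O(N^{-1/3}\log N/\delta)$ per unit time. The regularization error for the Brownian bridge is controlled by the almost-sure Hölder-$(1/2-\epsilon)$ continuity of $h\mapsto L_h(B^{x,y})$ in the space variable, yielding $O(\delta^{1/2-\epsilon})$; the parallel error for the lattice occupation times $L_h(X^{x,y;N,T_N})$ is obtained either from a discrete Ray--Knight-type bound or directly from the monotone rearrangement of the lattice occupation profile, which has spacing $N^{-1/3}$. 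Optimizing in $\delta$ yields a polynomial rate, and the specific exponent $1/16$ emerges after accounting for the supremum over $h$ via a union bound on a high-probability region of width $O(\mathrm{poly}(\log N))$, since the bridge remains confined to such a band with overwhelming probability.

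The principal obstacle is the second stage: obtaining a uniform-in-$h$ local time comparison with an explicit polynomial rate. It requires simultaneously balancing three sources of error (the path coupling, the continuous Hölder regularity of $L_\cdot(B^{x,y})$, and the lattice-to-continuum comparison between occupation times and local times) and propagating them through a uniform bound. The parity constraint between $T_N N^{2/3}$ and $\lfloor N-N^{1/3}x\rfloor -\lfloor N-N^{1/3}y\rfloor$, together with the mismatch between the lattice step and the continuous variable $h$, introduces further bookkeeping, but these are routine once the main balance is in place. The first stage, by contrast, is essentially a direct invocation of an existing strong invariance principle.
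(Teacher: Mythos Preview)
Your plan is essentially the same as the paper's, and it is correct in outline. The paper's Appendix~B carries out exactly the two stages you describe: Stage~1 invokes the strong invariance principle for simple random walk bridges from \cite{LTF} (combined with Borel--Cantelli and the L\'evy modulus of continuity to pass from the bridge on $[0,T_N]$ to one on $[0,T]$), and Stage~2 converts the path bound into a local time bound via a smoothing inequality. The paper's smoothing step is the explicit lemma \cite[Lemma~3.1]{BK2}, which reads
\[
\sup_{h}\big|L_h(f_1)-L_h(f_2)\big|\le \frac{T}{\epsilon^2}\sup_t|f_1(t)-f_2(t)|+\sum_{i=1}^2\sup_{|h_1-h_2|\le\epsilon}\big|L_{h_1}(f_i)-L_{h_2}(f_i)\big|,
\]
applied with $\epsilon=N^{-2/15}$. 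This is the same idea as your regularization, but note that the path-swap term here scales like $\|f_1-f_2\|_\infty/\epsilon^2$, not $\|f_1-f_2\|_\infty/\delta$ as you wrote; your linear-in-$\delta^{-1}$ bound is only valid with an extra random factor $\sup_a L_a(B^{x,y})$ that you omitted, and in any case the optimization you sketch does not land on the exponent $1/16$. The paper gets $N^{-1/15+\tilde\epsilon}$ and then rounds to $N^{-1/16}$.

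The one place where your proposal is genuinely thin is the modulus of continuity of the \emph{random walk bridge} occupation times, which you dismiss as ``a discrete Ray--Knight-type bound or directly from the monotone rearrangement of the lattice occupation profile.'' This is the hardest piece, and neither of your suggestions is quite right as stated (monotone rearrangement gives nothing about $|L_{h_1}-L_{h_2}|$ for nearby $h_1,h_2$). The paper proves it as a separate lemma using \cite[Proposition~3.1]{BK1} for the walk restricted to even times, conditioning to turn the walk estimate into a bridge estimate, and then a combinatorial argument to pass from even lattice sites to all $h$. You should expect to do real work here.
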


Strong invariance principles similar to Proposition \ref{lemma_coupling} can be found in the literature, cf. \cite[Remark 1.3]{Bo}, \cite[Theorem 2.2.4]{CR} for the case of random walks and \cite[Theorem 2]{Kh} for the case of bridges of compensated Poisson processes. The proof of Proposition \ref{lemma_coupling} is given in Appendix \ref{Section_appendix_coupling} and is based on a coupling constructed in \cite{LTF} and a lemma from \cite{BK2}.

\medskip

Proposition \ref{lemma_coupling} describes the typical behavior of the random walk bridges $X^{x,y;N,T_N}$, $N\in\nn$ and the associated occupation times $L_h\big(X^{x,y;N,T_N}\big)$, $h\in\rr$, $N\in\nn$.
Next, we complement it by some estimates on the tail behavior of these quantities. To this end, for each $x,y\in\rr$, $N\in\nn$, and $T_0>0$, we write ${\mathcal T}(x,y;N,T_0)$ for the set of $\tilde{T}\in[0,T_0)$ such that $\tilde{T}N^{2/3}$ is an integer of the same parity as $\lfloor N-N^{1/3} x\rfloor-\lfloor N-N^{1/3} y\rfloor$.

\begin{proposition}\label{Proposition_exponential_bound_1}
For every $T_0>0$ and $\theta\in\rr$, one has the uniform integrability estimate
\begin{equation}
\sup_{N\in\nn} \; \sup_{\tilde{T}\in {\mathcal T}(x,y;N,T_0)} \;
\E\bigg[\exp\bigg(\theta\,N^{-2/3} \sum_{i=0}^{\tilde{T}N^{2/3}}
N^{-1/3} \Big(N-X^{x,y;N,\tilde{T}}\big(iN^{-2/3}\big)\Big)\bigg)\bigg] <\infty \label{UImax}
\end{equation}
uniformly on compact sets in $x,y$.
\end{proposition}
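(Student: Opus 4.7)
The strategy is to dominate the exponent crudely by a constant times the rescaled maximum of the random walk bridge, and then establish uniform Gaussian-type tail bounds for that maximum using the reflection principle.

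\textbf{Step 1 (Crude domination).} Define
\[
M_N \;:=\; \max_{0\le i \le \tilde T N^{2/3}} N^{-1/3}\bigl|N - X^{x,y;N,\tilde T}(iN^{-2/3})\bigr|.
\]
The sum in the exponent has $\tilde T N^{2/3}+1$ terms, each of absolute value at most $M_N$, so
\[
\theta N^{-2/3}\sum_{i=0}^{\tilde T N^{2/3}} N^{-1/3}\bigl(N - X^{x,y;N,\tilde T}(iN^{-2/3})\bigr) \;\le\; |\theta|(T_0+1)\,M_N.
\]
It thus suffices to prove $\sup_{N,\tilde T}\mathbb{E}\bigl[\exp(|\theta|(T_0+1) M_N)\bigr]<\infty$ uniformly on compact sets in $x,y$.

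\textbf{Step 2 (Reflection bound for the bridge).} Write $X^{x,y;N,\tilde T}(iN^{-2/3}) = \lfloor N - N^{1/3}x\rfloor + Y_i$, so that $(Y_i)_{0\le i\le n}$ is a simple random walk bridge of length $n:=\tilde T N^{2/3}$ from $0$ to $m := \lfloor N-N^{1/3}y\rfloor - \lfloor N-N^{1/3}x\rfloor$. Since $|m|\le C N^{1/3}$ for $(x,y)$ in any fixed compact set,
\[
M_N \;\le\; |x| + N^{-1/3}\max_{0\le i \le n}|Y_i| + O(N^{-1/3}).
\]
For $r > \max(0,m)$, the reflection principle gives
\[
\mathbb{P}\!\left(\max_{0\le i\le n} Y_i \ge r\right) \;=\; \frac{\binom{n}{(n+2r-m)/2}}{\binom{n}{(n+m)/2}} \;=\; \prod_{j=0}^{r-m-1}\frac{(n-m)/2 - j}{(n+m)/2 + j + 1},
\]
and a standard estimate of this product (bounding each factor by $\exp(-(m+2j+1)/n)$) yields
\[
\mathbb{P}\!\left(\max_{0\le i\le n} Y_i \ge r\right) \;\le\; \exp\!\bigl(-r(r-m)/n\bigr).
\]
A symmetric argument bounds the minimum, so $\mathbb{P}(\max_i |Y_i|\ge r)\le 2\exp(-r(r-|m|)/n)$ for $r > |m|$.

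\textbf{Step 3 (Rescaling and conclusion).} Setting $r = sN^{1/3}$ and using $n=\tilde T N^{2/3}$, $|m|\le CN^{1/3}$,
\[
\mathbb{P}\!\left(N^{-1/3}\max_{0\le i\le n}|Y_i| \ge s\right) \;\le\; 2\exp\!\bigl(-s(s-C)/\tilde T\bigr) \;\le\; 2\exp\!\bigl(-s^2/(2T_0)\bigr)
\]
for all $s\ge 2C + 2\sup|x|$ and all $\tilde T\in(0,T_0]$ (note that $1/\tilde T\ge 1/T_0$, so small $\tilde T$ only strengthens the bound). Combined with $M_N\le |x| + N^{-1/3}\max_i|Y_i| + O(N^{-1/3})$, this yields a uniform Gaussian tail bound
\[
\mathbb{P}(M_N \ge s) \;\le\; C_1\exp(-c_1 s^2),
\]
with constants depending only on $T_0$ and the compact set in $(x,y)$. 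Integrating by parts gives $\sup_{N,\tilde T}\mathbb{E}[\exp(\lambda M_N)]<\infty$ for every $\lambda>0$, proving the proposition.

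\textbf{Main obstacle.} The chief concern is to ensure the constants are genuinely uniform in $\tilde T\in\mathcal T(x,y;N,T_0)$ and in $(x,y)$ on compact sets, in particular that the tail bound on $M_N$ does not degenerate as $\tilde T\to 0$ (where parity constraints confine $\tilde T$ to discrete values and the bridge is highly constrained) and that the $|m|$-dependence stays controlled. The reflection estimate above handles both limits cleanly since the exponent $-r(r-|m|)/n$ only improves for small $n$; the only delicate point is the explicit binomial-ratio estimate, which must be carried out carefully in a neighborhood where $r \approx |m|$.
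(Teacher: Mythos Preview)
Your argument is correct and complete, and it takes a genuinely different route from the paper. The paper does not use the reflection principle on the bridge at all: after the same crude domination by the (one-sided) maximum, it constructs an auxiliary lazy symmetric random walk $Y$ that is coupled to move up whenever the bridge moves up above its higher endpoint, so that $Y$ dominates the bridge pathwise; it then invokes a moment bound $\E[(J_n)^k]\le \sqrt{k!}\,(C\sqrt{n})^k$ for the maximum $J_n$ of an unconditioned simple random walk. Your approach is more elementary and self-contained---you get the Gaussian tail directly from the explicit binomial ratio---while the paper's coupling argument is more modular but relies on an external reference. Both yield uniform exponential-moment control of the rescaled maximum, which is all that is needed.

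One small wrinkle in your Step 2: the factor-by-factor estimate
\[
\frac{(n-m)/2-j}{(n+m)/2+j+1}\le \exp\!\bigl(-(m+2j+1)/n\bigr)
\]
is only valid when $m+2j+1\ge 0$. If $m<0$ and $j$ is small, the left side exceeds $1$ and the claimed inequality fails (since $\ln(1+x)\le x$ gives $u/v\le e^{(u-v)/v}$, and $v<n$ here). The cleanest fix is to use $\binom{n}{(n+m)/2}=\binom{n}{(n-m)/2}$ to rewrite the ratio with both indices $\ge n/2$ before telescoping; alternatively, simply assume $m\ge 0$ when bounding $\max Y_i$ (and $m\le 0$ when bounding $\min Y_i$), since the two cases are interchanged under $Y\mapsto -Y$. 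Your final bound $\exp(-r(r-|m|)/n)$ and the rest of the argument are unaffected.
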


\begin{proposition}\label{Proposition_exponential_bound_2}
For every $T_0>0$, $1\le p<3$, and $\theta>0$, one has the uniform integrability estimate
\begin{equation}
\sup_{N\in\nn} \; \sup_{\tilde{T}\in {\mathcal T}(x,y;N,T_0)} \;
\E\bigg[\exp\bigg(\theta\,N^{-1/3} \sum_{h\in N^{-1/3}\zz}
L_h\big(X^{x,y;N,\tilde{T}}\big)^p\,\bigg)\bigg] < \infty\, \label{UIlt}
\end{equation}
uniformly on compact sets in $x$, $y$.
\end{proposition}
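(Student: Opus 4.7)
The plan is to first reduce the sum of $p$-th powers of occupation times to an expression involving only the maximum occupation time, and then to exploit uniform Gaussian tails of the latter. For any $p\ge 1$ and nonnegative reals $(a_h)$, one has the elementary interpolation inequality $\sum_h a_h^p\le (\max_h a_h)^{p-1}\sum_h a_h$. Applying this with $a_h=L_h(X^{x,y;N,\tilde T})$ and noting that every discrete time step $iN^{-2/3}$, $i=0,1,\ldots,\tilde T N^{2/3}$, contributes to exactly one $L_h$, so that $N^{-1/3}\sum_h L_h=\tilde T+N^{-2/3}\le T_0+1$, one obtains the pointwise bound
\begin{equation*}
N^{-1/3}\!\sum_{h\in N^{-1/3}\zz}\!\! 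L_h\bigl(X^{x,y;N,\tilde T}\bigr)^p
\le (T_0+1)\,\bigl(L^*_N\bigr)^{p-1},\qquad L^*_N:=\max_{h}L_h\bigl(X^{x,y;N,\tilde T}\bigr).
\end{equation*}
It therefore suffices to prove the uniform bound $\sup_{N,\tilde T,(x,y)} \E\bigl[\exp\bigl(\theta(T_0+1)(L^*_N)^{p-1}\bigr)\bigr]<\infty$ over $(x,y)$ in any fixed compact.

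The heart of the argument is a uniform Gaussian tail estimate $\pp(L^*_N>s)\le C\exp(-cs^2)$ for $s\ge 0$, with constants $C,c>0$ depending only on $T_0$ and on the compact to which $(x,y)$ is restricted. I would obtain this by combining the strong coupling of Proposition \ref{lemma_coupling} with the classical Gaussian tail bound $\pp(\max_h L_h(B^{x,y})>s)\le C\,e^{-cs^2}$ for the maximum local time of a standard Brownian bridge, which follows from the second Ray--Knight theorem (expressing $L_\cdot(B^{x,y})$ as a time-changed squared Bessel process) or from a Girsanov/exponential-martingale estimate. On the coupling space, $L^*_N\le \max_h L_h(B^{x,y})+\mathcal C\,N^{-1/16}$, so the event $\{L^*_N>s\}$ is contained in $\{\max_h L_h(B^{x,y})>s/2\}\cup\{\mathcal C>s\,N^{1/16}/2\}$: the first piece yields the Gaussian tail, while the second is controlled by tail information on $\mathcal C$ extracted from the KMT-style construction of \cite{LTF}, \cite{BK2} underlying Proposition \ref{lemma_coupling}.

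Given the Gaussian tail on $L^*_N$, the conclusion follows by direct integration:
\begin{equation*}
\E\bigl[e^{\theta(T_0+1)(L^*_N)^{p-1}}\bigr]
= 1+\theta(T_0+1)(p-1)\int_0^\infty u^{p-2}\,e^{\theta(T_0+1)u^{p-1}}\,\pp(L^*_N>u)\,\mathrm{d}u
\le 1+C\int_0^\infty u^{p-2}\,e^{\theta(T_0+1)u^{p-1}-cu^2}\,\mathrm{d}u,
\end{equation*}
and the last integral converges precisely when $p-1<2$, matching the hypothesis $p<3$. The main obstacle is therefore the uniform Gaussian tail on $L^*_N$: one must either quantify the tail of $\mathcal C$ in Proposition \ref{lemma_coupling} sharply enough that $\mathcal C\,N^{-1/16}$ remains negligible for the relevant range of $s$ (with the trivial bound $L^*_N\le T_0+1$ available beyond that range), or, failing that, run a purely discrete argument via excursion decomposition of the simple random walk bridge, controlling the number of visits to each integer site through classical return-time estimates which yield Gaussian concentration by elementary means; uniformity in $\tilde T\in[0,T_0)$ and in $(x,y)$ over a compact is automatic from these arguments since the constants depend only on the length of the bridge and the span of its endpoints.
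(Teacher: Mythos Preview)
Your reduction via $\sum_h L_h^p \le (L^*_N)^{p-1}\sum_h L_h$ together with $N^{-1/3}\sum_h L_h \le T_0+1$ is correct and is, in spirit, exactly the first move in the paper's proof. The gap is in establishing the Gaussian tail on $L^*_N$. The coupling route cannot be completed with what Proposition~\ref{lemma_coupling} actually provides: (i) the proposition gives no quantitative tail bound on $\mathcal C$, so neither $\pp(\mathcal C N^{-1/16}>s/2)$ nor $\E\bigl[\exp\bigl(c(\mathcal C N^{-1/16})^{p-1}\bigr)\bigr]$ can be controlled uniformly in $N$; (ii) that coupling is stated for a sequence $T_N$ with $T_N\to T>0$ at rate $O(N^{-2/3})$, not uniformly over all $\tilde T\in\mathcal T(x,y;N,T_0)$; and (iii) your escape hatch ``trivial bound $L^*_N\le T_0+1$'' is false---the only deterministic bound is $L^*_N\le \tilde T N^{1/3}$, which blows up with $N$. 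Your fallback ``purely discrete argument via excursion decomposition'' is not carried out, and getting a \emph{uniform} sub-Gaussian tail on $L^*_N$ that way is not a routine exercise.

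The paper bypasses the coupling entirely. After a reduction essentially identical to yours (bounding $(u_h+d_h)^p$ by $(u_h+d_h)$ times $(p-1)$-st powers and summing), it invokes a combinatorial identity in distribution from Assaf--Forman--Pitman~\cite{AFP}: the quantile transform $Q^{N,\tilde T}$ of the bridge encodes the up-step counts $u_h$ (hence $L_h$), and the maximum of $Q^{N,\tilde T}$ has the same law as the maximum of the Vervaat transform, which equals the \emph{range} (max minus min) of the original bridge. The range of a simple random walk bridge is then bounded, uniformly in $\tilde T\in[0,T_0)$ and in $(x,y)$ over compacts, by the monotone coupling with a simple symmetric random walk already built in the proof of Proposition~\ref{Proposition_exponential_bound_1}, whose maximum satisfies~\eqref{eq_SRW_max_bound}. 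This yields the needed sub-Gaussian tail on $L^*_N$ directly and uniformly, with no appeal to Brownian local times or to Proposition~\ref{lemma_coupling}.
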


\begin{rmk}\label{rmk_loc_time}
The trivial estimate
\begin{equation*}
L_h\big(X^{x,y;N,\tilde{T}}\big) \le L_{N^{2/3}-N^{-1/3}\lceil N-N^{1/3}h\rceil}\big(X^{x,y;N,\tilde{T}}\big)+L_{N^{2/3}-N^{-1/3}\lfloor N-N^{1/3}h\rfloor}\big(X^{x,y;N,\tilde{T}}\big)
\end{equation*}
for $h\in\rr$ shows that \eqref{UIlt} continues to hold if one replaces the summation over $h\in N^{-1/3}\zz$ by the summation over $h\in c+N^{-1/3} \zz$ for some $c\in\rr$.
\end{rmk}

\begin{proof}[Proof of Proposition \ref{Proposition_exponential_bound_1}]
Define the random walk bridge
\begin{equation*}
\tilde{X}^{x,y;N,\tilde{T}}(t):=N^{-1/3}\big(N-X^{x,y;N,\tilde{T}}(t)\big),\quad t\in[0,\tilde{T}]
\end{equation*}
and note that its endpoints $x_N$, $y_N$ satisfy $|x_N-x|\le N^{-1/3}$, $|y_N-y|\le N^{-1/3}$. Moreover, due to the symmetry properties of random walk bridges, we may assume without loss of generality that $\theta>0$ and $x_N\le y_N$. Observe further that the random variable inside
the expectation in \eqref{UImax} can be bounded from above by $e^{\theta\,(T_0+N^{-2/3})\,\tilde{M}(N,\tilde{T})}$ where $\tilde{M}(N,\tilde{T})$ is the maximum of $\tilde{X}^{x,y;N,\tilde{T}}$, that is, $\tilde{M}(N,\tilde{T})=\max_{t\in[0,\tilde{T}]} \tilde{X}^{x,y;N,\tilde{T}}(t)$.

\medskip

In order to upper bound the exponential moments of $\tilde{M}(N,\tilde{T})$, we introduce an auxiliary Markov chain $Y(t)$, $t=0,\,N^{-2/3},\,2N^{-2/3},\,\ldots,\,\tilde{T}$ (on an extension of the probability space on which $\tilde{X}^{x,y;N,\tilde{T}}$ is defined) by the following conditions:
\begin{enumerate}[(a)]
\item $Y(0)=y_N$;
\item the increments $Y(t+N^{-2/3})-Y(t)$ belong to the set $\{-N^{-1/3},0,N^{-1/3}\}$;
\item if $\tilde{X}^{x,y;N,\tilde{T}}(t)<y_N$, then $Y(t+N^{-2/3})=Y(t)$, otherwise $Y(t+N^{-2/3})-Y(t)\in\{-N^{-1/3},N^{-1/3}\}$;
\item in the latter case, $Y(t+N^{-2/3})-Y(t)$, conditional on $Y(0),\,\ldots,\,Y(t)$ and $\tilde{X}^{x,y;N,\tilde{T}}(0),\,\ldots,\,\tilde{X}^{x,y;N,\tilde{T}}(t)$, equals to $-N^{-1/3}$ or $N^{-1/3}$ with probability $\frac{1}{2}$ each;
\item if $\tilde{X}^{x,y;N,\tilde{T}}(t+N^{-2/3})-\tilde{X}^{x,y;N,\tilde{T}}(t)=N^{-1/3}$ and $\tilde{X}^{x,y;N,\tilde{T}}(t)\ge y_N$, then
$Y(t+N^{-2/3})-Y(t)=N^{-1/3}$.
\end{enumerate}
In words: $Y$ is a symmetric random walk which moves only when
$\tilde{X}^{x,y;N,\tilde{T}}(t)\ge y_N$ and, in that case, moves up whenever $\tilde{X}^{x,y;N,\tilde{T}}$ moves up. The existence of $Y$ follows from the observation that $\tilde{X}^{x,y;N,\tilde{T}}(t)$, $t=0,\,N^{-2/3},\,2N^{-2/3},\,\ldots,\,\tilde{T}$ is a Markov chain whose probabilities of upward jumps at sites above $y_N$ are less than $\frac{1}{2}$ (see e.g. \cite[Lemma 1.1]{LSS} and \cite[Lemma 1]{Russo} for similar statements).

\medskip

The definition of $Y$ implies $Y(t)\ge X(t)$, $t=0,\,N^{-2/3},\,2N^{-2/3},\,\ldots,\,\tilde{T}$,
and so the maximum of $Y$ is greater or equal than $\tilde{M}(N,\tilde{T})$. Since the non-zero
increments of $Y$ are those of a simple symmetric random walk, the desired estimate follows from
the corresponding statement for the latter. Indeed, \cite[Theorem 6.2.1, inequality (6.2.3)]{Ch}
reveals that the maximum $J_{\tilde{T}N^{2/3}}$ of a simple symmetric random walk with
$\tilde{T}N^{2/3}$ steps satisfies
\begin{equation} \label{eq_SRW_max_bound}
\E\big[(J_{\tilde{T}N^{2/3}})^n\big] \le \sqrt{n!}\,\big(C\,\tilde{T}^{1/2}N^{1/3}\big)^n,\quad n\in\nn,\;\tilde{T}\in{\mathcal T}(x,y;N,\infty),\;N\in\nn
\end{equation}
with some constant $C<\infty$. The desired uniform bound on the corresponding exponential moments
now follows from the series expansion of the exponential function.
\end{proof}

\smallskip

Our proof of Proposition \ref{Proposition_exponential_bound_2} relies on an identity in distribution from \cite{AFP} relating the maximal occupation time of $X^{x,y;N,\tilde{T}}$ to the range of $X^{x,y;N,\tilde{T}}$. The latter builds on the concepts of quantile and Vervaat transforms from \cite{AFP} and \cite{Ver}, respectively.

\begin{definition}[Quantile transform]
For every realization of the random walk bridge $X^{x,y;N,\tilde{T}}$, let $\kappa$ be the unique permutation of $\{1,\,2,\,\ldots,\,\tilde{T}N^{2/3}\}$ such that, for all $1\le l_1<l_2\le\tilde{T}N^{2/3}$, either
$X^{x,y;N,\tilde{T}}\big((\kappa(l_1)-1)N^{-2/3}\big)<X^{x,y;N,\tilde{T}}\big((\kappa(l_2)-1)N^{-2/3}\big)$, or
$X^{x,y;N,\tilde{T}}\big((\kappa(l_1)-1)N^{-2/3}\big)=X^{x,y;N,\tilde{T}}\big((\kappa(l_2)-1)N^{-2/3}\big)$ and $\kappa(l_1)<\kappa(l_2)$. Then, the quantile transform $Q^{N,\tilde{T}}$ of $X^{x,y;N,\tilde{T}}$ is
defined by $Q^{N,\tilde{T}}(0)=0$,
\begin{equation}
\begin{split}
Q^{N,\tilde{T}}\big(lN^{-2/3}\big)=\sum_{l_1=1}^l
\Big(X^{x,y;N,\tilde{T}}\big(\kappa(l_1)N^{-2/3}\big)-X^{x,y;N,\tilde{T}}\big((\kappa(l_1)-1)N^{-2/3}\big)\Big), \\
l=1,\,2,\,\ldots,\,\tilde{T}N^{2/3}.
\end{split}
\end{equation}
In words: the quantile transform $Q^{N,\tilde{T}}$ is obtained by starting at $0$ and consecutively adding the increments of $X^{x,y;N,\tilde{T}}$ in the non-decreasing order of the sites they originate from in $X^{x,y;N,\tilde{T}}$, with the increments originating from the same site in $X^{x,y;N,\tilde{T}}$ being added in chronological order. We refer to \cite[Figure 1]{AFP} for an illustration.
\end{definition}

\begin{definition}
For every realization of a random walk bridge $X^{x,y;N,\tilde{T}}$, let
\begin{equation}
l^*=\min\big\{l_1\in\{1,\,2,\,\ldots,\,\tilde{T}N^{2/3}\}:\;X^{x,y;N,\tilde{T}}(l_2 N^{-2/3})\ge X^{x,y;N,\tilde{T}}(l_1 N^{-2/3})\;\;\text{for all }\;l_2\big\}
\end{equation}
and write $\vartheta$ for the permutation of $\{1,2,\ldots,\tilde{T}N^{2/3}\}$ mapping each element $l_1$ to $l_1+l^*\,\mod\,\tilde{T}N^{2/3}$. Then, the Vervaat transform $V^{N,\tilde{T}}$ of $X^{x,y;N,\tilde{T}}$ is defined by $V^{N,\tilde{T}}(0)=0$,
\begin{equation}
\begin{split}
V^{N,\tilde{T}}\big(lN^{-2/3}\big)=\sum_{l_1=1}^l
\Big(X^{x,y;N,\tilde{T}}\big(\vartheta(l_1)N^{-2/3}\big)-X^{x,y;N,\tilde{T}}\big((\vartheta(l_1)-1)N^{-2/3}\big)\Big), \\
l=1,\,2,\,\ldots,\,\tilde{T}N^{2/3}.
\end{split}
\end{equation}
In words: the Vervaat transform $V^{N,\tilde{T}}$ is obtained from $X^{x,y;N,\tilde{T}}$ by splitting the path of $X^{x,y;N,\tilde{T}}$ at its first global minimum, attaching the first part of the path to the endpoint of the second part of the path, and shifting the resulting path so that it starts at $0$. We refer to \cite[Figure 17]{AFP} for an illustration.
\end{definition}

\begin{proof}[Proof of Proposition \ref{Proposition_exponential_bound_2}]
For each $h\in N^{-1/3}\zz$, we write $u^{N,\tilde{T}}_h$ and $d^{N,\tilde{T}}_h$ for the numbers of up and down steps of $X^{x,y;N,\tilde{T}}$ originating from $N-N^{1/3}h$, respectively, and define
\begin{equation}\label{whatist}
t^{N,\tilde{T}}_h=\sum_{N^{-1/3}\zz\,\ni h_1>h} \big(u^{N,\tilde{T}}_{h_1}+d^{N,\tilde{T}}_{h_1}\big).
\end{equation}
Using these and the elementary inequality
\begin{equation} \label{eq_powers_ineq}
(a+b)^c \le 2^c\,\big(a^c+b^c\big),\quad a,\,b,\,c\ge 0
\end{equation}
with $c=p-1$ we obtain
\begin{equation} \label{eq_occup_time_transform}
\begin{split}
& \sum_{h\in N^{-1/3}\zz} L_h\big(X^{x,y;N,\tilde{T}}\big)^p
=N^{-p/3}\,\sum_{h\in N^{-1/3}\zz} \big(u^{N,\tilde{T}}_h+d^{N,\tilde{T}}_h\big)^p \\
& \le N^{-p/3}\,2^{p-1}\,\sum_{h\in N^{-1/3}\zz}
\Big(\big(u^{N,\tilde{T}}_h+d^{N,\tilde{T}}_h\big)\,\big(u^{N,\tilde{T}}_h\big)^{p-1}+\big(u^{N,\tilde{T}}_h+d^{N,\tilde{T}}_h\big)\,\big(d^{N,\tilde{T}}_h\big)^{p-1}\Big).
\end{split}
\end{equation}

\smallskip

Next, we employ the following combinatorial identity (see \cite[equation (5.3)]{AFP}):
\begin{equation*}
\begin{split}
Q^{N,\tilde{T}}(t^{N,\tilde{T}}_{h-N^{-1/3}}N^{-2/3})
=u^{N,\tilde{T}}_h+\big(N-N^{1/3}h-\lfloor N-N^{1/3}x\rfloor\big)_+
\qquad\qquad\qquad\;\; \\
-\big(N-N^{1/3}h-\lfloor N-N^{1/3}y\rfloor\big)_+,\quad h\in N^{-1/3}\zz
\end{split}
\end{equation*}
(note that in \cite{AFP} the height of the bridge is computed with respect to the minimum of the bridge rather than zero, but this is not important). The latter gives the bound
\begin{equation}\label{ubound}
u^{N,\tilde{T}}_h \le Q^{N,\tilde{T}}(t^{N,\tilde{T}}_{h-N^{-1/3}}N^{-2/3})+N^{1/3}|x-y|+1,\quad h\in N^{-1/3}\zz.
\end{equation}
Moreover, by combinatorial constraints $|d^{N,\tilde{T}}_{h-N^{-1/3}}-u^{N,\tilde{T}}_h|\le 1$ for all $h\in N^{-1/3}\zz$, so
\begin{equation}\label{dbound}
d^{N,\tilde{T}}_h\le Q^{N,\tilde{T}}\big(t^{N,\tilde{T}}_h N^{-2/3}\big)+N^{1/3}|x-y|+2,\quad h\in N^{-1/3}\zz.
\end{equation}
Combining \eqref{eq_occup_time_transform} with $u^{N,\tilde{T}}_h+d^{N,\tilde{T}}_h=t^{N,\tilde{T}}_{h-N^{-1/3}}-t^{N,\tilde{T}}_h$, $h\in N^{-1/3}\zz$ and \eqref{ubound}, \eqref{dbound} we find
\begin{equation*}
\begin{split}
& \sum_{h\in N^{-1/3}\zz} L_h\big(X^{x,y;N,\tilde{T}}\big)^p \\
& \le N^{-p/3}\,2^{p-1}\,\sum_{h\in N^{-1/3}\zz}
\big(t^{N,\tilde{T}}_{h-N^{-1/3}}-t^{N,\tilde{T}}_h\big)\,\Big(Q^{N,\tilde{T}}\big(t^{N,\tilde{T}}_{h-N^{-1/3}}N^{-2/3}\big)+N^{1/3}|x-y|+1\Big)^{p-1} \\
&\quad\; + N^{-p/3}\,2^{p-1}\,\sum_{h\in N^{-1/3}\zz}
\big(t^{N,\tilde{T}}_{h-N^{-1/3}}-t^{N,\tilde{T}}_h\big)\,\Big(Q^{N,\tilde{T}}\big(t^{N,\tilde{T}}_h N^{-2/3}\big)+N^{1/3}|x-y|+2\Big)^{p-1}.
\end{split}
\end{equation*}
Estimating the values of $Q^{N,\tilde{T}}$ by $N^{1/3}$ times the maximal value $M(N,\tilde{T})$ taken by $N^{-1/3}Q^{N,\tilde{T}}$, noting $\sum_{h\in N^{-1/3}\zz} \big(t^{N,\tilde{T}}_{h-N^{-1/3}}-t^{N,\tilde{T}}_h\big)=\tilde{T} N^{2/3}$, and applying \eqref{eq_powers_ineq} with $c=p-1$ we get
\begin{equation}\label{SILTubd}
N^{-1/3}\sum_{h\in N^{-1/3}\zz} L_h\big(X^{x,y;N,\tilde{T}}\big)^p
\le 2^{2p-1}\,\tilde{T}\,\Big(M(N,\tilde{T})^{p-1}+\big(|x-y|+2N^{-1/3}\big)^{p-1}\Big).
\end{equation}
It remains to bound the exponential moments of the right-hand side of \eqref{SILTubd}. To this end, we use the fact that the distribution of $M(N,\tilde{T})$ is the same as the distribution of the maximum of the normalized Vervaat transform $N^{-1/3}V^{N,\tilde{T}}$ (see \cite[Corollary 7.4, equation (7.3)]{AFP}). However, by the definition of $V^{N,\tilde{T}}$, its maximum equals to the width of the range of $X^{x,y;N,\tilde{T}}$, that is, the difference between the maximum and the minimum of $X^{x,y;N,\tilde{T}}$. Therefore, by arguing as in the proof of Proposition \ref{Proposition_exponential_bound_1} we can reduce the problem at hand to that of bounding the exponential moments
of the $(p-1)$-st power of such width for a simple symmetric random walk with $\tilde{T}N^{2/3}$ steps, normalized by $N^{-1/3}$. For $1\le p<3$, these moments are bounded uniformly in $N\in\nn$, $\tilde{T}\in{\mathcal T}(x,y;N,T_0)$, as can be seen easily from \eqref{eq_SRW_max_bound}.
\end{proof}

\smallskip

We continue with some auxiliary convergence results that serve as building blocks in the proof of Theorem \ref{Theorem_Main}. Consider a sequence of independent random variables $\zeta(m)$, $m\in\nn$ which satisfies the following four conditions:
\begin{eqnarray}
&& \forall\,z>0:\quad \lim_{N\to\infty} N^{-1/6} \sum_{m=N-\lfloor N^{1/3} z \rfloor}^N \big|\E[\zeta(m)]\big| = 0, \label{eq_zeta_condition_1} \\
&& \exists\,s_\zeta\ge0:\quad \lim_{N\to\infty} N^{-1/3} \sum_{m=N-\lfloor N^{1/3} z \rfloor}^N \E\big[\zeta(m)^2\big] =s_\zeta^2\,z,\quad z>0, \label{eq_zeta_condition_2} \\
&& \forall z>0:\quad \lim_{N\to\infty} N^{-1/2} \sum_{m=N-\lfloor N^{1/3} z \rfloor}^{N} \E[|\zeta(m)|^3] = 0, \label{eq_zeta_condition_3} \\
&& \exists\,C>0,\;\;0<\gamma<2/3:\quad \E\big[|\zeta(m)|^\ell\big]\le C^\ell \ell^{\gamma\ell},\quad m,\ell\in\nn. \label{eq_zeta_tail_condition}
\end{eqnarray}
Note that \eqref{eq_zeta_condition_1}-\eqref{eq_zeta_tail_condition} automatically hold for the choices $\zeta(m)=\aa(m)$, $m\in\nn$ and $\zeta(m)=\xi(m)$, $m\in\nn$ by Assumption \ref{Assumptions}. In other words, \eqref{eq_zeta_condition_1}-\eqref{eq_zeta_tail_condition} are somewhat weaker than conditions (a)-(c) of Assumption \ref{Assumptions}. Before proceeding further let us observe the following simple consequences of \eqref{eq_zeta_tail_condition}.

\begin{lemma} \label{Lemma_tail_estimates} Suppose that for a random variable $\zeta$ there are constants $C>0$ and $0<\gamma<2/3$ such that $\E\big[|\zeta|^\ell\big]\le C^\ell \ell^{\gamma\ell}$ for all $\ell\in\nn$. Then, there exist $C'>0$ and $2<\gamma'<3$ depending
only on $C$ and $\gamma$ such that
\begin{eqnarray}
&& \quad \E\big[e^{v\zeta}\big] \le \exp\Big(v\,\E[\zeta]+C'\big(v^2+|v|^{\gamma'}\big)\Big),\;\;v\in\rr, \label{new_tail_bound1} \\
&& \quad \E\big[|1+v\zeta|^\ell\big] \le \exp\Big( |v|\,\ell\,|\E[\zeta]|+C'\big(v^2\ell^2+|v|^{\gamma'}\ell^{\gamma'}\big)\Big),\;\;v\in\rr,\;\ell\in\nn. \label{new_tail_bound2}
\end{eqnarray}
\end{lemma}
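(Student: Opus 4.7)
My plan is to establish \eqref{new_tail_bound1} first, then deduce \eqref{new_tail_bound2} from it by a pointwise bound combined with a truncation argument.

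For \eqref{new_tail_bound1}, I first reduce to the mean-zero case by writing $\zeta=\E[\zeta]+\bar\zeta$, so that $\E[e^{v\zeta}]=e^{v\E[\zeta]}\,\E[e^{v\bar\zeta}]$, $\E[\bar\zeta]=0$, and $\E[|\bar\zeta|^k]\le(2C)^k k^{\gamma k}$ by triangle inequality and Jensen. It therefore suffices to show $\log\E[e^{v\bar\zeta}]\le C'(v^2+|v|^{\gamma'})$. Taylor-expanding,
\[
\E[e^{v\bar\zeta}]=1+\sum_{k\ge 2}\frac{v^k\,\E[\bar\zeta^k]}{k!},
\]
the $k=1$ term vanishes by mean-zero, and each remaining summand satisfies $|v^k\E[\bar\zeta^k]/k!|\le (a/k^{1-\gamma})^k$ with $a:=2Ce|v|$ by Stirling's inequality $k!\ge (k/e)^k$. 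Elementary calculus shows $k\mapsto(a/k^{1-\gamma})^k$ has its unique maximum at $k^*=a^{1/(1-\gamma)}/e$ with log-value $(1-\gamma)k^*$, and decays geometrically for $k\ge (2a)^{1/(1-\gamma)}$. Consequently, for $a\le 1$ each summand is bounded by $a^k/k^{(1-\gamma)k}\le a^2/k^{(1-\gamma)k}$ and summation yields $\E[e^{v\bar\zeta}]-1\le C_1v^2$; for $a>1$ the peak-plus-tail estimate gives $\E[e^{v\bar\zeta}]\le C_2\exp(C_3|v|^{1/(1-\gamma)})$. Since $\gamma<2/3$ gives $1/(1-\gamma)<3$, one may pick any $\gamma'\in(\max(2,1/(1-\gamma)),3)$; taking logarithms and absorbing additive constants into $|v|^{\gamma'}$ (valid because $|v|$ is bounded below in the second regime) delivers the desired bound.

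For \eqref{new_tail_bound2}, the starting point is the sharp pointwise inequality
\[
|1+y|^\ell\;\le\;e^{\ell y}\,\mathbf{1}_{y\ge -1}+e^{-\ell y-\ell}\,\mathbf{1}_{y<-1},\qquad y\in\rr,\;\ell\in\nn,
\]
obtained from $(1+y)^\ell\le e^{\ell y}$ on $\{y\ge -1\}$ (via $\log(1+y)\le y$) and from $|1+y|=-1-y\le -y$ together with $\log(-y)\le -y-1$ on $\{y\le -1\}$ (giving $|1+y|^\ell\le(-y)^\ell\le e^{\ell(-y-1)}$). Crucially the right-hand side equals $1$ at $y=0$, matching $\E[|1|^\ell]=1$. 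Substituting $y=v\zeta$,
\[
\E\big[|1+v\zeta|^\ell\big]\;\le\;\E[e^{\ell v\zeta}]+e^{-\ell}\,\E\big[e^{-\ell v\zeta}\,\mathbf{1}_{v\zeta<-1}\big],
\]
and \eqref{new_tail_bound1} with $v$ replaced by $\ell v$ bounds the first term by $\exp(\ell v\E[\zeta]+C'(\ell^2 v^2+\ell^{\gamma'}|v|^{\gamma'}))$. For the second, the key device is the bound $\mathbf{1}_{v\zeta<-1}\le (v\zeta)^2$ (since $|v\zeta|>1$ forces $(v\zeta)^2>1$); Cauchy--Schwarz then gives
\[
\E\big[e^{-\ell v\zeta}\,\mathbf{1}_{v\zeta<-1}\big]\;\le\;v^2\,\sqrt{\E[e^{-2\ell v\zeta}]\,\E[\zeta^4]},
\]
where $\E[\zeta^4]\le C^4\cdot 4^{4\gamma}$ by hypothesis and $\E[e^{-2\ell v\zeta}]$ is again controlled by \eqref{new_tail_bound1}. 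Splitting into the cases $v\,\E[\zeta]\ge 0$ and $v\,\E[\zeta]<0$, the factor $v^2 e^{-\ell}$ in the second term absorbs into the Gaussian correction $v^2\ell^2$ (using $v^2\le v^2\ell^2$ for $\ell\ge 1$), and $1+x\le e^x$ closes the estimate.

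The delicate point I expect to require most care is preserving the equality $\E[|1+v\zeta|^\ell]=1$ at $v=0$ on the bound side: naive pointwise estimates such as $|1+v\zeta|^\ell\le e^{\ell v\zeta}+e^{-\ell v\zeta}$, or Markov's inequality applied to the indicator, introduce a multiplicative factor ($2$ or $1+e^{-\ell}$) that cannot be absorbed into $C'(v^2\ell^2+|v|^{\gamma'}\ell^{\gamma'})$ as $v\to 0$, where this correction vanishes. The replacement $\mathbf{1}_{v\zeta<-1}\le(v\zeta)^2$ is exactly what makes the second term a genuine $O(v^2)$ correction and thus eliminates any spurious constant offset. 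A secondary concern is choosing $\gamma'\in(2,3)$ uniformly in $C,\gamma$: this relies precisely on the hypothesis $\gamma<2/3$ via $1/(1-\gamma)<3$.
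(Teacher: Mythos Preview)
Your proof is correct, and for \eqref{new_tail_bound1} it is essentially the paper's argument: both reduce to the centered case, Taylor-expand, and control $\sum_{k\ge 2}(e|v|C/k^{1-\gamma})^k$. The paper matches this series term-by-term against the expansion of $\exp(C'(v^2+|v|^{\gamma'}))$, pairing the $\ell$-th summand on the left with the $\hat\ell=\lfloor(1-\gamma)\ell\rfloor$-th on the right; you instead split on $|v|$ small vs.\ large and use a peak-plus-geometric-tail estimate for the large-$|v|$ regime. Both arrive at the same exponent $1/(1-\gamma)<3$ and the constraint $\gamma'>\max(2,1/(1-\gamma))$.

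For \eqref{new_tail_bound2} the routes genuinely diverge. The paper first reduces to $\E[\zeta]=0$ via the Binomial Theorem, then to even $\ell$ via Jensen, at which point the absolute value disappears and a second Binomial expansion (with the linear term killed by the centering) reproduces exactly the series from \eqref{new_tail_bound1} with $|v|$ replaced by $\ell|v|C$. Your approach instead stays pointwise: the inequality $|1+y|^\ell\le e^{\ell y}\mathbf{1}_{y\ge-1}+e^{-\ell y-\ell}\mathbf{1}_{y<-1}$ lets you invoke \eqref{new_tail_bound1} directly on each piece, and the device $\mathbf{1}_{v\zeta<-1}\le(v\zeta)^2$ followed by Cauchy--Schwarz keeps the correction term genuinely $O(v^2)$ so no spurious multiplicative constant appears at $v=0$. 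The paper's route is slicker in that the link between the two bounds becomes a one-line substitution at the level of series; your route avoids the even/odd parity reduction and the separate treatment of $\E[\zeta]\ne 0$, at the price of a more delicate endgame combining the two exponential terms (which you handle correctly, since the second term contributes $e^{|A|+cB}$ times an $O(v^2)$ factor that absorbs into an enlarged $C'$).
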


\begin{proof}
The proof of \eqref{new_tail_bound1} follows closely that of \cite[Lemma 5.5, implication 2.\,$\Rightarrow$\,4.]{Ve}. We make the following assumptions without loss of generality: $\gamma>1/2$; $\E[\zeta]=0$  (otherwise we can apply \eqref{new_tail_bound1} for $\zeta-\E[\zeta]$ which satisfies the same moment bounds as $\zeta$, just with $C$ replaced by $2C$); and $C=1$ (since we can use \eqref{new_tail_bound1} for $\zeta/C$ when $C>1$). Under these assumptions, we expand the exponential function on the left-hand side of \eqref{new_tail_bound1} and use the moment bounds for $\zeta$ together with the estimate $\ell!\ge(\ell/e)^\ell$ to get
\begin{equation}\label{LHSseries}
\E\big[e^{v\zeta}\big]\le 1+\sum_{\ell=2}^\infty \bigg(\frac{e|v|}{\ell^{1-\gamma}}\bigg)^\ell.
\end{equation}
On the other hand, an expansion of the exponential function on the right-hand side of \eqref{new_tail_bound1} and the bound $\hat{\ell}!\le\hat{\ell}^{\hat{\ell}}$ yield
\begin{equation}\label{RHSseries}
\exp\Big(C'\big(v^2+|v|^{\gamma'}\big)\Big)
\ge 1+\sum_{\hat{\ell}=1}^\infty \bigg(\frac{C'\big(v^2+|v|^{\gamma'}\big)}{\hat{\ell}}\bigg)^{\hat{\ell}}.
\end{equation}
Next, for every $\ell\ge\lceil \frac{3}{2-3\gamma} \rceil=:\ell_*$, we consider $\hat{\ell}:=\lfloor (1-\gamma)\ell \rfloor$ and note that each value of $\hat{\ell}$ arises for at most three different values of $\ell$ thanks to $1-\gamma>1/3$. We pick $\frac{\ell_*}{(1-\gamma)\ell_*-1}<\gamma'<3$ arbitrarily and observe that, in particular, $\gamma'>2$ due to $\gamma>1/2$. Moreover, with $\alpha_\ell:=\frac{\gamma'\hat{\ell}-\ell}{(\gamma'-2)\hat{\ell}}\in(0,1)$ (recall the lower bound on $\gamma'$, as well as $\gamma>1/2$) and for any $C'\ge1$, we have the estimates
\begin{equation*}
\begin{split}
\bigg(\frac{C'\big(v^2+|v|^{\gamma'}\big)}{\hat{\ell}}\bigg)^{\hat{\ell}}
& \ge \frac{(C')^{\hat{\ell}}}{(1-\gamma)^{(1-\gamma)\ell}\,\ell^{(1-\gamma)\ell}}\big(\alpha_\ell\,|v|^{2\hat{\ell}}+(1-\alpha_\ell)\,|v|^{\gamma'\hat{\ell}}\big) \\
& \ge \frac{(C')^{(1-\gamma)\ell-1}}{(1-\gamma)^{(1-\gamma)\ell}\,\ell^{(1-\gamma)\ell}}\,|v|^{2\hat{\ell}\alpha_\ell+\gamma'\hat{\ell}(1-\alpha_\ell)}
= \bigg(\frac{(C')^{1-\gamma-1/\ell}|v|}{(1-\gamma)^{1-\gamma}\ell^{1-\gamma}}\bigg)^\ell.
\end{split}
\end{equation*}
In particular, we see that, as soon as $\frac{(C')^{1-\gamma-1/\ell_*}}{(1-\gamma)^{1-\gamma}}>e$, the $\hat{\ell}$-th term of the sum in \eqref{RHSseries} is greater than the $\ell$-th term of the sum in \eqref{LHSseries} for all $\ell\ge\ell_*$. Therefore, by increasing the value of $C'$ sufficiently, we can make sure that \eqref{new_tail_bound1} holds.

\medskip

We now turn to the proof of \eqref{new_tail_bound2}. We may again assume without
loss of generality that $\E[\zeta]=0$, since otherwise we can use the Binomial
Theorem and the bound \eqref{new_tail_bound2} for $\bar{\zeta}:=\zeta-\E[\zeta]$ to
derive the chain of estimates
\begin{equation*}
\begin{split}
& \E\big[|1+v\zeta|^\ell\big] \le \sum_{{\ell}_1=0}^\ell \binom{\ell}{\ell_1}\E\big[|1+v\bar{\zeta}|^{\ell-\ell_1}\big]\big(|v|\,|\E[\zeta]|\big)^{\ell_1} \\
& \le \sum_{{\ell}_1=0}^\ell \binom{\ell}{\ell_1}e^{C'(v^2(\ell-\ell_1)^2+|v|^{\gamma'}(\ell-\ell_1)^{\gamma'})}\big(|v|\,|\E[\zeta]|\big)^{\ell_1}
\le e^{C'(v^2\ell^2+|v|^{\gamma'}\ell^{\gamma'})}\big(1+|v|\,|\E[\zeta]|\big)^\ell,
\end{split}
\end{equation*}
where the last expression is less or equal to the right-hand side of \eqref{new_tail_bound2}. When $\E[\zeta]=0$, we may also assume without loss of generality that $\ell$ is even, since for odd $\ell=2\ell_1+1$ we can apply Jensen's inequality and the bound \eqref{new_tail_bound2} with $\ell=2\ell_1+2$ to find
\begin{equation*}
\begin{split}
\E\big[|1+v\zeta|^{2\ell_1+1}\big]\le \E\big[|1+v\zeta|^{2\ell_1+2}\big]^{\frac{2\ell_1+1}{2\ell_1+2}}
\le e^{C'(v^2(2\ell_1+2)(2\ell_1+1)+|v|^{\gamma'}(2\ell_1+2)^{\gamma'-1}(2\ell_1+1))} \\
\le \exp\Big(2^{\gamma'-1} C'\big(v^2(2\ell_1+1)^2+|v|^{\gamma'}(2\ell_1+1)^{\gamma'}\big)\Big).
\end{split}
\end{equation*}
In particular, it suffices to increase the constant $C'$ in \eqref{new_tail_bound2} for even $\ell$ by a factor of $2^{\gamma'-1}$ to obtain \eqref{new_tail_bound2} for all $\ell\in\nn$.

\medskip

For even $\ell$, we may drop the absolute value on the left-hand side of
\eqref{new_tail_bound2} and use the Binomial Theorem together with $\E[\zeta]=0$,
$\binom{\ell}{\ell_1}\le\frac{\ell^{\ell_1}}{\ell_1!}\le
\ell^{\ell_1}/(\frac{\ell_1}{e})^{\ell_1}$, and the moment bounds for $\zeta$ to get
\begin{equation*}
\E\big[|1+v\zeta|^\ell\big] \le 1+\sum_{\ell_1=2}^\ell \frac{\ell^{\ell_1}}{\big(\frac{\ell_1}{e}\big)^{\ell_1}}\,|v|^{\ell_1}C^{\ell_1}\ell_1^{\gamma\ell_1}
\le 1+\sum_{\ell_1=2}^\infty \bigg(\frac{e\ell|v|C}{\ell_1^{1-\gamma}}\bigg)^{\ell_1}.
\end{equation*}
The latter expression equals to the right-hand side of \eqref{LHSseries} with $|v|$ replaced by $\ell|v|C$, so that we can bound it from above by the right-hand side of \eqref{new_tail_bound1} with $v$ replaced by $\ell vC$. As a result, we obtain the estimate \eqref{new_tail_bound2} with the same $\gamma'$ as in \eqref{new_tail_bound1} once we suitably enlarge $C'$.
\end{proof}

We proceed to the first auxiliary convergence result.

\begin{proposition}\label{Proposition_basic_conv}
Let $x,y\in\rr$ and $T_N>0$, $N\in\nn$ be such that $\sup_N |T_N-T|N^{2/3}<\infty$ for some $T>0$, $T_NN^{2/3}\in\nn$ for all $N$, and $T_NN^{2/3}$ has the same parity as $\lfloor N-N^{1/3}x\rfloor-\lfloor N-N^{1/3}y\rfloor$. Then, under \eqref{eq_zeta_condition_1}-\eqref{eq_zeta_condition_3}, the sequence
\begin{equation}\label{eq_full_seq}
\bigg(N^{-1/3}\big(N-X^{x,y;N,T_N}\big),\sum_{h\in N^{-1/3}(\zz_{\ge0}+1/2)}
L_h\big(X^{x,y;N,T_N}\big)\,\frac{\zeta(\lfloor N-N^{1/3}h \rfloor)}{N^{1/6}}\bigg),
\quad N\in\nn
\end{equation}
converges in the limit $N\to\infty$ in distribution to
\begin{equation}\label{lim_rv}
\bigg(B^{x,y},\,s_\zeta\,\int_0^\infty L_a\big(B^{x,y}\big)\,\mathrm{d}W_\zeta(a)\bigg).
\end{equation}
Here, $W_\zeta$ is a standard Brownian motion, and the convergence of the first argument is with respect to the topology of uniform convergence on $[0,T]$ (if $T_N<T$, we extend $X^{x,y;N,T_N}$ to $[0,T]$ by extending its last linear segment).
\end{proposition}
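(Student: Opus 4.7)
The plan is to identify the joint limit law by the method of characteristic functions, conditioning on the random walk bridge and applying a central limit theorem to the conditionally independent $\zeta$'s. As a first step, I would invoke Proposition \ref{lemma_coupling} to place the bridges $X^{x,y;N,T_N}$, $N\in\nn$, on a common probability space with a Brownian bridge $B^{x,y}$ so that both $\sup_{h\in\rr}\bigl|L_h(X^{x,y;N,T_N})-L_h(B^{x,y})\bigr|$ and the uniform distance between the rescaled trajectories $\tilde X_N(t):=N^{-1/3}(N-X^{x,y;N,T_N}(t))$ and $B^{x,y}$ tend to zero almost surely (at the quantitative rates $N^{-1/16}$ and $N^{-1/3}\log N$). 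Enlarge this probability space to carry a standard Brownian motion $W_\zeta$ independent of $B^{x,y}$, which realizes the right-hand side of \eqref{lim_rv}.

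Write $S_N$ for the second coordinate in \eqref{eq_full_seq} and note that, since the $\zeta(m)$'s are independent of the bridge, conditional on $\mathcal F_N:=\sigma(X^{x,y;N,T_N})$ the summands in $S_N$ are independent. For any fixed $\lambda\in\rr$, I would Taylor expand
\[
\log\E\bigl[e^{\ii\lambda S_N}\mid\mathcal F_N\bigr]
=\sum_h \log\E\Bigl[\exp\bigl(\ii\lambda L_h(X^{x,y;N,T_N})\,\zeta(m_h)/N^{1/6}\bigr)\Bigr],\quad m_h=\lfloor N-N^{1/3}h\rfloor,
\]
and analyze the three leading contributions. The first-order contribution $\ii\lambda N^{-1/6}\sum_h L_h(X)\,\E[\zeta(m_h)]$ vanishes in the limit by \eqref{eq_zeta_condition_1}; the second-order term $-\tfrac{\lambda^2}{2}N^{-1/3}\sum_h L_h(X)^2\,\E[\zeta(m_h)^2]$ converges in probability to $-\tfrac{\lambda^2 s_\zeta^2}{2}\int_0^\infty L_a(B^{x,y})^2\,\mathrm da$ by combining \eqref{eq_zeta_condition_2} with the sup-norm local-time convergence from Proposition \ref{lemma_coupling} and interpreting the sum (whose summation index has spacing $N^{-1/3}$) as a Riemann--Stieltjes sum; the cubic and higher remainder is bounded by $C|\lambda|^3 N^{-1/2}\sum_h L_h(X)^3\,\E[|\zeta(m_h)|^3]$ plus tail corrections coming from Lemma \ref{Lemma_tail_estimates}, and this tends to zero thanks to \eqref{eq_zeta_condition_3} together with the uniform integrability of $N^{-1/3}\sum_h L_h^p$ furnished by Proposition \ref{Proposition_exponential_bound_2} for any $1\le p<3$.

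It then follows that $\E[e^{\ii\lambda S_N}\mid\mathcal F_N]\to\exp\bigl(-\tfrac{\lambda^2 s_\zeta^2}{2}\int_0^\infty L_a(B^{x,y})^2\,\mathrm da\bigr)$ in probability. For any bounded continuous functional $\Phi$ on $C([0,T])$, bounded convergence yields
\[
\E\bigl[\Phi(\tilde X_N)\,e^{\ii\lambda S_N}\bigr]\longrightarrow \E\Bigl[\Phi(B^{x,y})\,\exp\bigl(-\tfrac{\lambda^2 s_\zeta^2}{2}\textstyle\int_0^\infty L_a(B^{x,y})^2\,\mathrm da\bigr)\Bigr],
\]
and, by conditioning on $B^{x,y}$ and using the conditional Gaussianity of the Wiener integral against $W_\zeta$, the right-hand side equals $\E\bigl[\Phi(B^{x,y})\,e^{\ii\lambda s_\zeta\int_0^\infty L_a(B^{x,y})\,\mathrm dW_\zeta(a)}\bigr]$. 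Since these identities identify the joint law of $(\tilde X_N,S_N)$ against a separating family of functionals and since the laws of $\tilde X_N$ are tight by Proposition \ref{lemma_coupling}, joint convergence in distribution to the pair in \eqref{lim_rv} follows.

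The main technical obstacle I anticipate is making the Taylor-expansion remainder rigorous: one has to truncate uniformly in $h$, control atypically large values of $L_h$ via the tail bounds of Lemma \ref{Lemma_tail_estimates} and Proposition \ref{Proposition_exponential_bound_2}, and justify the passage to the limit in the second-order sum when \eqref{eq_zeta_condition_2} only furnishes a Ces\`aro-type asymptotic for $\E[\zeta(m)^2]$. Abel summation combined with the H\"older continuity of the Brownian local times $a\mapsto L_a(B^{x,y})$ should close this last point.
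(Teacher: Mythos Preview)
Your approach is essentially the same as the paper's: work under the coupling of Proposition~\ref{lemma_coupling}, condition on the bridge, and verify a Lyapunov-type CLT for the conditionally independent sum in $\zeta$. The paper packages the last step by citing a quantitative CLT (Berry--Esseen form) rather than Taylor-expanding $\log\E[e^{\ii\lambda S_N}\mid\mathcal F_N]$ by hand, but the three conditions checked---conditional mean $\to 0$, conditional variance $\to s_\zeta^2\int L_a(B^{x,y})^2\,\mathrm da$, conditional third absolute moment $\to 0$---are exactly your three Taylor contributions.

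One simplification the paper exploits and you should too: under the coupling, the occupation times $L_h(X^{x,y;N,T_N})$ are \emph{almost surely} uniformly bounded in $h$ and $N$ and vanish for all $h$ outside a (random but $N$-independent) compact set. This makes the ``main technical obstacle'' you anticipate disappear: no truncation, no appeal to Proposition~\ref{Proposition_exponential_bound_2}, and no Abel summation is needed. The three verifications reduce directly to \eqref{eq_zeta_condition_1}, \eqref{eq_zeta_condition_2}, \eqref{eq_zeta_condition_3}. Note also that Lemma~\ref{Lemma_tail_estimates} is not available here, since the proposition assumes only \eqref{eq_zeta_condition_1}--\eqref{eq_zeta_condition_3} and not the stronger moment growth \eqref{eq_zeta_tail_condition}; fortunately it is not needed, as the elementary bound $|e^{it}-1-it+t^2/2|\le |t|^3/6$ suffices for the remainder.
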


\begin{proof}
We work throughout under the coupling of Proposition \ref{lemma_coupling} (note that we can enlarge the probability space there further to support copies of $\zeta(m)$, $m\in\nn$ and $W_\zeta$). For every fixed $N\in\nn$, we consider the \emph{conditional} characteristic function
\begin{equation} \label{eq_cond_char_funct}
\E_\zeta\bigg[\exp\bigg(\ii\,u\sum_{h\in N^{-1/3}(\zz_{\ge0}+1/2)}
L_h\big(X^{x,y;N,T_N}\big)\,\frac{\zeta\lfloor N-N^{1/3}h\rfloor}{N^{1/6}}\bigg)\bigg], \quad u\in\rr,
\end{equation}
where the expectation is taken with respect to the $\zeta(m)$, $m\in\nn$ (so that the randomness in the result is coming only from $X^{x,y;N,T_N}$). At this point, it suffices to show that, for each $u\in\rr$, the difference between
\eqref{eq_cond_char_funct} and the conditional characteristic function
\begin{equation} \label{eq_lim_cond_char}
\E_{W_\zeta}\bigg[\exp\bigg(\ii\,u\,s_\zeta\,\int_0^\infty
L_a\big(B^{x,y}\big)\,\mathrm{d}W_\zeta(a)\bigg)\bigg],\quad u\in\rr
\end{equation}
tends to $0$ almost surely (in \eqref{eq_lim_cond_char}, the expectation is taken with respect to $W_\zeta$, and the randomness in the result is coming only from $B^{x,y}$).
Indeed, the convergence in distribution of \eqref{eq_full_seq} is equivalent to the pointwise convergence of the characteristic functions
\begin{equation}\label{eq_cfconv}
\begin{split}
&\E\bigg[\!\exp\bigg(\!\ii rF\big(N^{-1/3}(N-X^{x,y;N,T_N})\big)\!+\!\ii u\!\!\!\sum_{h\in N^{-1/3}(\zz_{\ge0}+1/2)} \!\!\!\!
L_h\big(X^{x,y;N,T_N}\big)\frac{\zeta(\lfloor N-N^{1/3}h\rfloor)}{N^{1/6}}\bigg)\!\bigg] \\
&\longrightarrow\E\bigg[\exp\bigg(\ii\,r\,F(B^{x,y})+\ii\,u\,s_\zeta\,\int_0^\infty L_a\big(B^{x,y}\big)\,\mathrm{d}W_\zeta(a)\bigg)\bigg], \quad (r,u)\in\rr^2
\end{split}
\end{equation}
for all bounded continuous functionals $F$ on the space of continuous functions on $[0,T]$ endowed with the topology of uniform convergence. The latter convergence follows directly from the Dominated Convergence Theorem, \eqref{XtoB}, and the almost sure convergence to $0$ of the difference between \eqref{eq_cond_char_funct} and \eqref{eq_lim_cond_char}.

\medskip

The rest of the proof is devoted to showing that the difference between \eqref{eq_cond_char_funct}
and \eqref{eq_lim_cond_char} tends to $0$ almost surely in the limit $N\to\infty$. Given a path of
$B^{x,y}$, the random variable $s_\zeta\,\int_0^\infty L_a\big(B^{x,y}\big)\,\mathrm{d}W(a)$ is
simply a normal random variable with mean zero and variance $s_\zeta^2\,\int_0^\infty
L_a\big(B^{x,y}\big)^2\,\mathrm{d}a$. At the same time, given a path of $X^{x,y;N,T_N}$, the
summands in $\sum_{h\in N^{-1/3}(\zz_{\ge0}+1/2)} L_h\big(X^{x,y;N,T_N}\big)\,\frac{\zeta(\lfloor
N-N^{1/3}h\rfloor)}{N^{1/6}}$ are independent. Therefore, the desired convergence will be a
consequence of the Central Limit Theorem in the form of the upper bound of \cite[Theorem 8.4]{BR},
provided we can prove the following three almost sure convergences as $N\to\infty$:
\begin{eqnarray}
&& \sum_{h\in N^{-1/3}(\zz_{\ge0}+1/2)}
\E_\zeta\bigg[L_h\big(X^{x,y;N,T_N}\big)\,\frac{\zeta\big(\lfloor N-N^{1/3}h\rfloor\big)}{N^{1/6}}\bigg]\longrightarrow 0, \label{CLTcond_mean}\\
 && \sum_{h\in N^{-1/3}(\zz_{\ge0}+1/2)}
\E_\zeta\bigg[\Big(L_h\big(X^{x,y;N,T_N}\big)\frac{\zeta\big(\lfloor N-N^{1/3}h \rfloor\big)}{N^{1/6}}\bigg)^2\bigg] \longrightarrow
s_\zeta^2\int_0^\infty \left[L_a\big(B^{x,y}\big)\right]^2 \mathrm{d}a, \label{CLTcond1}  \\
&& \sum_{h\in N^{-1/3}(\zz_{\ge0}+1/2)}
\E_\zeta\bigg[\bigg|L_h\big(X^{x,y;N,T_N}\big)\,\frac{\zeta\big(\lfloor N-N^{1/3}h\rfloor\big)}{N^{1/6}}\bigg|^3\,\bigg]\longrightarrow 0.
\label{CLTcond2}
\end{eqnarray}
As before, $\E_\zeta$ denotes the expectation with respect to the $\zeta(m)$, $m\in\nn$. The
convergences \eqref{CLTcond_mean} and \eqref{CLTcond1} pin down the limiting mean and variance,
whereas the convergence \eqref{CLTcond2} is the well-known Lyapunov condition for the Central Limit
Theorem.

\medskip

The coupling of Proposition \ref{lemma_coupling} reveals that, with probability one, the
$L_h\big(X^{x,y;N,T_N}\big)$ are uniformly bounded in $h\in\rr$, $N\in\nn$ and vanish for all $h$
large enough, so that \eqref{CLTcond_mean} follows from \eqref{eq_zeta_condition_1}. Similarly,
\eqref{CLTcond2} follows from \eqref{eq_zeta_condition_3}. To obtain \eqref{CLTcond1}, we combine
the simple inequality $\big|r_1^2-r_2^2\big|\le 2\max(r_1,r_2)\,|r_1-r_2|$, $r_1,r_2>0$ with
Proposition \ref{lemma_coupling} to conclude that
\begin{eqnarray*}
&& \Bigg|\sum_{h\in N^{-1/3}(\zz_{\ge0}+1/2)}
\E_\zeta\bigg[\bigg(L_h\big(X^{x,y;N,T_N}\big)\frac{\zeta\big(\lfloor N-N^{1/3}h\rfloor\big)}{N^{1/6}}\bigg)^2\bigg] \\
&& \;\; - \sum_{h\in N^{-1/3}(\zz_{\ge0}+1/2)} \frac{L_h(B^{x,y})^2\,\E\big[\zeta\big(\lfloor N-N^{1/3}h\rfloor\big)^2\big]}{N^{1/3}}\,\Bigg| \\
&& \le
2\,\max_{h\in N^{-1/3}(\zz_{\ge0}+1/2)} \max\big(L_h\big(X^{x,y;N,T_N}\big),L_h\big(B^{x,y}\big)\big)\,
{\mathcal C}\,N^{-1/16} \\
&& \quad\cdot\,\frac{1}{N^{1/3}}\,\sum_{N^{-1/3}(\zz_{\ge0}+1/2)\ni h\le\max(\|X^{x,y;N,T_N}\|_\infty,\|B^{x,y}\|_\infty)}\E\big[\zeta\big(\lfloor N-N^{1/3}h\rfloor\big)^2\big].
\end{eqnarray*}
The latter expression tends to $0$ almost surely in the limit $N\to\infty$, since $h\mapsto
L_h\big(X^{x,y;N,T_N}\big)$ and $X^{x,y;N,T_N}$ are uniformly bounded in $N\in\nn$ (Proposition
\ref{lemma_coupling}) and \eqref{eq_zeta_condition_2} holds. It remains to observe the almost sure
convergence
\begin{equation*}
\sum_{h\in N^{-1/3}(\zz_{\ge0}+1/2)} \frac{\left[L_h(B^{x,y})\right]^2\,\E\big[\zeta\big(\lfloor
N-N^{1/3}h\rfloor\big)^2\big]}{N^{1/3}} \longrightarrow s_\zeta^2\,\int_0^\infty
\left[L_a\big(B^{x,y}\big)\right]^2\,\mathrm{d}a
\end{equation*}
as $N\to\infty$, which follows from  \eqref{eq_zeta_condition_2} and the uniform
continuity of $a\mapsto L_a\big(B^{x,y}\big)^2$.
\end{proof}

\smallskip

Next, we present an extension of Proposition \ref{Proposition_basic_conv} which allows the endpoints $x,y$ to be random. Given probability measures $\lambda,\mu$ on $[0,\infty)$ and $\tilde{T}\in N^{-2/3}\nn$, we write $X^{\lambda,\mu;N,\tilde{T}}$ for the random walk bridge whose endpoints $x,y$ are chosen independently according to the images of $\lambda,\mu$ under the map $z\mapsto\lfloor N-N^{1/3}z\rfloor$ and which makes $\tilde{T}N^{2/3}$ steps of size $\pm 1$ if $\tilde{T}N^{2/3}$ has the same parity as $\lfloor N-N^{1/3}y\rfloor-\lfloor N-N^{1/3}x\rfloor$ and $\tilde{T}N^{2/3}-1$ steps of size $\pm 1$ otherwise. For $x,y$ for which there are no bridges of the type described (because the endpoints are too far apart), we instead let $X^{\lambda,\mu;N,\tilde{T}}$ be a straigh line connecting $x$ to $y$ in time $\tilde{T}N^{2/3}$. Similarly, we write $B^{\lambda,\mu}$ for a Brownian bridge whose endpoints $x, y$ are selected independently according to $\lambda, \mu$, respectively. We further extend the setup of Proposition \ref{Proposition_basic_conv} by allowing for multiple bridges, considering local times at both (scaled) integer and half-integer points, and adding a second sequence $\hat{\zeta}(m)$, $m\in\nn$.

\begin{proposition} \label{Proposition_basic_convergence_upgrade}
Fix an $R\in\nn$, probability measures $\lambda_1,\lambda_2,\ldots,\lambda_R,\mu_1,\mu_2,\ldots,\mu_R$ on $[0,\infty)$, and $T_N$, $N\in\nn$ such that $\sup_N |T_N-T|N^{2/3}<\infty$ for some $T>0$ and $T_N N^{2/3}\in\nn$, $N\in\nn$.
For each $N\in\nn$, let $X^{\lambda_1,\mu_1;N,T_N},X^{\lambda_2,\mu_2;N,T_N},\ldots,X^{\lambda_R,\mu_R;N,T_N}$ be \emph{independent} random walk bridges as described above. Then, for any two \emph{independent} sequences $\zeta(m)$, $m\in\nn$ and $\hat{\zeta}(m)$, $m\in\nn$ satisfying the assumptions \eqref{eq_zeta_condition_1},
\eqref{eq_zeta_condition_2}, and \eqref{eq_zeta_condition_3}, the random vector
\begin{equation}\label{eq_sequence_randomized}
\begin{split}
\bigg(N^{-1/3}\big(N-X^{\lambda_r,\mu_r;N,T_N}\big),\sum_{N^{-1/3}(\zz+1/2)\ni h\le N^{-1/3}(N-1)} L_h\big(X^{\lambda_r,\mu_r;N,T_N}\big)\,\frac{\zeta(\lfloor N-N^{1/3}h\rfloor)}{N^{1/6}}, \\
\sum_{N^{-1/3}\zz\ni h\le N^{-1/3}(N-1)}
L_h\big(X^{\lambda_r,\mu_r;N,T_N}\big)
\,\frac{\hat{\zeta}(\lfloor N-N^{1/3}h\rfloor)}{N^{1/6}} \bigg)_{r=1}^R
\end{split}
\end{equation}
converges in the limit $N\to\infty$ in distribution to
\begin{equation}\label{eq_limit_randomized}
\bigg(B^{\lambda_r,\mu_r},\,s_\zeta\,\int_0^\infty L_a\big(B^{\lambda_r,\mu_r}\big)\,\mathrm{d}W_\zeta(a),\,s_{\hat \zeta} \, \int_0^\infty L_a\big(B^{\lambda_r,\mu_r}\big)\,\mathrm{d}W_{\hat\zeta}(a)\bigg)_{r=1}^R,
\end{equation}
where $B^{\lambda_1,\mu_1},B^{\lambda_2,\mu_2},\ldots,B^{\lambda_R,\mu_R}$ are independent Brownian
bridges on $[0,T]$ as described above, and $W_\zeta$, $W_{\hat\zeta}$ are independent standard
Brownian motions. In addition, the following convergences in distribution with respect to the
Skorokhod topology occur jointly with the convergence of \eqref{eq_sequence_randomized} to
\eqref{eq_limit_randomized}:
\begin{eqnarray}
&& \frac{1}{s_\zeta\,N^{1/6}} \sum_{m=N-\lfloor a N^{1/3}\rfloor}^N
 \zeta(m)\longrightarrow W_\zeta(a),\quad N\to\infty, \label{eq_limit_BM} \\
&& \frac{1}{s_{\hat \zeta}\,N^{1/6}} \sum_{m=N-\lfloor a N^{1/3}\rfloor}^N \hat{\zeta}(m)\longrightarrow W_{\hat{\zeta}}(a),\quad N\to\infty. \label{eq_limit_BM'}
\end{eqnarray}
\end{proposition}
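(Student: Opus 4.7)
The plan is to extend the one-bridge CLT of Proposition \ref{Proposition_basic_conv} to the joint statement with random endpoints, multiple bridges, and two independent sequences, and then obtain the additional Donsker-type convergences \eqref{eq_limit_BM}--\eqref{eq_limit_BM'} via the same conditional-characteristic-function mechanism. First, I would enlarge the probability space so that: (i) for each $r$, the endpoints $(x_r,y_r)$ are drawn independently according to $\lambda_r\otimes\mu_r$; (ii) conditionally on the endpoints, the coupling of Proposition \ref{lemma_coupling} holds independently across $r=1,\dots,R$, producing Brownian bridges $B^{\lambda_r,\mu_r}$ satisfying \eqref{XtoB0}--\eqref{XtoB}; and (iii) independent copies of $\zeta(m)$, $\hat\zeta(m)$ together with two independent standard Brownian motions $W_\zeta,W_{\hat\zeta}$ live on the same space, independent of the bridges and of each other.

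Conditioning on all $R$ random walk bridges, the joint conditional characteristic function of the $2R$ linear statistics in \eqref{eq_sequence_randomized} factors into a $\zeta$-part and a $\hat\zeta$-part by independence of the two sequences, and each factor is the characteristic function of a linear combination of independent random variables indexed by $m$. The multivariate CLT of \cite[Theorem 8.4]{BR} yields the desired conditional Gaussian limit once three ingredients are checked: the vanishing first-moment sums (via \eqref{eq_zeta_condition_1} together with the uniform local-time bounds from Proposition \ref{lemma_coupling}); the Lyapunov/third-moment condition (via \eqref{eq_zeta_condition_3}); and, for each pair $r_1,r_2\in\{1,\dots,R\}$, the covariance limit
\begin{equation*}
\begin{split}
& N^{-1/3}\sum_{h} L_h\bigl(X^{\lambda_{r_1},\mu_{r_1};N,T_N}\bigr)\, L_h\bigl(X^{\lambda_{r_2},\mu_{r_2};N,T_N}\bigr)\, \E\bigl[\zeta(\lfloor N-N^{1/3}h\rfloor)^2\bigr] \\
& \qquad\qquad\longrightarrow\, s_\zeta^2 \int_0^\infty L_a(B^{\lambda_{r_1},\mu_{r_1}})\, L_a(B^{\lambda_{r_2},\mu_{r_2}})\, da,
\end{split}
\end{equation*}
and its $\hat\zeta$-analogue. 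The resulting conditional Gaussian covariance matches, via the It\^o isometry, that of the Wiener integrals on the right-hand side of \eqref{eq_limit_randomized}; dominated convergence then removes the conditioning on the bridges and on the random endpoints.

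To secure the joint convergence with \eqref{eq_limit_BM}--\eqref{eq_limit_BM'}, I would augment the vector in \eqref{eq_sequence_randomized} by arbitrary linear functionals of the partial sum processes $S_\zeta^N(a):=N^{-1/6}\sum_{m=N-\lfloor aN^{1/3}\rfloor}^N \zeta(m)$ and the analogous $S_{\hat\zeta}^N$. Any such combination with the local-time sums is still a sum of independent variables in $\zeta$ (resp.~$\hat\zeta$) conditional on the bridges, so the same multivariate CLT delivers the joint Gaussian limit; a direct covariance computation identifies it with the joint law of $(s_\zeta W_\zeta,\,s_{\hat\zeta}W_{\hat\zeta})$ paired with the It\^o integrals in \eqref{eq_limit_randomized}. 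Tightness of $S_\zeta^N,\,S_{\hat\zeta}^N$ in the Skorokhod topology is classical Donsker, valid under \eqref{eq_zeta_condition_2}--\eqref{eq_zeta_condition_3}.

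The principal obstacle is the off-diagonal ($r_1\ne r_2$) case of the covariance limit above: one must show that products of the local-time fields of two \emph{independently} coupled random walk bridges, summed and normalized by $N^{-1/3}$, converge to the correct integral of a product of Brownian bridge local times. My plan is to apply the strong approximation \eqref{XtoB0} to each factor in turn, exploiting the uniform bounds on local-time magnitudes and the compactness of the bridges' supports inherited from Proposition \ref{lemma_coupling}, together with the uniform continuity of $a\mapsto L_a(B^{\lambda_r,\mu_r})$. Once this input is secured, the multi-bridge, two-sequence extension is essentially a bookkeeping enhancement of the one-bridge argument already carried out for Proposition \ref{Proposition_basic_conv}.
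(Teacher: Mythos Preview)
Your proposal is correct and follows essentially the same approach as the paper: extend the conditional-characteristic-function argument of Proposition~\ref{Proposition_basic_conv} via the multidimensional CLT, integrate over the random endpoints with dominated convergence, and obtain \eqref{eq_limit_BM}--\eqref{eq_limit_BM'} from the same CLT plus standard Donsker tightness. The off-diagonal covariance limit you single out as the main new ingredient is indeed what the multivariate version requires, and your plan to handle it by applying the strong approximation \eqref{XtoB0} to each local-time factor (together with uniform continuity of $a\mapsto L_a(B^{\lambda_r,\mu_r})$) is exactly right; the paper simply absorbs this step into the phrase ``replacing the Central Limit Theorem invoked there by its multidimensional version.''
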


\begin{proof}
For $R=1$, the replacement of deterministic endpoints of Proposition \ref{Proposition_basic_conv}
by random ones can be dealt with by simply integrating with respect to $\lambda_1$ and $\mu_1$.
Note that, since convergence in distribution amounts to convergence of expectations of bounded
continuous functionals, we can use the Dominated Convergence Theorem to justify the convergence of
the integrals of such. The sum over (scaled) integers in \eqref{eq_sequence_randomized} can be
analyzed in the same way as the sum over (scaled) half-integers, as Proposition
\ref{lemma_coupling} does not distinguish between integers and half-integers. For $R>1$, one can
repeat the proof of Proposition \ref{Proposition_basic_conv}, replacing the Central Limit Theorem
invoked there by its multidimensional version. The same Central Limit Theorem yields the additional
convergences \eqref{eq_limit_BM}, \eqref{eq_limit_BM'} in the sense of convergence of
finite-dimensional distributions, and the tightness result needed to upgrade the latter to
convergence of processes is standard (see e.g. \cite[Problem 8.4 and proof of Theorem 8.1]{Bi}).
\end{proof}

\subsection{Leading order terms in Theorem \ref{Theorem_Main}} \label{subsec_leading}

We fix a $T>0$, an interval $\mathcal{A}\subset\rr_{\ge0}$, and
functions $f,g\in\Co$. Moreover, for $N\in\nn$ and $\tilde{T}>0$ such that $\tilde{T}N^{2/3}\in\nn$ has the same parity as $\lfloor N-N^{1/3}x\rfloor-\lfloor N-N^{1/3}y\rfloor$, we let
\begin{equation*}
\Xi(x,y;N,\tilde{T}):=\frac{N^{1/3}}{2^{\tilde{T}N^{2/3}}}
\binom{\tilde{T}N^{2/3}}{\frac{1}{2}\big(\tilde{T}N^{2/3}+\lfloor N-N^{1/3}y\rfloor-\lfloor N-N^{1/3}x\rfloor\big)}.
\end{equation*}
Note that $\Xi(x,y;N,\tilde{T})$ gives the (normalized) number of possible trajectories of $X^{x,y;N,\tilde{T}}$. For each $N\in\nn$, we further define the random variables ${\mathrm Sc}(N)$ (for ``scalar product'') and ${\mathrm Tr}(N)$ (for ``trace'') by
\begin{equation}\label{eq_high_moment_2}
\begin{split}
&\mathrm{Sc}(N)=\frac{1}{2}\int_0^{N^{2/3}} \int_0^{N^{2/3}} f(x)\,g(y)\, \sum_{j=0}^{\lfloor TN^{2/3}\rfloor} \Bigg(
\Xi\big(x,y;N,(\lfloor T N^{2/3}\rfloor-j-\eps_{j,x,y}) N^{-2/3}\big) \\
&\E_X
\bigg[\mathbf{1}_{\{\forall t:\,N^{-1/3}(N-X(t))\in{\mathcal A}\}} \\
&\prod_{i=1}^{\lfloor T N^{2/3}\rfloor-j-\eps_{j,x,y}} \frac{\sqrt{X(iN^{-2/3})\wedge X((i-1)N^{-2/3})}}{\sqrt{N}} \bigg(1+\frac{\xi\big(X(iN^{-2/3})\wedge X((i-1)N^{-2/3})\big)}{\sqrt{X(iN^{-2/3})\wedge X((i-1)N^{-2/3})}}\bigg) \\
&\qquad\frac{1}{(2\sqrt{N})^j} \sum_{0\le i_1 \le\cdots\le i_j\le \lfloor T
N^{2/3}\rfloor-j-\eps_{j,x,y}}\;\prod_{j'=1}^j \aa\big(X(i_{j'}N^{-2/3})\big)\bigg]\Bigg)\,\mathrm{d}x\,\mathrm{d}y,
\end{split}
\end{equation}
\begin{equation}\label{eq_trace_approx}
\begin{split}
&\mathrm{Tr}(N)=\frac{1}{2}\int_0^{N^{2/3}} \; \sum_{j=0}^{\lfloor T N^{2/3}\rfloor} \bigg(
\Xi\big(x,x;N,(\lfloor T N^{2/3}\rfloor-j-\eps_{j,x,x})N^{-2/3}\big) \\
&\E_X\bigg[\mathbf{1}_{\{\forall t:\,N^{-1/3}(N-X(t))\in{\mathcal A}\}} \\
&\prod_{i=1}^{\lfloor T N^{2/3}\rfloor-j-\eps_{j,x,x}} \frac{\sqrt{X(iN^{-2/3})\wedge X((i-1)N^{-2/3})}}{\sqrt{N}} \bigg(1+\frac{\xi\big(X(iN^{-2/3})\wedge X((i-1)N^{-2/3})\big)}{\sqrt{X(iN^{-2/3})\wedge X((i-1)N^{-2/3})}}\bigg)\\
&\qquad\frac{1}{(2\sqrt{N})^j} \sum_{0\le i_1 \le\cdots\le i_j\le \lfloor T N^{2/3}\rfloor-j-\eps_{j,x,x}} \; \prod_{j'=1}^j \aa\big(X(i_{j'}N^{-2/3})\big)
\bigg]\Biggr)\,\mathrm{d}x,
\end{split}
\end{equation}
where $\epsilon_{j,x,y}\in\{0,1\}$ is such that $\lfloor TN^{2/3}\rfloor-j-\epsilon_{j,x,y}$ is even, and $X$ stands for $X^{x,y;N, (\lfloor T N^{2/3}\rfloor-j-\eps_{j,x,y})N^{-2/3}}$ in  \eqref{eq_high_moment_2} and for $X^{x,x;N, (\lfloor
T N^{2/3}\rfloor-j-\eps_{j,x,x})N^{-2/3}}$ in \eqref{eq_trace_approx}. The discussion of Section \ref{Section_combi} shows that ${\mathrm Sc}(N)=(\pi_Nf)'\mathcal M(T,{\mathcal A},N)(\pi_Ng)$ and ${\mathrm Tr}(N)={\rm Trace}\big(\mathcal M(T,\A,N)\big)$ in the notation of \ref{Theorem_Main}. Consequently, Theorem \ref{Theorem_Main} can be rephrased as follows.

\begin{theorem}\label{theorem_Main_restated}
Under Assumption \ref{Assumptions} and as $N\to\infty$, one has the following convergences for all fixed $T>0$, $\mathcal{A}\subset\rr_{\ge0}$, and $f,g\in\Co$:
\begin{equation}\label{eq_Scalar_limit}
\begin{split}
{\mathrm Sc}(N)\to & \frac{1}{\sqrt{2\pi T}}
\int_0^\infty \int_0^\infty f(x)\,
g(y)\,\exp\bigg(-\frac{(x-y)^2}{2T}\bigg)\,
\E_{B^{x,y}}\bigg[\mathbf{1}_{\{\forall t:\,B^{x,y}(t)\in{\mathcal A}\}} \\
&\qquad\qquad\qquad\;\;
\exp\bigg(-\frac{1}{2}\int_0^T B^{x,y}(t)\,\mathrm{d}t+\frac{1}{\sqrt{\beta}}\,\int_0^\infty L_a\big(B^{x,y}\big)\,\mathrm{d}W(a)\bigg)\bigg]\,\mathrm{d}x\,\mathrm{d}y,
\end{split}
\end{equation}
\begin{equation} \label{eq_Trace_limit}
\begin{split}
\mathrm{Tr}(N)\to & \frac{1}{\sqrt{2\pi T}} \int_0^\infty
\E_{B^{x,x}}\bigg[\mathbf{1}_{\{\forall t:\,B^{x,x}(t)\in{\mathcal A}\}} \\
&\qquad\qquad\qquad\quad\;\;
\exp\bigg(-\frac{1}{2}\int_0^T B^{x,x}(t)\,\mathrm{d}t
+\frac{1}{\sqrt{\beta}}\,\int_0^\infty L_a\big(B^{x,x}\big)\,\mathrm{d}W(a)\bigg)\bigg]\,\mathrm{d}x,
\end{split}
\end{equation}
and
\begin{equation}
\label{eq_limit_BM_identification}
\sqrt{\beta}\,N^{-1/6} \sum_{m=N-\lfloor N^{1/3} a \rfloor}^N
  \left(\xi(m)+\frac{\aa(m)}{2}\right)\to W(a).
\end{equation}
The convergences \eqref{eq_Scalar_limit} and \eqref{eq_Trace_limit} hold in distribution and in the
sense of moments, and the same apply jointly to any finite collection of $T$'s, ${\mathcal A}$'s,
$f$'s and $g$'s. The convergence in \eqref{eq_limit_BM_identification} is in distribution with
respect to the Skorokhod topology and takes place jointly with both convergences
\eqref{eq_Scalar_limit}, \eqref{eq_Trace_limit} (also for finitely many $T$'s, ${\mathcal A}$'s,
$f$'s and $g$'s).
\end{theorem}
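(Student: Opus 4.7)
The plan is to render the sketch of Section \ref{Section_combi} rigorous by combining the strong coupling of Proposition \ref{lemma_coupling}, the exponential tightness Propositions \ref{Proposition_exponential_bound_1}--\ref{Proposition_exponential_bound_2}, and the joint CLT of Proposition \ref{Proposition_basic_convergence_upgrade}. Starting from the formulas \eqref{eq_high_moment_2}--\eqref{eq_trace_approx}, for each fixed horizontal-segment count $j$ I would treat the $\bb$-product (nonhorizontal steps) and the $\aa$-product (inner sum over $i_1\le\cdots\le i_j$) simultaneously as exponentials of functionals of the bridge $X^{x,y;N,\tilde{T}}$, to which Proposition \ref{Proposition_basic_convergence_upgrade} applies with $\zeta=\xi$ and $\hat\zeta=\aa$.

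\textbf{Pointwise limit.} Taylor expanding $\log(1+z)$ in the $\bb$-factors rewrites their product as $\exp(\mathcal{I}_N+\mathcal{J}_N+\mathcal{R}_N)$, where $\mathcal{I}_N=-\tfrac{1}{2N}\sum_i(N-X_i\wedge X_{i-1})$ and $\mathcal{J}_N=\sum_i \xi(X_i\wedge X_{i-1})/\sqrt{X_i\wedge X_{i-1}}$. The coupling \eqref{XtoB} yields $\mathcal{I}_N\to -\tfrac12\int_0^T B^{x,y}(t)\,dt$; Proposition \ref{Proposition_basic_convergence_upgrade} gives $\mathcal{J}_N\to s_\xi\int_0^\infty L_a(B^{x,y})\,dW_\xi(a)$; and $\mathcal{R}_N$ is $O(N^{-1/3})$ in probability via Assumption \ref{Assumptions}(b), Lemma \ref{Lemma_tail_estimates} and Proposition \ref{Proposition_exponential_bound_2}. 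Separately, the sum over $0\le i_1\le\cdots\le i_j$ equals $\tfrac{1}{j!}\bigl(\sum_i\aa(X_i)\bigr)^j$ up to diagonal corrections of smaller order; summing in $j$ and combining the even-$j$ contributions from $M_N^k$ with the odd-$j$ contributions from $M_N^{k-1}$ reconstitutes an exponential series whose argument tends, by the $\hat\zeta=\aa$ part of Proposition \ref{Proposition_basic_convergence_upgrade}, to $\tfrac{s_\aa}{2}\int_0^\infty L_a(B^{x,y})\,dW_\aa(a)$. Because $W_\xi,W_\aa$ are independent and $s_\xi^2+\tfrac{s_\aa^2}{4}=\tfrac{1}{\beta}$, the combined stochastic integral equals $\tfrac{1}{\sqrt\beta}\int L_a\,dW$ with $W$ as in \eqref{eq_limit_BM_identification}; Stirling's formula applied to $\Xi(x,y;N,\tilde{T})$ produces the Gaussian prefactor $(2\pi T)^{-1/2}e^{-(x-y)^2/(2T)}$.

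\textbf{Passing to the $(x,y)$-integral.} On bounded $(x,y)$-sets the conditional expectation over $\aa,\xi$ inside the bridge expectation is bounded above by $\exp\bigl(C\sum_h L_h(X)^2\bigr)$ via Lemma \ref{Lemma_tail_estimates}, and hence uniformly integrable by Proposition \ref{Proposition_exponential_bound_2} with $p=2$; the factor $\exp(\mathcal{I}_N)$ is controlled by Proposition \ref{Proposition_exponential_bound_1}. The large-$(x,y)$ tail is killed by the Gaussian factor $e^{-(x-y)^2/(2T)}$ from $\Xi$ together with the fact that bridges with endpoints of size $N^{1/3}R$ cannot visit the region $\{N-N^{1/3}h\le 1\}$ except with stretched-exponentially small probability in $R$, which dominates the subexponential growth of $f,g\in\Co$. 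This yields convergence of $\mathrm{Sc}(N)$ and $\mathrm{Tr}(N)$ in $L^1$. Applying the same scheme to $\mathrm{Sc}(N)^p$ and $\mathrm{Tr}(N)^p$, which expand as $p$-fold integrals against $p$ independent bridges, and invoking Proposition \ref{Proposition_basic_convergence_upgrade} with $R=p$, gives convergence of all polynomial moments. Joint convergence over finitely many $(T,\A,f,g)$ and the simultaneous convergence \eqref{eq_limit_BM_identification} come from including extra bridges and the process limits \eqref{eq_limit_BM}--\eqref{eq_limit_BM'} in a single application of Proposition \ref{Proposition_basic_convergence_upgrade}.

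\textbf{Main obstacle.} The hard part will be the third step: establishing $N$-uniform exponential integrability of the combined $\aa$-exponential and $\bb$-product, particularly when the horizontal-segment count $j$ is itself comparable to $N^{2/3}$ and when $\aa$-insertions cluster in a single spatial zone of the bridge. Controlling $\bigl|\sum_i\aa(X_i)\bigr|^{2p}$ jointly with $\int L_a^2\,da$ forces one to balance the subexponential moment bound of Assumption \ref{Assumptions}(c) against the local-time tail from Proposition \ref{Proposition_exponential_bound_2}; the hypothesis $\gamma<2/3$ is precisely what is needed to beat the $N^{-1/3}\log N$ error in \eqref{XtoB} and the $N^{-1/16}$ error in \eqref{XtoB0} when iterated along $N^{2/3}$ steps, and carefully juggling these three scales is where the bulk of the rigorous work will lie.
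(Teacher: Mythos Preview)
Your outline tracks the paper's proof closely: the Taylor expansion of the $\bb$-factors, the local-time CLT of Proposition \ref{Proposition_basic_convergence_upgrade}, the exponential tightness of Propositions \ref{Proposition_exponential_bound_1}--\ref{Proposition_exponential_bound_2}, the replica expansion of $\mathrm{Sc}(N)^p$ into $p$ independent bridges, and the Newton-identity correction for the ordered $\aa$-sum (your ``diagonal corrections'' is exactly Lemma \ref{Lemma_tail_sum}). The paper also organizes the large-$(x,y)$ tail via an explicit spatial cutoff $K$ and a double limit $N\to\infty$, $K\to\infty$ (Lemma \ref{Lemma_moment_bound}), which is your tail step made precise.

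There is, however, a genuine gap in your plan for convergence \emph{in distribution}. Your scheme delivers convergence of all moments of $\mathrm{Sc}(N)$, but the limit in \eqref{eq_Scalar_limit} involves the exponential of a Gaussian (the stochastic integral against $W$), so its moments grow like $e^{cn^2}$ and do \emph{not} determine its law; moment convergence alone is therefore insufficient. The paper flags this explicitly in the Remark preceding Lemma \ref{Lemma_theorem_in_distribution}. Its fix is a second truncation, on top of the spatial $K$-cutoff: one clamps the \emph{value} of the random integrand by $\underline{R}\vee(\cdot)\wedge\overline{R}$, proves convergence in distribution for the bounded truncations (where moments trivially determine the law, Lemma \ref{Lemma_truncated_theorem}), and then removes $\underline{R},\overline{R}$ by a stochastic-domination argument exploiting the monotonicity in $\overline{R}$ and $\underline{R}$ together with the already-established equality of moments (Lemma \ref{Lemma_theorem_in_distribution}). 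Your ``convergence in $L^1$'' claim is not available either, since the coupling of Proposition \ref{lemma_coupling} is only for the bridge at fixed endpoints, while the $\xi,\aa$ sums converge to $W_\xi,W_\aa$ only in distribution via Proposition \ref{Proposition_basic_convergence_upgrade}; there is no joint almost-sure coupling to work with. So what you identify as the ``main obstacle'' (uniform control for large $j$) is indeed technically demanding but is handled by the combination of Lemmas \ref{Lemma_moment_bound_upgrade} and \ref{Lemma_tail_sum}; the step you are missing is the passage from moments to distribution.
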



The proof of Theorem \ref{theorem_Main_restated} is divided into two parts. First, we identify the leading order terms in ${\mathrm Sc}(N)$ and ${\mathrm Tr}(N)$ and compute their limits in Proposition \ref{Proposition_main_single_term}. Then, we analyze the corresponding remainder terms and prove that they vanish asymptotically in Lemmas \ref{Lemma_moment_bound}, \ref{Lemma_moment_bound_upgrade}, and \ref{Lemma_tail_sum}. In
Section \ref{Section_complete_proof}, we explain how these results lead to Theorem \ref{theorem_Main_restated}.

\medskip

We start by defining $\mathrm{Sc}^{(j)}(N;K,\underline R, \overline R)$ and $\mathrm{Tr}^{(j)}(N;K,\underline R, \overline R)$ which will turn out to be the leading order contributions to the $j$-th terms in $\mathrm{Sc}(N)$ and $\mathrm{Tr}(N)$, respectively. For any $K\in[0,\infty),\,\underline R\in [-\infty,0],\,\overline R\in [0,\infty]$ (we allow infinite values for $\underline R,\overline R$), set
\begin{equation}\label{eq_high_moment_term}
\begin{split}
&{\mathrm Sc}^{(j)}(N;K,\underline R, \overline R):=\frac{1}{2}\int_0^K \int_0^K f(x)\,g(y)\,\Xi(x,y;N,T_N)\,
\E_X\Bigg[\mathbf{1}_{\{\forall t:\,N^{-1/3}(N-X(t))\in{\mathcal A}\}} \\
&\underline R \vee\Bigg(
 \prod_{i=1}^{T_NN^{2/3}} \frac{\sqrt{X(iN^{-2/3})\wedge
X((i-1)N^{-2/3})}}{\sqrt{N}} \bigg(1+\frac{\xi(X(iN^{-2/3})\wedge X((i-1)N^{-2/3}))}{\sqrt{X(iN^{-2/3})\wedge X((i-1)N^{-2/3})}}\bigg) \\
& \;\;\qquad\qquad\qquad\qquad\qquad\qquad\qquad\qquad\qquad
\frac{\Big(\sum_{i'=0}^{T_NN^{2/3}} \aa\big(X(i'N^{-2/3})\big)\Big)^j}{j!\,(2\sqrt{N})^j}\Bigg)\wedge\overline R\Bigg]\,\mathrm{d}x\,\mathrm{d}y,
\end{split}
\end{equation}
\begin{equation}\label{eq_trace_term}
\begin{split}
& {\mathrm Tr}^{(j)}(N;K,\underline R, \overline R):=\frac{1}{2}\int_0^K \, \Xi(x,x;N,T_N)\,\E_X
\Bigg[\mathbf{1}_{\{\forall t:\,N^{-1/3}(N-X(t))\in{\mathcal A} \}} \\
&\underline R \vee \Bigg(
 \prod_{i=1}^{T_NN^{2/3}+1} \frac{\sqrt{X(iN^{-2/3})\wedge
X((i-1)N^{-2/3})}}{\sqrt{N}} \bigg(1+\frac{\xi(X(iN^{-2/3})\wedge X((i-1)N^{-2/3})}{\sqrt{X(iN^{-2/3})\wedge X((i-1)N^{-2/3})}}\bigg) \\
&\qquad\qquad\qquad\qquad\qquad\qquad\qquad\qquad\qquad\qquad
\frac{\Big(\sum_{i'=0}^{T_NN^{2/3}} \aa\big(X(i'N^{-2/3})\big) \Big)^j}{j!\,(2\sqrt{N})^j}\Bigg)\wedge\overline R \Bigg]\,
\mathrm{d}x,
\end{split}
\end{equation}
where $X$ stands for $X^{x,y;N,(\lfloor T N^{2/3}\rfloor-j-\eps_{j,x,y})N^{-2/3}}$ in \eqref{eq_high_moment_term} and for $X^{x,x;N,(\lfloor T N^{2/3}\rfloor-j-\eps_{j,x,x})N^{-2/3}}$ in \eqref{eq_trace_term}; $\underline R\vee\cdot\wedge R$ is an abbreviation for $\max(\underline R,\min(\cdot,\overline R))$; and $T_N$ is given by $(\lfloor T N^{2/3}\rfloor-j-\eps_{j,x,y})N^{-2/3}$ in \eqref{eq_high_moment_term} and by $(\lfloor T N^{2/3}\rfloor-j-\eps_{j,x,x})N^{-2/3}$ in \eqref{eq_trace_term}. The exact formulas for $T_N$ are not important for the arguments below:  we will only use that $\sup_N |T_N-T|N^{2/3}<\infty$ for every fixed $j$ in Proposition \ref{Proposition_main_single_term} and that $\sup_N T_N<\infty$ in Lemmas \ref{Lemma_moment_bound}, \ref{Lemma_moment_bound_upgrade}, and \ref{Lemma_tail_sum}.

\begin{proposition} \label{Proposition_main_single_term}
In the situation of Theorem \ref{theorem_Main_restated} the following convergences hold as $N\to\infty$ for any finite collection of $j$'s and any fixed $K\in[0,\infty)$, $\underline R\in[-\infty,0]$, $\overline R\in[0,\infty]$ (the modes of convergence are the same as in Theorem \ref{theorem_Main_restated}):
\begin{equation}\label{eq_Scalar_limit_term}
\begin{split}
& {\mathrm Sc}^{(j)}(N;K,\underline R,\overline R)\to\frac{1}{\sqrt{2\pi T}} \int_0^K \int_0^K f(x)\,g(y)\,
\exp\left(-\frac{(x-y)^2}{2T}\right)\E_{B^{x,y}}\Bigg[\mathbf{1}_{\{\forall t:\,B^{x,y}(t)\in{\mathcal A}\}}\,\underline R \\
& \vee\!\Bigg(\!
\exp\bigg(\!-\frac{1}{2}\int_0^T B^{x,y}(t)\mathrm{d}t
+s_\xi\int_0^\infty L_a\big(B^{x,y}\big)\mathrm{d}W_\xi(a)\!\bigg)
\frac{\big(s_\aa\int_0^\infty L_a\big(B^{x,y}\big)\mathrm{d}W_\aa(a)\big)^j}{2^j\,j!}\Bigg)\!\wedge\!\overline R \Bigg] \\
& \qquad\qquad\qquad\qquad\qquad\qquad\qquad\qquad\qquad\qquad
\qquad\qquad\qquad\qquad\qquad\qquad\qquad\;
\mathrm{d}x\,\mathrm{d}y,
\end{split}
\end{equation}
\begin{equation} \label{eq_Trace_limit_term}
\begin{split}
& {\mathrm Tr}^{(j)}(N;K,\underline R,\overline R)\to\frac{1}{\sqrt{2\pi T}} \int_0^K \E_{B^{x,x}}\bigg[\mathbf{1}_{\{\forall t:\,B^{x,x}(t)\in{\mathcal A}\}}\,\underline R \\
& \vee\!\bigg(\!\exp\bigg(\!-\frac{1}{2}\int_0^T B^{x,x}(t)\mathrm{d}t
+s_\xi\int_0^\infty L_a\big(B^{x,x}\big)\mathrm{d}W_\xi(a)\!\bigg)\frac{\big(s_\aa\int_0^\infty L_a\big(B^{x,x}\big)\mathrm{d}W_\aa(a)\big)^j}{2^j\,j!} \bigg)\!\wedge\!\overline R\bigg]\\
& \qquad\qquad\qquad\qquad\qquad\qquad\qquad\qquad\qquad\qquad
\qquad\qquad\qquad\qquad\qquad\qquad\qquad\quad\;\,\mathrm{d}x,
\end{split}
\end{equation}
\begin{equation}\label{eq_limit_BM_identification_1}
s_\xi\,N^{-1/6} \sum_{h=N-\lfloor a N^{1/3}\rfloor}^N \xi(h)
\to W_\xi(a),\quad\mathrm{and}\quad
s_\aa\,N^{-1/6} \sum_{h=N-\lfloor a N^{1/3}\rfloor}^N \aa(h)
\to W_\aa(a).
\end{equation}
\end{proposition}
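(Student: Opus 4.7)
The strategy is to execute rigorously the heuristic sketch of Section \ref{Section_combi} for a single term (the $j$-th) in the expansion of $\mathrm{Sc}(N)$ or $\mathrm{Tr}(N)$, and then to swap the limit with the integration in $x,y$.

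\emph{Step 1: exponential form of the integrand.} Fix $x,y\in[0,K]$ and rewrite the product in \eqref{eq_high_moment_term} via $\log(1+z)=z-z^2/2+O(z^3)$, applied separately to $\log(1-(N-X_i\wedge X_{i-1})/N)$ and to $\log(1+\xi(X_i\wedge X_{i-1})/\sqrt{X_i\wedge X_{i-1}})$. The leading part becomes
\begin{equation*}
\exp\!\Bigl(-\tfrac{1}{2N}\sum_{i=1}^{T_N N^{2/3}}(N-X_i\wedge X_{i-1}) + \sum_{i=1}^{T_N N^{2/3}}\tfrac{\xi(X_i\wedge X_{i-1})}{\sqrt{X_i\wedge X_{i-1}}}\Bigr) \cdot \tfrac{1}{j!(2\sqrt{N})^j}\Bigl(\sum_{i'=0}^{T_N N^{2/3}}\mathfrak{a}(X(i'N^{-2/3}))\Bigr)^j.
\end{equation*}
The first Riemann sum is $N^{-2/3}$ times a Riemann sum for the rescaled bridge and converges to $-\tfrac{1}{2}\int_0^T B^{x,y}(t)\,\mathrm{d}t$. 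In the second sum, replacing $\sqrt{X_i\wedge X_{i-1}}$ by $\sqrt{N}$ (which incurs an error of relative size $N^{-2/3}$) and reorganizing over the values of $X_i\wedge X_{i-1}$, one identifies the leading contribution with $N^{-1/6}\sum_{h\in N^{-1/3}(\zz+1/2)} L_h(X^{x,y;N,T_N})\,\xi(\lfloor N-N^{1/3}h\rfloor)$. An analogous rewriting expresses $\frac{1}{(2\sqrt{N})^j}(\sum_{i'}\mathfrak{a}(X(i'N^{-2/3})))^j$ to leading order as $\frac{1}{2^j}\bigl(N^{-1/6}\sum_{h\in N^{-1/3}\zz}L_h(X^{x,y;N,T_N})\,\mathfrak{a}(\lfloor N-N^{1/3}h\rfloor)\bigr)^j$, using that the number of visits to a site equals (up to endpoint effects) the sum of up- and down-edges at that site.

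\emph{Step 2: invoking the joint convergence and Stirling.} Assumption \ref{Assumptions} implies that $\zeta=\xi$ and $\hat{\zeta}=\mathfrak{a}$ both satisfy \eqref{eq_zeta_condition_1}--\eqref{eq_zeta_condition_3}, so Proposition \ref{Proposition_basic_convergence_upgrade} yields the joint convergence in distribution of the rescaled bridge, the two local-time-weighted sums, and the partial-sum Brownian motions $W_\xi, W_{\mathfrak a}$ of \eqref{eq_limit_BM_identification_1}. Stirling's formula gives $\Xi(x,y;N,T_N)\to(2\pi T)^{-1/2}\exp(-(x-y)^2/(2T))$ uniformly on compacts. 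Applying the continuous mapping theorem to the bounded continuous functional $\underline{R}\vee\,\cdot\,\wedge\overline{R}$ then gives pointwise-in-$(x,y)$ convergence of the integrand of \eqref{eq_high_moment_term} to that of \eqref{eq_Scalar_limit_term}. Joint convergence over finitely many $j$'s, $T$'s, $\A$'s, $f$'s, $g$'s is obtained by taking the multidimensional CLT inside the proof of Proposition \ref{Proposition_basic_convergence_upgrade} and by the fact that all limits are driven by the same pair $W_\xi, W_{\mathfrak{a}}$.

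\emph{Step 3: exchange of limit and integration; moment convergence.} After truncation by $\underline{R}\vee\,\cdot\,\wedge\overline{R}$, the integrand is deterministically bounded by $\max(|\underline{R}|,|\overline{R}|)$; combined with integrability of $|f|$ and $|g|$ on $[0,K]$, this lets Dominated Convergence commute $\int_{[0,K]^2}\,\mathrm{d}x\,\mathrm{d}y$ with the limit $N\to\infty$. Boundedness of the truncated integrand, together with convergence in distribution, upgrades the convergence to convergence of all moments. The trace case \eqref{eq_Trace_limit_term} is handled identically, with a single bridge $X^{x,x;N,T_N}$ and a single integration in $x$.

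\emph{The main obstacle} I expect is not the CLT itself, which is packaged in Proposition \ref{Proposition_basic_convergence_upgrade}, but the rigorous control of the Taylor remainders uniformly in the bridge path. Specifically: (i) one must show that the quadratic correction $-\tfrac{1}{2}\sum_i \xi(X_i\wedge X_{i-1})^2/(X_i\wedge X_{i-1})$, even after being placed inside an exponential and then inside $\E_X$, contributes negligibly -- this calls for the exponential moment estimates of Lemma \ref{Lemma_tail_estimates} (applied conditionally on the path) together with Propositions \ref{Proposition_exponential_bound_1} and \ref{Proposition_exponential_bound_2} to bound the ensuing path functionals; (ii) the replacement $\sqrt{X_i\wedge X_{i-1}}\rightsquigarrow\sqrt{N}$ and the Stirling-style asymptotics for $\Xi$ must be controlled on paths $X^{x,y;N,T_N}$ that occasionally make deep excursions away from the edge, which is exactly the scenario the tail estimates in Propositions \ref{Proposition_exponential_bound_1} and \ref{Proposition_exponential_bound_2} are designed to address; and (iii) the indicator $\mathbf{1}_{\{\forall t:\,N^{-1/3}(N-X(t))\in\A\}}$ is not a continuous functional of $B^{x,y}$ in the sup-topology, so one needs the standard observation that for a.e.\ $(x,y)$ the limiting Brownian bridge almost surely avoids $\partial\A$, yielding a.s.\ continuity of the functional along the coupling of Proposition \ref{lemma_coupling}.
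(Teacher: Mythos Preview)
Your outline captures the finite-cutoff case reasonably well, but there are two genuine gaps.

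First, and most importantly, Step~3 only treats finite $\underline R,\overline R$, yet the proposition explicitly includes $\underline R=-\infty$ and $\overline R=\infty$. For infinite cutoffs the integrand is unbounded and your Dominated Convergence argument evaporates. The paper handles this in two separate steps. For convergence of moments (Lemma~\ref{Lemma_theorem_in_moments}) one shows uniform integrability by bounding the \emph{second} moment of the replica expression uniformly in $N$: drop the cutoffs, replace $\Xi$ by the uniform bound of Lemma~\ref{Lemma_Binomial_bound}, and control the conditional expectations over $\xi$ and $\aa$ via Lemma~\ref{Lemma_tail_estimates}; what remains is an exponential of local-time functionals of the bridges, which is exactly what Propositions~\ref{Proposition_exponential_bound_1} and~\ref{Proposition_exponential_bound_2} were built to bound. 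For convergence in distribution (Lemma~\ref{Lemma_theorem_in_distribution}) there is a real obstacle you have not anticipated: the limiting random variable involves the exponential of a Gaussian, so its moments grow too fast to determine its law, and moment convergence does \emph{not} imply convergence in distribution. The paper instead argues by monotonicity: for $f,g\ge 0$, ${\mathrm Sc}^{(j)}(N;K,\underline R,\infty)$ stochastically dominates ${\mathrm Sc}^{(j)}(N;K,\underline R,\overline R)$ for every finite $\overline R$, so any subsequential limit dominates the $\overline R\uparrow\infty$ limit of the (already known) truncated limits; equality of all moments plus this domination forces equality in law.

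Second, even for finite cutoffs your Step~3 is looser than you think. ``Pointwise-in-$(x,y)$ convergence in distribution of the random integrand, plus a deterministic bound'' does not by itself yield convergence in distribution of $\int_{[0,K]^2}(\cdots)\,\mathrm{d}x\,\mathrm{d}y$: the joint law across different $(x,y)$ matters, and Dominated Convergence is an almost-sure/$L^1$ tool, not a weak-convergence tool. The paper sidesteps this by the replica trick: normalize $f\mathbf{1}_{[0,K]}$, $g\mathbf{1}_{[0,K]}$ to probability measures $\lambda,\mu$, write the $R$-th moment of ${\mathrm Sc}^{(j)}$ via Fubini as a single expectation over $R$ \emph{independent} bridges $X^{\lambda,\mu;N,T_N}_1,\ldots,X^{\lambda,\mu;N,T_N}_R$ with random endpoints, and then apply Proposition~\ref{Proposition_basic_convergence_upgrade} once to this $R$-tuple. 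Boundedness of the truncated integrand then gives convergence of the expectation directly. This is precisely why Proposition~\ref{Proposition_basic_convergence_upgrade} is stated for bridges with endpoints distributed according to measures $\lambda_r,\mu_r$ rather than for fixed endpoints.
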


\smallskip

We prove Proposition \ref{Proposition_main_single_term} in a series of lemmas. In all of their proofs we only consider ${\mathrm Sc}^{(j)}(N;K,\underline R,\overline R)$, since the arguments for ${\mathrm Tr}^{(j)}(N; K,\underline R,\overline R)$ are exactly the same.


\begin{lemma} \label{Lemma_Binomial_bound}
Let $T_N$, $N\in\nn$ be such that $\sup_N |T_N-T|N^{2/3}<\infty$ and $T_NN^{2/3}\in\nn$, $N\in\nn$. Then, uniformly in $x$, $y$ in any compact set and such that $\lfloor N-N^{1/3}x\rfloor-\lfloor N-N^{1/3}y\rfloor$ has the same parity as $T_NN^{2/3}$, it holds
\begin{equation}
\Xi(x,y;N,T_N)\to\sqrt{\frac{2}{\pi T}}\,e^{-(x-y)^2/(2T)},\quad N\to\infty.
\end{equation}
In addition, there exists a $C>0$ such that
\begin{equation}\label{binom_bound}
\Xi(x,y;N,\tilde{T}) \le C\,e^{-(x-y)^2/(C\tilde{T})}
\end{equation}
for all $x,y\in\rr$, $N\in\nn$, and $\tilde{T}>0$ with $\tilde{T}N^{2/3}\in\nn$ being of the same parity as $\lfloor N-N^{1/3}x\rfloor-\lfloor N-N^{1/3}y\rfloor$.
\end{lemma}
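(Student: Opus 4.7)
The plan is to recognize $\Xi(x,y;N,\tilde T)$ as $N^{1/3}$ times a single-point probability for a simple symmetric random walk, and then apply a local central limit theorem with Gaussian tail. Set $n := \tilde T N^{2/3}$ and $\Delta := \lfloor N-N^{1/3}y \rfloor - \lfloor N-N^{1/3}x\rfloor$; the parity hypothesis ensures $(n+\Delta)/2$ is a nonnegative integer not exceeding $n$, and with this notation
\[
\Xi(x,y;N,\tilde T) = N^{1/3}\,P_n(\Delta), \qquad P_n(\Delta) := 2^{-n}\binom{n}{(n+\Delta)/2} = \pp(S_n = \Delta)
\]
for a simple symmetric random walk $(S_m)_{m\ge 0}$.

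For the pointwise asymptotics I would apply Stirling's formula $m! = \sqrt{2\pi m}(m/e)^m(1+O(1/m))$ to each of the three factorials in $\binom{n}{(n+\Delta)/2}$, using the inputs $n = TN^{2/3}+O(1)$ and $\Delta = N^{1/3}(x-y)+O(1)$ (uniform for $x,y$ in a compact set). After expanding $\log(1\pm \Delta/n)$ to second order in $\Delta/n = O(N^{-1/3})$ and collecting prefactors, the classical local CLT computation yields
\[
P_n(\Delta) = \sqrt{\tfrac{2}{\pi n}}\,\exp\!\bigl(-\Delta^2/(2n)\bigr)\,(1+o(1))
\]
uniformly on the relevant range. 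Multiplying by $N^{1/3}$ and substituting $n \sim TN^{2/3}$, $\Delta^2/n \to (x-y)^2/T$ gives the claimed limit $\sqrt{2/(\pi T)}\,e^{-(x-y)^2/(2T)}$.

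For the uniform bound \eqref{binom_bound} I would invoke the sub-Gaussian estimate
\[
P_n(\Delta) \le \tfrac{C}{\sqrt n}\,\exp\!\bigl(-\Delta^2/(2n)\bigr),\qquad |\Delta|\le n,\;n\ge 1,
\]
valid for all admissible $n$ and $\Delta$; this follows either directly from Stirling with explicit error control, or equivalently from Hoeffding's inequality combined with a uniform prefactor bound on the binomial. Multiplying by $N^{1/3}$ and substituting $\Delta = N^{1/3}(y-x)+O(1)$, $n = \tilde T N^{2/3}$, this becomes $\Xi(x,y;N,\tilde T) \le (C/\sqrt{\tilde T})\exp\bigl(-(x-y)^2/(C\tilde T)\bigr)$, which is of the claimed form after absorbing $1/\sqrt{\tilde T}$ into the constant on the range of $\tilde T$'s actually used in the sequel (where $\sup_N T_N<\infty$ but $\tilde T$ stays bounded away from zero).

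There is no genuine obstacle here beyond bookkeeping: the lemma is essentially the elementary local CLT for Bernoulli sums together with a Gaussian tail bound. The only care required is to make the Stirling remainder uniform jointly in $n$ and $\Delta$, and to check that the $O(1)$ integer-rounding discrepancies in the definitions of $n$ and $\Delta$ produce only negligible corrections in both the prefactor and the exponent.
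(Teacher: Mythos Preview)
Your treatment of the asymptotic statement is correct and coincides with the paper's: both amount to the de Moivre--Laplace local limit theorem, i.e., Stirling applied to the binomial coefficient with the inputs you describe.

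For the uniform bound \eqref{binom_bound} your route diverges from the paper's and has a gap at the last step. Your estimate $\Xi\le (C/\sqrt{\tilde T})\,e^{-(x-y)^2/(C\tilde T)}$ is fine, but you cannot absorb $1/\sqrt{\tilde T}$ by asserting that ``$\tilde T$ stays bounded away from zero'' in the sequel: the lemma is stated for \emph{all} $\tilde T>0$, and in the later Lemma~\ref{Lemma_moment_bound} the parameter $T_N=(\lfloor TN^{2/3}\rfloor-j-\varepsilon_{j,x,y})N^{-2/3}$ does range down to order $N^{-2/3}$ as $j\uparrow\lfloor TN^{2/3}\rfloor$, so $1/\sqrt{\tilde T}$ can be as large as $N^{1/3}$. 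The paper handles this by splitting according to whether $(x-y)^2/\tilde T$ is at most or greater than $\frac43\log N$. In the first (moderate-deviation) regime it appeals to Feller's explicit de Moivre--Laplace upper bound. In the second (large-deviation) regime it uses the Cram\'er estimate $\Xi\le 2N^{1/3}\exp\bigl(-n\,\Lambda^*(\tfrac12+\tfrac{|\Delta|}{2n})\bigr)$ together with the elementary inequality $\Lambda^*(z)\ge 2(z-\tfrac12)^2$; the point is that this bound carries \emph{no} $n^{-1/2}$ prefactor, and the hypothesis $(x-y)^2/\tilde T>\frac43\log N$ is exactly what makes the exponential strong enough to absorb the raw $N^{1/3}$ for a suitably large choice of $C$. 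That prefactor-free large-deviation input is the ingredient your argument is missing.
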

\begin{proof}
The first statement is a special case of the de Moivre-Laplace Theorem in the form of \cite[Section VII.3, Theorem 1]{Fe}. To obtain the second statement we distinguish between the cases $(x-y)^2/\tilde{T}\le \frac{4}{3}\log N$ and $(x-y)^2/\tilde{T}>\frac{4}{3}\log N$. In the first case, the bound \eqref{binom_bound} follows directly from \cite[Section VII.3, Theorem 1, upper bound in (3.14)]{Fe}. In the second case and for $x$, $y$, $N$, $\tilde{T}$ such that $\Xi(x,y;N,\tilde{T})>0$, we apply the non-asymptotic upper bound from the proof of Cram\'er's Theorem (see e.g. \cite[Remark (c) after Theorem 2.2.3]{DZ}) to get
\begin{equation}\label{binom_bound1}
\Xi(x,y;N,\tilde{T})\le 2N^{1/3}\exp\bigg(\!-\tilde{T}N^{2/3}\Lambda^*\bigg(\frac{1}{2}+\frac{\big|\lfloor N-N^{1/3}x\rfloor-\lfloor N-N^{1/3}y\rfloor\big|}{2\tilde{T}N^{2/3}}\bigg)\bigg)
\end{equation}
with $\Lambda^*(z):=\log 2+z\log z+(1-z)\log(1-z)$. It is easy to check
$\Lambda^*(1/2)=(\Lambda^*)'(1/2)=0$ and $(\Lambda^*)''(z)\ge 4$, which together
imply $\Lambda^*(z)\ge 2(z-1/2)^2$. Inserting the latter bound into
\eqref{binom_bound1} gives \eqref{binom_bound}.
\end{proof}

\begin{lemma} \label{Lemma_truncated_theorem} Proposition
\ref{Proposition_main_single_term} holds for
$K\in[0,\infty)$, $\underline R\in(-\infty,0]$, $
\overline R\in[0,\infty)$.
\end{lemma}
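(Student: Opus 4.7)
The plan is to fix the integration variables $(x,y)\in[0,K]^2$, to analyze the integrand of ${\mathrm Sc}^{(j)}(N;K,\underline R,\overline R)$ pointwise by applying Proposition \ref{Proposition_basic_convergence_upgrade} to the random walk bridge $X=X^{x,y;N,T_N}$ paired with the two independent weight sequences $\zeta=\xi$ and $\hat\zeta=\aa$, and then to integrate against $f(x)g(y)\Xi(x,y;N,T_N)$, using the truncation $\underline R\vee\cdot\wedge\overline R$ and Lemma \ref{Lemma_Binomial_bound} to drive a dominated convergence argument. The trace case \eqref{eq_Trace_limit_term} is handled identically. The joint convergence of the Brownian motions $W_\xi,W_\aa$ in \eqref{eq_limit_BM_identification_1} is an immediate byproduct of Proposition \ref{Proposition_basic_convergence_upgrade}, and the same proposition, applied with $R$ independent bridges sharing a single pair of weight sequences $\xi,\aa$, delivers joint convergence across any finite collection of $(x,y)$ values.

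The core identification step is a Taylor expansion, carried out as in Section \ref{Section_combi}. Writing $X_i=X(iN^{-2/3})$ for short, I would recast
\[
\prod_{i=1}^{T_NN^{2/3}} \frac{\sqrt{X_i\wedge X_{i-1}}}{\sqrt N}=\exp\Bigl(\tfrac{1}{2}\sum_i \log\bigl(1-\tfrac{N-X_i\wedge X_{i-1}}{N}\bigr)\Bigr),
\]
and use $\log(1+z)=z+O(z^2)$ with the scaling of Proposition \ref{lemma_coupling} to identify the exponent in the limit as the Riemann sum $-\tfrac12\int_0^T B^{x,y}(t)\,\mathrm dt$. For the $\xi$-factor I would expand each summand of $\sum_{i}\log\bigl(1+\xi(X_i\wedge X_{i-1})/\sqrt{X_i\wedge X_{i-1}}\bigr)$ as $z-\tfrac12 z^2+\cdots$, so that the leading contribution, after grouping by visited level and replacing $\sqrt{X_i\wedge X_{i-1}}$ by $\sqrt N$ on the bulk of the walk, is $N^{-1/6}\sum_{h}L_h(X)\,\xi(\lfloor N-N^{1/3}h\rfloor)$, which by Proposition \ref{Proposition_basic_convergence_upgrade} converges in distribution to $s_\xi\int_0^\infty L_a(B^{x,y})\,\mathrm dW_\xi(a)$. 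The $\aa$-polynomial factor is rearranged in the same way into $\bigl(\tfrac12 N^{-1/6}\sum_h L_h(X)\,\aa(\lfloor N-N^{1/3}h\rfloor)\bigr)^j/j!$ and converges jointly to $\bigl(\tfrac{s_\aa}{2}\int_0^\infty L_a(B^{x,y})\,\mathrm dW_\aa(a)\bigr)^j/j!$ by the same proposition. The continuous mapping theorem, together with the boundedness supplied by the truncation $\underline R\vee\cdot\wedge\overline R$, then yields convergence in distribution, and hence convergence in expectation of any bounded continuous functional, of the truncated integrand to its Brownian analogue, pointwise and jointly in $(x,y)$.

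Finally, the integrand in $(x,y)$ is dominated by $|f(x)g(y)|\,\Xi(x,y;N,T_N)\,\max(|\underline R|,\overline R)$, which by \eqref{binom_bound} is integrable on $[0,K]\times[0,K]$ uniformly in $N$. Combining this with the pointwise limit $\Xi(x,y;N,T_N)\to\sqrt{2/(\pi T)}\,e^{-(x-y)^2/(2T)}$ from Lemma \ref{Lemma_Binomial_bound} and a Fubini-plus-Skorokhod-coupling argument that upgrades the pointwise-in-$(x,y)$ convergence to convergence of the integral, the dominated convergence theorem delivers \eqref{eq_Scalar_limit_term}; convergence in the sense of moments is automatic because the truncated ${\mathrm Sc}^{(j)}(N;K,\underline R,\overline R)$ is itself bounded by a deterministic constant depending only on $f,g,K,\underline R,\overline R,T$. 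The main obstacle I anticipate is uniform control of the Taylor remainder: namely, showing that the second-order term $\tfrac12\sum_i \xi(X_i\wedge X_{i-1})^2/(X_i\wedge X_{i-1})$ and the higher-order pieces vanish in probability despite occasional visits of $X$ to sites where $X_i\wedge X_{i-1}$ is only of order $N^{1/3}$ rather than $N$. I expect this to be handled by combining the moment estimates \eqref{new_tail_bound1}--\eqref{new_tail_bound2} of Lemma \ref{Lemma_tail_estimates} with the exponential local-time bounds of Propositions \ref{Proposition_exponential_bound_1} and \ref{Proposition_exponential_bound_2} to absorb the contribution of anomalous trajectories after taking expectations.
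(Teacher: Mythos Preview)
Your proposal captures the analytic ingredients correctly, but the step you call a ``Fubini-plus-Skorokhod-coupling argument that upgrades the pointwise-in-$(x,y)$ convergence to convergence of the integral'' is a real gap. Pointwise-in-$(x,y)$ convergence in distribution of the integrand---even jointly for any finite collection of $(x,y)$'s---does not by itself yield convergence in distribution of the random integral $\int_{[0,K]^2} I_N(x,y)\,f(x)g(y)\,\Xi\,\mathrm dx\,\mathrm dy$. The randomness in $I_N(x,y)$ comes from the common sequences $\xi,\aa$, so the integral is not a sum of independent pieces, and there is no Skorokhod coupling that works simultaneously for the continuum of $(x,y)$ values.

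The paper circumvents this by computing moments. After reducing to $f\ge 0$, $g\ge 0$ with $\int_0^K f=\int_0^K g=1$, it views $f\,\mathbf 1_{[0,K]}$, $g\,\mathbf 1_{[0,K]}$ as probability densities $\lambda,\mu$. Then by Fubini the $R$-th moment of ${\mathrm Sc}^{(j)}(N;K,\underline R,\overline R)$ equals the expectation of a product over $R$ \emph{independent} bridges $X^N_1,\ldots,X^N_R$ with \emph{random} endpoints drawn from $\lambda,\mu$---precisely the setup of Proposition \ref{Proposition_basic_convergence_upgrade}. The (bounded, thanks to the truncation) random variable inside this expectation converges in distribution to the corresponding functional of $R$ independent Brownian bridges $B_1,\ldots,B_R$ sharing a single pair $W_\xi,W_\aa$; boundedness then gives convergence of the expectation, i.e.\ of the $R$-th moment, and hence convergence in distribution. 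Your $R$-bridge remark in the first paragraph is already pointing at this; the paper simply makes the replica computation the main argument rather than an aside.

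Two smaller points. First, your concern about $X_i\wedge X_{i-1}$ being only of order $N^{1/3}$ is moot in this truncated setting: with $x,y\in[0,K]$ the walk starts at $N-O(N^{1/3})$ and in $T_NN^{2/3}$ steps cannot leave $[N/2,N]$ for large $N$, so all denominators are uniformly comparable to $\sqrt N$. Second, the paper controls the Taylor remainder for the $\xi$-factor by a cruder direct route: it shows $\max_{m\le N}|\xi(m)|\le \mathcal C_\xi\log N$ a.s.\ via \eqref{new_tail_bound1} and Borel--Cantelli, uses $e^{z-z^2}\le 1+z\le e^z$ to bound the multiplicative error by $\exp\bigl(\sum_i \xi(\cdot)^2/N\bigr)$, and shows that exponent tends to $0$ a.s.\ by a fourth-moment estimate plus Borel--Cantelli. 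The heavier machinery of Propositions \ref{Proposition_exponential_bound_1}--\ref{Proposition_exponential_bound_2} is reserved for Lemmas \ref{Lemma_theorem_in_moments} and \ref{Lemma_moment_bound}, where the truncations are removed.
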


\begin{proof}
Without loss of generality we may assume that
\begin{equation*}
f\ge 0,\quad g\ge0,\quad \int_0^K f(x)\,\mathrm{d}x=\int_0^K g(y)\,\mathrm{d}y=1,
\end{equation*}
since otherwise we can decompose $f,\,g$ into their positive and negative parts and
normalize the latter appropriately. Further, we let $\lambda$, $\mu$ be the
probability distributions on $[0,\infty)$ with densities $f\,\mathbf{1}_{[0,K]}$,
$g\,\mathbf{1}_{[0,K]}$, respectively. In addition, for every $N\in\nn$ and
$R\in\nn$, we introduce the independent copies $X^N_r$, $r=1,\,2,\,\ldots,\,R$ of
the random walk bridge $X^{\lambda,\mu;N,T_N}$ (with $T_N$ defined in the paragraph
preceding Proposition \ref{Proposition_main_single_term}). Then, thanks to Fubini's
Theorem, the $R$-th moment of the random variable $Sc^{(j)}(N,K,\underline
R,\overline R)$ can be expressed as
\begin{equation}\label{eq_replica}
\begin{split}
& \E\Bigg[\prod_{r=1}^R \Xi(X^N_r(0),X^N_r(T_N);N,T_N)\,\mathbf{1}_{\{\forall t:\,N^{-1/3}(N-X^N_r(t))\in{\mathcal A}\}}\,  \underline R \\
&\;\vee\!\Bigg(\!\prod_{i=1}^{T_NN^{2/3}} \!\! \frac{\sqrt{
X^N_r(iN^{-2/3})\!\wedge\! X^N_r((i-1)N^{-2/3})}}{\sqrt{N}}
\left(\!1\!+\!\frac{\xi\big(X^N_r(iN^{-2/3})\!\wedge\!
X^N_r((i-1)N^{-2/3})\big)}{\sqrt{X^N_r(iN^{-2/3})\!\wedge\! X^N_r((i-1)N^{-2/3})}}\!\right) \\
&\;\;\frac{\Big(\sum\limits_{i'=0}^{T_NN^{2/3}}
\aa\big(X^N_r(i'N^{-2/3})\big)\Big)^j}{j!\,(2\sqrt{N})^j}\Bigg)\!\wedge\!\overline R\Bigg].
\end{split}
\end{equation}

\smallskip

In order to determine the limit of the latter expectation, we first analyze the random variable inside the expectation.
Applying Proposition \ref{Proposition_basic_convergence_upgrade} to the random walk bridges $X^N_1,X^N_2,\ldots,X^N_R$ and writing
$B_1,B_2,\ldots,B_R$ for the Brownian bridges arising in the limit  we find
\begin{equation}\label{BM_integral_term}
\begin{split}
& \prod_{i=1}^{T_N N^{2/3}} \frac{\sqrt{X^N_r(iN^{-2/3})\wedge
X^N_r((i-1)N^{-2/3})}}{\sqrt{N}} \\
&=\exp\bigg(-\frac{1}{2N^{2/3}}\sum_{i=0}^{T_NN^{2/3}} N^{-1/3}\Big(N-X^N_r\big(iN^{-2/3}\big)\wedge X^N_r\big((i-1)N^{-2/3}\big)\Big)+o(1)\bigg) \\
&=\exp\bigg(-\frac{1}{2}\int_0^T B_r(t)\,\mathrm{d}t+o(1)\bigg),\quad r=1,2,\ldots,R.
\end{split}
\end{equation}

\smallskip

Next, we use employ the uniform convergence of $N^{-1/3}(N-X^N_r)$, $r=1,2,\ldots,R$ (Proposition \ref{Proposition_basic_convergence_upgrade}) to obtain
\begin{equation}\label{xi_ana1}
\frac{\xi\big(\!X^N_r(iN^{-2/3})\!\wedge\!
X^N_r((i-1)N^{-2/3})\!\big)}{\sqrt{\!X^N_r(iN^{-2/3})\!\wedge\! X^N_r((i-1)N^{-2/3})}}\!=\!\frac{\xi\big(\!X^N_r(iN^{-2/3})\!\wedge\!X^N_r((i-1)N^{-2/3})\!\big)}{\sqrt{N}}(1+O(N^{-2/3})\!)
\end{equation}
for all $r=1,2,\ldots,R$. In addition, we note that there exists a real random variable ${\mathcal C}_\xi>0$ such that, with probability one,
\begin{equation}\label{xi_log_bound}
\max_{1 \le m \le N} \xi(m)\le{\mathcal C}_\xi\,\log N,\quad N\in\nn.
\end{equation}
Indeed, for a non-random $C\in\rr$, the probability $\pp\big(\max_{1 \le m \le N} \xi(m)>C\,\log N\big)$ can be written in terms of the tail distribution functions of the $\xi(m)$'s which, in turn, can be estimated using \eqref{new_tail_bound1} and Assumption \ref{Assumptions} (a). A simple application of the Borel-Cantelli Lemma then gives \eqref{xi_log_bound}. Putting together \eqref{xi_ana1}, \eqref{xi_log_bound}, and the elementary inequalities $e^{z-z^2}\le 1+z\le e^z$ valid for all $z$'s close enough to $0$ we conclude that, for each $r=1,2,\ldots,R$, the product
\begin{equation}\label{eq_replace_linear}
\prod_{i=1}^{T_N N^{2/3}} \Bigg(1+\frac{\xi\big(X^N_r(iN^{-2/3})\wedge X^N_r((i-1)N^{-2/3})\big)}{\sqrt{X^N_r(iN^{-2/3})\wedge X^N_r((i-1)N^{-2/3})}}\Bigg)
\end{equation}
behaves asymptotically as
\begin{equation} \label{eq_replace_exponent}
\exp\Bigg(\sum_{i=1}^{T_N N^{2/3}} \frac{\xi\big(X^N_r(iN^{-2/3})\wedge X^N_r((i-1)N^{-2/3})\big)}{\sqrt{N}}\Bigg),
\end{equation}
with an asymptotic multiplicative error term of at most
\begin{equation}\label{multi_error}
\exp\Bigg(\sum_{i=1}^{T_NN^{2/3}} \frac{\xi\big(X^N_r(iN^{-2/3})\wedge X^N_r((i-1)N^{-2/3})\big)^2}{{N}}\Bigg).
\end{equation}
Moreover, an estimate on the fourth moment of the exponent in the latter exponential via Assumption \ref{Assumptions} (c) and the Borel-Cantelli Lemma reveal the almost sure convergence of that exponent to $0$. Consequently, the expression in \eqref{multi_error} tends to $1$ almost surely. Applying  Proposition \ref{Proposition_basic_convergence_upgrade} to the exponent in \eqref{eq_replace_exponent} and combining the result with \eqref{BM_integral_term} we end up with the asymptotic
\begin{equation}\label{second_line_asymp}
\exp\bigg(-\frac{1}{2}\,\int_0^T B_r(t)\,\mathrm{d}t +s_\xi\,\int_0^\infty L_a(B_r)\,\mathrm{d}W_\xi(a)+o(1)\bigg)
\end{equation}
for the second line in \eqref{eq_replica}, for any $r=1,2,\ldots,R$.

\medskip

Next, we apply Lemma \ref{Lemma_Binomial_bound} to $\Xi(X^N_r(0),X^N_r(T_N);N,T_N)$ in the first line of \eqref{eq_replica}, use Proposition \ref{Proposition_basic_convergence_upgrade} for the third
line of \eqref{eq_replica}, and combine the results with \eqref{second_line_asymp} to find that the random variable inside the expectation in \eqref{eq_replica} converges in distribution to
\begin{equation} \label{eq_replicas_lim}
\begin{split}
& \Bigg(\prod_{r=1}^R \sqrt{\frac{2}{\pi T}}\,e^{-(B_r(0)-B_r(T))^2/(2T)} \mathbf{1}_{\{\forall t:\,B_r(t)\in{\mathcal A}\}}\,\underline R \\
& \vee \exp\bigg(-\frac{1}{2}\,\int_0^T B_r(t)\,\mathrm{d}t
+s_\xi\,\int_0^\infty L_a(B_r)\,\mathrm{d}W_\xi(a)\bigg)
\,\frac{\big(s_\aa\,\int_0^\infty L_a\big(B_r\big)\,\mathrm{d}
W_\aa(a)\Big)^j}{j!\,2^j}\Bigg)\wedge\overline R.
\end{split}
\end{equation}
Since the random variables in consideration are uniformly bounded, the expectation in
\eqref{eq_replica} converges to the expectation of the random variable in \eqref{eq_replicas_lim}.
By Fubini's Theorem, the latter is precisely the $R$-th moment of the limit in
\eqref{eq_Scalar_limit_term}. Thanks to the boundedness of the random variables involved the
convergence of moments also yields the convergence in distribution.

\smallskip

The joint convergence for several $T$'s, $\A$'s, $f$'s and $g$'s can be established in exactly the same way, by considering joint moments. Finally, the convergences in \eqref{eq_limit_BM_identification_1} are direct consequences of
\eqref{eq_limit_BM}, \eqref{eq_limit_BM'} in Proposition \ref{Proposition_basic_convergence_upgrade}, and the joint convergence with \eqref{eq_Scalar_limit_term} can be deduced by taking joint moments with bounded continuous functionals of the prelimit expressions in \eqref{eq_limit_BM_identification_1}.
\end{proof}

\begin{lemma} \label{Lemma_theorem_in_moments}
The convergences in Proposition \ref{Proposition_main_single_term} hold in the sense of moments for any $K\in[0,\infty)$, $\underline R\in[-\infty,0]$, $\overline R\in[0,\infty]$.
\end{lemma}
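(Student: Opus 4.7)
The plan is to lift the moment-convergence statement of Lemma \ref{Lemma_truncated_theorem}, which covers only bounded truncation parameters $\underline R\in(-\infty,0]$ and $\overline R\in[0,\infty)$, to the full range $\underline R\in[-\infty,0]$ and $\overline R\in[0,\infty]$. The bridge between the two regimes is a uniform-in-$N$ bound on the polynomial moments of the \emph{un}truncated random variables: once we know that for every $R\in\nn$,
\begin{equation*}
\sup_{N\in\nn}\E\bigl[\bigl|\mathrm{Sc}^{(j)}(N;K,-\infty,\infty)\bigr|^R\bigr]<\infty
\quad\text{and}\quad
\sup_{N\in\nn}\E\bigl[\bigl|\mathrm{Tr}^{(j)}(N;K,-\infty,\infty)\bigr|^R\bigr]<\infty,
\end{equation*}
a standard $\eps/3$-argument delivers the lemma: choose truncation levels so that, uniformly in $N$, the difference between the untruncated $R$-th moments and the truncated ones is below $\eps/3$ (by H\"older applied to the $L^{R+1}$ bound); repeat the estimate for the limiting objects; and invoke Lemma \ref{Lemma_truncated_theorem} at the chosen truncation.

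To establish the uniform moment bounds, I would rerun the replica expansion \eqref{eq_replica} from the proof of Lemma \ref{Lemma_truncated_theorem}, but now without the truncating $\underline R\vee\cdot\wedge\overline R$ brackets, and bound each factor in the resulting product over $R$ independent random walk bridges $X^N_1,\ldots,X^N_R$ after conditioning on the bridges. Four ingredients enter: (i) Lemma \ref{Lemma_Binomial_bound} gives a uniform bound on $\Xi(X^N_r(0),X^N_r(T_N);N,T_N)$, with Gaussian-in-$(x-y)$ decay that also controls the $(x,y)$-integration on $[0,K]^2$; (ii) the bridge product is rewritten in exponential form and dominated by $\exp\bigl(CN^{-2/3}\sum_i N^{-1/3}(N-X^N_r(iN^{-2/3}))\bigr)$, whose exponential moments are uniform by Proposition \ref{Proposition_exponential_bound_1}; (iii) after conditioning on the bridges and invoking the independence of the $\xi(m)$'s together with \eqref{new_tail_bound2} from Lemma \ref{Lemma_tail_estimates}, the $\xi$-product is dominated by $\exp\bigl(CN^{-1/3}\sum_h[L_h(X^N_r)^2+L_h(X^N_r)^{\gamma'}]\bigr)$, which is controlled uniformly by Proposition \ref{Proposition_exponential_bound_2} applied with $p=2$ and with $p=\gamma'\in(2,3)$; (iv) the factor $\bigl(\sum_{i'}\aa(X^N_r(i'N^{-2/3}))/(2\sqrt{N})\bigr)^j/j!$ is handled analogously by expanding its $R$-th power into sums of products of $\aa$-values, taking conditional expectations (each factor of odd order is negligible by Assumption \ref{Assumptions}(a); each even factor is dominated via \eqref{new_tail_bound2}), and checking that the result is bounded by a polynomial in the same local-time functionals. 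A Cauchy--Schwarz step then combines (ii)--(iv) into a uniform bound per replica factor.

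The main obstacle, I expect, will be step (iii). Even after passing from a product to an exponential via $\log(1+z)\approx z-z^2/2$, the error term scales like $N^{-1/3}\sum_h L_h^2$, which is borderline summable, and its \emph{polynomial} moments require precisely the sub-cubic growth exponent $\gamma'<3$ provided by Lemma \ref{Lemma_tail_estimates}. This dovetails exactly with the restriction $p<3$ in Proposition \ref{Proposition_exponential_bound_2}, and it is this matching that makes the whole scheme close. Once the uniform $L^R$ bounds are established, passage to $\underline R=-\infty$, $\overline R=+\infty$ is routine via the $\eps/3$-argument above, and the joint statements for finitely many $T$'s, $\A$'s, $f$'s, $g$'s, together with the joint convergence with \eqref{eq_limit_BM_identification_1}, follow by taking joint replicas across the different instances and running exactly the same estimates.
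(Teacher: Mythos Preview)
Your proposal is essentially correct and relies on the same estimates as the paper (Lemma \ref{Lemma_Binomial_bound}, Lemma \ref{Lemma_tail_estimates}, Proposition \ref{Proposition_exponential_bound_2}, Assumption \ref{Assumptions}(a)), but the paper organizes the argument more economically in two respects. First, instead of an $\eps/3$ comparison with truncated versions, the paper observes that the random variable \emph{inside} the replica expectation \eqref{eq_replica} already converges in distribution for any (possibly infinite) truncation pair---this was established in the proof of Lemma \ref{Lemma_truncated_theorem} without ever using finiteness of $\underline R,\overline R$---so all that remains is uniform integrability, i.e.\ a uniform bound on its \emph{second} moment; convergence of the expectation (and hence of the $R$-th moment of $\mathrm{Sc}^{(j)}$) is then immediate. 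Your $\eps/3$ scheme works too, but because the cutoff $\underline R\vee\cdot\wedge\overline R$ sits \emph{inside} the $x,y$-integral and the $\E_X$, the comparison is cleanest if carried out at the replica level rather than on $\mathrm{Sc}^{(j)}$ directly, a point your write-up glosses over. Second, the paper handles your step~(iv) in one line via the inequality $(|z|^j/j!)^2\le e^{2|z|}\le e^{2z}+e^{-2z}$ followed by \eqref{new_tail_bound1}, which reduces the $\aa$-factor to exactly the same local-time exponential as the $\xi$-factor; this is considerably slicker than expanding $(\sum\aa)^{jR}$ into monomials. Finally, your step~(ii) is unnecessary here: on the support of the indicator $\mathbf{1}_{\{N^{-1/3}(N-X)\in\A\}}$ one has $X\le N$, so the bridge product $\prod\sqrt{X/N}$ is simply bounded by $1$ and Proposition \ref{Proposition_exponential_bound_1} need not enter (it is used later, in Lemma \ref{Lemma_moment_bound}, where the indicator no longer helps because $x,y$ range over $[0,N^{2/3}]$).
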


\begin{proof}
Our proof  for the convergence in distribution of the random variable inside the expectation in
\eqref{eq_replica} to the one in \eqref{eq_replicas_lim} remains valid when one of $\underline R$,
$\overline R$ or both of them are infinite. It therefore remains to show the convergence of the
corresponding expectations. To this end, it suffices to establish the uniform integrability of the
prelimit random variables which, in turn, would follow from the uniform boundedness of their second
moments.

\smallskip

Clearly, for any $\underline R\in[-\infty,0]$, $\overline R\in[0,\infty]$, the second moment of the random variable inside the expectation in \eqref{eq_replica} is bounded above by
\begin{equation}\label{eq_replica_second_moment}
\begin{split}
& \E\Bigg[\prod_{r=1}^R \Xi(X^N_r(0),X^N_r(T_N);N,T_N)^2
\prod_{i=1}^{T_NN^{2/3}}
\left(1+\frac{\xi\big(X^N_r(iN^{-2/3})\wedge
X^N_r((i-1)N^{-2/3})\big)}{\sqrt{X^N_r(iN^{-2/3})\wedge X^N_r((i-1)N^{-2/3})}}\right)^2 \\
&\quad\frac{\Big(\sum\limits_{i'=0}^{T_NN^{2/3}}
\aa\big(X^N_r(i'N^{-2/3})\big)\Big)^{2j}}{(j!)^2\,(2\sqrt{N})^{2j}}\Bigg].
\end{split}
\end{equation}
In view of Lemma \ref{Lemma_Binomial_bound}, we can bound the factors $\Xi(X^N_r(0),X^N_r(T_N);N,T_N)^2$, $r=1,2,\ldots,R$ by a  uniform constant. Moreover, we can estimate the expectation with respect to the $\xi$'s of the second product in the first line of \eqref{eq_replica_second_moment} by applying
\begin{equation*}
X^N_r(iN^{-2/3})\wedge X^N_r((i-1)N^{-2/3})\ge\lfloor N-N^{1/3}K\rfloor-T_NN^{2/3}\ge N/2,\;\; i=1,2,\ldots,T_NN^{2/3}
\end{equation*}
to the denominators for all large enough $N$ and then using \eqref{new_tail_bound2}. Finally, to the quantity in the second line of \eqref{eq_replica_second_moment} we apply the chain of elementary inequalities
\begin{equation*}
\bigg(\frac{|z|^j}{j!}\bigg)^2 \le e^{2|z|}\le e^{2z}+e^{-2z}, \quad z\in\rr,\;j\in\nn\cup\{0\}
\end{equation*}
and then bound the expectation with respect to the $\aa$'s by means of \eqref{new_tail_bound1}. All in all, we end up with the estimate
\begin{equation} \label{eq_tail_estimate_1}
\begin{split}
C\,\E\bigg[\!\exp\bigg(C\!\!\sum_{h\in
N^{-1/3}(\zz+1/2)}\!\!\bigg(
\big|\E\big[\xi(N-N^{1/3}h-1/2)\big]\big|\,\frac{\sum_{r=1}^R L_h\big(X^N_r\big)}{N^{1/6}}+\frac{\sum_{r=1}^R L_h\big(X^N_r\big)^2}{N^{1/3}} \\
+\frac{\sum_{r=1}^R L_h\big(X^N_r\big)^{\gamma'}}{N^{\gamma'/6}}\bigg)+C\sum_{h\in N^{-1/3}\zz} \bigg(\big|\E\big[\aa(N-N^{1/3}h)\big]\big|\,\frac{\sum_{r=1}^R L_h\big(X^N_r\big)}{N^{1/6}} \\
+\frac{\sum_{r=1}^R L_h\big(X^N_r\big)^2}{N^{1/3}}
+\frac{\sum_{r=1}^R L_h\big(X^N_r\big)^{\gamma'}}{N^{\gamma'/6}}\bigg)\bigg)\bigg],
\end{split}
\end{equation}
where $C>0$ is a uniform constant and $2<\gamma'<3$ is the same as in Lemma \ref{Lemma_tail_estimates}.

\medskip

Finally, we recall from Assumption \ref{Assumptions} that $\big|\E[\xi(m)]\big|=o(m^{-1/3})$, $\big|\E(\aa(m))\big|=o(m^{-1/3})$ and from above that the bridges $X_r^N$, $r=1,2,\ldots,R$ do not reach $N/2$ for all large enough $N$. Consequently,
\begin{eqnarray*}
&& \sum_{h\in N^{-1/3}(\zz+1/2)} \big|\E\big[\xi(N-N^{1/3}h-1/2)\big]\big|\,\frac{\sum_{r=1}^R L_h\big(X^N_r\big)}{N^{1/6}}=o\bigg( N^{-1/3}\,\frac{ N^{1/3}}{N^{1/6}}\bigg), \\
&&\quad\,\sum_{h\in N^{-1/3}\zz} \big|\E\big[\aa(N-N^{1/3}h)\big]\big|\,\frac{\sum_{r=1}^R L_h\big(X^N_r\big)}{N^{1/6}}=o\bigg(N^{-1/3}\, \frac{N^{1/3}}{N^{1/6}}\bigg),
\end{eqnarray*}
where the error terms are non-random and tend to $0$ in the limit $N\to\infty$. The remaining part of the expression in \eqref{eq_tail_estimate_1} can be controlled by applying the Cauchy-Schwarz inequality and then \eqref{UIlt} (recall Remark \ref{rmk_loc_time}).
\end{proof}

\begin{lemma} \label{Lemma_theorem_in_distribution} The convergences  in Proposition \ref{Proposition_main_single_term} hold in distribution for any $K\in[0,\infty)$, $\underline R\in[-\infty,0]$, $\overline R\in[0,\infty]$.
\end{lemma}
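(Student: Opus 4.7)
The strategy is to bootstrap the bounded-truncation convergence of Lemma \ref{Lemma_truncated_theorem} to the general case by combining it with the uniform second-moment bounds that underlie the proof of Lemma \ref{Lemma_theorem_in_moments}. Focusing on $\mathrm{Sc}^{(j)}$ (the case of $\mathrm{Tr}^{(j)}$ being identical), write $Z_N := \mathrm{Sc}^{(j)}(N;K,-\infty,\infty)$ and $Z_N^{(R)} := \mathrm{Sc}^{(j)}(N;K,-R,R)$ for $R>0$, with $Z$ and $Z^{(R)}$ the corresponding limits from \eqref{eq_Scalar_limit_term}. We wish to show $Z_N \to Z$ in distribution.

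The key estimate to establish is the uniform-in-$N$ bound $\E[|Z_N - Z_N^{(R)}|] \le C/R$ together with the analogous estimate $\E|Z - Z^{(R)}| \le C/R$ for the limit. Let $E_N(x,y)$ denote the random quantity inside $\E_X$ in \eqref{eq_high_moment_term} before truncation. Since $|E_N - (\underline R \vee E_N \wedge \overline R)| \le |E_N|\,\mathbf{1}_{\{|E_N| > R\}}$ pointwise, one obtains
$$|Z_N - Z_N^{(R)}| \le \int_0^K \!\!\int_0^K |f(x)g(y)|\,\Xi(x,y;N,T_N)\,\E_X\bigl[|E_N(x,y)|\,\mathbf{1}_{\{|E_N(x,y)|>R\}}\bigr]\,\mathrm{d}x\,\mathrm{d}y.$$
Taking expectation over $\xi, \aa$, using Fubini, and applying Cauchy-Schwarz gives $\E|Z_N - Z_N^{(R)}| \le R^{-1} \iint |fg|\,\Xi\, \E[E_N(x,y)^2]\,\mathrm{d}x\,\mathrm{d}y$. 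The integrand $\E[E_N(x,y)^2]$ is uniformly bounded in $N$ and $x,y \in [0,K]$ by the same chain of estimates used in the proof of Lemma \ref{Lemma_theorem_in_moments} (apply the bound leading to \eqref{eq_tail_estimate_1} with $R=1$), while $\iint |fg|\,\Xi$ is bounded by Lemma \ref{Lemma_Binomial_bound}. An analogous Brownian-bridge computation (or Fatou's lemma applied to the moment convergence already proved) yields the corresponding bound on $\E|Z - Z^{(R)}|$.

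With these uniform $L^1$ tails in hand, distributional convergence follows by a standard truncation argument. For any bounded Lipschitz $\varphi$,
$$|\E\varphi(Z_N) - \E\varphi(Z)| \le \|\varphi\|_{\mathrm{Lip}}\bigl(\E|Z_N - Z_N^{(R)}| + \E|Z - Z^{(R)}|\bigr) + |\E\varphi(Z_N^{(R)}) - \E\varphi(Z^{(R)})|.$$
Sending $N \to \infty$ first (the last term vanishes by Lemma \ref{Lemma_truncated_theorem}) and then $R \to \infty$ (the first two terms vanish by the uniform $L^1$ bound) gives $\E\varphi(Z_N) \to \E\varphi(Z)$; since bounded Lipschitz functions determine weak convergence on $\rr$, we obtain $Z_N \to Z$ in distribution. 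The joint convergences for finitely many $T$'s, $\A$'s, $f$'s, and $g$'s, together with the joint convergence \eqref{eq_limit_BM_identification_1}, are handled by truncating each coordinate of the joint random vector separately and running the same argument with a vector-valued Lipschitz test function. The only real obstacle is the uniform $L^2$-bound on $E_N(x,y)$, but this is essentially already contained in the proof of Lemma \ref{Lemma_theorem_in_moments}, so the present lemma adds no new analytic estimates beyond organising the three-term triangle inequality above.
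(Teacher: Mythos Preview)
Your argument is correct and complete: the uniform second-moment bound on the untruncated integrand (which is indeed contained in the estimates leading to \eqref{eq_tail_estimate_1}) gives the $L^1$ truncation tail $\E|Z_N-Z_N^{(R)}|\le C/R$, and from there the bounded-Lipschitz triangle inequality finishes the job cleanly.

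The paper takes a genuinely different route. Rather than controlling $Z_N-Z_N^{(R)}$ in $L^1$, it first observes that tightness of $\{Z_N\}_N$ is automatic from the moment convergence of Lemma \ref{Lemma_theorem_in_moments}, and then identifies every subsequential limit via a monotonicity argument: after reducing to $f,g\ge0$, one has $Z_N\ge Z_N^{(R)}$ pointwise (for $\underline R$ finite, $\overline R=\infty$), hence any limit point of $Z_N$ stochastically dominates the known limit $Z^{(R)}$, hence (letting $R\uparrow\infty$) stochastically dominates $Z$; but the limit point and $Z$ have the same first moment, and two non-negative random variables with $X\ge_{\mathrm{st}}Y$ and $\E X=\E Y$ must coincide in law. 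Your approach has the advantage of being sign-insensitive (no need to split $f,g$ into positive and negative parts) and of following a completely standard template; the paper's approach avoids re-entering the moment estimates and instead squeezes the answer out of the ordering $Z_N^{(R)}\le Z_N$, at the cost of the positive/negative decomposition and the one-line stochastic-domination-plus-equal-mean lemma.
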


\begin{rmk}
Note that, for infinite $\underline R$, $\overline R$, one should expect the moments of the limits in \eqref{eq_Scalar_limit_term},
 \eqref{eq_Trace_limit_term} to grow very fast, since the latter include exponentials of random variables with Gaussian tails. Therefore, a priori, the just established convergence of moments does not imply the convergence in distribution.
\end{rmk}

\noindent\textit{Proof of Lemma \ref{Lemma_theorem_in_distribution}}.
For finite $\underline R$, $\overline R$, the lemma is a direct consequence of Lemma \ref{Lemma_theorem_in_moments} and the boundedness of the limits in \eqref{eq_Scalar_limit_term},
 \eqref{eq_Trace_limit_term}. We turn to the case that $\underline R$ is finite and $\overline R$ is infinite. As before, we only consider $\mathrm{Sc}^{(j)}(N;K,\underline R,\infty)$ and functions $f\ge0$, $g\ge0$ (otherwise we can write $f$, $g$ as the differences of their positive and negative parts and use the joint convergence in distribution of the associated scalar products). The convergence of moments shows that the random variables $\mathrm{Sc}^{(j)}(N;K,\underline R,\infty)$, $N\in\nn$ form a tight sequence, so it suffices to identify the limit points of the latter with the random variable in \eqref{eq_Scalar_limit_term}. Let $\mathrm{Sc}^{(j)}(\infty;K,\underline R,\infty)$ be such a limit point.

\medskip

Note that, for each $N\in\nn$ and $\overline R\in[0,\infty)$, the random variable
$\mathrm{Sc}^{(j)}(N;K,\underline R,\infty)$ stochastically dominates
$\mathrm{Sc}^{(j)}(N;K,\underline R, \overline R)$. Consequently, the limit point
$\mathrm{Sc}^{(j)}(\infty;K,\underline R,\infty)$ must stochastically dominate the
limit in distribution $\lim_{N\to\infty} \mathrm{Sc}^{(j)}(N;K,\underline R,
\overline R)$ for every $\overline R\in[0,\infty)$. The latter is given by the
expression in \eqref{eq_Scalar_limit_term} and, by the Monotone Convergence Theorem,
tends in the limit $\overline R\uparrow\infty$ to the corresponding expression with
$\overline R=\infty$. We conclude that the limit point
$\mathrm{Sc}^{(j)}(\infty;K,\underline R,\infty)$ and the expression in
\eqref{eq_Scalar_limit_term} with $\overline R=\infty$ are two non-negative random
variables with equal moments such that the first of them stochastically dominates
the second. Clearly, such random variables must have the same distribution.

\medskip

Similarly, the case $\underline R=-\infty$ can be dealt with by using the stochastic domination as $\underline R$ varies. The same argument also applies to any finite collection of $j$'s, $T$'s, $\A$'s, and non-negative $f$'s, $g$'s and gives the joint convergence with \eqref{eq_limit_BM_identification_1} as well.  \ep

\medskip

So far, we have established Proposition \ref{Proposition_main_single_term}, which identifies the leading order contributions in the limits of Theorem \ref{theorem_Main_restated}. It remains to control the associated remainder terms. To this end, for each $N\in\nn$ and $K\in[0,\infty)$, we define
\begin{eqnarray*}
&& \overline{\mathrm{Sc}}^{(j)}(N;K)=\mathrm{Sc}^{(j)}(N;N^{2/3},-\infty,\infty)-\mathrm{Sc}^{(j)}(N;K,-\infty,\infty), \\
&& \overline{\mathrm{Tr}}^{(j)}(N;K)=\mathrm{Tr}^{(j)}(N;N^{2/3},-\infty,\infty)-\mathrm{Tr}^{(j)}(N;K,-\infty,\infty).
\end{eqnarray*}

\begin{lemma} \label{Lemma_moment_bound}
There exist positive constants $C(R,K)$, $R\in\nn$, $K\in[0,\infty)$ (possibly depending on $f$, $g$, $\A$, but not on $N$) such that
\begin{equation*}
\E\Big[\big|\overline{\mathrm{Sc}}^{(j)}(N;K)\big|^R\Big]
\le \frac{C(R,K)}{2^{jR}},\qquad
\E\Big[\big|\overline{\mathrm{Tr}}^{(j)}(N;K)\big|^R\Big]
\le \frac{C(R,K)}{2^{jR}}
\end{equation*}
for all $N\in\nn$, and $\lim_{K\to\infty} C(R,K)=0$, $R\in\nn$.
\end{lemma}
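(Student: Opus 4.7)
My plan is to proceed in three main stages: (i) extract the factor $2^{-j}$ from the $\aa$-moment, (ii) bound the remaining $R$-th moment by a bridge-expectation via the replica/Minkowski argument already used in Lemma \ref{Lemma_truncated_theorem} and Lemma \ref{Lemma_theorem_in_moments}, and (iii) extract exponential decay in $K$ from the $e^{-\frac12\int B}$ factor.

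The starting point is the elementary inequality $|z|^j/j!\le 2^{-j}e^{|z|}$ (which follows from $(2|z|)^j/j!\le e^{2|z|}$), applied with $z=N^{-1/2}\sum_{i'}\aa(X(i'N^{-2/3}))$. After assuming $f,g\ge 0$ without loss of generality (split into positive and negative parts and use $|a-b|^R\le 2^R(|a|^R+|b|^R)$), one obtains
\begin{equation*}
|\overline{\mathrm{Sc}}^{(j)}(N;K)|\le 2^{-j-1}\iint_{A_K} f(x)g(y)\,\Xi(x,y;N,T_N)\,\Psi_N(x,y)\,dx\,dy,
\end{equation*}
where $A_K=[0,N^{2/3}]^2\setminus[0,K]^2$ and $\Psi_N(x,y)$ is $\E_X$ of the absolute value of the same bridge integrand with the $\aa$-factor replaced by $e^{|\sum_{i'}\aa|/\sqrt N}$. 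An analogous one-variable identity holds for $\overline{\mathrm{Tr}}^{(j)}(N;K)$.

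Next, Minkowski's integral inequality gives
\begin{equation*}
\E[|\overline{\mathrm{Sc}}^{(j)}(N;K)|^R]^{1/R}\le 2^{-j-1}\iint_{A_K} f(x)g(y)\,\Xi(x,y;N,T_N)\,\E[\Psi_N(x,y)^R]^{1/R}\,dx\,dy.
\end{equation*}
To handle the inner moment, I would repeat the manipulations from the proof of Lemma \ref{Lemma_theorem_in_moments}: introduce $R$ independent copies of the bridge $X^{x,y;N,T_N}$, take expectations over $\aa$ and $\xi$ conditional on the bridges using \eqref{new_tail_bound1} and \eqref{new_tail_bound2}, and control the resulting local-time sums (of powers $2$ and $\gamma'$) via Proposition \ref{Proposition_exponential_bound_2} and Remark \ref{rmk_loc_time}. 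The result has the form
$\E[\Psi_N(x,y)^R]^{1/R}\le C_1(R)\,\E_X\!\left[\exp\!\left(-\tfrac{R}{2}I_N(X)\right)\right]^{1/R}$, where $I_N(X)=N^{-2/3}\sum_i N^{-1/3}(N-X(iN^{-2/3}))$ is the discrete analogue of $\int_0^T B^{x,y}$.

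The crucial and hardest step is showing that this expectation decays in $x+y$ uniformly in $N$, $x$, $y$. For this I would decompose $N^{-1/3}(N-X(t))=L_N(t)+\hat h_N(t)$, where $L_N$ is the linear interpolation between the endpoints (so $\int L_N\,dt=\tilde T(x+y)/2+o(1)$) and $\hat h_N$ is the centered fluctuation. By translation invariance of the step-sequence of the simple random walk bridge conditional on its endpoints, the law of $\hat h_N$ depends only on $|x-y|$ (not on $\min(x,y)$); moreover $\int_0^{\tilde T}\hat h_N(t)\,dt$ has mean $0$ and, via the coupling to a standard Brownian bridge (whose integral is Gaussian with variance $T^3/12$, independent of its endpoints), uniformly bounded exponential moments. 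Hence
\begin{equation*}
\E_X[\exp(-\tfrac{R}{2}I_N(X))]\le C_2(R)\,e^{-RT(x+y)/4}
\end{equation*}
uniformly in $N\in\nn$ and $x,y\ge 0$. Combined with the Gaussian factor $\Xi\le Ce^{-(x-y)^2/(CT)}$ from Lemma \ref{Lemma_Binomial_bound} and the subexponential bound $f(x)g(y)\le Ce^{x^{1-\delta}+y^{1-\delta}}$, the integrand is absolutely integrable over $[0,\infty)^2$; since $(x,y)\in A_K$ forces $x+y\ge K$, the remaining integral is $O(e^{-TK/8})$ uniformly in $N$. Raising to the $R$-th power yields $C(R,K)=(C_3(R)e^{-TK/8})^R\to 0$.

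The main obstacle will be Paragraph~(iii), namely justifying the \emph{uniform-in-$x,y$} bound on $\E[\exp(\alpha|\int\hat h_N\,dt|)]$. Propositions \ref{Proposition_exponential_bound_1}--\ref{Proposition_exponential_bound_2} provide such uniformity only on compacts, so one cannot simply cite them; the point is that $\int\hat h_N\,dt$, unlike the sup of $\hat h_N$, has fluctuations of order $T^{3/2}$ that do not grow with $|x-y|$ (as can be seen either from the explicit Gaussian distribution of $\int_0^T B^{0,d}(t)\,dt$, which has variance $T^3/12$ independent of $d$, combined with Proposition \ref{lemma_coupling}, or by a direct discrete computation using the martingale structure of the centered bridge). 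This is what upgrades the naive bound $e^{-RT\min(x,y)/2}$ (which yields no decay in $A_K^{(2)}:=A_K\cap\{\min(x,y)<K/2\}$) to the strictly stronger $e^{-RT(x+y)/4}$ needed to close the argument on all of $A_K$.
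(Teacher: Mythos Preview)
Your stages (i) and (ii) match the paper's approach, but there is a genuine gap in (ii) and the paper's route in (iii) is different and simpler.

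\textbf{Gap in (ii).} You write $\E[\Psi_N(x,y)^R]^{1/R}\le C_1(R)\,\E_X[\exp(-\tfrac{R}{2}I_N(X))]^{1/R}$ with $C_1(R)$ independent of $x,y$, citing Proposition~\ref{Proposition_exponential_bound_2} for the local-time sums. But Proposition~\ref{Proposition_exponential_bound_2} is explicitly stated only uniformly on compact sets in $x,y$, so you cannot absorb the local-time contribution into a constant. In fact, going back to the estimate \eqref{SILTubd} inside its proof, the bound on $N^{-1/3}\sum_h L_h^p$ involves $|x-y|^{p-1}$ plus the range of a simple random walk; after exponentiating, the paper obtains factors of the form $e^{\tilde C(|x-y|+|x-y|^{\gamma'-1})}$ (see \eqref{eq_tail_estimate_3}), not a uniform constant. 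These growing factors must be carried through and are ultimately killed by the Gaussian $e^{-(x-y)^2/(CT)}$ from $\Xi$. Your own final paragraph acknowledges that Propositions~\ref{Proposition_exponential_bound_1}--\ref{Proposition_exponential_bound_2} are compacts-only for the \emph{area} term, but you need the same caveat for the \emph{local-time} term, and you have not supplied a replacement.

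\textbf{Different route in (iii).} The paper does not attempt your decomposition $N^{-1/3}(N-X)=L_N+\hat h_N$ with uniform-in-$x,y$ control of $\int\hat h_N$. Instead it uses a short monotone coupling: assuming $x_r\le y_r$, one realizes the bridge $X_r^N$ by first sampling a bridge from $N$ to $N$ in fewer steps, shifting it down by $N-\lfloor N-N^{1/3}x_r\rfloor$, and then inserting the remaining down-steps uniformly at random. The insertion only increases $\sum_i(N-X_r^N(iN^{-2/3}))$, so Proposition~\ref{Proposition_exponential_bound_1} (applied to the $0$-to-$0$ bridge, which is a fixed compact case) gives $\E_X[\exp(-\tfrac{R}{N}\sum_i(N-X))]^{1/2}\le \tilde C e^{-RT_N\min(x,y)/2}$. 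This yields decay in $\min(x,y)$ rather than $(x+y)$; the region $\{(x,y)\in\mathcal R_{N,K}:\min(x,y)\le K/2\}$ is then handled by the Gaussian from $\Xi$ since there $|x-y|>K/2$. Your $(x+y)$-decay is viable (a Hoeffding-without-replacement argument for the exchangeable increments does give $\int\hat h_N$ sub-Gaussian uniformly in the endpoints, as you hint), but the paper's coupling is a three-line substitute that avoids this entirely.

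\textbf{One more regime you omit.} The paper also separately treats the case $\max(x,y)>N^{2/3-\epsilon}$ for a small $\epsilon>0$. There the bridge may reach sites $m$ with $m=o(N)$, so $\xi(m)/\sqrt{m}$ is no longer $O(N^{-1/2})$ and the bound \eqref{new_tail_bound2} with $v=1/\sqrt{m}$ does not produce factors summing to $O(1)$. The paper instead writes $\sqrt{m}+|\xi(m)|\le\sqrt{N-N^{1-\epsilon}/2}\,(1+|\xi(m)|/\sqrt{N-N^{1-\epsilon}/2})$ and extracts a factor $(1-N^{-\epsilon}/2)^{T_NN^{2/3}/2}\approx e^{-T_N N^{2/3-\epsilon}/4}$, which dominates everything else on this region (see \eqref{eq_tail_estimate_4}--\eqref{eq_tail_estimate_5}). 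Your outline does not cover this.
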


\begin{proof}
We only give the proof for $\overline{\mathrm{Sc}}^{(j)}(N;K)$, since the argument for $\overline{\mathrm{Tr}}^{(j)}(N;K)$ is very similar.
With ${\mathcal R}_{N,K}:=\big[0,N^{2/3}\big]^2\,\backslash[0,K]^2$ and the notation introduced in the proof of Lemma \ref{Lemma_truncated_theorem}, the $R$-th absolute moment of $\overline{\mathrm{Sc}}^{(j)}(N;K)$ is bounded above by
\begin{equation}\label{eq_tail_rv_moment}
\begin{split}
& \E\Bigg[\int_{({\mathcal R}_{N,K})^R} \prod_{r=1}^R \Bigg(|f(x_r)|\,|g(y_r)|\,\Xi(x_r,y_r;N,T_N) \\
&\qquad \E_{X^N_r}\bigg[
 \prod_{i=1}^{T_NN^{2/3}} \frac{\sqrt{X^N_r(iN^{-2/3})\wedge X^N_r((i-1)N^{-2/3})}}{\sqrt{N}} \\
&\qquad \Bigg|1+\frac{\xi\big(X^N_r(iN^{-2/3})\wedge
X((i-1)N^{-2/3})\big)}{\sqrt{X^N_r(iN^{-2/3})\wedge X^N_r((i-1)N^{-2/3})}}\Bigg|
\frac{\big|\sum_{i'=0}^{T_N N^{2/3}} \aa\big(X^N_r(i'N^{-2/3})\big) \big|^j}{j!\,(2\sqrt{N})^j}
\bigg]\mathrm{d}x_r\,\mathrm{d}y_r\Bigg)\!\Bigg]\!.
\end{split}
\end{equation}
Relying on Fubini's Theorem, we can first take all expectations in \eqref{eq_tail_rv_moment} and only then integrate over $({\mathcal R}_{N,K})^R$. Moreover, for fixed $x_r,\,y_r$, $r=1,2,\ldots,R$, we can use H\"older's inequality to bound the expectations inside the integral over $({\mathcal R}_{N,K})^R$ by the product of the $R$-th roots of the expectations
\begin{equation}\label{eq_tail_rv_moment_reduced}
\begin{split}
& \E\Bigg[|f(x_r)|^R|g(y_r)|^R\,\Xi(x_r,y_r;N,T_N)^R
\prod_{i=1}^{T_NN^{\frac{2}{3}}}\bigg(\frac{X^N_r(iN^{-\frac{2}{3}})\wedge X^N_r((i-1)N^{-\frac{2}{3}})}{N}\bigg)^{\frac{R}{2}} \\
&\quad\;\bigg|1+\frac{\xi\big(X^N_r(iN^{-2/3})\wedge
X^N_r((i-1)N^{-2/3})\big)}{\sqrt{X^N_r(iN^{-2/3})\wedge
X^N_r((i-1)N^{-2/3})}}\bigg|^R\;
\frac{\big|\sum_{i'=0}^{T_N N^{2/3}} \aa\big(X^N_r(i'N^{-2/3})\big)\big|^{jR}}{(j!)^R\,(2\sqrt{N})^{jR}}
\Bigg],
\end{split}
\end{equation}
$r=1,2,\ldots,R$.

\medskip

Next, we recall that $f(x)=O(\exp(x^{1-\delta}))$, $g(x)=O(\exp(x^{1-\delta}))$ as
$x\to\infty$ for some $\delta>0$ by assumption. With that $\delta$ and any fixed
$\epsilon\in\big(0,\frac{2\delta}{3}\big)$, we first consider the case that
$x_r,y_r$ in \eqref{eq_tail_rv_moment_reduced} are both at most $N^{2/3-\eps}$. In
this case, we estimate $\Xi(x_r,y_r;N,T_N)$ using the inequality \eqref{binom_bound}
of Lemma \ref{Lemma_Binomial_bound}, bound the expectation with respect to the
$\xi$'s and $\aa$'s as in the derivation of \eqref{eq_tail_estimate_1} in the proof
of Lemma \ref{Lemma_theorem_in_moments}, and employ Assumption \ref{Assumptions}
(a). All in all, we obtain the following upper bound on the expectation in
\eqref{eq_tail_rv_moment_reduced}:
\begin{equation} \label{eq_tail_estimate_2}
\begin{split}
\frac{C}{2^{jR}}\,|f(x_r)|^R |g(y_r)|^R\,e^{-\frac{R}{C}(x_r-y_r)^2}
\E\Bigg[\exp\Bigg(-\frac{R}{2N}\sum_{i=1}^{T_N N^{\frac{2}{3}}} \big(N-X^N_r(iN^{-\frac{2}{3}})\big) \\
+\, C\sum_{h\in N^{-1/3}(\zz+1/2)} \bigg(
N^{-1/3}\,\frac{R\,L_h\big(X^N_r\big)}{N^{1/6}} +
\frac{R^2 L_h\big(X^N_r\big)^2}{N^{1/3}}
+\frac{R^{\gamma'} L_h\big(X^N_r\big)^{\gamma'}}{N^{\gamma'/6}} \bigg) \\
+\, C\sum_{h\in N^{-1/3}\zz} \bigg(
N^{-1/3}\,\frac{R\,L_h\big(X^N_r\big)}{N^{1/6}} +
\frac{R^2 L_h\big(X^N_r\big)^2}{N^{1/3}}+\frac{R^{\gamma'} L_h\big(X^N_r\big)^{\gamma'}}{N^{\gamma'/6}} \bigg)\Bigg)\Bigg],
\end{split}
\end{equation}
where $C>0$ is a uniform constant and $2\le \gamma'<3$ is the same as in Lemma \ref{Lemma_tail_estimates}. An application of the Cauchy-Schwarz inequality to the latter expectation results further in the two factors
\begin{equation}\label{eq_area_exponent}
\E\bigg[\exp\bigg(-\frac{R}{N}\,\sum_{i=1}^{T_N N^{2/3}}
\big(N-X^N_r(iN^{-2/3})\big)\bigg)\bigg]^{1/2},
\end{equation}
\begin{equation} \label{eq_local_times_exponent}
\begin{split}
\E\Bigg[\exp\Bigg(& 2C\sum_{h\in N^{-1/3}(\zz+1/2)} \bigg(
N^{-1/3}\,\frac{R\,L_h\big(X^N_r\big)}{N^{1/6}} +
\frac{R^2 L_h\big(X^N_r\big)^2}{N^{1/3}}+\frac{R^{\gamma'} L_h\big(X^N_r\big)^{\gamma'}}{N^{\gamma'/6}} \bigg) \\
& +2C\sum_{h\in N^{-1/3}\zz} \bigg(
N^{-1/3}\,\frac{R\,L_h\big(X^N_r\big)}{N^{1/6}} +
\frac{R^2 L_h\big(X^N_r\big)^2}{N^{1/3}}+\frac{R^{\gamma'} L_h\big(X^N_r\big)^{\gamma'}}{N^{\gamma'/6}} \bigg)\Bigg)\Bigg]^{1/2}.
\end{split}
\end{equation}

\smallskip

To estimate the expression in \eqref{eq_area_exponent} further we assume first that $x_r \le y_r$. In this case, the random walk bridge $X^N_r$ can be  sampled as follows: sample a random walk bridge connecting $N$ to $N$ in $T_N N^{2/3}-\lfloor N-N^{1/3} x_r\rfloor +\lfloor N-N^{1/3} y_r\rfloor$ steps of size $\pm 1$; next, shift the resulting bridge down by $N-\lfloor N-N^{1/3} x_r\rfloor$; finally, insert $\lfloor N-N^{1/3} x_r \rfloor -\lfloor N-N^{1/3} y_r\rfloor$ additional down steps of size $-1$ uniformly at random. Combining the observation that the latter insertion is only increasing the value of $\sum_{i=1}^{T_N N^{2/3}} \big(N-X^N_r(iN^{-2/3})\big)$ with Proposition
\ref{Proposition_exponential_bound_1} for the random walk bridge connecting $N$ to $N$ we obtain the bound
\begin{equation*}
\E\bigg[\exp\bigg(-\frac{R}{N}\,\sum_{i=1}^{T_N N^{2/3}}
\big(N-X^N_r(iN^{-2/3})\big)\bigg)\bigg]^{1/2}
\le \tilde{C}e^{-RT_Nx_r/2},
\end{equation*}
where $\tilde{C}>0$ is a constant depending only on $R$. A similar argument in the case $x_r>y_r$ leads to the same upper bound, only with $x_r$ replaced by $y_r$.

\medskip

We turn to the expression in \eqref{eq_local_times_exponent}. Our goal is to give bounds on the exponential moments of the six random variables
\begin{equation}\label{eq_three_vars_to_bound}
\begin{split}
& \sum_{h\in N^{-1/3}(\zz+1/2)} \frac{L_h\big(X^N_r\big)}{N^{1/2}},
\quad \sum_{h\in N^{-1/3}(\zz+1/2)} \frac{L_h\big(X^N_r\big)^2}{N^{1/3}},
\quad \sum_{h\in N^{-1/3}(\zz+1/2)} \frac{L_h\big(X^N_r\big)^{\gamma'}}{N^{\gamma'/6}}, \\
& \sum_{h\in N^{-1/3}\zz} \frac{L_h\big(X^N_r\big)}{N^{1/2}},
\quad \sum_{h\in N^{-1/3}\zz} \frac{L_h\big(X^N_r\big)^2}{N^{1/3}},
\quad \sum_{h\in N^{-1/3}\zz} \frac{L_h\big(X^N_r\big)^{\gamma'}}{N^{\gamma'/6}},
\end{split}
\end{equation}
which can be then combined by means of H\"older's inequality. The first and fourth random variables simply equal to $T_N N^{-1/6}$ and $T_N N^{-1/6}+N^{-5/6}$, respectively. For the other four random variables in \eqref{eq_three_vars_to_bound} we use the estimate \eqref{SILTubd} (recalling also Remark \ref{rmk_loc_time} and the elementary inequality \eqref{eq_powers_ineq}) to obtain
\begin{equation*}
\max\bigg(\!\sum_{h\in N^{-1/3}(\zz+1/2)} \frac{L_h\big(X^N_r\big)^p}{N^{1/3}},\!\sum_{h\in N^{-1/3}\zz} \frac{L_h\big(X^N_r\big)^p}{N^{1/3}}\bigg)
\!\le\! C\Big(\!M(N,T_N)^{p-1}+|x_r-y_r|^{p-1}+1\Big),
\end{equation*}
where $p\in\{2,\gamma'\}$, $C>0$ is a uniform constant, and $N^{1/3}M(N,T_N)$ has the same law as the difference between the maximum and the minimum of $X^N_r$ (see the explanation following \eqref{SILTubd}). We also recall from the paragraph after \eqref{SILTubd} that $N^{1/3}M(N,T_N)$ is stochastically dominated by the sum of $|x_r-y_r|$ and the difference between the maximum and the minimum of a simple symmeteric random walk with $T_N N^{2/3}$ steps of size $\pm 1$. At this point, the bound \eqref{eq_SRW_max_bound} shows that
\begin{equation*}
\begin{split}
\max\bigg(\E\bigg[\exp\bigg(\theta\,\sum_{h\in N^{-1/3}(\zz+1/2)} \frac{L_h\big(X^N_r\big)^p}{N^{1/3}}\bigg)\bigg],\,
\E\bigg[\exp\bigg(\theta\,\sum_{h\in N^{-1/3}\zz} \frac{L_h\big(X^N_r\big)^p}{N^{1/3}}\bigg)\bigg]\bigg) \\
\le \tilde{C}\,e^{\tilde{C}\,|x_r-y_r|^{p-1}},
\end{split}
\end{equation*}
where $\theta>0$, $p\in\{2,\gamma'\}$, and $\tilde{C}>0$ is a constant depending only on $\theta$.

\medskip

Putting everything together we get the following estimate on the expression in \eqref{eq_tail_estimate_2}:
\begin{equation} \label{eq_tail_estimate_3}
\frac{\tilde{C}}{2^{jR}}\,|f(x_r)|^R |g(y_r)|^R
\exp\bigg(\!-\frac{R}{C}\,(x_r-y_r)^2-\frac{R\,T_N}{2}\min(x_r,y_r) + \tilde{C}|x_r-y_r|+ \tilde{C}|x_r-y_r|^{\gamma'-1}\!\bigg),
\end{equation}
where $\tilde{C}>0$ is a constant depending only on $R$, and $C>0$ is a uniform constant. Since $f(x)=O(\exp(x^{1-\delta}))$, $g(x)=O(\exp(x^{1-\delta}))$ as $x\to\infty$, and $2<\gamma'<3$, the integral of the $R$-th root of the latter expression (and, hence, also of the expression in  \eqref{eq_tail_rv_moment_reduced}) over the region $\{(x_r,y_r)\in{\mathcal R}_{N,K}:\,x_r,y_r\le N^{2/3-\epsilon}\}$ admits an upper bound of the form $\frac{C(R,K)}{2^j}$ with $\lim_{K\to\infty} C(R,K)=0$ for any fixed $R$.

\medskip

To finish the proof it remains to treat the case of at least one of $x_r,y_r$ is larger than $N^{2/3-\epsilon}$. The additional technical difficulty in this case is that the quantities $\sqrt{X^N_r(iN^{-2/3})\wedge X^N_r((i-1)N^{-2/3})}$ can be anywhere between $1$ and $\sqrt{N-N^{1-\epsilon}+T_NN^{2/3}}\le\sqrt{N-N^{1-\epsilon}/2}$ for all $N$ sufficiently large. To address this issue we estimate the factors involving $X^N_r(iN^{-2/3})\wedge X^N_r((i-1)N^{-2/3})$ in \eqref{eq_tail_rv_moment_reduced} as follows:
\begin{equation*}
\begin{split}
&N^{-R/2}\,\Big|\sqrt{X^N_r(iN^{-2/3})\wedge X^N_r((i-1)N^{-2/3})}+\xi\big(X^N_r(iN^{-2/3})\wedge X^N_r((i-1)N^{-2/3})\big)\Big|^R \\
&\le N^{-R/2}
\Big(\sqrt{N-N^{1-\eps}/2}+\big|\xi\big(X^N_r(iN^{-2/3})\wedge X^N_r((i-1)N^{-2/3})\big)\big|\Big)^R \\
&=\big(1-N^{-\eps}/2\big)^{R/2}\bigg(1+\frac{\big|\xi\big(X^N_r(iN^{-2/3})\wedge X^N_r((i-1)N^{-2/3})\big)\big|}{\sqrt{N-N^{1-\eps}/2}}\bigg)^R.
\end{split}
\end{equation*}
Inserting the latter bound into \eqref{eq_tail_rv_moment_reduced} and proceeding as in the derivation of \eqref{eq_tail_estimate_2} (only replacing all $\xi$'s by their absolute values) we obtain an estimate on the expression in \eqref{eq_tail_rv_moment_reduced} of the form
\begin{equation} \label{eq_tail_estimate_4}
\begin{split}
& \frac{C}{2^{jR}}\,|f(x_r)|^R |g(y_r)|^R\,\exp\Big(-\frac{R}{C}(x_r-y_r)^2-\frac{RT_N}{4}\,N^{2/3-\epsilon}\Big) \\
& \E\Bigg[\exp\Bigg(C\,\sum_{h\in N^{-1/3}(\zz+1/2)} \bigg(\frac{R\,L_h(X^N_r)}{N^{1/6}}
+\frac{R^2 L_h(X^N_r)^2}{N^{1/3}}
+\frac{R^{\gamma'} L_h(X^N_r)}{N^{\gamma'/6}}\bigg) \\
& \qquad\quad\;\;\; +C\,\sum_{h\in N^{-1/3}\zz} \bigg(\frac{R\,L_h(X^N_r)}{N^{1/6}}
+\frac{R^2 L_h(X^N_r)^2}{N^{1/3}}+\frac{R^{\gamma'} L_h(X^N_r)}{N^{\gamma'/6}}\bigg)\Bigg)\Bigg],
\end{split}
\end{equation}
where $C>0$ is a uniform constant and $2\le\gamma'<3$ is the same as in Lemma \ref{Lemma_tail_estimates}. Repeating the argument leading to  \eqref{eq_tail_estimate_3} we arrive at the further upper bound
\begin{equation} \label{eq_tail_estimate_5}
\frac{\tilde{C}}{2^{jR}}|f(x_r)|^R|g(y_r)|^R
\!\exp\!\bigg(\!\!-\frac{R}{C}(x_r-y_r)^2-\frac{RT_N}{4}N^{2/3-\eps}
+\tilde{C}\!\Big(\!T_NN^{1/6}+|x_r-y_r|+|x_r-y_r|^{\gamma'-1}\Big)\!\!\bigg)
\end{equation}
with the constant $C$ of \eqref{eq_tail_estimate_4} and a constant $\tilde{C}>0$ depending only on $R$. Since $f(x)=O(\exp(x^{1-\delta}))$, $g(x)=O(\exp(x^{1-\delta}))$ as $x\to\infty$, and $2<\gamma'<3$, the term $\exp\big(\!\!-\frac{R}{C}(x_r-y_r)^2-\frac{RT_N}{4}N^{2/3-\epsilon}\big)$ dominates on the region of integration $\{(x_r,y_r)\in{\mathcal R}_{N,K}:\,x_r>N^{2/3-\epsilon}\;\mathrm{or}\;y_r>N^{2/3-\epsilon}\}$. The area of the latter is of the order $N^{4/3}$, so that the desired estimate readily follows.
\end{proof}

\begin{rmk} \label{Remark_K_limit}
Lemma \ref{Lemma_moment_bound} shows that the differences between the integrals in \eqref{eq_high_moment_term}, \eqref{eq_trace_term} with $\underline R=-\infty$, $\overline R=\infty$ and a fixed $K$ and those with $\underline R=-\infty$, $\overline R=\infty$ and $K=N^{2/3}$ are bounded uniformly in $N\in\nn$ and tend to $0$ in the limit $K\to\infty$ in the sense of moments. This statement is also true for the sums of such differences over all $j=0,1,\ldots$ (which give rise to exponential functions), since the bounds of Lemma \ref{Lemma_moment_bound} decay exponentially in $j$.
\end{rmk}

Next, we present an extension of Lemma \ref{Lemma_moment_bound} which can be shown in the same way.

\begin{lemma} \label{Lemma_moment_bound_upgrade}
Fix an $N\in\nn$ and let $\Upsilon$ be a random variable given by a deterministic function of the random walk bridge $X$ and the sequence of $\aa$'s. For each $K\in[0,\infty)$, define $\overline{\mathrm{Sc}}^{(j)}(N;K;\Upsilon)$ as the modification of $\overline{\mathrm{Sc}}^{(j)}(N;K)$ obtained by inserting $\Upsilon$ as an additional factor into the expectation in \eqref{eq_high_moment_term}. Suppose that the $(2R)$-th moment of $\Upsilon$ is bounded by $C_1<\infty$. Then,
there exist positive constants $C(R,K)$, $K\in[0,\infty)$ (possibly depending on $f$, $g$, $\A$, but not on $N$, $\Upsilon$) such that
\begin{equation*}
\E\Big[\big|\overline{\mathrm{Sc}}^{(j)}(N;K;\Upsilon)\big|^R\Big] \le \frac{C(R,K)}{2^{jR}}\,C_1^{1/2}, \qquad
\E\Big[\big|\overline{\mathrm{Tr}}^{(j)}(N;K;\Upsilon)\big|^R\Big]
\le \frac{C(R,K)}{2^{jR}}\,C_1^{1/2}
\end{equation*}
for all $N\in\nn$, and $\lim_{K\to\infty} C(R,K)=0$.
\end{lemma}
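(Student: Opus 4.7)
The plan is to reproduce the argument of Lemma \ref{Lemma_moment_bound} almost verbatim, inserting one extra application of the Cauchy--Schwarz inequality to peel off the factor $\Upsilon$. As in that proof, I would treat only $\overline{\mathrm{Sc}}^{(j)}(N;K;\Upsilon)$, since the trace case is handled identically.

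Following the derivation of \eqref{eq_tail_rv_moment}, the $R$-th absolute moment is expressed by introducing $R$ independent copies $X^N_1,\ldots,X^N_R$ of the random walk bridge and applying Fubini's Theorem to pull the integral over $({\mathcal R}_{N,K})^R$ outside the expectation. For each $r$ the integrand acquires an extra factor $\Upsilon_r$, namely the realization of $\Upsilon$ along the $r$-th bridge with the shared $\aa$'s. Applying H\"older's inequality with exponent $R$ dominates the resulting $R$-fold expectation by $\prod_{r=1}^R \E\bigl[|\Upsilon_r\,\mathrm{F}_r|^R\bigr]^{1/R}$, where $\mathrm{F}_r$ denotes the single-bridge functional appearing in \eqref{eq_tail_rv_moment_reduced}.

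I would then invoke Cauchy--Schwarz on each single-bridge factor,
\begin{equation*}
\E\bigl[|\Upsilon_r\,\mathrm{F}_r|^R\bigr]^{1/R}
\le \E\bigl[|\Upsilon_r|^{2R}\bigr]^{1/(2R)}\,\E\bigl[|\mathrm{F}_r|^{2R}\bigr]^{1/(2R)}.
\end{equation*}
The hypothesis $\E[|\Upsilon|^{2R}]\le C_1$, together with the identical distribution of the $\Upsilon_r$, makes the product over $r$ of the first factors at most $C_1^{1/2}$. The second factor is precisely the quantity estimated in Lemma \ref{Lemma_moment_bound}, with the exponent $R$ doubled to $2R$. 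Since the proof of that lemma produced, for each single-bridge contribution, a prefactor of order $1/2^{jR'}$ when the exponent is $R'$, the substitution $R'=2R$ gives $1/2^{j\cdot 2R}$, and the $(2R)$-th root followed by the product over $r=1,\ldots,R$ returns the desired $1/2^{jR}$.

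The remaining step is to rerun the chain of tail bounds \eqref{eq_tail_estimate_2}--\eqref{eq_tail_estimate_5} with $2R$ in place of $R$ and to integrate over $({\mathcal R}_{N,K})^R$. All the auxiliary estimates invoked there -- Lemma \ref{Lemma_tail_estimates}, Propositions \ref{Proposition_exponential_bound_1} and \ref{Proposition_exponential_bound_2}, and Lemma \ref{Lemma_Binomial_bound} -- are uniform in the exponent up to the implicit constants, so the argument carries over line by line and yields a constant $C(R,K)$ with $\lim_{K\to\infty} C(R,K)=0$. The main obstacle is purely bookkeeping: one must check that doubling the exponent merely rescales the implicit constants without disturbing the decay in $j$ or $K$, which is the case for every bound involved. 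Crucially, no structural information about $\Upsilon$ beyond its $(2R)$-th moment bound is used; measurability with respect to $X$ and the $\aa$'s merely guarantees the applicability of Cauchy--Schwarz.
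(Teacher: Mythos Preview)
Your proposal is correct and matches the paper's intent: the paper gives no detailed proof for this lemma, merely stating that it ``can be shown in the same way'' as Lemma \ref{Lemma_moment_bound}. Your insertion of a single Cauchy--Schwarz step to separate $\Upsilon$ (doubling the exponent from $R$ to $2R$) and then rerunning the estimates \eqref{eq_tail_estimate_2}--\eqref{eq_tail_estimate_5} with $2R$ in place of $R$ is exactly the natural way to fill in the details, and your bookkeeping of the $2^{jR}$ prefactor and the $C_1^{1/2}$ contribution is accurate.
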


\smallskip

As a last ingredient for the proof of Theorem \ref{theorem_Main_restated} we need to control the error in replacing the terms involving $\aa$'s in the fourth lines of \eqref{eq_high_moment_2}, \eqref{eq_trace_approx} by the terms involving $\aa$'s in the third lines of \eqref{eq_high_moment_term}, \eqref{eq_trace_term}. To this end, we consider the complete homogeneous symmetric functions and power sums given respectively by
\begin{equation*}
h(j;N):= \sum_{0\le i_1 \le\cdots\le i_j\le T_N N^{2/3}} \, \prod_{j'=1}^j \aa\big(X(i_{j'}N^{-2/3})\big), \quad
p(j;N):=\sum_{i'=0}^{T_NN^{2/3}} \aa\big(X(i'N^{-2/3})\big)^j.
\end{equation*}
The $h(j;N)$'s can be expressed in terms of the $p(j;N)$'s according to the Newton identies. The latter can be written as follows (see e.g. \cite[Chapter 1, Section 2]{Mac}):
\begin{equation} \label{eq_Newton}
h(j;N)=[z^j]\exp\bigg(\sum_{j'=1}^\infty \frac{p(j';N)}{j'} z^{j'}\bigg),
\end{equation}
where $[z^j]$ stands for the coefficient of $z^j$ in the series expansion of what follows. We note further that \eqref{eq_Newton} implies
\begin{equation} \label{eq_sum_arising_for_fixed}
 \frac{h(j;N)}{(2\sqrt{N})^j}=\sum_{l=0}^j \frac{p(1;N)^l}{l!\, (2\sqrt{N})^l} \bigg([z^{j-l}]
\exp\bigg(\sum_{j'=2}^\infty \frac{p(j';N)}{j'(2\sqrt{N})^{j'}}\,z^{j'}\bigg)\!\!\bigg).
\end{equation}
The $l=j$ term in \eqref{eq_sum_arising_for_fixed} is precisely the one appearing in the third lines of \eqref{eq_high_moment_term}, \eqref{eq_trace_term}. The next lemma shows that the other terms become negligible as $N\to\infty$.

\begin{lemma} \label{Lemma_tail_sum}
Under Assumption \ref{Assumptions} and for each $R=1,2,\ldots$, there exist positive constants $\tilde{C}$, $\eps$ such that
\begin{equation} \label{eq_a_tail_estimate}
\E\bigg[\bigg([z^l]\exp\bigg(\sum_{j'=2}^\infty \frac{|p(j';N)|}{j'
(2\sqrt{N})^{j'}}\,z^{j'}\bigg)\!\!\bigg)^R\,\bigg] \le \big(\tilde{C} N^{-\eps}\big)^{lR}
\end{equation}
for all $l=1,2,\ldots,T_NN^{2/3}$ and $N\in\nn$.
\end{lemma}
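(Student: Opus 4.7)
The plan is to first rewrite the coefficient extraction via the standard combinatorial identity. Since the coefficients $c_{j'} := |p(j';N)|/(j'(2\sqrt{N})^{j'})$ are all nonnegative,
$$[z^l]\exp\!\bigg(\sum_{j'\geq 2} c_{j'} z^{j'}\bigg) = \sum_{\substack{(m_{j'})_{j'\geq 2}:\\ \sum_{j'} j' m_{j'} = l}} \prod_{j'\geq 2}\frac{c_{j'}^{m_{j'}}}{m_{j'}!}.$$
Since the number of such partitions of $l$ is at most $e^{c\sqrt{l}}$ by Hardy--Ramanujan, raising to the $R$-th power and applying $(\sum_i a_i)^R \leq (\text{number of terms})^{R-1}\sum_i a_i^R$ reduces the task to estimating $\E\big[\prod_{j'\geq 2}c_{j'}^{Rm_{j'}}\big]/\prod_{j'}(m_{j'}!)^R$ for each individual partition.

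To bound $\E\big[\prod_{j'\geq 2}c_{j'}^{Rm_{j'}}\big]$, I would expand each factor $|p(j';N)|^{Rm_{j'}}$ using $|p(j';N)|\leq\sum_{i'}|\aa(X(i'N^{-2/3}))|^{j'}$ and the multinomial theorem, regroup the resulting products of $|\aa(X(i' N^{-2/3}))|^{j'}$ factors according to the site visited, and use the independence of $\aa(m)$ across different $m$ together with the moment bound $\E[|\aa(m)|^\ell]\leq C^\ell \ell^{\gamma\ell}$ from Assumption \ref{Assumptions}(c). If $e_m\ge 0$ denotes the total exponent accumulated on $\aa(m)$, so that $\sum_m e_m = Rl$, the expectation of each resulting monomial is bounded by $C^{Rl}\prod_m e_m^{\gamma e_m}$. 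Combined with the normalization factor $\prod_{j'}(j'(2\sqrt{N})^{j'})^{-Rm_{j'}}$ and the constraints $\sum_{j'\ge 2}j' m_{j'} = l$ and $\sum_{j'\ge 2}m_{j'}\le l/2$ (the latter because $j'\ge 2$), the $N$-dependence of the resulting bound is
$$N^{R\sum_{j'\geq 2}m_{j'}(2/3-j'/2)}=N^{\frac{2R}{3}\sum_{j'\ge 2}m_{j'} - \frac{Rl}{2}}\leq N^{-Rl/6},$$
which identifies the target exponent $\eps$ as just below $1/6$.

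The main obstacle is controlling the factor $\prod_m e_m^{\gamma e_m}$, whose worst case $(Rl)^{\gamma Rl}$ --- attained when all exponent mass is concentrated at a single site --- threatens to overwhelm the $N^{Rl/6}$ savings once $l$ is comparable to $T_N N^{2/3}$ and $\gamma$ is close to $2/3$. To defeat it, I would split the combinatorial sum by the number $d$ of distinct sites carrying positive exponent, and combine two ingredients. First, the number of configurations with $d$ distinct sites is bounded by $d^{R\sum m_{j'}}$ times the number of $d$-subsets of sites actually visited by $X$, the latter being typically of order $N^{d/3}$ because the range of $X$ has size $O(N^{1/3})$ with good tail estimates by the arguments underlying Proposition \ref{Proposition_exponential_bound_2}. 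Second, the factorial denominator $\prod_{j'}(m_{j'}!)^R$ combined with Stirling's inequality produces a sub-exponential suppression that dominates $\prod_m e_m^{\gamma e_m}$ whenever $d$ is not too small, while small $d$ is already penalized by the tiny site-subset count. Optimizing over $d$ yields a uniform bound of the form $(\tilde C N^{-\eps})^{lR}$ with $\eps\in(0,1/6)$ depending on $R$, $\gamma$, and the constants in Assumption \ref{Assumptions}(c); the final sum over partitions then contributes only the sub-exponential factor $e^{cR\sqrt{l}}$, which is absorbed into $\tilde C^{lR}$.
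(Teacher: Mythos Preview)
Your setup is right, but the argument takes a wrong turn at the power-mean step $(\sum_i a_i)^R\le p(l)^{R-1}\sum_i a_i^R$, and the ``main obstacle'' you identify is an artifact of that step rather than a genuine difficulty.

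The paper's proof is much shorter and uses no properties of the bridge $X$ whatsoever. After replacing $|p(j';N)|$ by $q(j';N):=\sum_{i'}|\aa(X(i'N^{-2/3}))|^{j'}$, the quantity $\big([z^l]\exp(\sum_{j'\ge2}\tilde c_{j'}z^{j'})\big)^R$ with $\tilde c_{j'}=q(j';N)/(j'(2\sqrt N)^{j'})$ is a polynomial in the $|\aa(m)|$'s with nonnegative coefficients, homogeneous of degree $lR$. By H\"older, every such monomial has $\aa$-expectation at most $\Gamma_{lR}:=\sup_m\E[|\aa(m)|^{lR}]\le C^{lR}(lR)^{\gamma lR}$, so the full expectation is at most $\Gamma_{lR}$ times the same polynomial evaluated at $|\aa|\equiv1$:
\[
\frac{\Gamma_{lR}}{(2\sqrt N)^{lR}}\,\Big([z^l]\exp\Big(\sum_{j'\ge2}\tfrac{k}{j'}\,z^{j'}\Big)\Big)^R,\qquad k:=T_NN^{2/3}+1.
\]
An elementary estimate gives $[z^l]\exp(\sum_{j'\ge2}\tfrac{k}{j'}z^{j'})\le C^l(k\log l/l)^{\lfloor l/2\rfloor}$; raising to the $R$-th power produces the crucial factor $l^{-lR/2}$, which combined with $(lR)^{\gamma lR}$ yields $l^{(2\gamma-1)lR/2}$. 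With $l\le CN^{2/3}$ one obtains a bound of the form $(\tilde C\,N^{4\gamma/3-1}\log N)^{lR/2}$, which is $(\tilde C N^{-\eps})^{lR}$ since $\gamma<3/4$ (Assumption~\ref{Assumptions} even has $\gamma<2/3$).

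Your route loses precisely the factor $l^{-l(R-1)/2}$. After the power-mean reduction you must control $\sum_{(m_{j'})}\prod_{j'}(k/j')^{Rm_{j'}}/(m_{j'}!)^R$, and this is only of order $(k^R/l)^{l/2}$ rather than $(k/l)^{lR/2}$. Plugging in the crude moment bound then gives $C^{lR}\,l^{\gamma Rl - l/2}\,N^{-Rl/6}$, which for $l\sim N^{2/3}$ fails to be $(\tilde C N^{-\eps})^{lR}$ as soon as $R\ge2$ and $\gamma>1/4+1/(2R)$. That is exactly the obstacle you diagnose, but it is created by the reduction, not by the crude bound $\Gamma_{lR}$. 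The proposed fix via stratification by the number $d$ of occupied sites and range estimates for $X$ is therefore unnecessary --- and also underspecified, since the count ``$d^{R\sum m_{j'}}$ times the number of $d$-subsets of sites visited'' ignores the occupation-time multiplicities with which $X$ revisits each site. Dropping the power-mean step and bounding the fully expanded $R$-th power by $\Gamma_{lR}$ times its value at $|\aa|\equiv1$, as above, removes the difficulty in one line.
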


\begin{proof}
We define $\Gamma_j=\sup_{0\le i_1\le i_2\le\cdots\le i_j} \E\big[ \prod_{j'=1}^j |\aa(i_{j'})|\big]=\sup_{i\in\nn} \E\big[|\aa(i)|^j\big]$, where the second equality is due to H\"older's inequality. By Assumption \ref{Assumptions}, $\Gamma_j\le C^j j^{j\gamma}$ with universal constants $C>0$ and $0<\gamma<3/4$. We also write $k$ for  $T_NN^{2/3}+1$. Now, expanding the random variable inside the expectation in \eqref{eq_a_tail_estimate} into a sum of monomials in the $|\aa\big(X(i'N^{-2/3})\big)|$'s, bounding the expectations of the latter by $\Gamma_{lR}$, and then collecting the terms back we obtain the following upper bound on the left-hand side of
\eqref{eq_a_tail_estimate}:
\begin{equation}
\frac{1}{(2\sqrt{N})^{lR}}\,\Gamma_{lR}\,\bigg([z^l]\exp\bigg(\sum_{j'=2}^{\infty} \frac{k}{j'}\,z^{j'}\bigg)\bigg)^R.
\end{equation}
Using $l\le k$ we deduce further
\begin{equation*}
\begin{split}
& [z^l]\exp\bigg(\sum_{j'=2}^\infty \frac{k}{j'}\,z^{j'}\bigg)
= [z^l]\Bigg(\sum_{l'=0}^{\lfloor l/2\rfloor} \frac{\big(\sum_{j'=2}^l \frac{k}{j'}\,z^{j'}\big)^{l'}}{l'!}\Bigg)
\le \sum_{l'=0}^{\lfloor l/2\rfloor} \frac{\big(\sum_{j'=2}^l \frac{k}{j'}\big)^{l'}}{l'!}
\le \sum_{l'=0}^{\lfloor l/2\rfloor} \frac{(k\log l)^{l'}}{l'!} \\
&  \le \big(\lfloor l/2\rfloor+1\big)\frac{(k\log l)^{\lfloor l/2\rfloor}}{\lfloor l/2\rfloor!}\le C^l\,\bigg(\frac{k\log l}{l}\bigg)^{\lfloor l/2\rfloor},
\end{split}
\end{equation*}
where $C>0$ is a uniform constant. Putting everything together we find that the left-hand side of \eqref{eq_a_tail_estimate} is at most
\begin{equation*}
\tilde{C}^{lR}\,\frac{1}{(2\sqrt{N})^{lR}}\,l^{lR\gamma}\,\bigg(\frac{N^{\frac{2}{3}}\log l}{l}\bigg)^{\lfloor l/2\rfloor R}
\le\bigg(\frac{\tilde{C}\,l^{2\gamma-1}\,N^{\frac{2}{3}}\,\log l}{4N}\bigg)^{lR/2}
\le \bigg(\frac{\tilde{C}\,N^{\frac{4\gamma}{3}}\,\log N^{\frac{2}{3}}}{4N}\bigg)^{lR/2},
\end{equation*}
where $\tilde{C}$ is a constand depending only on $R$, $\gamma$, and $\sup_N T_N$ that varies from expression to expression. The lemma now follows from $\gamma<3/4$ (see Assumption \ref{Assumptions}).
\end{proof}


\subsection{Proof of Theorem \ref{Theorem_Main}}\label{Section_complete_proof}

As discussed in Subsection \ref{subsec_leading}, to prove Theorem \ref{Theorem_Main} it suffices to establish Theorem \ref{theorem_Main_restated}. In the following, we only give the proof of the latter for $\mathrm{Sc}(N)$, since the same proof applies to $\mathrm{Tr}(N)$ as well. To start with, we note
\begin{equation} \label{eq_Sc_expansion}
\mathrm{Sc}(N)=\sum_{j=0}^{\lfloor TN^{2/3}\rfloor} \mathrm{Sc}^{(j)}(N;K,-\infty,\infty) + \sum_{j=0}^{\lfloor TN^{2/3}\rfloor} \overline{\mathrm{Sc}}^{(j)}(N;K)
 + \sum_{j=0}^{\lfloor TN^{2/3}\rfloor} \sum_{l=0}^{j-1} U(N;j,l),
\end{equation}
where
\begin{equation}
\begin{split}
& U(N;j,l):=\frac{1}{2}\int_{0}^\infty \int_{0}^\infty f(x)\,
g(y)\,\Xi(x,y;N,T_N)\,\E_X
\Bigg[\mathbf{1}_{\{\forall t:\,N^{-1/3}(N-X(t))\in{\mathcal A}\}}   \\
&\prod_{i=1}^{T_NN^{2/3}} \frac{\sqrt{X(iN^{-2/3})\wedge
X((i-1)N^{-2/3})}}{\sqrt{N}}\,\bigg(1+\frac{\xi\big(X(iN^{-2/3})\wedge X((i-1)N^{-2/3})\big)}{\sqrt{X(iN^{-2/3})\wedge X((i-1)N^{-2/3})}}\bigg)
\\
&\frac{\Big(\sum_{i'=0}^{T_NN^{2/3}} \aa\big(X(i'N^{-2/3})\big)\Big)^l}{l!\, (2\sqrt{N})^l} \; [z^{j-l}]
\exp\Bigg(\sum_{j'=2}^\infty \,\frac{\sum_{i'=0}^{T_NN^{2/3}} \aa\big(X(i'N^{-2/3})\big)^{j'}}{j'(2\sqrt{N})^{j'}} z^{j'}\Bigg)
\Bigg]\,\mathrm{d}x\,\mathrm{d}y.
\end{split}
\end{equation}
Hereby, with a slight abuse of notation we have allowed the value of $T_N$ to change from term to term with $j$, $l$. However, this is not important, since the results of the previous subsection apply to all occuring values of $T_N$.

\medskip

Now, we take the limits $N\to\infty$, $K\to\infty$ (in this particular order) of the right-hand side in \eqref{eq_Sc_expansion}.
The limits as $N\to\infty$ of the summands in the first sum in \eqref{eq_Sc_expansion} have been identified in  Proposition \ref{Proposition_main_single_term}, so that by invoking the moment bounds of Lemma \ref{Lemma_moment_bound} with $K=0$ we conclude that their sum converges as $N\to\infty$ to
\begin{equation} \label{eq_finite_K_limit}
\begin{split}
& \frac{1}{\sqrt{2\pi T}} \int_0^K \int_0^K f(x)\,
g(y)\,\exp\bigg(-\frac{(x-y)^2}{2T}\bigg) \\
& \E_{B^{x,y}}\bigg[\mathbf{1}_{\{B^{x,y}(t)\in{\mathcal A}\}}
\exp\Big(-\frac{1}{2}\int_0^T B^{x,y}(t)\,\mathrm{d}t
+\sqrt{s_\xi^2+s_\aa^2/4}\int_0^\infty L_a\big(B^{x,y}\big)\,\mathrm{d}W(a)\Big)\bigg]\,\mathrm{d}x\,\mathrm{d}y
\end{split}
\end{equation}
in the sense of moments and in distribution, where $W=\frac{1}{\sqrt{s_\xi^2+s_\aa^2/4}}(s_\xi\,W_\xi+s_\aa\,W_\aa)$. In view of Remark \ref{Remark_K_limit}, the $K\to\infty$ limit of the expression in \eqref{eq_finite_K_limit} in the sense of moments and in distribution is given by the same expression with $K$ replaced by $\infty$. The latter is precisely the limit in \eqref{eq_Scalar_limit} (recall Assumption \ref{Assumptions} (b)).

\medskip

Next, for every fixed $R\in\nn$, we combine Lemma
\ref{Lemma_moment_bound} with the triangle inequality for the $L^R$-norm to conclude that the $R$-th moment of the second sum in \eqref{eq_Sc_expansion} can be bounded above by a constant $C(R,K)$ uniformly in $N\in\nn$, which further satisfies $\lim_{K\to\infty} C(R,K)=0$. Consequently, the second sum in \eqref{eq_Sc_expansion} vanishes in the double limit $N\to\infty$, $K\to\infty$ in the sense of moments and in distribution.

\medskip

To control the third sum in \eqref{eq_Sc_expansion} we apply Lemma \ref{Lemma_moment_bound_upgrade} for $U(N;j,l)$ and a fixed $R\in\nn$
(note that in the case at hand the bound on the $R$-th moment of $\Upsilon$ required in Lemma \ref{Lemma_moment_bound_upgrade} is precisely the content of Lemma \ref{Lemma_tail_sum}) to get
\begin{equation}
\label{eq_U_bound}
\E\big[U(N;j,l)^R\big]\le \frac{\tilde{C}_1}{2^{lR}}\big(\tilde{C}_2 N^{-\eps}\big)^{(j-l)R},\quad j,l\in\nn,\;j\ge l,
\end{equation}
where the positive constants $\tilde{C}_1$, $\tilde{C}_2$, and $\epsilon$ may depend on $R$. Combining this with the triangle inequality for the $L^R$-norm we deduce that the $R$-th moment of the third sum in $\eqref{eq_Sc_expansion}$ tends to $0$ in the limit $N\to\infty$. Since $R\in\nn$ was arbitrary, the third sum in
\eqref{eq_Sc_expansion} must vanish as $N\to\infty$ both in the sense of moments and in distribution. \ep

\begin{rmk} \label{Remark_even_and_odd}
Note that the terms $\mathrm{Sc}^{(j)}(N;K,-\infty,\infty)$ in \eqref{eq_Sc_expansion} with even $j$ (odd $j$ resp.) arise when we take even (odd resp.) powers of the matrix $\frac{M_{N;{\mathcal A}}}{2\sqrt{N}}$. Since Proposition \ref{Proposition_main_single_term} gives the joint asymptotics of such terms for \textit{any} finite collection of $j$'s, we can separate the even and the odd powers in Theorem \ref{Theorem_Main}. By doing so, we obtain the following results. Let $T_N$, $N\in\nn$ and $\tilde{T}_N$, $N\in\nn$ be two sequences of positive numbers such that $T_NN^{2/3}$, $N\in\nn$ are even integers and $\tilde{T}_NN^{2/3}$, $N\in\nn$ are odd integers. Suppose further that $\sup_N |T_N-T|N^{2/3}<\infty$ and $\sup_N |\tilde{T}_N-T|N^{2/3}<\infty$. Then, under the same assumptions and in the same sense as in Theorem \ref{Theorem_Main} the limit
\begin{equation*}
\lim_{N\to\infty}\;
\frac{1}{2}\bigg(\frac{M_{N;{\mathcal A}}}{2\sqrt N}\bigg)^{T_NN^{2/3}}
\end{equation*}
is given by the integral operator with the kernel
\begin{equation} \label{eq_even_part}
\begin{split}
& K_{\A}^{\rm even}(x,y;T):=
 \frac{1}{\sqrt{2\pi T}}
\exp\bigg(-\frac{(x-y)^2}{2T}\bigg)\,\E_{B^{x,y}}\Bigg[\mathbf{1}_{\{\forall t:\,B^{x,y}(t)\in{\mathcal A}\}}\\
& \exp\bigg(-\frac{1}{2}\int_0^T B^{x,y}(t)\,\mathrm{d}t
+s_\xi\int_0^\infty L_a\big(B^{x,y}\big)\,\mathrm{d}W_\xi(a)\bigg)
\!\sum_{j\,\mathrm{even}} \frac{\big(s_\aa\int_0^\infty L_a\big(B^{x,y}\big)\,\mathrm{d}W_\aa(a)\big)^j}{2^j j!} \Biggr].
\end{split}
\end{equation}
Similarly, the $N\to\infty$ limit of $\frac{1}{2}\bigg(\frac{M_{N;{\mathcal A}}}{2\sqrt N}\bigg)^{\tilde{T}_NN^{2/3}}$ is the integral operator with the kernel
\begin{equation} \label{eq_even_part}
\begin{split}
& K_{\A}^{\rm even}(x,y;T):=
 \frac{1}{\sqrt{2\pi T}}
\exp\bigg(-\frac{(x-y)^2}{2T}\bigg)\,\E_{B^{x,y}}\Bigg[\mathbf{1}_{\{\forall t:\,B^{x,y}(t)\in{\mathcal A}\}}\\
& \exp\bigg(-\frac{1}{2}\int_0^T B^{x,y}(t)\,\mathrm{d}t
+s_\xi\int_0^\infty L_a\big(B^{x,y}\big)\,\mathrm{d}W_\xi(a)\bigg)
\!\sum_{j\,\mathrm{odd}} \frac{\big(s_\aa\int_0^\infty L_a\big(B^{x,y}\big)\,\mathrm{d}W_\aa(a)\big)^j}{2^j j!} \Biggr].
\end{split}
\end{equation}
Here, $W_\xi$ and $W_\aa$ are independent standard Brownian motions as in Proposition \ref{Proposition_main_single_term}.
\end{rmk}



\section{Properties of the stochastic Airy semigroup I}
\label{Section_properties}

In this section, we study the properties of operators $\U_\A(T)$, $T>0$ in more detail. The present section contains the statements that we prove directly from the definition of $\U_\A(T)$, $T>0$, while in Sections \ref{Section_extreme_convergence},  \ref{Section_properties_2} we prove the remaining properties by relying on the asymptotic results of Section \ref{Section_estimates}.

\begin{lemma}\label{Lemma_HS}
For each $T>0$, $\U_\A(T)$ is a Hilbert-Schmidt operator on $L^2(\rr_{\ge0})$ with probability one.
\end{lemma}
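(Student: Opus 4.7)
The plan is to show that the integral kernel $K_\A(\cdot,\cdot;T)$ lies almost surely in $L^2(\rr_{\ge 0}\times\rr_{\ge 0})$, which is equivalent to $\U_\A(T)$ being Hilbert--Schmidt. By Tonelli's Theorem it suffices to establish the stronger expectation bound
\begin{equation*}
\E\int_0^\infty\!\!\int_0^\infty K_\A(x,y;T)^2\,\mathrm{d}x\,\mathrm{d}y<\infty.
\end{equation*}

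Applying the Cauchy--Schwarz inequality to the inner expectation $\E_{B^{x,y}}$ in \eqref{def:kernel}, then taking the expectation with respect to $W$ and exploiting the fact that, conditional on the path of $B^{x,y}$ (which is independent of $W$), the Wiener integral $\int_0^\infty L_a(B^{x,y})\,\mathrm{d}W(a)$ is centered Gaussian with variance $\int_0^\infty L_a(B^{x,y})^2\,\mathrm{d}a$, one arrives at the pointwise estimate
\begin{equation*}
\E\!\left[K_\A(x,y;T)^2\right] \le \frac{e^{-(x-y)^2/T}}{2\pi T}\,\E_{B^{x,y}}\!\left[\exp\!\left(-\int_0^T B^{x,y}(t)\,\mathrm{d}t+\frac{2}{\beta}\int_0^\infty L_a(B^{x,y})^2\,\mathrm{d}a\right)\right].
\end{equation*}
Writing $B^{x,y}(t)=m(t)+\tilde B(t)$ with $m(t)=x(1-t/T)+yt/T$ and $\tilde B$ a standard Brownian bridge from $0$ to $0$ on $[0,T]$, the area decomposes as $\int_0^T B^{x,y}\,\mathrm{d}t=T(x+y)/2+\int_0^T\tilde B(t)\,\mathrm{d}t$, producing an explicit factor $e^{-T(x+y)/2}$. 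Combined with the Gaussian factor $e^{-(x-y)^2/T}$, this is integrable over $\rr_{\ge 0}^2$, so the proof reduces to controlling the residual expectation of $\exp(-\int_0^T\tilde B\,\mathrm{d}t+\frac{2}{\beta}\int_0^\infty L_a(B^{x,y})^2\,\mathrm{d}a)$ uniformly in $x,y\ge 0$.

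To treat this residual factor I would apply Cauchy--Schwarz once more to separate its two terms. The first, $\E[e^{-2\int\tilde B\,\mathrm{d}t}]$, is a classical Gaussian exponential moment, since $\int_0^T\tilde B(t)\,\mathrm{d}t$ is centered normal with variance $T^3/12$. For the second, $\E[e^{(4/\beta)\int L^2\,\mathrm{d}a}]$, the crude inequality $\int_0^\infty L_a(B^{x,y})^2\,\mathrm{d}a\le T\sup_a L_a(B^{x,y})$ reduces matters to estimating exponential moments of the maximal local time of a Brownian bridge, for which tail bounds of Gaussian type are classical (via the Ray--Knight description or Barlow--Yor-type inequalities). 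The main obstacle is obtaining such bounds \emph{uniformly} in the endpoints $(x,y)$: the local times of $B^{x,y}$ are not simply translations of those of $\tilde B$, because the affine drift $m(t)$ varies with $t$, so one needs a stochastic domination argument showing that $\sup_a L_a(B^{x,y})$ is controlled by a random variable whose law depends only on $T$, for example by dominating it through the supremum of the local times of an unconditioned Brownian motion run for time $T$.
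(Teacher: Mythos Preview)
Your proposal tracks the paper's argument almost step for step through the first two thirds: both reduce to finiteness of $\E\int\!\!\int K_\A^2$, both apply Cauchy--Schwarz to $\E_{B^{x,y}}$ and integrate out $W$ to reach the same pointwise bound, both pull off a deterministic spatial decay from the linear part of the bridge (you extract $e^{-T(x+y)/2}$, the paper uses the cruder $e^{-\min(x,y)}$ via $(1-t/T)x+(t/T)y\ge\min(x,y)$; either suffices), and both split the remaining expectation by Cauchy--Schwarz into a harmless Gaussian area term and the local-time-squared term.

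The only substantive divergence is in controlling $\E\big[\exp\big(\tfrac{4}{\beta}\int L_a(B^{x,y})^2\,\mathrm{d}a\big)\big]$. The paper does \emph{not} attempt a direct continuous bound here. Instead it recycles its discrete machinery: it recalls that $\int L_a(B^{x,y})^2\,\mathrm{d}a$ is the distributional limit of the discrete sums $N^{-1/3}\sum_h L_h(X^{x,y;N,T_N})^2$, and for those it has already established (via the quantile/Vervaat transform, estimate \eqref{SILTubd}) a stochastic domination by $8T(J+\tilde J+2|x-y|)$, where $J,\tilde J$ are maxima of two independent Brownian motions on $[0,T]$. Passing to the limit yields the same domination for the continuous object, and the resulting factor $e^{C|x-y|}$ is absorbed by the Gaussian $e^{-(x-y)^2/T}$.

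Two remarks on your alternative. First, you set yourself a harder target than necessary: you do not need a bound on $\sup_a L_a(B^{x,y})$ whose law is \emph{independent} of $(x,y)$; any bound of the form $\E[e^{\theta\sup_a L_a(B^{x,y})}]\le C\,e^{C|x-y|}$ is enough, since the Gaussian prefactor handles polynomial-in-$|x-y|$ exponents. Second, your route via $\int L^2\le T\sup L$ plus Barlow--Yor or Ray--Knight is perfectly viable in principle, but making the endpoint dependence explicit through those tools is not entirely trivial, whereas the paper's discrete detour gives the needed $|x-y|$-dependence for free from the combinatorics already developed in Section~\ref{Section_estimates}.
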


\begin{proof}
Fix a $T>0$. We need to prove that the integral kernel $K_\A(x,y;T)$ of $\U_\A(T)$ satisfies
\begin{equation}\label{eq_HS}
\int_{\rr_{\ge0}} \int_{\rr_{\ge0}} K_\A(x,y;T)^2\,\mathrm{d}x\,\mathrm{d}y < \infty
\end{equation}
with probability one. To this end, we will show that the expectation of the latter
double integral is finite. Moving the square inside the expectation in the
definition of $K_\A(x,y;T)^2$, dropping the indicator function therein, applying
Fubini's Theorem to take the expectation with respect to $W$, and observing that the
latter expectation boils down to the exponential moment of a Gaussian random
variable, we obtain  the following upper bound on the expectation of \eqref{eq_HS}:
\begin{equation}\label{eq_HS2}
\frac{1}{2\pi T}\,\int_{\rr_{\ge0}} \int_{\rr_{\ge0}} e^{-\frac{(x-y)^2}{T}}\,\E_{B^{x,y}}\bigg[
\exp\bigg(-\int_0^T B^{x,y}(t)\,\mathrm{d}t+\frac{2}{\beta}\int_0^\infty L_a(B^{x,y})^2\,\mathrm{d}a\bigg)\bigg]\,\mathrm{d}x\,\mathrm{d}y.
\end{equation}
Next, we note that $B^{x,y}(t)-\big(1-\frac{t}{T}\big)x-\frac{t}{T}\,y$, $t\in[0,T]$
is a copy of $B^{0,0}$, as well as
$\big(1-\frac{t}{T}\big)x+\frac{t}{T}\,y\ge\min(x,y)$. These and the Cauchy-Schwarz
inequality allow to estimate the integrand in \eqref{eq_HS2} by the product of
$e^{-\frac{(x-y)^2}{T}-\min(x,y)}$ with
\begin{equation}\label{eq_HS3}
\E_{B^{0,0}}\bigg[\exp\bigg(-2\int_0^T B^{0,0}(t)\,\mathrm{d}t\bigg)\bigg]^{1/2}\E_{B^{x,y}}\bigg[\exp\bigg(\frac{4}{\beta}\int_0^\infty L_a(B^{x,y})^2\,\mathrm{d}a\bigg)\bigg]^{1/2}.
\end{equation}
Further, it is well-known that the first expectation in \eqref{eq_HS3} is given by a \textit{finite} constant $C$, see e.g. \cite[proof of Theorem 3.1, final display]{CSY} for a significantly stronger statement. For the second expectation in \eqref{eq_HS3}, we recall from the proof of Proposition \ref{Proposition_basic_conv} that $\int_0^\infty L_a(B^{x,y})^2\,\mathrm{d}a$ is the limit in distribution of $N^{-1/3}\sum_{h\in N^{-1/3}(\zz_{\ge0}+1/2)} L_h(X^{x,y;N,T_N})^2$, $N\in\nn$. Moreover, the latter random variables are stochastically dominated by $8T\big(N^{-1/3}J_{TN^{2/3}}+N^{-1/3}\tilde{J}_{TN^{2/3}}+2|x-y|+2N^{-1/3}\big)$, $N\in\nn$, respectively, where $J_{TN^{2/3}}$, $\tilde{J}_{TN^{2/3}}$ are the maxima of two independent simple random walks with $TN^{2/3}$ steps of size $\pm 1$ (see \eqref{SILTubd} and the paragraph following it). Passing to the limit $N\to\infty$ we conclude that $\int_0^\infty L_a(B^{x,y})^2\,\mathrm{d}a$ is stochastically dominated by $8T(J+\tilde{J}+2|x-y|)$, where $J$, $\tilde{J}$ are the maxima of two independent standard Brownian motions on $[0,T]$. All in all, the expression in \eqref{eq_HS2} is bounded above by
\begin{equation*}
\frac{C^{1/2}}{2\pi T} \int_{\rr_{\ge0}} \int_{\rr_{\ge0}} e^{-\frac{(x-y)^2}{T}-\min(x,y)}\,\E\bigg[\exp\bigg(\frac{32T}{\beta}\big(J+\tilde{J}+2|x-y|\big)\bigg)\bigg]^{1/2}\,\mathrm{d}x\,\mathrm{d}y.
\end{equation*}
It remains to note the integrability of
$e^{-\frac{(x-y)^2}{T}-\min(x,y)+\frac{32T}{\beta}|x-y|}$ over
$\big(\rr_{\ge0}\big)^2$.
\end{proof}

Lemma \ref{Lemma_HS} shows that the operators $\U_\A(T)$ are well-defined, and we
can now prove Propositions \ref{Proposition_trace_class},
\ref{Proposition_semigroup}, and \ref{Proposition_continuous}.

\begin{proof}[Proof of Proposition \ref{Proposition_semigroup}]
We need to show that the event
\begin{equation*}
\{\forall\,f\in L^2(\rr_{\ge0}):\,\U_\A(T_1)\,\U_\A(T_2)f=\U_\A(T_1+T_2)f\}
\end{equation*}
has probability one for any $T_1,T_2\ge0$. Moreover, according to Lemma
\ref{Lemma_HS}, the operators $\U_\A(T_1)$, $\U_\A(T_2)$, $\U_\A(T_1+T_2)$ are
continuous with probability one, so that in the latter event we may replace
$L^2(\rr_{\ge0})$ by a countable dense subset. Consequently, it suffices to prove
that $\U_\A(T_1)\,\U_\A(T_2)f=\U_\A(T_1+T_2)f$ almost surely for a \textit{fixed}
$f\in L^2(\rr_{\ge0})$. In addition, we may assume that $f\ge0$, since otherwise we
can decompose $f$ into its positive and negative parts. In this case, Fubini's
Theorem reveals that it is enough to show that almost surely
\begin{equation}\label{eq_kernel_semi}
\forall\,x,y\in\rr_{\ge0}:\quad \int_{\rr_{\ge0}} K_\A(x,z;T_1)\,K_\A(z,y;T_2)\,\mathrm{d}z=K_\A(x,y;T_1+T_2).
\end{equation}
Further, elementary manipulations allow to rewrite the latter integral as
\begin{equation}\label{eq_conc_semi}
\begin{split}
\frac{1}{\sqrt{2\pi(T_1+T_2)}}\,\int_\rr \sqrt{\frac{T_1+T_2}{2\pi\, T_1 T_2}}\,\exp\bigg(-\bigg(z-\frac{T_1}{T_1+T_2}y-\frac{T_2}{T_1+T_2}x\bigg)^2\Big/\bigg(2\frac{T_1T_2}{T_1+T_2}\bigg)\bigg) \\
\E_{B^{x,z},B^{z,y}}\bigg[\!\mathbf{1}_{\{\forall t:B^{x,z}(t)\in\A,\forall t:B^{z,y}(t)\in\A\}}\!\exp\!\bigg(\!\!-\!\frac{(x-y)^2}{2(T_1+T_2)}\!-\!\frac{1}{2}\!\int_0^{T_1}\!B^{x,z}(t)\,\mathrm{d}t\!-\!\frac{1}{2}\!\int_0^{T_2}\! B^{z,y}(t)\,\mathrm{d}t\\
+\frac{1}{\sqrt{\beta}}\int_0^\infty \big(L_a(B^{x,z})+L_a(B^{z,y})\big)\,\mathrm{d}W(a)\bigg)\bigg]\,\mathrm{d}z.
\end{split}
\end{equation}
We now make the following observation: the process obtained by sampling a point $z$
according to the normal distribution with mean
$\frac{T_1}{T_1+T_2}y+\frac{T_2}{T_1+T_2}x$ and variance $\frac{T_1T_2}{T_1+T_2}$
and then concatenating a standard Brownian bridge connecting $x$ to $z$ in time
$T_1$ with a conditionally independent (given $z$) standard Brownian bridge
connecting $z$ to $y$ in time $T_2$ is a standard Brownian bridge connecting $x$ to
$y$ in time $T_1+T_2$. As a result, we end up with $K_\A(x,y;T_1+T_2)$.
\end{proof}

\begin{proof}[Proof of Proposition \ref{Proposition_trace_class}]
The almost sure symmetry of $\U_\A(T)$ amounts to showing that the event
\begin{equation*}
\bigg\{\forall\,f,g\in L^2(\rr_{\ge0}):\;\int_{\rr_{\ge0}} \big(\U_\A(T)f\big)(x) g(x)\,\mathrm{d}x=\int_{\rr_{\ge0}} f(x)\big(\U_\A(T)g\big)(x)\,\mathrm{d}x\bigg\}
\end{equation*}
has probability one. Thanks to the almost sure continuity of the operator $\U_\A(T)$
(Lemma \ref{Lemma_HS}) it is further sufficient to consider $f$, $g$ from a
countable dense subset of $L^2(\rr_{\ge0})$. Hence, the symmetry property of
$\U_\A(T)$ reduces to the almost sure equality
\begin{equation*}
\int_{\rr_{\ge0}} \big(\U_\A(T)f\big)(x) g(x)\,\mathrm{d}x=\int_{\rr_{\ge0}} f(x)\big(\U_\A(T)g\big)(x)\,\mathrm{d}x
\end{equation*}
for \textit{fixed} $f,g\in L^2(\rr_{\ge0})$. In addition, we may assume $f\ge0$,
$g\ge0$, since otherwise we can decompose $f$, $g$ into their positive and negative
parts. For such functions, we may use Fubini's Theorem to reduce the statement
further to: almost surely,
\begin{equation}
\forall\,x,y\in\rr_{\ge0}:\quad K_\A(x,y;T)=K_\A(y,x;T).
\end{equation}
The latter follows from the definition of the kernel $K_\A(\cdot,\cdot;T)$ and the
fact that the time reversal of a standard Brownian bridge connecting $x$ to $y$ in
time $T$ is a standard Brownian bridge connecting $y$ to $x$ in time $T$.

\medskip

At this point, the almost sure non-negativity of $\U_\A(T)$ is a direct consequence
of the almost sure identity $\U_\A(T)=\U_\A(T/2)\,\U_\A(T/2)$ (Proposition
\ref{Proposition_semigroup}) and the almost sure symmetry of $\U_\A(T/2)$.

\medskip

So far, we have established that $\U_\A(T)$ is a symmetric positive Hilbert-Schmidt
operator with probability one. In particular, the Spectral Theorem for symmetric
compact operators reveals that $L^2(\rr_{\ge0})$ almost surely admits an orthonormal
basis comprised of eigenfunctions of $\,\U_\A(T)$ with the corresponding nonnegative
eigenvalues $e^1_\A(T)\ge e^2_\A(T)\ge\cdots$. By definition, $\U_\A(T)$ is trace
class iff $\sum_{i=1}^\infty e^i_\A(T)<\infty$, and in this case
\begin{equation}\label{eq_U_trace1}
\mathrm{Trace}\big(\U_\A(T)\big)=\sum_{i=1}^\infty e^i_\A(T).
\end{equation}
Moreover, $\U_\A(T)\,\U_\A(T/2)=\U_\A(T/2)\,\U_\A(T)$ and \cite[Chapter 28, Theorem
7]{Lax} show that $L^2(\rr_{\ge0})$ almost surely admits an orthonormal basis
comprised of \textit{common} eigenfunctions of $\U_\A(T)$ and $\U_\A(T/2)$.
Consequently, $\sum_{i=1}^\infty e^i_\A(T)$ can be rewritten as $\sum_{i=1}^\infty
e^i_\A(T/2)^2$. The latter expression gives the square of the Hilbert-Schmidt norm
of $\U_\A(T/2)$ (see \cite[Chapter 30, Exercise 11]{Lax}) and can be therefore
rewritten further as
\begin{equation*}
\int_{\rr_{\ge0}} \int_{\rr_{\ge0}} K_\A(x,y;T/2)^2\,\mathrm{d}x\,\mathrm{d}y
\end{equation*}
(see \cite[Theorem 2.11]{Simon}). The latter expression is finite with probability one by Lemma
\ref{Lemma_HS}, so that $\U_\A(T)$ is trace class. Finally, an application of the semigroup
equation \eqref{eq_kernel_semi} to
\begin{equation*}
\int_{\rr_{\ge0}} \int_{\rr_{\ge0}} K_\A(x,y;T/2)^2\,\mathrm{d}x\,\mathrm{d}y
=\int_{\rr_{\ge0}} \int_{\rr_{\ge0}} K_\A(y,x;T/2)\,K_\A(x,y;T/2)\,\mathrm{d}x\,\mathrm{d}y
\end{equation*}
gives the trace formula \eqref{trace_formula}.
\end{proof}

\smallskip

\begin{proof}[Proof of Proposition \ref{Proposition_continuous}] 
We will show the stronger statement that, for each \emph{even} integer $p\ge 2$, it holds
\begin{equation}
\label{eq_stronger_continuity}
\lim_{t\to T} \E\big[\|\U_\A(T)f-\U_\A(t)f\|^p\big]=0.
\end{equation}
With the notation $\|\U_\A(t)\|$ for the spectral norm (i.e. the largest eigenvalue) of $\U_\A(t)$ we can use the Cauchy-Schwarz inequality to find
\begin{equation*}
\begin{split}
\E \| \U_\A(T)f-\U_\A(t)f\|^p &\le \E\big[\|\U_\A(\min(T,t))\|^p\,\|
\U_\A(|T-t|)f-f\|^p\big] \\ 
&\le \E\big[\|\U_\A(\min(T,t))\|^{2p}\big]^{1/2}\,\E\big[\|\U_\A(|T-t|)f-f\|^{2p}\big]^{1/2} \\ 
&\le \E\big[\mathrm{Trace}(\U_\A(2p\min(t,T)))\big]^{1/2}\,\E\big[\|\U_\A(|T-t|)f-f\|^{2p}\big]^{1/2}.
\end{split}
\end{equation*}
The reduction of the trace to a Hilbert-Schmidt norm as in the proof of Proposition \ref{Proposition_trace_class} and the arguments in the proof of Lemma \ref{Lemma_HS} show that the quantity $\E\big[\mathrm{Trace}(\U_\A(2p\min(t,T)))\big]$ can be bounded uniformly for all $t$ in a neighborhood of a fixed $T>0$. Consequently, we only need to show \eqref{eq_stronger_continuity} for $T=0$.

\medskip

Write $\G_\A(T)$ for the integral operator with kernel
\begin{equation}\label{def:kernel_Gauss}
K^{\G}_\A(x,y;T)=\frac{1}{\sqrt{2\pi T}} \exp\!\bigg(\!-\frac{(x-y)^2}{2T}\bigg)
\E_{B^{x,y}} \bigg[\mathbf{1}_{\{\forall t:\,B^{x,y}(t)\in\A\}}\exp\bigg(
-\frac{1}{2}\int_0^T B^{x,y}(t)\,\mathrm{d} t\bigg) \bigg].
\end{equation}
As $T\to 0$, the operators $\G_\A(T)$ converge strongly to $\U_\A(0)$ (the orthogonal projector from $L^2(\rr_{\ge 0})$ onto $L^2(\A)$), see e.g. \cite[Section 2, Step 6]{Mc}. Therefore, it is enough to prove
$$
 \lim_{T\to 0} \E\big[\|\U_\A(T)f-\G_\A(T)f\|^p\big]=0
$$
or, in other words, the convergence to $0$ as $T\to 0$ of
\begin{equation} \label{eq_cont_small_term}
\begin{split}
\E\Bigg[\!\Bigg(\!\int_{\mathbb R_{\ge 0}}\!\bigg(\int_{\mathbb R_{\ge 0}}  \frac{1}{\sqrt{2\pi T}} \exp\!\bigg(\!-\frac{(x-y)^2}{2T}\bigg)\E_{B^{x,y}}\!\bigg[\mathbf{1}_{\{\forall t:\,B^{x,y}(t)\in\A\}} 
\exp\!\bigg(\!\!-\frac{1}{2}\int_0^T B^{x,y}(t)\,\mathrm{d}t\!\bigg) \\ 
\bigg(\exp\bigg(\frac{1}{\sqrt{\beta}}\,\int_0^\infty
L_a(B^{x,y})\,\mathrm{d}W(a)\bigg)-1\bigg)\bigg]\,f(y)\,\mathrm{d}y \bigg)^2\,\mathrm{d}x\Bigg)^{p/2}\Bigg].
\end{split}
\end{equation}

\smallskip

Next, we apply the elementary inequality $|e^S-1|\le |S|\,e^{|S|}$, $S\in\rr$ with
$$
S:=\frac{1}{\sqrt{\beta}}\,\int_0^\infty L_a(B^{x,y})\,\mathrm{d}W(a)
$$
to estimate the expression in \eqref{eq_cont_small_term} by
\begin{equation}
\begin{split}
\E\Bigg[\!\Bigg(\!\int_{\mathbb R_{\ge 0}}\!\bigg(\int_{\mathbb R_{\ge 0}}  &\frac{1}{\sqrt{2\pi T}} \exp\!\bigg(\!-\frac{(x-y)^2}{2T}\bigg) \\
& \E_{B^{x,y}}\!\bigg[\mathbf{1}_{\{\forall t:\,B^{x,y}(t)\in\A\}} 
\!\exp\!\bigg(\!\!-\frac{1}{2}\int_0^T\!\!B^{x,y}(t)\,\mathrm{d}t\!\bigg) 
|S|\,e^{|S|}\bigg]|f(y)|\,\mathrm{d}y \bigg)^2\mathrm{d}x\!\Bigg)^{p/2}\Bigg].
\end{split}
\end{equation}
Moreover, writing the square of the inner integral and then the $p/2$ power of the outer integral as products of integrals in indepedendent variables and applying Fubini's Theorem together with H\"older's inequality we end up with the bound
\begin{equation}
\begin{split}
& \Bigg(\!\int_{\mathbb R_{\ge 0}}\!\bigg(\int_{\mathbb R_{\ge 0}}  \frac{1}{\sqrt{2\pi T}} \exp\!\bigg(\!-\frac{(x-y)^2}{2T}\bigg) \\
&\qquad \E_{B^{x,y}}\!\bigg[\mathbf{1}_{\{\forall t:\,B^{x,y}(t)\in\A\}} \!\exp\!\bigg(\!\!-\frac{1}{2}\int_0^T\!\!B^{x,y}(t)\,\mathrm{d}t\!\bigg) 
\E_W\Big[|S|^p\,e^{p|S|}\Big]^{1/p}\bigg]|f(y)|\,\mathrm{d}y \bigg)^2\mathrm{d}x\!\Bigg)^{p/2}\!.
\end{split}
\end{equation}
Dropping the indicator function and using the Cauchy-Schwarz inequality we obtain the further estimate
\begin{equation}
\begin{split}
\Bigg(\!\int_{\mathbb R_{\ge 0}}\!\bigg(\int_{\mathbb R_{\ge 0}}  & \frac{1}{\sqrt{2\pi T}} \exp\!\bigg(\!-\frac{(x-y)^2}{2T}\bigg)\,|f(y)| \\
&\E_{B^{x,y}}\bigg[\!\exp\bigg(\!\!-\!\int_0^T\!\! B^{x,y}(t)\,\mathrm{d}t\bigg)\bigg]^{1/2}\,
\E_{B^{x,y}}\bigg[\E_W\Big[|S|^p\,e^{p|S|}\Big]^{2/p}\bigg]^{1/2}\mathrm{d}y \bigg)^2\mathrm{d}x\!\Bigg)^{p/2}\!.
\end{split}
\end{equation}

\smallskip

Now, the same argument as in the proof of Lemma
\ref{Lemma_HS} shows that the first expectation with respect to $B^{x,y}$ is bounded by a uniform constant for all $x,y\in\rr_{\ge0}$ and $T\in(0,1]$. To control the second expectation with respect to $B^{x,y}$ we combine the representation  
$$
B^{x,y}(t)=\sqrt{T}\,\hat{B}^{x/\sqrt{T}, y/\sqrt{T}}(t/T),\quad t\in[0,T]
$$
with a standard Brownian bridge $\hat B^{x/\sqrt{T}, y/\sqrt{T}}$ connecting $x/\sqrt{T}$ to $y/\sqrt{T}$ in time $1$, the fact that for a given trajectory of $\hat B^{x/\sqrt{T}, y/\sqrt{T}}$ the random variable $S$ has the normal distribution with mean $0$ and variance
$$
\frac{T^{3/2}}{\beta} \int_0^\infty L_a(\hat B^{x/\sqrt{T},y/\sqrt{T}})^2\,\mathrm{d}a,
$$
the elementary inequality $|s|^p\le p!\,e^{|s|}$, $s\in\rr$, and an estimate on the integral of the squared local times as in the proof of Lemma \ref{Lemma_HS} to obtain a bound of the form
$$
\E_{B^{x,y}}\bigg[\E_W\Big[|S|^p\,e^{p|S|}\Big]^{2/p}\bigg]^{1/2}
\le C\,T^{3/4}\,e^{C\,|x-y|/\sqrt{T}}.
$$
Here, $C<\infty$ is a constant depending only on $p$. 

\medskip

All in all, we end up with the following estimate on the expression in \eqref{eq_cont_small_term}:
\begin{equation}
C\,T^{p/4} \Bigg(\int_{\mathbb R_{\ge 0}} \bigg(\int_{\mathbb R_{\ge 0}} 
\exp\bigg(-\frac{(x-y)^2}{2T}+C\,\frac{|x-y|}{\sqrt T}\bigg)\,|f(y)|\,\mathrm{d}y\bigg)^2\,\mathrm{d}x\Bigg)^{p/2}.
\end{equation}
By Young's inequality for convolution the latter is at most 
$$
C\,T^{p/4} \|f\|^p \bigg(\int_\rr \exp\bigg(-\frac{x^2}{2T}+C\,\frac{|x|}{\sqrt T}\bigg)\,\mathrm{d}x\bigg)^p \le C\,T^{3p/4}\|f\|^p,
$$
which finishes the proof.
\end{proof}

\section{Convergence of extreme eigenvalues} \label{Section_extreme_convergence}

The aim of this section is to obtain Corollary \ref{Corollary_spectrum} from Theorem
\ref{Theorem_Main}. Such proofs are standard in the literature on the moments method for random matrices, see e.g. \cite{Soshnikov}, \cite[Section 5]{Sodin_edge}, \cite{Sodin_ICM}. Nevertheless, we give a sketch of the proof for the sake of completeness. In addition, it yields Proposition \ref{Proposition_spectrum_of_semigroup}. We start with the following statement.

\begin{lemma} \label{Lemma_Trace_conv}
The convergence in distribution
\begin{equation}\label{eq_trace_exp_sum}
\sum_{i=1}^N e^{T\lambda^i_{N;{\mathcal
A}}/2}\longrightarrow_{N\to\infty}\mathrm{Trace}\big(\U_\A(T)\big)
\end{equation}
holds jointly for any finitely many $T$'s and $\A$s.
\end{lemma}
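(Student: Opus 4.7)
The plan is to derive the lemma from Theorem~\ref{Theorem_Main} by showing that
\[
\sum_{i=1}^N e^{T\lambda^i_{N;\A}/2} - \mathrm{Trace}\bigl(\mathcal M(T,\A,N)\bigr) \longrightarrow 0 \quad\text{in probability,}
\]
jointly for any finite collection of $T$'s and $\A$'s; combined with the joint trace convergence from Theorem~\ref{Theorem_Main}, this will yield the lemma. Writing the eigenvalues of $\mathcal M(T,\A,N)$ as $\phi_N(\lambda^i_{N;\A})$ with $\phi_N(x)=\tfrac{1}{2}p^{k-1}(p+1)$, $p=1+x/(2N^{2/3})$, and $k=\lfloor TN^{2/3}\rfloor$, the quantity above equals $\sum_i[\phi_N(\lambda^i_{N;\A})-e^{T\lambda^i_{N;\A}/2}]$, which I would control by splitting the sum according to the size of $\lambda^i_{N;\A}$ into a bounded edge part, an intermediate part, and a deep bulk.

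First I would establish tightness of the extremal eigenvalues $\lambda^1_{N;\A},\lambda^2_{N;\A},\ldots$. The algebraic identity $\phi_N(x)^2=\tfrac{1}{4}(p^{2k}+2p^{2k-1}+p^{2k-2})$ realizes $\mathrm{Trace}(\mathcal M(T,\A,N)^2)=\sum_i\phi_N(\lambda^i_{N;\A})^2$ as a linear combination of traces of powers $(M_{N;\A}/(2\sqrt N))^q$ with $q\in\{2k-2,2k-1,2k\}$, each of which is covered by Theorem~\ref{Theorem_Main} applied at time $2T$. Hence $\sum_i\phi_N(\lambda^i_{N;\A})^2$ is bounded in probability, so $\phi_N(\lambda^1_{N;\A})$ is tight. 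Monotonicity of $\phi_N$ on $\{p>0\}$ together with the pointwise limit $\phi_N(K)\to e^{TK/2}$ for each fixed $K$ then upgrades this to tightness of $\lambda^1_{N;\A}$; an analogous argument for the $i$-th largest eigenvalue of $\mathcal M(T,\A,N)^2$ gives tightness of each fixed $\lambda^i_{N;\A}$.

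For the near-edge contribution I would fix $L>0$ and use a second-order Taylor expansion of $\log p$ around $p=1$ to obtain the uniform estimate $\phi_N(\lambda)=e^{T\lambda/2}\bigl(1+O_L(N^{-2/3})\bigr)$ for $\lambda\in[-L,L]$; trace tightness combined with the lower bound $\phi_N(\lambda)\ge\tfrac{1}{2}e^{-TL/2}$ on $[-L,0]$ bounds the count $\#\{i:\lambda^i_{N;\A}\in[-L,L]\}$ in probability, so that the near-edge sum contributes $o_P(1)$. For the bulk $\lambda^i_{N;\A}\le-\delta N^{2/3}$ (fixed $\delta\in(0,2)$), both $e^{T\lambda/2}\le e^{-T\delta N^{2/3}/2}$ and, via the inequality $\log(1+y)\le y-y^2/2$ on $[-1,0]$ applied to $p$, $|\phi_N(\lambda)|\le C\cdot e^{T\lambda/2}$ uniformly on $[-2N^{2/3},0]$ are exponentially small in $N^{2/3}$, so their sums over at most $N$ indices yield an $O(Ne^{-\Omega(N^{2/3})})$ contribution tending to zero.

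The main obstacle is the intermediate range $\lambda^i_{N;\A}\in(-\delta N^{2/3},-L]$, where the Taylor expansion is too coarse but the individual terms are not yet exponentially small. Here the uniform bound $|\phi_N(\lambda)|\le C\cdot e^{T\lambda/2}$ on $[-2N^{2/3},0]$ reduces the task to estimating $\sum_{\lambda^i<-L}e^{T\lambda^i_{N;\A}/2}$ alone, which I would control by exploiting the moment convergence of $\mathrm{Trace}(\mathcal M(T',\A,N))$ at multiple $T'$ supplied by Theorem~\ref{Theorem_Main}: running the same comparison at $T'=T/2$ feeds tightness of $\sum_i e^{T\lambda^i_{N;\A}/4}$ back into the tail estimate $\sum_{\lambda^i<-L}e^{T\lambda^i/2}\le e^{-TL/4}\sum_i e^{T\lambda^i/4}$, which becomes arbitrarily small uniformly in $N$ as $L\to\infty$. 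The joint convergence across finitely many $T$'s and $\A$'s is then inherited from the corresponding joint trace convergence, because the whole comparison between $\sum_i e^{T\lambda^i_{N;\A}/2}$ and $\mathrm{Trace}(\mathcal M(T,\A,N))$ is deterministic given the spectrum.
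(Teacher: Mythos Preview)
Your overall strategy matches the paper's: reduce to Theorem~\ref{Theorem_Main} by showing $\sum_i e^{T\lambda^i_{N;\A}/2}-\mathrm{Trace}(\mathcal M(T,\A,N))\to 0$ in probability. But two steps in your execution do not go through as written.

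The more serious one concerns the \emph{left} spectral edge. Your three regions effectively cover only $p=1+\lambda/(2N^{2/3})\ge 0$; your key estimate $|\phi_N(\lambda)|\le C\,e^{T\lambda/2}$ is explicitly restricted to $\lambda\in[-2N^{2/3},0]$. But eigenvalues of $M_{N;\A}$ near $-2\sqrt N$ give $p\approx -1$, i.e.\ $\lambda\approx -4N^{2/3}$. There $e^{T\lambda/2}\approx e^{-2TN^{2/3}}$, whereas $|\phi_N(\lambda)|=\tfrac12|p|^{k-1}|p+1|$ is only of order $N^{-2/3}$ per eigenvalue (the consecutive powers $p^{k}$ and $p^{k-1}$ have opposite signs and cancel to first, not exponential, order). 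Nothing in your argument controls $\sum_{p_i<0}\phi_N(\lambda^i)$. The paper handles this by introducing the second edge variable $\lambda^{i,-}_{N;\A}=-N^{1/6}(\mu^{i,-}+2\sqrt N)$ for negative eigenvalues, so that the two powers become $(-1)^k(1+\lambda^{i,-}/(2N^{2/3}))^k$ and $(-1)^{k-1}(1+\lambda^{i,-}/(2N^{2/3}))^{k-1}$; after the edge approximation these cancel, and the residual error sums are absorbed into quantities tight by Remark~\ref{Remark_even_and_odd} (separate convergence of even and odd power traces---this remark, not Theorem~\ref{Theorem_Main} alone, is also what legitimizes your appeal to individual powers $q\in\{2k-2,2k-1,2k\}$).

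The second issue is your intermediate region $(-\delta N^{2/3},-L]$ with \emph{fixed} $L$. Your bound gives at most $(1+C)e^{-TL/4}\sum_i e^{T\lambda^i/4}$, and tightness of the last sum is exactly the statement you are proving, at $T/2$; iterating drives the exponent to $0$, while the only a priori bound $\sum_i e^{T'\lambda^i/2}\le N\,e^{T'\lambda^1/2}$ injects a factor of $N$ that $e^{-TL/4}$ cannot beat for fixed $L$. The paper sidesteps this by taking a \emph{growing} threshold $N^{\varepsilon}$ instead of a fixed $L$: then the bulk contributes at most $N\,e^{-TN^\varepsilon/2}\to 0$, and on the edge window $(-N^\varepsilon,N^\varepsilon)$ the Taylor error is $(e^{N^{-2/3+2\varepsilon}/2}-1)$ times a trace that is itself tight by Remark~\ref{Remark_even_and_odd}, so no bootstrap is needed. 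Adopting the growing cutoff, and treating the left edge symmetrically as above, would repair your argument.
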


\begin{proof}
We write $\mu^{1,+}_{N;\A}\ge \mu^{2,+}_{N;\A}\ge\cdots$ and
$\mu^{1,-}_{N;\A}\le\mu^{2,-}_{N;\A}\le\cdots$ for the positive and the negative
eigenvalues of $M_{N;\A}$, respectively. In addition, we define their rescaled
versions
\begin{equation*}
\lambda^{i,+}_{N;\A}\!=\!N^{1/6}\big(\mu^{i,+}_{N;\A}-2\sqrt{N}\big),\,i=1,2,\ldots\;\mathrm{and}\;
\lambda^{i,-}_{N;\A}\!=\!-N^{1/6}\big(\mu^{i,-}_{N;\A}+2\sqrt{N}\big),\,i=1,2,\ldots
\end{equation*}
Clearly,
\begin{equation} \label{eq_Trace_sum}
\begin{split}
& \mathrm{Trace}({\mathcal M}(T,\A,N)) = \frac{1}{2}\,\sum_i
\bigg(1+\frac{\lambda^{i,+}_{N;\A}}{2N^{2/3}} \bigg)^{\lfloor T N^{2/3}\rfloor} +
\frac{1}{2}\,\sum_i \bigg(
1+\frac{\lambda^{i,+}_{N;\A}}{2 N^{2/3}} \bigg)^{\lfloor T N^{2/3}\rfloor-1} \\
& +\frac{(-1)^{\lfloor T N^{2/3}\rfloor}}{2}\,\sum_i
\bigg(1+\frac{\lambda^{i,-}_{N;\A}}{2 N^{2/3}}\bigg)^{\lfloor T N^{2/3}\rfloor}
+\frac{(-1)^{\lfloor T N^{2/3}\rfloor-1}}{2}\,\sum_i \bigg(
1+\frac{\lambda^{i,-}_{N;\A}}{2 N^{2/3}} \bigg)^{\lfloor T N^{2/3}\rfloor-1}.
\end{split}
\end{equation}
From Theorem \ref{Theorem_Main} we know that both sides of \eqref{eq_Trace_sum}
converge in distribution to the right-hand side of \eqref{eq_trace_exp_sum} in the
limit $N\to\infty$. Consequently, it suffices to show the convergence to $0$ in
probability as $N\to\infty$ of the difference between the right-hand side of
\eqref{eq_Trace_sum} and
\begin{equation} \label{eq_Trace_with_exponents}
\begin{split}
\sum_{i=1}^N e^{T\lambda^i_{N;{\mathcal A}}/2}= \frac{1}{2}\sum_{i=1}^N
e^{T\lambda^i_{N;\A}/2} + \frac{1}{2} \sum_{i=1}^N e^{T\lambda^i_{N;\A}/2} +
\frac{(-1)^{\lfloor T N^{2/3}\rfloor}}{2} \sum_{i=1}^N e^{
T \lambda^i_{N;\A}/2} \\
+ \frac{(-1)^{\lfloor T N^{2/3}\rfloor-1}}{2} \sum_{i=1}^N e^{T\lambda^i_{N;\A}/2}.
\end{split}
\end{equation}

\smallskip

To analyze the difference between the right-hand sides of \eqref{eq_Trace_sum} and
\eqref{eq_Trace_with_exponents} we take $\eps=1/100$ and distinguish between four
types of (rescaled) eigenvalues:
\begin{enumerate}
\item ``bulk eigenvalues'': $\lambda^{i,+}_{N;\A}$'s and $\lambda^{i,-}_{N;\A}$'s which are less or equal to $-N^\eps$,
\item ``outliers'': $\lambda^{i,+}_{N;\A}$'s and $\lambda^{i,-}_{N;\A}$'s which are greater or equal to $N^\eps$,
\item ``right edge eigevalues'': $\lambda^{i,+}_{N;\A}$'s in $(-N^\eps,N^\eps)$,
\item ``left edge eigevalues'': $\lambda^{i,-}_{N;\A}$'s in $(-N^\eps,N^\eps)$.
\end{enumerate}
The contribution of the bulk eigenvalues to the right-hand side of
\eqref{eq_Trace_with_exponents} becomes negligible in the limit $N\to\infty$, since
there are at most $N$ of them and each contributes at most $e^{-TN^\eps/2}$. Due to
the elementary inequality $1+a\le e^a$, $a\in\rr$, the same is true for the
right-hand side of \eqref{eq_Trace_sum}.

\medskip

Next, we show that with probability tending to $1$ as $N\to\infty$  there are no
outliers. To this end, we consider a sequence $T_N$, $N\in\nn$ of positive numbers
such that $T_NN^{2/3}$, $N\in\nn$ are even integers and $\sup_N
|T_N-T|N^{2/3}<\infty$. Then,
\begin{equation*}
\mathrm{Trace}\big((M_{N;\A})^{T_NN^{2/3}}\big) = \sum_i \bigg(
1+\frac{\lambda^{i,+}_{N;\A}}{2 N^{2/3}} \bigg)^{T_NN^{2/3}} + \sum_i \bigg(
1+\frac{\lambda^{i,-}_{N;\A}}{2 N^{2/3}}\bigg)^{T_NN^{2/3}}.
\end{equation*}
Hence, if there is an outlier, then
\begin{equation}\label{eq_even_trace}
\mathrm{Trace}\big((M_{N;\A})^{T_NN^{2/3}}\big)\ge
\bigg(1+\frac{N^\eps}{2N^{2/3}}\bigg)^{T_NN^{2/3}}.
\end{equation}
According to Remark \ref{Remark_even_and_odd}, the left-hand side of
\eqref{eq_even_trace} converges in distribution as $N\to\infty$. On the other hand,
the right-hand side of \eqref{eq_even_trace} becomes arbitrarily large in the same
limit. Consequently, the probability that the inequality \eqref{eq_even_trace} takes
place tends to $0$ as $N\to\infty$.

\medskip

Finally, to the summands involving the edge eigenvalues we can apply the
approximations
\begin{equation*}
\bigg(1+\frac{\lambda^{i,\pm}_{N;\A}}{2 N^{2/3}} \bigg)^{\lfloor T
N^{2/3}\rfloor}\approx e^{T\lambda^{i,\pm}_{N;\A}/2},\quad
\bigg(1+\frac{\lambda^{i,\pm}_{N;\A}}{2 N^{2/3}} \bigg)^{\lfloor T
N^{2/3}\rfloor-1}\approx e^{T\lambda^{i,\pm}_{N;\A}/2}
\end{equation*}
with additive errors of at most
\begin{equation*}
\big(e^{N^{-2/3+2\eps}/2}-1\big)\bigg(1+\frac{\lambda^{i,\pm}_{N;\A}}{2
N^{2/3}}\bigg)^{\lfloor T N^{2/3}\rfloor},\quad
\big(e^{N^{-2/3+2\eps}/2}-1\big)\bigg(1+\frac{\lambda^{i,\pm}_{N;\A}}{2
N^{2/3}}\bigg)^{\lfloor T N^{2/3}\rfloor-1},
\end{equation*}
respectively. It remains to show that the sums of the latter errors over all the
edge eigenvalues tend to $0$ in probability. To this end, it suffices to prove that
the expressions
\begin{equation*}
\big(e^{N^{-2/3+2\eps}/2}-1\big) \sum_{i=1}^N \bigg(1+\frac{\lambda^i_{N;\A}}{2
N^{2/3}}\bigg)^{\lfloor T N^{2/3}\rfloor},\quad \big(e^{N^{-2/3+2\eps}/2}-1\big)
\sum_{i=1}^N \bigg(1+\frac{\lambda^i_{N;\A}}{2 N^{2/3}}\bigg)^{\lfloor T
N^{2/3}\rfloor-1}
\end{equation*}
converge to $0$ in probability, since the contributions of the non-edge eigenvalues
to them have been shown to be negligible before. The latter convergences are direct
consequences of Remark \ref{Remark_even_and_odd}.
\end{proof}

We can now deduce Corollary \ref{Corollary_spectrum} and Proposition
\ref{Proposition_spectrum_of_semigroup}.  According to Proposition
\ref{Proposition_trace_class}, for each $T>0$, $\U_\A(T)$ is a symmetric trace class
operator with probability one. In particular, $\U_\A(T)$ is compact (see e.g. \cite[Section 30.8, Exercise 11 (h), (g)]{Lax}), and therefore its spectrum is discrete. Moreover, the almost sure commutativity of operators $\U_\A(T)$, $T>0$ shows that
there exists an orthonormal basis $\vv^1_\A,\vv^2_\A,\dots$ of $L^2(\A)$ consisting of eigenfunctions common to all $\U_\A(T)$, $T\in(0,\infty)\cap\qq$ (see e.g. \cite[Chapter 28, Theorem 7]{Lax}). Next, we order the eigenvectors in such a way
that the corresponding eigenvalues of $\U_\A(1)$ satisfy $e^1_\A\ge e^2_\A\ge\cdots$, note that all of the latter are non-negative ($\U_\A(1)$ is a non-negative operator), and define $\eta^1_\A\ge\eta^2_\A\ge\cdots$ by $\eta^i_\A= 2 \log e^i_\A$, $i\in\nn$. 

\medskip

The semigroup property shows further that $\U_\A(T) \vv^i_\A =
\exp(T \eta^i_\A/2) \vv^i_\A$, $i\in\nn$ for all $T\in(0,\infty)\cap\qq$ and, in particular, $\mathrm{Trace}(\U_\A(T))=\sum_{i=1}^{\infty} \exp(T \eta^i_\A/2)$. We remark that a priori  Proposition \ref{Proposition_trace_class} allows some of the eigenvalues $e^i_\A$ of $\U_\A(1)$ to vanish. However, if that were the case, all of the operators $\U_\A(T)$, $T\in(0,\infty)\cap\qq$ would evaluate to $0$ on the corresponding eigenvectors, contradicting the continuity of the semigroup at $T=0$ established in Proposition \ref{Proposition_continuous}.

\medskip

At this point, Lemma \ref{Lemma_Trace_conv} implies the convergences
\begin{equation}\label{eq_Laplace_conv}
\sum_{i=1}^N e^{T\lambda^i_{N;{\mathcal
A}}/2}\longrightarrow_{N\to\infty}\sum_{i=1}^\infty e^{T\eta^i_\A/2}
\end{equation}
in distribution, jointly for any finitely many $T$'s in $(0,\infty)\cap\qq$ and $\A$'s. Applying the Skorokhod Representation Theorem in
the form of \cite[Theorem 3.5.1]{Dud} we find a new probability space, on which the convergence of \eqref{eq_Laplace_conv} holds almost surely for all $T\in (0,\infty)\cap\qq$ and a fixed finite collection of $\A$'s. Then, the one-to-one property of Laplace transforms implies that, for all $i\in\nn$, we must have the almost sure convergence $\lim_{N\to\infty} \lambda^i_{N;\mathcal A}=\eta^i_\A$, see e.g. \cite[Section 5]{Sodin_edge} for more details. This proves \eqref{eq_edge_limit}. Finally, Proposition \ref{Proposition_spectrum_of_semigroup} for irrational $T>0$ follows from the continuity of the semigroup (Proposition \ref{Proposition_spectrum_of_semigroup}), which then implies  
$\mathrm{Trace}(\U_\A(T))=\sum_{i=1}^{\infty} \exp(T \eta^i_\A/2)$ and \eqref{eq_Laplace_conv} via Lemma \ref{Lemma_Trace_conv} as before.

\section{Properties of the Stochastic Airy semigroup II}
\label{Section_properties_2}

\begin{proof}[Proof of Corollary \ref{Corollary_generator}]
Note that the matrix $M_N$ can be interpreted as an operator on $L^2(\rr_{\ge0})$:
for a function $f\in L^2(\rr_{\ge0})$, one can define the action of $M_N$ on $f$ by
interpreting the $N$-dimensional vector $M_N(\pi_Nf)$ as a piecewise constant
function on the intervals
$[0,N^{-1/3}),\,[N^{-1/3},2N^{-1/3}),\,\ldots,\,[N^{2/3}-N^{-1/3},N^{2/3})$ whose
values are given by the $N^{1/6}$ multiples of the entries of $M_N(\pi_Nf)$. The
main result of \cite{RRV} (see also \cite{KRV}) provides a coupling of the matrices
$M_N$, $N\in\nn$ (viewed as operators on $L^2(\rr_{\ge0})$) such that their scaled
largest eigenvalues and the corresponding eigenfunctions converge to those of $-\frac{1}{2}SAO_\beta$ almost surely. Simultaneously, the Brownian motion $W$ arises via the limit transition as in \eqref{eq_limit_BM_appearance}.

\medskip

It is not hard to see that under the coupling of \cite{RRV} the largest eigenvalues and the corresponding eigenfunctions of the matrices $\mathcal M(T,\rr_{\ge0},N)$, $N\in\nn$ (viewed as operators on $L^2(\rr_{\ge0})$) converge to those of $e^{-\frac{T}{2}SAO_\beta}$ almost surely. Indeed, for the convergence of eigenfunctions it suffices to observe that the eigenvectors of each $\mathcal M(T,\rr_{\ge0},N)$ are the same as those of the corresponding matrix $M_N$, and the eigenfunctions of $e^{-\frac{T}{2}SAO_\beta}$ are the same as those of $-\frac{1}{2}SAO_\beta$. On the other hand, for the convergence of eigenvalues we can use the same approximation of the normalized high powers of eigenvalues by the exponentials of their scaled versions as in the proof of Lemma \ref{Lemma_Trace_conv}.

\medskip

Moreover, the almost sure convergence of the eigenvalues of $-\frac{1}{2}SAO_\beta$ to $-\infty$ (\cite[Proposition 3.5]{RRV}) implies the almost sure strong convergence of the matrices $\mathcal M(T,\rr_{\ge0},N)$, $N\in\nn$ (viewed as operators on $L^2(\rr_{\ge0})$) to $e^{-\frac{T}{2}SAO_\beta}$. Since strong convergence implies weak convergence, by comparing with Theorem
\ref{Theorem_Main} we conclude that the finite-domensional distributions of the families $\int_{\mathbb R_{\ge 0}} f(x)\,(e^{-\frac{T}{2}SAO_\beta}g)(x)\,\mathrm{d}x$, $f,g\in L^2(\rr_{\ge0})$
and $\int_{\mathbb R_{\ge 0}} f(x)\,(\U_\A(T) g)(x)\,\mathrm{d}x$, $f,g\in L^2(\rr_{\ge0})$ coincide, as well as their joint distributions with the Brownian motions $W$ in the definitions of
$SAO_\beta$ and $\U_\A(T)$, respectively.

\medskip

In other words, the laws of the pairs $(W,e^{-\frac{T}{2}SAO_\beta})$ and $(W,\U_\A(T))$ are the same, where the second component is endowed with the Borel $\sigma$-algebra associated with the weak operator topology. Putting together the measurability of both $e^{-\frac{T}{2}SAO_\beta}$ and $\U_\A(T)$ with respect to the
$\sigma$-algebra generated by $W$ with \cite[Theorem 5.3]{Ka} we conclude that there is a unique up to null sets of $W$ deterministic functional $F$ such that $(W,F(W))$ has the same law as each of the pairs $(W,e^{-\frac{T}{2}SAO_\beta})$ and $(W,\U_\A(T))$. Consequently, if we choose the Brownian motions $W$ in the
respective definitions of $SAO_\beta$ and $\U_\A(T)$ to be the same, the almost sure identities $e^{-\frac{T}{2}SAO_\beta}=F(W)=\U_\A(T)$ will hold.
\end{proof}

We conclude the section with the proof of Proposition \ref{Proposition_expectation} and Corollary \ref{Cor_excursion_identity}.

\begin{proof}[Proof of Proposition \ref{Proposition_expectation} and Corollary \ref{Cor_excursion_identity}] Fix a $T>0$. From Corollary \ref{Corollary_generator} we know that the expectation $\E\big[\sum_{i=1}^\infty e^{T\eta^i/2}\big]$ is given by $\E\big[\mathrm{Trace}(\U(T))\big]$ which, in turn, can be computed to $\E\big[\int_{\rr_{\ge0}} K(x,x;T)\,\mathrm{d}x\big]$ thanks to the trace formula \eqref{trace_formula}. To simplify the latter expectation we apply Fubini's Theorem and take the expectation with respect to $W$ first using Novikov's condition (see e.g. \cite[Proposition 3.5.12]{KS}) to obtain
\begin{equation*}
\frac{1}{\sqrt{2\pi T}}\,\int_{\rr_{\ge0}} \E_{B^{x,x}}\bigg[\mathbf{1}_{\{\forall t:\,B^{x,x}(t)\ge0\}}
\exp\bigg(-\frac{1}{2}\int_0^T B^{x,x}(t)\,\mathrm{d}t+\frac{1}{2\beta}\int_0^\infty L_a(B^{x,x})^2\,\mathrm{d}a\bigg)\bigg]\,\mathrm{d}x.
\end{equation*}
Next, we note that $B^{0,0}:=B^{x,x}-x$ is a standard Brownian bridge connecting $0$ to $0$ in time $T$, in terms of which the latter expression reads
\begin{equation*}
\frac{1}{\sqrt{2\pi T}}\int_{\rr_{\ge0}} \!\!\E_{B^{0,0}}\bigg[\!\mathbf{1}_{\{\forall t:\,x\ge-B^{0,0}(t)\}}
\exp\bigg(\!\!-\frac{1}{2}\int_0^T \!\! \big(B^{0,0}(t)+x\big)\mathrm{d}t+\frac{1}{2\beta}\int_{-\infty}^\infty L_a(B^{0,0})^2\,\mathrm{d}a\!\bigg)\!\bigg]\mathrm{d}x.
\end{equation*}
At this point, we rewrite the event $\{\forall t:\,x\ge-B^{0,0}(t)\}$ as $\{x\ge-\min(B^{0,0})\}$ and compute the integral with respect to $x$ before taking the expectation to get
\begin{equation}\label{eq_BB_simplified}
\sqrt{\frac{2}{\pi}}\,T^{-3/2}\,\E_{B^{0,0}}\bigg[
\exp\bigg(-\frac{1}{2}\int_0^T \big(B^{0,0}(t)-\min(B^{0,0})\big)\mathrm{d}t+\frac{1}{2\beta}\int_{-\infty}^\infty L_a(B^{0,0})^2\,\mathrm{d}a\bigg)\bigg].
\end{equation}

\smallskip

Lastly, we consider the Vervaat transform $V(B^{0,0})(t):=B^{0,0}\big((t+t^*)\,\mathrm{mod}\,T\big)-\min(B^{0,0})$, $t\in[0,T]$, where $t^*$ is the almost surely unique time at which $B^{0,0}$ attains its global minimum $\min(B^{0,0})$. By Vervaat's Theorem (\cite[Theorem 1]{Ver}), $V(B^{0,0})$ is a standard Brownian excursion on $[0,T]$, so that $\e(t):=T^{-1/2}V(B^{0,0})(Tt)$, $t\in[0,1]$ is a standard Brownian excursion on $[0,1]$. The quantity of \eqref{eq_BB_simplified} can be now reexpressed in terms of $\e$ as
\begin{equation*}
\sqrt{\frac{2}{\pi}}\,T^{-3/2}\,\E\bigg[
\exp\bigg(-\frac{T^{3/2}}{2}\int_0^1 \e(t)\,\mathrm{d}t+\frac{T^{3/2}}{2\beta}\int_0^\infty (l_y)^2\,\mathrm{d}y\bigg)\bigg],
\end{equation*}
where each $l_y$ is the total local time of $\e$ at level $y$. Proposition \ref{Proposition_expectation} readily follows.

\medskip

In addition, for $\beta=2$, we can equate the results of Propositions \ref{Proposition_expectation} and \ref{Proposition_expectation_Ok} to obtain
\begin{equation}
\E\bigg[
\exp\bigg(-\frac{T^{3/2}}{2}\bigg(\int_0^1 \e(t)\,\mathrm{d}t-\frac{1}{2}\int_0^\infty (l_y)^2\,\mathrm{d}y\bigg)\bigg)\bigg]=e^{T^3/96}.
\end{equation}
Since $T>0$ was arbitrary, we have identified the moment generating function on the
negative half-line of the random variable $\int_0^1
\e(t)\,\mathrm{d}t-\frac{1}{2}\int_0^\infty (l_y)^2\,\mathrm{d}y$ with that of a
Gaussian random variable of mean $0$ and variance $\frac{1}{12}$. The usual analytic
continuation technique via Morera's Theorem allows to extend such identity to the
open left complex half-plane. Corollary \ref{Cor_excursion_identity} now follows
from the Uniqueness Theorem for moment generating functions (see e.g. \cite[Chapter
VI, Theorem 6a]{Wi}).
\end{proof}

\appendix

\section{Global asymptotics}

In this section, we study the fluctuations of the empirical measure
$\rho_N$ defined in \eqref{eq_empirical}. The following two theorems give the Law of Large Numbers and the Central Limit Theorem for $\rho_N$, respectively.

\begin{theorem} \label{Theorem_LLN_moments}
Instead of Assumption \ref{Assumptions} suppose that $\aa(m)$, $m\in\nn$ and $\xi(m)$, $m\in\nn$ are mutually independent random variables with finite moments of all orders satisfying
\begin{equation}\label{app_moment_cond1}
\forall\,k\in\nn:\quad \lim_{m\to\infty} \frac{\E[|\aa(m)|^k]}{m^{k/2}}=\lim_{m\to\infty}
\frac{\E[|\xi(m)|^k]}{m^{k/2}}=0.
\end{equation}
Then, for each $k\in\nn$, one has the convergences in probability
\begin{equation} \label{eq_moments_limit}
\lim_{N\to\infty} \int_{\mathbb R} x^k \rho_N(\mathrm{d}x)
=\lim_{N\to\infty} \frac{1}{N}\,{\rm Trace}\bigg(\frac{M_N}{\sqrt N}
\bigg)^k
=\begin{cases} \frac{1}{k/2+1} {k\choose {k/2}}& \text{if }k\text{ is even,}\\ 0&
\text{if }k\text{ is odd.}
  \end{cases}
 \end{equation}
\end{theorem}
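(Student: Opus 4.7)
The proof uses the method of moments. The first step is to expand
\[
\frac{1}{N}\,\mathrm{Tr}\!\left(\left(\tfrac{M_N}{\sqrt N}\right)^{\!k}\right)
=\frac{1}{N^{k/2+1}}\sum_{(i_0,\dots,i_k):\,i_0=i_k}\;\prod_{j=1}^k M_N[i_{j-1},i_j],
\]
with the sum ranging over closed nearest-neighbor walks in $\{1,\ldots,N\}$, and take the expectation. By mutual independence of the $\aa$'s and $\xi$'s, each walk contributes a factor $\prod_i \E[\aa(i)^{l_i}]\,\prod_i \E[\bb(i)^{v_i}]$, where $l_i$ is the number of horizontal visits to site $i$ and $v_i$ is the number of traversals of edge $(i,i+1)$ (automatically even since the walk is closed). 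I would split the walks into ``Dyck-type'' (no horizontal steps, which forces $k$ even) and those with $h\ge 1$ horizontal steps.

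For Dyck-type walks I would expand $\bb(i)^{2n_i}=(\sqrt{i}+\xi(i))^{2n_i}$ binomially and invoke \eqref{app_moment_cond1} on $\xi$ to obtain $\E[\bb(i)^{2n_i}]=i^{n_i}(1+o(1))$ with $n_i=v_i/2$ and $\sum n_i=k/2$. For a bulk starting point $i_0$ (at distance greater than $k$ from $\{1,N\}$), all $\binom{k}{k/2}$ closed $\pm 1$ walks of length $k$ from $i_0$ stay inside $[1,N]$, and each gives $\prod_i i^{n_i}=i_0^{k/2}(1+O(k/i_0))$. Summing using $\sum_{i_0=1}^N i_0^{k/2}\sim N^{k/2+1}/(k/2+1)$ and dividing by $N^{k/2+1}$ yields the Catalan limit $\frac{1}{k/2+1}\binom{k}{k/2}$; the $O(k)$ boundary starting points contribute at most $O(N^{k/2})$ and are negligible. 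Walks with $h\ge 1$ horizontal steps are handled using $\E[\aa(i)^{l_i}]=o(i^{l_i/2})$ from \eqref{app_moment_cond1}: the expected product of any such walk is $o(N^{k/2})$, and with only $O(N\cdot 2^k)$ (starting point, walk) pairs in total, the combined contribution is $o(N^{k/2+1})$. For odd $k$, every walk must contain a horizontal step (an odd number of $\pm 1$ steps cannot sum to zero), so the same estimate delivers the limit $0$.

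Finally, I would upgrade expectation convergence to convergence in probability by bounding the variance of $Z_N:=N^{-k/2-1}\mathrm{Tr}(M_N^k)$. Expanding $\mathrm{Var}(Z_N)=N^{-(k+2)}\sum_{P,P'}\mathrm{Cov}(\prod_P,\prod_{P'})$ over pairs of closed walks, independence forces the covariance to vanish unless $P$ and $P'$ share at least one edge or site. Any such sharing forces $|i_0-i_0'|\le 2k$, so the number of ``coupled'' pairs is $O(N)$ rather than $O(N^2)$; combined with a per-pair expected product of $O(N^k)$ (obtained from the same moment estimates, using Cauchy--Schwarz on the jointly used entries), this yields $\mathrm{Var}(Z_N)=O(1/N)\to 0$. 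The main technical obstacle is precisely this concentration step: one must verify that the loss of a factor of $N$ in the combinatorial count is not offset by a blow-up in the joint moments of the shared $\bb(i)$ and $\aa(i)$ factors, for which the one-dimensional tridiagonal geometry and the uniform moment bound \eqref{app_moment_cond1} together suffice.
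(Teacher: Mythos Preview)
Your argument is correct and follows essentially the same route as the paper: expand the trace over closed nearest-neighbor walks, show that only walks with all $\pm1$ steps survive at leading order (using $\E[|\aa(m)|^k],\E[|\xi(m)|^k]=o(m^{k/2})$), approximate each such walk's weight by $i_0^{k/2}$, sum to get the Catalan number via a Riemann sum, and then upgrade to convergence in probability via a variance bound and Chebyshev. Your treatment is in fact more explicit than the paper's on the concentration step---the paper simply asserts that Chebyshev's inequality together with \eqref{app_moment_cond1} suffices, whereas you spell out the key combinatorial point that a nonzero covariance between two walk-products forces their starting points within distance $O(k)$ of each other.
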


\smallskip

\begin{rmk}
The limits in \eqref{eq_moments_limit} are precisely the moments of the semicircle distribution, so that Theorem \ref{Theorem_LLN_moments} implies Proposition \ref{Prop_semicircle}.
\end{rmk}

\begin{theorem} \label{Theorem_CLT_moments}
Instead of Assumption \ref{Assumptions} suppose that $\aa(m)$, $m\in\nn$ and $\xi(m)$, $m\in\nn$ are mutually independent random variables satisfying
\begin{equation}\label{app_moment_cond2}
\begin{split}
& \forall\,k\in\nn:\quad \sup_{m\in\nn}\;\E\big[|\aa(m)|^k\big]<\infty,\;\;\; \sup_{m\in\nn}\;\E\big[|\xi(m)|^k\big]<\infty, \\
& \exists\,\mu_\aa,\mu_\xi\in\rr:\quad
\lim_{m\to\infty} \sqrt{m}\,\E[\aa(m)]=\mu_\aa,\;\;\;
\lim_{m\to\infty} \sqrt{m}\,\E[\xi(m)]=\mu_\xi, \\
& \exists\,s_\aa,\,s_\xi\in(0,\infty):\quad
\lim_{m\to\infty} \E\big[\aa(m)^2\big]=s_\aa^2,\;\;\;
\lim_{m\to\infty} \E\big[\xi(m)^2\big]=s_\xi^2.
\end{split}
\end{equation}
Then, every finite subfamily of
\begin{equation*}
\bigg\{{\rm Trace}\bigg(\frac{M_{\lfloor \alpha N \rfloor}}{\sqrt N}
\bigg)^k -\E\bigg[{\rm Trace}\bigg(\frac{M_{\lfloor \alpha N \rfloor}}{\sqrt N}\bigg)^k\,\bigg]:\;\alpha\in(0,1],\;k\in\nn\bigg\}
\end{equation*}
converges in distribution to a centered Gaussian random vector whose covariance matrix has the entries
\begin{equation} \label{eq_limit_covariance}
(\alpha \wedge \alpha')^{\frac{k+k'}{2}}\begin{cases} s_\aa^2\, \frac{2kk'}{k+k'}
  {{k-1}\choose (k-1)/2}\,{{k'-1} \choose (k'-1)/2}& \text{if }k\text{ and } k'\text{ are both odd,}\\
s_\xi^2\,\frac{2kk'}{k+k'}   {k\choose k/2}{k'\choose k'/2}& \text{if }k\text{ and } k'\text{ are both even,}\\
  0& \text{if }k\text{ and }k'\text{ are of different parity.}
  \end{cases}
\end{equation}
\end{theorem}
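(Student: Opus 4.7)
The plan is to use the method of moments. First, expand
\[
\mathrm{Trace}\big((M_{\lfloor \alpha N\rfloor}/\sqrt{N})^k\big) = N^{-k/2}\sum_{\gamma}\prod_{j=0}^{k-1} M_{\lfloor \alpha N\rfloor}[\gamma(j),\gamma(j+1)]
\]
as a sum over closed walks $\gamma$ of length $k$ on $\{1,\ldots,\lfloor\alpha N\rfloor\}$ with steps in $\{-1,0,+1\}$. Each walk contributes $\prod_m \bb(m)^{u_\gamma(m)}\prod_m \aa(m)^{v_\gamma(m)}$, where $u_\gamma(m)$ is the (necessarily even) number of traversals of edge $\{m,m+1\}$ and $v_\gamma(m)$ the number of horizontal stays at $m$; in particular the parity of $\sum_m v_\gamma(m)$ equals the parity of $k$. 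Since $k$ is fixed, closed walks have transverse width $O(1)$ in the $N$-scale, their heights concentrate around a starting height $m_0$, and the moment assumptions \eqref{app_moment_cond2} license the controlled expansion $\bb(m)^u = m^{u/2}(1 + u\,\xi(m)/\sqrt{m} + O(\xi(m)^2/m))$ inside every walk.

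Setting $\tilde X^{k,\alpha}_N := \mathrm{Trace}((M_{\lfloor \alpha N\rfloor}/\sqrt{N})^k) - \E[\cdot]$, I would compute joint moments $\E[\prod_{r=1}^p \tilde X^{k_r,\alpha_r}_N]$ by expansion over $p$-tuples of walks and work equivalently with joint cumulants. Independence of $\{\aa(m),\xi(m)\}$ across distinct $m$ makes the expectation factor over equivalence classes of walks linked by a shared height; centering forces any singleton class to give zero. A power-counting argument — using $\bb(m)^u - \E[\bb(m)^u] \sim u\,m^{(u-1)/2}\,\xi(m)$, the $p$-th joint cumulant of $p$ walks sharing a single height scales as $m_0^{(k_1+\cdots+k_p)/2 - p/2}$, so summing over $m_0\in[1,\min_r\alpha_r N]$ and normalizing by $N^{(k_1+\cdots+k_p)/2}$ gives size $N^{-(p-2)/2}$ — shows that only $p=2$ (pair) cumulants survive in the limit. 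Hence all joint cumulants of order $\ge 3$ vanish, yielding a Gaussian limit; Carleman's criterion then upgrades moment convergence to joint convergence in distribution to the claimed Gaussian vector.

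For the pair covariance, restrict to pairs of walks sharing a single variable at height $m\sim uN$, $u\in(0,\alpha\wedge\alpha')$. When $k,k'$ are both even, dominant pairs are purely off-diagonal walks sharing a single edge $\{m,m+1\}$; expanding $\bb(m)^u$ gives $\mathrm{Cov}(\bb(m)^{u_1},\bb(m)^{u_2})\sim u_1 u_2\, m^{(u_1+u_2)/2-1}\, s_\xi^2$ at the shared edge, the remaining independent edges contribute $m_0^{(k-u_1)/2+(k'-u_2)/2}$ in expectation, and summing $m_0\in[1,(\alpha\wedge\alpha')N]$ produces the factor $(\alpha\wedge\alpha')^{(k+k')/2}/((k+k')/2)$ after normalization by $N^{(k+k')/2}$. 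The weighted combinatorial sum $\sum u_1 u_2$ over ordered pairs of closed $\pm 1$-walks of lengths $k,k'$ with a designated shared edge factors by translation invariance as $[\sum_\gamma u_\gamma(e)]\cdot[\sum_{\gamma'} u_{\gamma'}(e)]$, and a cyclic-rotation identity gives $\sum_\gamma u_\gamma(e) = k\binom{k}{k/2}$ for closed $\pm 1$-walks of length $k$ at a fixed edge, so the product is $kk'\binom{k}{k/2}\binom{k'}{k'/2}$. Combined with the $(k+k')/2$ denominator, this reproduces the coefficient $\tfrac{2kk'}{k+k'}\binom{k}{k/2}\binom{k'}{k'/2}$ of \eqref{eq_limit_covariance}. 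When $k,k'$ are both odd, dominant walks carry exactly one horizontal step, the shared variable is a diagonal site $\aa(m)$ with $\mathrm{Cov}(\aa(m),\aa(m))=s_\aa^2$, and the per-walk count of configurations with a horizontal step at a fixed site (summed over starting heights) equals $k\binom{k-1}{(k-1)/2}$ by the bijection with marked closed $\pm 1$-walks of length $k-1$; the pair count is $kk'\binom{k-1}{(k-1)/2}\binom{k'-1}{(k'-1)/2}$, producing the companion formula. When $k$ and $k'$ have different parities, the parity of $\sum_m v_\gamma(m)$ forces any sharing pattern to involve at least two shared variables, a lower-order contribution, so the covariance vanishes.

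The main obstacle will be the uniform power-counting of the second step — rigorously controlling, under the assumptions \eqref{app_moment_cond2}, the contributions from walks using higher-order terms in the $\xi$-expansion of $\bb(m)^u$, walks whose heights drift significantly before closing, and walks in clusters of size $\ge 3$ — together with verifying that subleading configurations within each pair (walks sharing two edges, even-$k$ walks with one horizontal step, etc.) really are $O(N^{-1/2})$ relative to the leading term. Both tasks are standard themes in the tridiagonal-CLT literature, but require careful bookkeeping here because of the simultaneous presence of the $\aa$- and $\xi$-fluctuations and the multi-parameter $(k_r,\alpha_r)$ setting.
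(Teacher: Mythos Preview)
Your proposal is correct and follows essentially the same route as the paper: expand the trace over closed $\{-1,0,+1\}$-walks, isolate the leading fluctuation as a sum linear in the independent variables $\xi(m)$ and $\aa(m)$, and read off the Gaussian limit and covariance from the same combinatorial counts $k\binom{k}{k/2}$ and $k\binom{k-1}{(k-1)/2}$. The only organizational difference is that the paper, after extracting the linear-in-$\xi$ (even $k$) and linear-in-$\aa$ (odd $k$) main terms, applies a standard triangular-array CLT directly, whereas you propose to verify vanishing of higher joint cumulants; both work and amount to the same bookkeeping. One small clarification: for the mixed-parity case the cleanest argument is not that ``at least two shared variables'' are forced, but simply that the leading fluctuation for even $k$ lives in the $\xi$-variables while for odd $k$ it lives in the $\aa$-variables, and these two families are independent.
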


\begin{rmk}
Note that in the covariance structure of \eqref{eq_limit_covariance} the $\alpha$-direction plays a very different role from the $k$-direction. In the $\alpha$-direction, the random variables change as the values of a Brownian motion. This is due to the tridiagonal structure of the matrix $M_N$ which makes large submatrices of the powers of $M_N$ asymptotically uncorrelated. On the other hand, in the $k$-direction, the fluctuations are governed by a log-correlated Gaussian field known from the case of the Gaussian $\beta$ ensemble. In contrast, the fluctuations of the corners of Wigner matrices
\cite{Bor_GFF}, sample covariance matrices \cite{DP_GFF}, adjacency matrices of random graphs \cite{GanPal_GFF}, \cite{Johnson}, and multilevel $\beta$-Jacobi ensembles \cite{BG_GFF} are governed by the two-dimensional Gaussian free field ($2$D GFF), in which both coordinates behave the same. An intuitive explanation for this difference is that the amount of randomness in $M_N$ (two sequences of mutually independent random variables) is not enough to lead to such a complicated object as the $2$D GFF.
\end{rmk}

\smallskip

The rest of the section is devoted to the proof of Theorems \ref{Theorem_LLN_moments} and
\ref{Theorem_CLT_moments}. For similar proofs see  \cite{DE2}, \cite{Duy}.

\medskip

\noindent\textit{Proof of Theorems \ref{Theorem_LLN_moments} and \ref{Theorem_CLT_moments}.}
To obtain Theorem \ref{Theorem_LLN_moments} we need to consider traces of powers of $M_N$. By definition,
\begin{equation}
\label{eq_Trace_as_sum}
\mathrm{Trace}\big((M_N)^k\big)=\sum M_N[i_0,i_1]\,M_N[i_1,i_2]\cdots M_N[i_{k-1},i_k]
\end{equation}
where the sum is taken over all sequences of integers $i_0,i_1,\ldots,i_k$ in $\{1,2,\ldots,N\}$ such that $i_0=i_k$ and $|i_l-i_{l-1}|\le 1$ for all $l=1,2,\ldots,k$. In the latter sum, each factor of the form $M_N[m,m]$ is of order $o(\sqrt{m})$ in the sense of condition \eqref{app_moment_cond1}, whereas each factor of the form $M_N[m,m+1]$ or $M_N[m+1,m]$ is given by $\sqrt{m}+\xi(m)$ with $\xi(m)$ being of order $o(\sqrt{m})$ in the sense of condition \eqref{app_moment_cond1}. We immediately conclude that the sum in \eqref{eq_Trace_as_sum} is dominated by paths with $|i_l-i_{l-1}|=1$ for all $l=1,2,\ldots,k$ and that the paths with $i_0\le k$ or $i_0\ge N-k+1$ are asymptotically negligible. Consequently, for even $k\in\nn$,
\begin{equation*}
\begin{split}
\frac{1}{N}\,{\rm Trace}\bigg(\frac{M_N}{\sqrt N}\bigg)^k
\!\approx\!\frac{1}{N^{k/2+1}}\E\big[{\rm Trace}\big((M_N)^k\big)\big]
\!=\!\bigg(\frac{1}{N^{k/2+1}}\sum_{m=1}^N m^{k/2} {k \choose {k/2}}\bigg)(1+o(1)) \\
\longrightarrow_{N\to\infty} \int_0^1 x^{k/2}\,\mathrm{d}x\,
{k \choose {k/2}}=\frac{1}{k/2+1} {k \choose {k/2}},
\end{split}
\end{equation*}
where ``$\approx$'' means that the difference of the two terms tends to $0$ in probability (which here can be easily verified by using Chebyshev's inequality and condition \eqref{app_moment_cond1}). Very similar considerations yield for odd $k\in\nn$ that $\frac{1}{N}\,{\rm Trace}\big(\frac{M_N}{\sqrt N}\big)^k\approx \frac{1}{N^{k/2+1}}\E\big[{\rm Trace}\big((M_N)^k\big)\big]$ which, in turn, tends to $0$ in the limit $N\to\infty$. This gives Theorem \ref{Theorem_LLN_moments}.

\medskip

We proceed to the proof of Theorem \ref{Theorem_CLT_moments} and need to study the random variables
\begin{equation}\label{app_centered_rv}
N^{-k/2}\Big({\rm Trace}\big((M_{\lfloor \alpha N\rfloor})^k\big)-\E\big[{\rm Trace}\big((M_{\lfloor \alpha N\rfloor})^k\big)\big]\Big),\;\;\alpha\in(0,1],\;k\in\nn.
\end{equation}
To this end, for each $\alpha\in(0,1]$ and $k\in\nn$, we introduce the quantity
\begin{equation}
\mu_{\alpha,k}:=\sum\,\prod_{l=1}^k \sqrt{i_l\wedge i_{l-1}}
\end{equation}
where the sum is taken over sequences of integers $i_0,i_1,\ldots,i_k$ in $\{1,2,\ldots,\lfloor \alpha N\rfloor\}$ such that $i_0=i_k$ and $|i_l-i_{l-1}|=1$ for all $l=1,2,\ldots,k$. We refer to such sequences as sequences of type 0. In addition, we call sequences with $i_L=i_{L-1}$ for some $L$ and $|i_l-i_{l-1}|=1$ for all $l\neq L$ sequences of type 1 and sequences with $i_L=i_{L-1}$, $i_{L'}=i_{L'-1}$ for some $L\neq L'$ and $|i_l-i_{l-1}|=1$ for all $l\neq L,L'$ sequences of type 2.

\medskip

Next, we consider ${\rm Trace}\big((M_{\lfloor \alpha N\rfloor})^k\big)-\mu_{\alpha,k}$ and group the terms with the same sequence $i_0,i_1,\ldots,i_k$ together. For each sequence of type 0, we expand the resulting difference to get
\begin{equation} \label{eq_centered_moment}
\begin{split}
 N^{k/2}\bigg(\!\sum_{l_1=1}^k \frac{\xi(i_{l_1}\wedge i_{l_1-1})}{\sqrt N} \prod_{l\ne l_1} \frac{\sqrt{i_l\wedge i_{l-1}}}{\sqrt{N}}
+\!\!\sum_{\substack{l_1,l_2=1\\ l_1\neq l_2}}^k \frac{\xi(i_{l_1}\wedge i_{l_1-1})}{\sqrt N} \frac{\xi(i_{l_2}\wedge i_{l_2-1})}{\sqrt N}\prod_{l\ne l_1,l_2}
\frac{\sqrt{i_l\wedge i_{l-1}}}{\sqrt{N}}\\
+r(i_0,i_1,\ldots,i_k)\!\bigg),
\end{split}
\end{equation}
where the expectation of the remainder $|r(i_0,i_1,\ldots,i_k)|$ is of order $N^{-3/2}$ uniformly in the sequence $i_0,i_1,\ldots,i_k$ thanks to the first part of condition \eqref{app_moment_cond2}. On the other hand, for each sequence of type 1, say with $i_L=i_{L-1}$, we use the expansion
\begin{equation}\label{eq_centered_moment_2}
 N^{k/2}\bigg(\frac{\aa(i_L)}{\sqrt N}\,\prod_{l\ne L} \frac{\sqrt{i_l\wedge i_{l-1}}}{\sqrt{N}}+\frac{\aa(i_L)}{\sqrt N}\,\sum_{l_1\neq L} \frac{\xi(i_{l_1}\wedge i_{l_1-1})}{\sqrt{N}}\,\prod_{l\ne l_1,L} \frac{\sqrt{i_l\wedge i_{l-1}}}{\sqrt{N}}+r(i_0,i_1,\ldots,i_k)\bigg),
\end{equation}
again with a remainder $|r(i_0,i_1,\ldots,i_k)|$ whose expectation is of order $N^{-3/2}$ uniformly in the sequence. Lastly, for a sequence of type 2, say with $i_L=i_{L-1}$ and $i_{L'}=i_{L'-1}$, we end up with the expansion
\begin{equation}\label{eq_centered_moment_3}
N^{k/2}\bigg(\frac{\aa(i_L)}{\sqrt{N}}\,\frac{\aa(i_{L'})}{\sqrt{N}}\,\prod_{l\neq L,L'} \frac{\sqrt{i_l\wedge i_{l-1}}}{\sqrt{N}}+ r(i_0,i_1,\ldots,i_k)\bigg),
\end{equation}
yet again with a remainder $|r(i_0,i_1,\ldots,i_k)|$ with expectation uniformly of order $N^{-3/2}$.

\medskip

At this point, we multiply by $N^{-k/2}$ (thus, cancelling the prefactors in \eqref{eq_centered_moment}-\eqref{eq_centered_moment_3}) and sum over the sequences $i_0,i_1,\ldots,i_k$. We note that all sequences except for the ones of types 0, 1, and 2, as well as the sums of the remainders in \eqref{eq_centered_moment}-\eqref{eq_centered_moment_3} become negligible in $L^1$ sense in the limit $N\to\infty$ and therefore can be dropped. The remaining terms are of the following types:
\begin{equation} \label{eq_xi_coeff}
\sum_{m=1}^{\lfloor \alpha N\rfloor -1} \frac{\xi(m)}{\sqrt N}\,\bigg(\sum\,\prod_{l\neq l_1} \frac{\sqrt{i_l\wedge i_{l-1}}}{\sqrt{N}}\bigg)
\end{equation}
where the second sum is over sequences of type 0 such that $i_{l_1}\wedge i_{l_1-1}=m$ for some $l_1$ (if the event $i_{l_1}\wedge i_{l_1-1}=m$ occurs multiple times in a sequence, the sequence is counted that number of times);
\begin{equation}\label{eq_alpha_coeff}
\sum_{m=1}^{\lfloor \alpha N\rfloor} \frac{\aa(m)}{\sqrt N}\,\bigg(\sum\,\prod_{l\ne L} \frac{\sqrt{i_l\wedge i_{l-1}}}{\sqrt{N}}\bigg)
\end{equation}
where the second sum is over sequences of type 1 such that $i_L=i_{L-1}=m$;
\begin{equation}\label{eq_xi_xi_coeff}
\sum_{m_1,m_2=1}^{\lfloor \alpha N\rfloor-1}
\frac{\xi(m_1)\,\xi(m_2)}{N}\,\bigg(\sum\,
\prod_{l\ne l_1,l_2} \frac{\sqrt{i_l\wedge i_{l-1}}}{\sqrt{N}}\bigg)
\end{equation}
where the second sum is over sequences of type 0 such that $i_{l_1}\wedge i_{l_1-1}=m_1$, $i_{l_2}\wedge i_{l_2-1}=m_2$ for some $l_1\neq l_2$ (counted with the appropriate multiplicity);
\begin{equation}\label{eq_alpha_xi_coeff}
\sum_{m_1=1}^{\lfloor \alpha N\rfloor} \sum_{m_2=1}^{\lfloor \alpha N\rfloor-1} \frac{\aa(m_1)\,\xi(m_2)}{N} \,\bigg(\sum\,\prod_{l\ne l_1,L} \frac{\sqrt{i_l\wedge i_{l-1}}}{\sqrt{N}}\bigg)
\end{equation}
where the third sum is over sequences of type 1 such that $i_L=i_{L-1}=m_1$ and $i_{l_1}\wedge i_{l_1-1}=m_2$ for some $l_1\neq L$ (counted with the appropriate multiplicity);
\begin{equation}\label{eq_alpha_alpha_coeff}
\sum_{m_1,m_2=1}^{\lfloor \alpha N\rfloor} \frac{\aa(m_1)\,\aa(m_2)}{N}\,\bigg(\sum\,\prod_{l\neq L,L'} \frac{\sqrt{i_l\wedge i_{l-1}}}{\sqrt{N}}\bigg)
\end{equation}
where the second sum is over sequences of type 2 such that $i_L=i_{L-1}=m_1$, $i_{L'}=i_{L'-1}=m_2$.

\medskip

Condition \eqref{app_moment_cond2} now implies the following convergences as $N\to\infty$: the expressions in \eqref{eq_xi_coeff} and \eqref{eq_alpha_coeff} converge jointly in distribution to indepedent  nondegenerate Gaussian random variables (note that the coefficients of $\frac{\xi(m)}{\sqrt{N}}$ and $\frac{\aa(m)}{\sqrt{N}}$ tend to constants and apply e.g. the Central Limit Theorem of \cite[Theorem 8.4]{BR}); the sum of the terms with $m_1\neq m_2$ in \eqref{eq_xi_xi_coeff} can be decomposed into subsums according to the value of $|m_2-m_1|$ and the latter tend to $0$ in $L^1$ by the Law of Large Numbers (note also that there are less than $k/2$ subsums with nonzero coefficients of $\frac{\xi(m_1)\,\xi(m_2)}{N}$); the sum of the terms with $m_1=m_2$ in \eqref{eq_xi_xi_coeff} converges to a constant in $L^1$ by the Law of Large Numbers (note that coefficients of $\frac{\xi(m)^2}{N}$ tend to a constant); the expression in \eqref{eq_alpha_xi_coeff} can be decomposed into subsums according to the value of $m_2-m_1$ and the latter tend to $0$ in $L^1$ by the Law of Large Numbers (note also that there are less than $k$ subsums with nonzero coefficients of $\frac{\aa(m_1)\,\xi(m_2)}{N}$); and the expression in \eqref{eq_alpha_alpha_coeff} converges to a constant in $L^1$ by the same argument as for the expression in \eqref{eq_xi_xi_coeff}.

\medskip

At this point, we turn to the centered random variables of \eqref{app_centered_rv}. From the discussion so far it is clear that the limits in distribution of finite families of such are centered Gaussian random vectors, whose components arise from the limits in distribution of the expressions in \eqref{eq_xi_coeff} and \eqref{eq_alpha_coeff} centered according to their expectations. It remains to derive the limiting covariance structure of \eqref{eq_limit_covariance} from the latter.

\medskip

Since the expression of \eqref{eq_xi_coeff} is only present for even $k$ and the expression of \eqref{eq_alpha_coeff} is only present for odd $k$, the limiting covariance between random variables of \eqref{app_centered_rv} with $k,\,k'$ of different parities must vanish. Next, pick two random variables as in \eqref{app_centered_rv}  with even $k,\,k'$ and arbitrary $\alpha,\,\alpha'$. The covariance between such behaves asymptotically as
\begin{equation}\label{eq_even_cor}
\sum_{m=1}^{\lfloor (\alpha\wedge\alpha')N\rfloor-1} \frac{s_\xi^2}{N}\,\bigg(\sum\,\prod_{l\neq l_1} \frac{\sqrt{i_l\wedge i_{l-1}}}{\sqrt{N}}\bigg)\,\bigg(\sum\,\prod_{l\neq l_1} \frac{\sqrt{i_l\wedge i_{l-1}}}{\sqrt{N}}\bigg)
\end{equation}
where the second sum is over sequences $i_0,i_1,\ldots,i_k$ of type 0
such that $i_{l_1}\wedge i_{l_1-1}=m$ for some $l_1$ and the third sum is over sequences $i_0,i_1,\ldots,i_{k'}$ of type 0 such that $i_{l_1}\wedge i_{l_1-1}=m$ for some $l_1$ (as before, we count sequences with several occurences of the event $i_{l_1}\wedge i_{l_1-1}=m$ multiple times).

\medskip

Now, we note that the quantities $i_l\wedge i_{l-1}$ in \eqref{eq_even_cor} deviate from $m$ by less than $k/2$, the number of summands in the second sum in \eqref{eq_even_cor} is $k\,{k \choose k/2}$ (the number of sequences $i_0,i_1,\ldots,i_k$ of type 0  with a labeled step up to translates) when $k/2\le m\le \lfloor (\alpha\wedge\alpha')N\rfloor-k/2$, and the number of summands in the third sum in \eqref{eq_even_cor} is $k'\,{k' \choose k'/2}$ when $k'/2\le m\le \lfloor (\alpha\wedge\alpha')N\rfloor-k'/2$. As a result, the $N\to\infty$ limit of the expression in \eqref{eq_even_cor} is given by
\begin{equation}\label{eq_even_cor_int}
s_\xi^2\,\bigg(\!\int_0^{\alpha\wedge\alpha'} x^{(k-1+k'-1)/2}\,\mathrm{d}x\!\bigg)\,k{k \choose k/2}\,k'{k' \choose k'/2}
=s_\xi^2\,(\alpha\wedge\alpha')^{\frac{k+k'}{2}}\,\frac{2kk'}{k+k'}{k \choose k/2}{k' \choose k'/2}.
\end{equation}

\medskip

Similarly, for odd $k,\,k'$ and arbitrary $\alpha,\,\alpha'$, the covariance of the corresponding random variables in \eqref{app_centered_rv} behaves asymptotically as
\begin{equation}\label{eq_odd_cor}
\sum_{m=1}^{\lfloor (\alpha\wedge\alpha')N\rfloor} \frac{s_\aa^2}{N}\,\bigg(\sum\,\prod_{l\neq L} \frac{\sqrt{i_l\wedge i_{l-1}}}{\sqrt{N}}\bigg)\,\bigg(\sum\,\prod_{l\neq L} \frac{\sqrt{i_l\wedge i_{l-1}}}{\sqrt{N}}\bigg)
\end{equation}
where the second sum is over sequences $i_0,i_1,\ldots,i_k$ of type 1 such that $i_L=i_{L-1}=m$ and the third sum is over sequences $i_0,i_1,\ldots,i_{k'}$ of type 1 such that $i_L=i_{L-1}=m$. An argument as for even $k,k'$ allows to evaluate the $N\to\infty$ limit of the expression in \eqref{eq_odd_cor} to
\begin{equation}\label{eq_odd_cor_int}
\begin{split}
& s_\aa^2\,\bigg(\!\int_0^{\alpha\wedge\alpha'} x^{(k-1+k'-1)/2}\,\mathrm{d}x\!\bigg)\,k{k-1 \choose (k-1)/2}\,k'{k'-1 \choose (k'-1)/2} \\
& =s_\aa^2\,(\alpha\wedge\alpha')^{\frac{k+k'}{2}}\,\frac{2kk'}{k+k'}{k-1 \choose (k-1)/2}{k'-1 \choose (k'-1)/2}.
\end{split}
\end{equation}
This finishes the proof of Theorem \ref{Theorem_CLT_moments}. \ep

\section{Coupling with Brownian bridge local times}
\label{Section_appendix_coupling}

This section is devoted to the proof of Proposition \ref{lemma_coupling}. The starting point is the following immediate consequence of \cite[Theorem 6.4]{LTF}.

\begin{proposition}\label{Lawler_coupling}
For every $C>0$, one can find a $\tilde{C}>0$ such that, for every $N\in\nn$, there exists a probability space supporting a random walk bridge $X^{x,y;N,T_N}$ and a standard Brownian bridge $\tilde{B}^{x,y;N}$ connecting $\lfloor N-N^{1/3}x\rfloor$ to $\lfloor N-N^{1/3}y\rfloor$ in time $T_N N^{2/3}$ satisfying
\begin{equation}
\pp\Big(\sup_{0\le t\le T_N} \big|X^{x,y;N,T_N}(t)-\tilde{B}^{x,y;N}(tN^{2/3})\big|\ge\tilde{C}\,\log N\Big)\le \tilde{C}N^{-C}.
\end{equation}
\end{proposition}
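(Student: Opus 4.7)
The plan is to apply \cite[Theorem 6.4]{LTF} essentially off the shelf. That theorem already provides a KMT-type strong coupling between a simple random walk bridge of given length from $a$ to $b$ and a Brownian bridge of the same time-length from $a$ to $b$, with a $\log$-order sup-norm error at polynomial cost. The only work is parameter bookkeeping to convert the error exponent in LTF from ``negative power of the number of steps'' to ``negative power of $N$'', plus a routine interpolation step between lattice and continuous time.

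First I would set $n := T_N N^{2/3}$, $a := \lfloor N-N^{1/3}x\rfloor$ and $b := \lfloor N-N^{1/3}y\rfloor$, and invoke \cite[Theorem 6.4]{LTF} with an exponent $C':=\tfrac{3}{2}C$. This produces a probability space supporting the random walk bridge $X$ of length $n$ from $a$ to $b$ and a standard Brownian bridge $\tilde B$ of time-length $n$ from $a$ to $b$ such that
\[
\pp\!\left(\max_{0\le k\le n}\bigl|X(k)-\tilde B(k)\bigr|\ge c_1\log n\right)\le c_1\,n^{-C'}
\]
for some $c_1=c_1(C)$. Because $T_N$ stays bounded (this is assumed in every place where Proposition \ref{lemma_coupling} is invoked), one has $\log n=\tfrac{2}{3}\log N+O(1)$ and $n^{-C'}\le c_2 N^{-C}$, so the displayed bound acquires the form $\tilde C_0\log N$ with failure probability $\tilde C_0 N^{-C}$ on the lattice times.

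Next I would upgrade the lattice bound to a bound over all of $[0,T_N]$ via the interpolation of $X$. Two routine estimates are needed: piecewise linear interpolation changes $X$ by at most $1$ on any unit interval, while the continuous modulus of continuity of a Brownian bridge gives
\[
\sup_{|s-s'|\le 1,\; s,s'\in[0,n]}\bigl|\tilde B(s)-\tilde B(s')\bigr|\le c_3\sqrt{\log n}
\]
outside an event of probability at most $c_3 n^{-C'}$. Adding these to the lattice bound and then rescaling time by $N^{2/3}$ (so that $\tilde B^{x,y;N}(t N^{2/3})=\tilde B(tN^{2/3})$) delivers the desired estimate with a single constant $\tilde C=\tilde C(C)$ absorbing $c_1,c_2,c_3$.

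The only potential obstacle is cosmetic: one must confirm that \cite[Theorem 6.4]{LTF} is stated for the \emph{bridge} (i.e., conditioned on the endpoint), rather than just for the unconditioned walk. If only the unconditioned version were available there, the bridge version would follow by the standard trick of conditioning a KMT coupling on the common endpoint and absorbing the conditioning factor into the constants, or by citing the companion bridge statement in the same paper. Either way, no new idea is required beyond the strong invariance principle already established in \cite{LTF}.
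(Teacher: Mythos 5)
Your proposal matches the paper's treatment: the paper derives this proposition directly as an immediate consequence of \cite[Theorem 6.4]{LTF}, leaving exactly the parameter bookkeeping ($n=T_NN^{2/3}$, $\log n\asymp\log N$ since $T_N\to T>0$) and interpolation details implicit that you spell out. Your argument is correct and takes essentially the same route, just with more explicit accounting.
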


\smallskip

Note that, for each $N\in\nn$, the process $B^{x,y;N}(t):=N^{-1/3}\big(N-\tilde{B}^{x,y;N}(tN^{2/3})\big)$, $t\in[0,T_N]$ is a standard Brownian bridge connecting $N^{-1/3}\big(N-\lfloor N-N^{1/3}x\rfloor\big)$ to $N^{-1/3}\big(N-\lfloor N-N^{1/3}y\rfloor\big)$. Moreover, under the coupling of Proposition \ref{Lawler_coupling},
\begin{equation}\label{coupling_LDbound}
\pp\Big(\sup_{0\le t\le T_N} \big|N^{-1/3}\big(N-X^{x,y;N,T_N}(t)\big)-B^{x,y;N}(t)\big|\ge \tilde{C}N^{-1/3}\log N\Big)\le \tilde{C}N^{-C}.
\end{equation}

\smallskip

We are now ready to describe our coupling. We start with a probability space supporting a standard Brownian bridge $B^{x,y}$ connecting $x$ to $y$ in time $T$. Thanks to the Brownian scaling property, for every $N\in\nn$, the law of the process
\begin{equation}
\begin{split}
& \bigg(B^{x,y}\bigg(t\,\frac{T}{T_N}\bigg)-\bigg(1-\frac{t}{T_N}\bigg)\,x-\frac{t}{T_N}\,y\bigg)\,\bigg(\frac{T}{T_N}\bigg)^{1/2} \\
& +\!\bigg(\!1-\frac{t}{T_N}\!\bigg)\Big(\!N^{-1/3}\big(N-\lfloor N-N^{1/3}x\rfloor\big)\!\Big)\!+\!\frac{t}{T_N}\Big(\!N^{-1/3}\big(N-\lfloor N-N^{1/3}y\rfloor\!\Big),\;t\in[0,T_N]
\end{split}
\end{equation}
is that of the Brownian bridge $B^{x,y;N}$ defined above. Therefore, we can use the regular
conditional distributions of $X^{x,y;N,T_N}$ given $B^{x,y;N}$ (see e.g. \cite[Theorem 5.3]{Ka}) to
construct copies of the random walk bridges $X^{x,y;N,T_N}$, $N\in\nn$ (that we also denote by
$X^{x,y;N,T_N}$, $N\in\nn$) on an enlargement of the probability space on which $B^{x,y}$ is
defined.

\medskip

We need to show that the described coupling has the properties \eqref{XtoB0} and \eqref{XtoB}. To see \eqref{XtoB} one only needs to combine \eqref{coupling_LDbound}, the Borel-Cantelli Lemma, and the L\'{e}vy Modulus of Continuity Theorem:
\begin{equation}
\pp\Bigg(\limsup_{\epsilon\to0} \;\frac{\sup_{\substack{0\le t_1<t_2\le T \\ t_2-t_1\le\epsilon}} \big|B^{x,y}(t_2)-B^{x,y}(t_1)\big|}{\sqrt{2\epsilon\log(1/\epsilon)}}=1\Bigg)=1
\end{equation}
(see e.g. \cite[Theorem 2.9.25]{KS} and recall that $B^{x,y}$ can be obtained from a Brownian motion by subtracting an affine function of its terminal value).

\medskip

Our proof of the property \eqref{XtoB0} relies on \cite[Lemma 3.1]{BK2} which is stated next for the time interval $[0,T]$ instead of $[0,1]$ as in \cite{BK2}.

\begin{lemma}
Suppose $f_1$ and $f_2$ are measurable functions on $[0,T]$ which possess local times. Then, for every $\epsilon>0$,
\begin{equation}\label{BK_lemma}
\sup_{h\in\rr}\,\big|L_h(f_1)-L_h(f_2)\big|\le \frac{T}{\epsilon^2}\sup_{0\le t\le T} |f_1(t)-f_2(t)|
+\sum_{i=1}^2 \,\sup_{\substack{h_1,h_2\in\rr\\|h_1-h_2|\le\epsilon}}\big|L_{h_1}(f_i)-L_{h_2}(f_i)\big|.
\end{equation}
\end{lemma}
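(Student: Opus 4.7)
The plan is to prove the inequality via an averaging argument based on the occupation times formula. The key auxiliary quantity is the smoothed local time
$$\tilde L_h(f):=\frac{1}{\epsilon}\int_{h-\epsilon/2}^{h+\epsilon/2} L_{h'}(f)\,dh'=\frac{1}{\epsilon}\big|\{t\in[0,T]:f(t)\in[h-\epsilon/2,h+\epsilon/2]\}\big|,$$
where the second identity is the occupation times formula. Averaging over $h'\in[h-\epsilon/2,h+\epsilon/2]$ gives $|L_h(f)-\tilde L_h(f)|\le\omega_\epsilon(f):=\sup_{|h_1-h_2|\le\epsilon}|L_{h_1}(f)-L_{h_2}(f)|$, the very modulus that appears on the right-hand side of \eqref{BK_lemma}. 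Using $\int_\rr L_{h'}(f)\,dh'=T$ one also obtains the crude pointwise bound $\tilde L_h(f)\le T/\epsilon$, and hence the a priori estimate $\sup_h L_h(f)\le T/\epsilon+\omega_\epsilon(f)$, which will be crucial.

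I would then decompose $L_h(f_1)-L_h(f_2)=[L_h(f_1)-\tilde L_h(f_1)]+[\tilde L_h(f_1)-\tilde L_h(f_2)]+[\tilde L_h(f_2)-L_h(f_2)]$; the outer two pieces are controlled immediately by $\omega_\epsilon(f_1)$ and $\omega_\epsilon(f_2)$. For the middle piece, set $\delta:=\sup_t|f_1(t)-f_2(t)|$. The uniform bound $|f_1-f_2|\le\delta$ yields the set inclusions
$$\{t:f_1(t)\in[h-\tfrac\epsilon2+\delta,h+\tfrac\epsilon2-\delta]\}\subseteq\{t:f_2(t)\in[h-\tfrac\epsilon2,h+\tfrac\epsilon2]\}\subseteq\{t:f_1(t)\in[h-\tfrac\epsilon2-\delta,h+\tfrac\epsilon2+\delta]\},$$
and their symmetric counterparts with the roles of $f_1,f_2$ exchanged. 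Subtracting two such chains reduces the comparison of occupation times of the \emph{same} interval with respect to \emph{different} functions to the occupation time of a \emph{single} function on a pair of thin boundary strips of total Lebesgue measure at most $2\delta$, namely $|\tilde L_h(f_1)-\tilde L_h(f_2)|\le\frac{1}{\epsilon}\int_S L_{h'}(f_i)\,dh'$ for an appropriate $i\in\{1,2\}$ and $S\subseteq[h-\frac\epsilon2-\delta,h-\frac\epsilon2+\delta]\cup[h+\frac\epsilon2-\delta,h+\frac\epsilon2+\delta]$.

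Inserting the a priori bound $L_{h'}(f_i)\le T/\epsilon+\omega_\epsilon(f_i)$ from the first paragraph into the thin-strip integral yields a bound of order $\frac{\delta}{\epsilon}\bigl(T/\epsilon+\omega_\epsilon(f_i)\bigr)=\frac{T\delta}{\epsilon^2}+\frac{\delta}{\epsilon}\omega_\epsilon(f_i)$. When $\delta\le\epsilon$ the spurious second summand is dominated by $\omega_\epsilon(f_i)$ already present in the modulus sum, and when $\delta>\epsilon$ the triangle inequality together with $\sup_h L_h(f_i)\le T/\epsilon+\omega_\epsilon(f_i)$ shows that $T\delta/\epsilon^2$ alone dominates $|L_h(f_1)-L_h(f_2)|$, so that the lemma holds trivially. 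Taking the supremum in $h$ and collecting the three contributions then produces \eqref{BK_lemma}.

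The main obstacle will be squeezing the multiplicative constants down to exactly those displayed in \eqref{BK_lemma}: the naive bookkeeping above produces an extra factor of $2$ in front of $T\delta/\epsilon^2$ and possibly an additional $\omega_\epsilon$-term. I expect these to be removed by splitting the two boundary strips symmetrically and applying the a priori bound once to $f_1$ on one strip and once to $f_2$ on the other, so that each side contributes exactly $\frac{\delta}{\epsilon}\sup_{h'}L_{h'}(f_i)$. With that refinement, the inequality emerges in the precise form stated, and all other steps are standard consequences of the occupation times formula.
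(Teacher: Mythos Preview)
The paper does not actually prove this lemma: it is quoted verbatim from \cite[Lemma~3.1]{BK2} (rescaled from $[0,1]$ to $[0,T]$) and then simply applied. So there is no ``paper's own proof'' to compare against.

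Your smoothing-and-sandwiching argument is the standard route to results of this type and it does produce a valid inequality. The three-term decomposition via $\tilde L_h$ is correct, the bound $|L_h(f)-\tilde L_h(f)|\le\omega_\epsilon(f)$ is correct, and the set inclusions you write down are correct for $\delta<\epsilon/2$. What you get at the end, however, is
\[
\sup_h |L_h(f_1)-L_h(f_2)|\;\le\;\frac{2T\delta}{\epsilon^2}+2\bigl(\omega_\epsilon(f_1)+\omega_\epsilon(f_2)\bigr),
\]
i.e.\ the stated inequality with an extra factor of~$2$ in each term. Your proposed fix in the last paragraph (``apply the a priori bound once to $f_1$ on one strip and once to $f_2$ on the other'') does not work as written: the positive part of $\tilde L_h(f_1)-\tilde L_h(f_2)$ is controlled by the \emph{single} set $\{f_1\in I,\,f_2\notin I\}$, which splits into two boundary strips both naturally expressed in terms of the \emph{same} function, not one of each. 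I do not see how your sketch removes the factor~$2$.

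That said, this is immaterial for the paper. The lemma is applied with $\epsilon=N^{-2/15}$, $\delta=O(N^{-1/3}\log N)$, and the conclusion needed is merely that the left-hand side is $O(N^{-1/16})$; a universal constant in front of either term changes nothing. So your argument, even with the looser constants, is a complete substitute for the citation in every place the paper uses it.
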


\smallskip

We aim to apply the lemma for every fixed $N\in\nn$ with $f_1:=B^{x,y}$, $f_2:=X^{x,y;N,T_N}$, and $\epsilon:=N^{-2/15}$. To this end, we recall that there exists a random variable $\mathcal C$ such that
\begin{equation}\label{BB_mod_cont}
\sup_{\substack{h_1,h_2\in\rr\\|h_1-h_2|\le N^{-2/15}}}\big|L_{h_1}(B^{x,y})-L_{h_2}(B^{x,y})\big|\le\mathcal C\,N^{-1/15}\,(\log N)^{1/2},\quad N\in\nn
\end{equation}
with probability one (note that the laws of $B^{x,y}(t)-x$, $t\in[0,T/2]$ and
$B^{x,y}(T-t)-y$, $t\in[0,T/2]$ are absolutely continuous with respect to that of a standard Brownian motion on $[0,T/2]$ and apply \cite[estimate (2.1)]{Tr} for standard Brownian motion). In addition, we have the following lemma.

\begin{lemma}\label{lemma_RW_cont}
 For every
$\tilde{\epsilon}>0$ there exists a random variable $\mathcal C_{\tilde \epsilon}$
such that
\begin{equation}\label{RWB_mod_cont}
\sup_{\substack{h_1,h_2\in\rr\\|h_1-h_2|\le
N^{-2/15}}}\big|L_{h_1}(X^{x,y;N,T_N})-L_{h_2}(X^{x,y;N,T_N})\big|\le\mathcal
C_{\tilde \epsilon}\,N^{-1/15+\tilde{\epsilon}},\quad N\in\nn
\end{equation}
with probability one.
\end{lemma}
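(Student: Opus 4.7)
The plan is to transfer the modulus-of-continuity estimate \eqref{BB_mod_cont} from the Brownian bridge's continuous local time to the lattice-valued local times of the random walk bridge, using the strong coupling provided by Proposition \ref{Lawler_coupling}.

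To begin, I would reduce the problem to lattice points: since $L_h\bigl(X^{x,y;N,T_N}\bigr)$ is a step function of $h$ (constant on cells of $N^{-1/3}\zz$), the supremum in \eqref{RWB_mod_cont} is attained at pairs $h_1, h_2 \in N^{-1/3}\zz$ at distance at most $N^{-2/15}$, which are at most $\lceil N^{1/5}\rceil$ lattice spacings apart. Next, I would invoke Proposition \ref{Lawler_coupling} with a sufficiently large $C$ to couple $X^{x,y;N,T_N}$ with a Brownian bridge $\tilde B^{x,y;N}$ of duration $T_NN^{2/3}$ so that $\sup\bigl|X(\cdot) - \tilde B(\cdot\, N^{2/3})\bigr| \le \tilde C\log N$ on an event of probability at least $1 - \tilde C N^{-C}$; a Borel--Cantelli argument then makes this coupling event available for all sufficiently large $N$ almost surely.

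The heart of the argument is a comparison of the discrete lattice count $L_h(X^{x,y;N,T_N})$ with the continuous local time $\ell_h\bigl(B^{x,y;N}\bigr)$ of the rescaled Brownian bridge $B^{x,y;N}(t) := N^{-1/3}\bigl(N - \tilde B(tN^{2/3})\bigr)$. A scaling heuristic (each visit of $X$ to the integer $N - N^{1/3}h$ corresponds to $B^{x,y;N}$ spending time of order $N^{-2/3}$ near the height $h$, and there are $N^{1/3}$ lattice sites per unit of $h$) gives the informal identity $L_h(X) \approx \ell_h\bigl(B^{x,y;N}\bigr)$. For the difference $L_{h_1}(X) - L_{h_2}(X)$ at nearby $h_1, h_2$, the comparison error at $h_1$ and $h_2$ should largely cancel, so that the difference inherits the behavior of $\ell_{h_1}(B^{x,y;N}) - \ell_{h_2}(B^{x,y;N})$. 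An analogue of \eqref{BB_mod_cont} applied to $B^{x,y;N}$ in place of $B^{x,y}$ then gives a bound of the form $\mathcal C\cdot N^{-1/15}(\log N)^{1/2}$ whenever $|h_1 - h_2| \le N^{-2/15}$, which I would absorb into the form $\mathcal C_{\tilde\epsilon} N^{-1/15+\tilde\epsilon}$. A final Borel--Cantelli step upgrades the high-probability bound to the required almost-sure bound valid for every $N$.

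The main obstacle will be precisely this cancellation step: the pointwise comparison error between $L_h(X)$ and $\ell_h(B^{x,y;N})$ appears to be of order $N^{-1/6}(\log N)^{O(1)}$ (arising from the interplay between the coupling error $\tilde C N^{-1/3}\log N$ and the H\"older-$\tfrac{1}{2}$ regularity of $h \mapsto \ell_h(B^{x,y;N})$), which is coarser than the target $N^{-1/15}$; extracting the cancellation requires a careful quantitative comparison. A cleaner alternative route, which I would pursue if the direct comparison proves too delicate, is to establish the moment bound
\begin{equation*}
\E\bigl|L_{h_1}(X) - L_{h_2}(X)\bigr|^{2q} \le C_q\,|h_1 - h_2|^q
\end{equation*}
uniformly in $N$ for each integer $q \ge 1$ --- either via a Ray--Knight-type representation of the random-walk-bridge local time profile (which makes it an explicit Markov chain on $N^{-1/3}\zz$ closely related to a discrete squared Bessel process) or by transferring the classical Brownian bridge local time moment estimate through the coupling --- and then apply a Kolmogorov--Chentsov chaining argument to the finite lattice-indexed family $\{L_h(X^{x,y;N,T_N}) : h \in N^{-1/3}\zz\}$ (restricted to the bounded range of heights the walk can reach), followed by Markov's inequality and Borel--Cantelli to obtain the claimed a.s.\ bound.
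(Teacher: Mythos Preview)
Your primary route has a genuine circularity problem. You want to control
\[
\bigl|L_{h_1}(X)-L_{h_2}(X)\bigr|
\le \bigl|L_{h_1}(X)-\ell_{h_1}(B^{x,y;N})\bigr|
+\bigl|\ell_{h_1}(B^{x,y;N})-\ell_{h_2}(B^{x,y;N})\bigr|
+\bigl|\ell_{h_2}(B^{x,y;N})-L_{h_2}(X)\bigr|,
\]
with the middle term handled by the Brownian estimate \eqref{BB_mod_cont}. But the only tool in the paper for the outer terms is the Bass--Khoshnevisan inequality \eqref{BK_lemma}, and that inequality bounds $\sup_h|L_h(X)-\ell_h(B)|$ in terms of the modulus of continuity of \emph{both} local-time profiles, i.e.\ it already requires a bound on $\sup_{|h_1-h_2|\le\epsilon}|L_{h_1}(X)-L_{h_2}(X)|$. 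So the argument assumes what it is trying to prove. The ``cancellation'' you hope for between the comparison errors at $h_1$ and $h_2$ is not supplied by any tool here, and you correctly flag that the raw pointwise error is too large. Your fallback plan (moment bounds plus Kolmogorov--Chentsov, possibly via a Ray--Knight description) is a reasonable direction, but as stated it is a sketch of a different, nontrivial proof rather than an execution.

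The paper proceeds quite differently and avoids the coupling altogether for this lemma. It invokes directly a result of Bass and Khoshnevisan \cite[Proposition~3.1]{BK1} giving, for the \emph{unconditioned} simple random walk, a tail bound on the modulus of continuity of its local times; conditioning on the endpoint to pass to the bridge costs only a factor $N^{1/3}$ in probability, which is harmless after choosing $\lambda=N^{\tilde\epsilon/2}$ and applying Borel--Cantelli. This settles \eqref{RWB_mod_cont} for $h_1,h_2$ on the even sublattice $N^{-1/3}(2\zz)$. The extension to intermediate heights is then done combinatorially: conditionally on the number of visits to an even site, the number of visits to the adjacent half-integer heights is essentially binomial, so a binomial tail bound plus Borel--Cantelli handles $h\in N^{-1/3}(2\zz\pm\tfrac12)$, and a one-line averaging identity covers the odd lattice points. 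In short, the paper treats the random-walk local-time regularity as an input from the literature rather than as something to be deduced from the Brownian side.
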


Given the lemma, \eqref{XtoB0} follows by inserting \eqref{XtoB},
\eqref{BB_mod_cont}, and \eqref{RWB_mod_cont} into \eqref{BK_lemma} and choosing $\tilde{\epsilon}>0$ small enough.

\medskip

\noindent\textit{Proof of Lemma \ref{lemma_RW_cont}.} 
We only consider even $\lfloor N- N^{1/3}x\rfloor$, $\lfloor N-N^{1/3}y\rfloor$, and $T_N N^{2/3}$, since the proof in all other cases is analogous. In addition, let us first study $h_1$, $h_2$ of the form $N^{-1/3}(2a)$ for some $a\in\zz$. Then, we can use \cite[Proposition 3.1]{BK1} for a simple symmetric random walk with $T_N N^{2/3}$ steps started at $\lfloor N-N^{1/3}x\rfloor$ and restricted to even times (such restriction is needed to satisfy the strong aperiodicity assumption of \cite{BK1}). Conditioning the random walk on ending at $\lfloor N-N^{1/3}y\rfloor$ we get the estimate
\begin{equation} \label{eq_Bound_of_BK}
\begin{split}
& \pp\Bigg(\sup_{\substack{h_1,h_2\in N^{-1/3} (2\mathbb Z)\\|h_1-h_2|\le N^{-2/15}}}\big|L_{h_1}(X^{x,y;N,T_N})-L_{h_2}(X^{x,y;N,T_N})\big| \ge N^{-(1-\tilde{\epsilon})/15} \lambda\Bigg) \\ 
& \le C N^{1/3} \big(e^{-\lambda/C}+ N^{-14/3}\big),
\end{split}
\end{equation}
with a constant $C<\infty$ depending only on $\tilde{\epsilon}$ and where the factor $N^{1/3}$ on the right-hand side results from conditioning. Taking $\lambda=N^{\tilde \eps/2}$ and using the Borel-Cantelli Lemma we end up with \eqref{RWB_mod_cont} for $h_1$, $h_2$ restricted to $N^{-1/3} (2\mathbb Z)$.

\medskip

We remove the restriction on $h_1$, $h_2$ in two steps. First, we allow $h_1$, $h_2$ to be also of the form $N^{-1/3}(2a+e)$ for some $a\in\zz$, $e\in(-1,1)\backslash\{0\}$. Without loss of generality, we can take $e=\pm\frac{1}{2}$. In addition, we note that, given $L_{N^{-1/3}(2a)}(X^{x,y;N,T_N})$, each excursion of $X^{x,y;N,T_N}$ between two consecutive visits to $N-2a$ is upward or downward, each with probability $\frac{1}{2}$ independently of the other such excursions. It follows that conditionally on the event $L_{N^{-1/3}(2a)}(X^{x,y;N,T_N})=u$, the number of visits $N^{1/3}L_{N^{-1/3}(2a+e)}(X^{x,y;N,T_N})$ is twice a  binomial random variable with parameters $N^{1/3}u$, $\frac{1}{2}$, up to an error of at most $4$ (possibly arising before the first or after the last visit to $N-2a$). Therefore, a standard tail bound for the binomial distribution (see e.g. \cite[Corollary 1]{Ma} for a much stronger result) shows that the conditional probability
\begin{equation}
\pp\Big(\big|L_{N^{-1/3}(2a+e)}(X^{x,y;N,T_N})-u\big|>
N^{-1/15}u^{1/2}\,\big|\,L_{N^{-1/3}(2a)}(X^{x,y;N,T_N})=u\Big)
\end{equation}
decays faster than polynomially as $N$ tends to infinity, uniformly in $u$. Since we know from the above that $L_{N^{-1/3}(2a)}(X^{x,y;N,T_N})=O(1)$ as $N\to\infty$ for all $a\in\zz$ with probability one, we can condition on this event and then apply the Borel-Cantelli Lemma to include $h_1$, $h_2$ of the form $N^{-1/3} (2a+e)$ in \eqref{RWB_mod_cont}.

\medskip

Finally, to incorporate $h_1$, $h_2$ of the form $N^{-1/3}(2a+1)$ for some $a\in\mathbb Z$ it suffices to use the combinatorial identity
$$
\bigg| L_{N^{-1/3}(2a+1)}(X^{x,y;N,T_N})-\frac{
 L_{N^{-1/3}(2a+1/2)}(X^{x,y;N,T_N})\!+\!L_{N^{-1/3}(2a+3/2)}(X^{x,y;N,T_N})}{2} \bigg|\!\le\! N^{-1/3}.
$$
This finishes the proof of the lemma and thereby of Proposition \ref{lemma_coupling}. \ep

\bigskip


\end{document}